\def\T{{\mathrm{T}}}
\def\X{{\mathrm{X}}}
\def\x{\widetilde{x}}
\def\xNx{\mathrm{x}}
\def\xx{\mathcal{X}}
\def\a{\widetilde{a}}
\def\s{\mathbb S}
\def\Y{{\mathrm{Y}}}
\def\G{\mathbb G}
\def\R{\mathbb R}
\def\N{\mathbb N}
\def\C{\mathbb C}
\def\Z{\mathbb Z}
\def\D{\mathcal D}
\def\U{\mathbb U}
\def\u{\mathrm U}
\def\H{{\mathbb H}(2n , m)}
\def\M{\mathbb M}
\def\OM{\widetilde{\M}}
\def\He{\mathrm{Hess}}
\def\OA{\Omega_*}
\def\OB{\Omega_+}
\def\OC{\Omega_{-, 4}}
\def\RN{{\mathbb R}^2_{>}}
\def\RS{{\mathbb R}^2_{<, +}}
\def\MD{\Lambda}
\def\K{\mathrm{K}}
\def\V{\vartheta_1}
\def\F{\mathcal{R}}
\def\FS{\mathfrak{R}}
\def\JA{\mathrm{J}_{\Lambda}}
\newtheorem{theo}{Theorem}[section]
\newtheorem{lem}{Lemma}[section]
\newtheorem{remark}{Remark}[section]
\newtheorem{cor}{Corollary}[section]
\newtheorem{prop}{Proposition}[section]
\title{The Carnot-Carath\'eodory distance on $2$-step groups}
\author{Hong-Quan Li}
\date{}
\begin{document}

\renewcommand{\theequation}{\thesection.\arabic{equation}}
\setcounter{equation}{0} \maketitle

\vspace{-1.0cm}

\bigskip

{\bf Abstract.} Combining Varadhan's formula, Loewner's theorem with the method of stationary phase, we study the exact formula of the Carnot-Carath\'eodory distance on $2$-step groups. The method is also adapted to determine all normal geodesics from the identity element to any given point (up to a set of measure zero). As an application, we characterize the squared sub-Riemannian distance as well as the cut locus on generalized Heisenberg-type groups and on star graphs respectively.  Furthermore, the long-standing open problem of Gaveau-Brockett is completely solved in the case of $N_{3, 2}$, the free Carnot group of step two and 3 generators.

\medskip

{\bf Mathematics Subject Classification (2020):} {\bf  22E25, 58J35, 35K08, 35R03, 35B40, 53C17, 53C22}

\medskip

{\bf Key words and phrases:}  Carnot-Carath\'eodory distance, Geodesics, Heat kernel, Loewner's theorem, Operator convex, Sub-Riemannian geometry, $2$-step groups

\medskip

\renewcommand{\theequation}{\thesection.\arabic{equation}}
\section{Introduction}\label{sec1}
\setcounter{equation}{0}

\medskip

In this article, we present some results on a fundamental problem of obtaining the exact formula for the sub-Riemannian distance on $2$-step nilpotent groups. It is in connection with the study of many other topics such as geodesics, abnormal extremals, cut locus, optimal transport, asymptotic estimates for the heat kernel, various harmonic analysis problems, etc. See for example \cite{G77}, \cite{A81}, \cite{S86}, \cite{HM89}, \cite{LS95}, \cite{GK95}, \cite{BGG00}, \cite{M02}, \cite{AR04}, \cite{R05}, \cite{Li07}, \cite{Li09}, \cite{Li10}, \cite{LLMV13}, \cite{MM17}, \cite{LZ19} and the references therein.

Recall that $2$-step groups are closely connected with simply connected homogeneous Riemannian manifolds with sectional curvature satisfying $-4 \leq K \leq -1$,  see \cite[Section~0]{E03} and the references therein.  Geometry of $2$-step groups with a left invariant Riemannian metric has been studied for example in \cite{K83}, \cite{E94}, \cite{E94A}, \cite{GM00} and the references therein, as well as their subsequent research.

Here, we will consider only sub-Riemannian geometry of $2$-step groups. A lot of ``qualitative results'' have been obtained in $2$-step groups also in  more general sub-Riemannian setting, such as smoothness of the sub-Riemannian distance on an open everywhere dense subset, Sard property of the abnormal set, etc.  We refer the reader to \cite{RT05}, \cite{A08},  \cite{LDMOPV16} and references therein for further details. In particular, we notice that the second layer in the stratification of Lie algebra and abnormal set are main research object of ``qualitative results'' cited above in the setting of $2$-step groups.

In the present paper, we focus mainly on a ``quantitative problem'' which becomes much more difficult. More precisely, we will study the exact expression of the Carnot-Carath\'eodory distance $d(g, g')$ on a $2$-step group $\G$. First, by the left-invariant property of $d$, it suffices to consider
\[
d(g) := d(o, g) \quad \mbox{where $o$ is the identity element of $\G$.}
\]
Second, recall that $d$ is continuous on $\G$ with respect to the usual topology (cf. \cite{VSC92}).  So it is enough to determine $d(g)$ for $g$ in a suitable dense set. Recall that \cite[Proposition~15]{R13} (see also \cite[pp.~384--385]{ABB20}) says that the cut locus of $o$, namely the set of points where the squared distance is not smooth, has Lebesgue measure zero. Hence, using the above ``qualitative results'', we do not care too much about the exact formulas for $d(g)$ in the case where $g$ belongs to the second layer, abnormal set, or the cut locus of $o$ which are negligible from an analytic point of view. Third, it is worthwhile to point out that in the study of Carnot-Carath\'eodory distance and of heat kernel on $2$-step group $\G$, by  basic properties of direct product, we may assume that $\G$ is irreducible, that is it cannot be decomposed into direct product of two non-trivial subgroups.  However, we will not assume in the sequel that $\G$ is irreducible.

The exact computation for $d$ on nilpotent Lie groups dates back to the work of Gaveau (cf. \cite{G77})  and Brockett (cf. \cite{B82}). To get the distance, the classical method is to find length-minimizing horizontal path parametrized with constant speed by using tools of control theory. See for example \cite{G77} and \cite{BGG00} for the case of general Heisenberg groups. In brief, for fixed $g \neq o$,
we first find all normal geodesics joining $o$ to $g$ via solving some ODE. Then among  all these normal geodesics, we compute the length of the/a shortest one. In other words, we have to find the best inverse image of the sub-Riemannian exponential mapping based at $o$ (see \eqref{nFE} below for the definition). It is usually extremely complicated and not practical.

To our best knowledge, a complete and exact calculation for  the sub-Riemannian  distance in the setting of stratified groups has been obtained only for general Heisenberg groups and Heisenberg-type groups, see \cite{G77}, \cite{BGG00}, \cite{TY04} and \cite{R05}. We will generalize these known results by introducing a new approach based on operator convexity,  which can be adapted to other situations and related problems. In particular, we can get some results about cut locus, shortest and normal geodesics, as well as short-time asymptotic behavior of the heat kernel. Moreover, our method allows us to improve the results in \cite[\S  3]{BGG96} and simplify their proof. Also, the method will be adapted in \cite{LZ20A} to obtain the asymptotic behavior at infinity with time $1$ of the heat kernel on a large class of step-two groups.

The paper is organized as follows. In Section \ref{s2} we give some preliminary material and the main results. In Section \ref{SFP} we collect fine properties of the function $-s \cot{s}$, in particular its operator convexity. In Section \ref{SP2}, we obtain a lower bound of the squared sub-Riemannian distance as well as the exact formulas of $d(g)^2$ under the assumption that $g \in \M$ defined by \eqref{m} below. The structure of $\M$ will be studied in Section \ref{S5s}. We consider (``good'') normal geodesics in Section \ref{PTHN4}. A slight generalization of our main result, as well as the geometric meaning of $\M^c$ on an enormous class of step-two groups, will be provided in Section \ref{Pthn6}. Sections \ref{ghg}, \ref{MG}, \ref{star} and \ref{N32} are devoted to the applications of our main theorems. We characterize the squared distance as well as the cut locus on generalized Heisenberg-type groups in Section \ref{ghg}, and on star graphs in Section \ref{star}. The squared distance will be considered on M\'etivier groups in Section \ref{MG}. Finally, in Sections \ref{N32}, we explain briefly how to determine the square sub-Riemannian distance for general step-two groups. As an example, we solve the Gaveau-Brockett optimal control problem on the free Carnot group of step two and $3$ generators.

Some of our results were presented at the $8^{\mbox{th}}$ ICCM  held at Tsinghua University in June 2019 (see the announcement
\cite{Li19}).

\medskip

\renewcommand{\theequation}{\thesection.\arabic{equation}}
\section{Notations and statement of results}\label{s2}
\setcounter{equation}{0}

\medskip

Let $q, m \in \N^*$ and let $\U = \left\{ U^{(1)}, \ldots, U^{(m)} \right\}$ be an $m$-tuple of linearly independent $q \times q$ skew-symmetric matrices with real entries. Then we have $m \leq \frac{q (q - 1)}{2}$. Recall (cf. for example \cite[\S  3.2]{BLU07}, also \cite{E03} for some equivalent definitions) that the step-two Carnot group of type $(q, m, \U)$, $\G = \G(q, m, \U)$, can be considered as $\R^q \times \R^m$ with the group law
\[
(x, t) \cdot (x', t') = (x + x', t + t' + 2^{-1} \langle \U x, x' \rangle),
\]
where $\langle \U x, x' \rangle$ denotes the $m$-tuple
\[
(\langle U^{(1)} x, x' \rangle, \ldots, \langle U^{(m)} x, x' \rangle) \in \R^m,
\]
and $\langle \cdot, \cdot \rangle$ (resp. $\langle \cdot, \cdot \rangle_{\C}$) stands for the inner product in $\R^q$ (resp. $\C^q$).

Let $U^{(j)} := (U^{(j)}_{l, k})_{1 \leq l, k \leq q}$ ($1 \leq j \leq m$). The vector fields
\[
\X_l := \frac{\partial}{\partial x_l} + \frac{1}{2} \sum_{j = 1}^m \Big( \sum_{k = 1}^{q} U^{(j)}_{l, k} x_k \Big) \frac{\partial}{\partial t_j}, \qquad 1 \leq l \leq q,
\]
are left invariant on $\G$. The canonical sub-Laplacian is
\[
\Delta := \Delta_{\U} = \sum_{l = 1}^{q} \mathrm{X}_l^2.
\]

Here we will restrict our attention to the canonical sub-Laplacian $\Delta_{\U}$. However, this assumption is not essential. Indeed, for an arbitrary sub-Laplacian on $\G(q, m, \U)$, namely
\[
\Delta_{\U, \mathbb{A}} := \sum_{l = 1}^q \left( \sum_{j = 1}^q a_{l, j} \X_j \right)^2
\]
where the real matrix $\mathbb{A} = (a_{l, j})_{1 \leq l, j \leq q}$ is non-singular, it is always possible to perform a linear change of basis so that $\Delta_{\U, \mathbb{A}}$ is turned into the canonical sub-Laplacian $\Delta_{\widetilde{\U}}$ of $\G(q, m, \widetilde{\U})$ for some $\widetilde{\U}$. See for example \cite{BLU07} for more details.

By slightly abusing of notation in the sequel, $0$ denotes the number $0$ or the origin in Euclidean space. Let $g = (x, t) \in \R^q \times \R^m$. Recall that $d$ denotes the Carnot-Carath\'eodory distance associated to $\Delta$, $o = (0, 0)$ is the identity element of $\G$ and $d(g) = d(g, o)$. The dilation on $\G$ is defined by
\[
\delta_r(x, t) := (r \, x, \  r^2 \, t), \qquad \forall r > 0, \, (x, t) \in \G.
\]
It is well-known that
\begin{align} \label{scaling}
d(\delta_r(g)) = r \, d(g), \quad \forall r > 0, \ g \in \G.
\end{align}
We also write
\begin{align*}
| x |^2 = \sum_{k =1}^{q} x_k^2,  \quad | t |^2 = \sum_{j = 1}^m t_j^2,  \quad t \cdot \lambda = \sum_{j = 1}^m t_j \lambda_j \  \mbox{\  for \ $\lambda = (\lambda_1, \ldots, \lambda_m) \in \R^m$}.
\end{align*}
The Haar measure $dg$ coincides with the $(q + m)$-dimensional Lebesgue measure on $\G$.

The heat semigroup $(e^{h \Delta})_{h
> 0}$ has a convolution kernel $p_h$, in the sense that
\begin{eqnarray*}
e^{h \Delta} u(g) = u \ast p_h(g) = \int_{\G} u(g') \, p_h(g'^{-1} g) \, dg'
\end{eqnarray*}
for suitable functions $u$. It is well known that
\begin{align} \label{sp}
p_h(x, t) = h^{-\frac{q}{2} - m} p_1(x/\sqrt{h}, t/h), \qquad h > 0, \ (x, t) \in \G.
\end{align}
Up to a positive constant, $p_1 = C p$ and (cf. \cite{C79} or \cite[Theorem 4, p.\,293]{BGG96} with a slight modification of notations, also \cite[Proposition 4]{MM16} and references therein)
\begin{align} \label{2c0}
p(x, t) := \int_{\R^m} V(\lambda) \,  e^{-\frac{\widetilde{\phi}((x, t); \lambda)}{4}} \, d\lambda,
\end{align}
where
\begin{gather}
\widetilde{U}(\lambda) := \sum_{j = 1}^m \lambda_j U^{(j)},  \quad  U(\lambda) := i \, \sum_{j = 1}^m \lambda_j U^{(j)},  \quad  V(\lambda) := \Big( \mathrm{det} \frac{U(\lambda)}{\sinh{U(\lambda)}} \Big)^{\frac{1}{2}},  \label{2c'}  \\
\widetilde{\phi}((x, t); \lambda) := \langle U(\lambda) \coth{U(\lambda)} \, x, x  \rangle - 4 \, i \, t \cdot \lambda,    \label{2c}
\end{gather}
and the meaning of the matrix-valued functions herein is obvious. Notice that Varadhan's formulas in this situation have been proved (cf. \cite{L87}, \cite{BA88}, \cite{VSC92}, \cite{AH05} and the references therein), namely
\begin{align} \label{VF}
\lim_{h \longrightarrow 0^+} 4 h \ln{p_h(g)} = -d(g)^2.
\end{align}

Recall that the usual norm of a $q \times q$ matrix $A$, considered as a linear operator on $\C^q$, is defined as
\[
\| A \| := \sup_{|z| = 1, z \in \C^q} |Az|.
\]
A function $u$, defined on a convex set $\Omega \subset \R^n$, is called concave if
\begin{align} \label{Dcp}
u(s a + (1-s)b) \geq s \, u(a) + (1 - s) \, u(b), \quad \forall 0 < s < 1, \ a \neq b \in \Omega.
\end{align}
If the relation \eqref{Dcp} holds with ``$>$'' instead of ``$\geq$'' we say that $u$ is strictly concave. And $u$ is strongly concave if there exists a constant $C > 0$ such that
\[
u(s a + (1-s)b) \geq s u(a) + (1 - s) u(b) + C \, s \, (1 - s) \, |a - b|^2, \quad \forall 0 \leq s \leq 1, \ a, b \in \Omega.
\]
Similarly, we say that $u$ is (strictly, strongly resp.) convex if $-u$ is (strictly, strongly resp.) concave.

In the sequel, let us introduce the initial reference set
\begin{align} \label{oa}
\OA := \left\{ \tau \in \R^m; \ \max_{|x| = 1} \langle U(\tau)^2 \, x, x \rangle  < \pi^2 \right\} = \left\{ \tau \in \R^m; \ \| U(\tau) \| < \pi \right\},
\end{align}
which is obviously an absorbing, convex and bounded open set.

\medskip

\subsection{Basic results}

\medskip

The starting point of this article is the following:

\begin{prop} \label{TH1}
For any $g = (x, t) \in \G$, the reference function defined by
\begin{align} \label{2cn}
\phi(g; \tau) := \widetilde{\phi}(g; i \tau) = \langle U(\tau) \cot{U(\tau)} \, x, x  \rangle + 4 t \cdot \tau, \quad \tau \in \OA,
\end{align}
is smooth and concave.
\end{prop}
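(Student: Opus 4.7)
The linear term $4t\cdot\tau$ is smooth and affine in $\tau$, hence both concave and convex. All the content therefore lies in the map
\[
\Phi_x(\tau) := \langle F(U(\tau))\,x,\,x\rangle, \qquad F(s):=s\cot s,
\]
where $U(\tau)=i\sum_{j=1}^m\tau_j U^{(j)}$. Since each $U^{(j)}$ is real skew-symmetric, $U(\tau)$ is Hermitian, with purely real spectrum; by the very definition of $\OA$, this spectrum lies in $(-\pi,\pi)$ for $\tau\in\OA$. Because $F$ is even, the pole of $\cot$ at $0$ is removable, so $F$ extends to a real-analytic even function on $(-\pi,\pi)$; hence $F(U(\tau))$ is well-defined by the Hermitian functional calculus. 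Note also that evenness of $F$ lets us write $F(U(\tau))$ as a power series in $U(\tau)^2=-\widetilde U(\tau)^2$, which is real symmetric, so $\Phi_x(\tau)\in\R$ for $x\in\R^q$.

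\textbf{Smoothness.} On any compact $K\subset\OA$, the eigenvalues of $U(\tau)$ stay uniformly inside $(-\pi,\pi)$; hence a fixed contour $\Gamma\subset\C$ enclosing all such eigenvalues but avoiding the poles $\pm k\pi$ of $\cot$ can be used simultaneously for all $\tau\in K$. Writing
\[
F(U(\tau)) = \frac{1}{2\pi i}\oint_{\Gamma} F(z)\,\bigl(z\,I-U(\tau)\bigr)^{-1}\,dz,
\]
we see that $\tau\mapsto F(U(\tau))$ is real-analytic, since $\tau\mapsto U(\tau)$ is linear and the resolvent depends analytically on $\tau$. Composing with the continuous linear functional $A\mapsto\langle A x,x\rangle$ gives real-analyticity of $\Phi_x$, which is more than needed.

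\textbf{Concavity.} The main step is to exploit the following fact, to be established in Section \ref{SFP}: the function $-F(s)=-s\cot s$ is operator convex on $(-\pi,\pi)$, meaning that for any two Hermitian matrices $A,B$ with spectra in $(-\pi,\pi)$ and any $s\in[0,1]$,
\[
-F(sA+(1-s)B)\ \preceq\ -s\,F(A)-(1-s)\,F(B)
\]
in the Loewner order. Granted this, fix $\tau_0,\tau_1\in\OA$ and $s\in(0,1)$. Since $\OA$ is convex, $\tau_s:=s\tau_0+(1-s)\tau_1\in\OA$, and the linearity of $\tau\mapsto U(\tau)$ gives $U(\tau_s)=s\,U(\tau_0)+(1-s)\,U(\tau_1)$. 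Applying the operator inequality above to $A=U(\tau_0)$, $B=U(\tau_1)$ and pairing with any $x\in\R^q\subset\C^q$ yields
\[
\Phi_x(\tau_s)\ \geq\ s\,\Phi_x(\tau_0)+(1-s)\,\Phi_x(\tau_1),
\]
which is exactly concavity of $\Phi_x$. Adding the affine term $4t\cdot\tau$ preserves concavity and completes the proof of Proposition \ref{TH1}.

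\textbf{Where the work is.} The only non-formal ingredient is the operator convexity of $-s\cot s$ on $(-\pi,\pi)$; this is presumably the content of Section \ref{SFP}. The natural route is via the Mittag-Leffler expansion
\[
-s\cot s = -1+\sum_{k=1}^{\infty}\frac{2s^2}{k^2\pi^2-s^2},
\]
together with the identity $\frac{s^2}{a-s^2}=-1+\frac{a}{a-s^2}$, which reduces matters to the well-known operator convexity of $s\mapsto (a-s^2)^{-1}$ on $(-\sqrt a,\sqrt a)$ (a classical consequence of Loewner's theorem, or obtainable directly from a resolvent integral representation). Uniform convergence on compacta in $(-\pi,\pi)$ then transports operator convexity to the sum. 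This is the substantive step; the rest of the proposition is a routine functional-calculus packaging as described above.
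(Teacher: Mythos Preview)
Your proposal is correct and follows essentially the same approach as the paper: the paper's proof (at the end of Section~\ref{SFP}) is the one-line observation that Proposition~\ref{TH1} follows from the operator convexity of $-s\cot s$ on $(-\pi,\pi)$, which in turn (Lemma~\ref{OCP}) is deduced from Loewner's theorem together with the integral representation \eqref{IS}, i.e.\ $-s\cot s=-1+\int_{-1/\pi}^{1/\pi}\frac{s^2}{1-\lambda s}\,d\nu(\lambda)$. Your route via the Mittag--Leffler expansion and operator convexity of $(a-s^2)^{-1}$ is a minor repackaging of the same argument; the paper does not spell out the smoothness claim, so your contour-integral justification actually fills in a detail the paper leaves implicit.
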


\begin{remark} \label{nRk21}
(1) It is useful to consider the closure of $\phi(g; \cdot)$,  on $\overline{\OA}$,  defined by (see for example \cite[Section 7]{R70})
\[
\mathrm{cl} \phi(g; \tau) := \limsup_{\tau' \longrightarrow \tau}  \phi(g; \tau'),
\]
which is a closed, upper semicontinuous and proper concave function on  $\overline{\OA}$. It follows from \cite[Section 7]{R70} that $\mathrm{cl} \phi(g; \cdot)$ coincides with its usual extension on $\overline{\OA}$, that is, $\mathrm{cl} \phi(g; \tau) = - \infty$ if $\tau \in \partial \OA$ and the orthogonal projection of $x$ on $\pi^2$-eigenspace of $U^2(\tau)$ is nonzero, and $\mathrm{cl} \phi(g; \tau) = \phi(g; \tau)$ otherwise with the understanding $0 \cdot \infty = 0$. Notice that $\mathrm{cl} \phi(g; \cdot)$ attains its maximum in $\overline{\OA}$.

(2) Also notice that the function $\phi(g; \cdot)$ is well-defined provided the spectrum of $U(\tau)$ does not contain any $k \pi$ ($k \in \Z \setminus \{ 0 \}$).
\end{remark}

In some concrete situations, the above result can be improved to ``strongly concave for suitable $g$''. For a concave function, we are interested in its maximum or supremum. Moreover, motivated by the results obtained in \cite{BGG96} and \cite{BGG00}, we ask naturally if there is any connection between $d^2(g)$ and $\sup_{\tau \in \OA} \phi(g; \tau)$. We have the following bound:

\begin{theo} \label{MP1}
It holds that $d(g)^2 \geq \sup_{\tau \in \OA} \phi(g; \tau)$ for any $g \in \G$.
\end{theo}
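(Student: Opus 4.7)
The plan is to combine Varadhan's formula \eqref{VF} with a complex contour shift in the integral representation \eqref{2c0} of the heat kernel, the decisive ingredient being a noncommutative analogue of the elementary scalar bound $\mathrm{Re}(w \coth w) \geq (\mathrm{Im}\,w)\cot(\mathrm{Im}\,w)$, valid whenever $|\mathrm{Im}\,w| < \pi$. The target is the upper bound
\[
p_h(g) \;\leq\; C_\tau\, h^{-q/2 - m}\, e^{-\phi(g;\tau)/(4h)}, \qquad \forall\, \tau \in \OA,\ 0 < h \leq 1,
\]
from which, after taking logarithms, multiplying by $4h$, letting $h \to 0^+$, and invoking \eqref{VF}, the inequality $d(g)^2 \geq \phi(g;\tau)$ drops out. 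Taking the supremum over $\tau \in \OA$ then yields the theorem.

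First I would rescale the integral representation. Combining \eqref{sp} and \eqref{2c0} with the homogeneity $\widetilde\phi((x/\sqrt h, t/h); \lambda) = h^{-1}\widetilde\phi(g;\lambda)$, which is immediate from \eqref{2c}, one obtains
\[
p_h(g) \;=\; C\, h^{-q/2 - m}\, \int_{\R^m} V(\lambda)\, e^{-\widetilde\phi(g;\lambda)/(4h)}\, d\lambda .
\]
Fix $\tau \in \OA$. The condition $\|U(\tau)\| < \pi$ ensures that for every $s \in [0,1]$ the spectrum of $U(\lambda + is\tau)$ avoids the poles $i k \pi$ ($k \in \Z \setminus \{0\}$) of $\coth$, so that $V(\cdot)$ and $\widetilde\phi(g;\cdot)$ extend holomorphically to the strip $\{\lambda + is\tau : \lambda \in \R^m,\ s \in [0, 1]\}$. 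Combined with the exponential decay of $V$ along horizontal lines in this strip, Cauchy's theorem permits shifting the contour from $\R^m$ to $\R^m + i\tau$. Using $\widetilde\phi(g;i\tau) = \phi(g;\tau)$ and factoring out the $\tau$-dependent exponential, this produces
\[
p_h(g) \;=\; C\, h^{-q/2 - m}\, e^{-\phi(g;\tau)/(4h)} \int_{\R^m} V(\lambda + i\tau)\, e^{-[\widetilde\phi(g;\lambda + i\tau) - \phi(g;\tau)]/(4h)}\, d\lambda .
\]

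The heart of the argument is to check that the exponent in the remaining integrand has nonnegative real part, i.e.
\[
\mathrm{Re}\,\widetilde\phi(g;\lambda + i\tau) \;\geq\; \phi(g;\tau), \qquad \forall\, \lambda \in \R^m .
\]
The term $-4it\cdot(\lambda + i\tau)$ contributes $4t\cdot\tau$ to the real part, matching the linear-in-$t$ piece of $\phi(g;\tau)$. What remains is the matrix inequality
\[
\mathrm{Re}\,\langle U(\lambda + i\tau)\coth U(\lambda + i\tau)\, x, x \rangle \;\geq\; \langle U(\tau)\cot U(\tau)\, x, x \rangle ,
\]
a noncommutative lift of the scalar bound recalled above, which I would establish through the operator convexity of $-s\cot s$ collected in Section \ref{SFP}. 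Together with absolute integrability of $V(\cdot + i\tau)$ on $\R^m$ (a consequence of the exponential decay of $\det(U/\sinh U)^{1/2}$ along the shifted line), this delivers the desired heat-kernel upper bound.

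The main obstacle is precisely this matrix inequality: because $U(\lambda)$ and $U(\tau)$ do not commute in general, the scalar prototype does not suffice, and genuinely operator-theoretic input on $s\cot s$ is needed, which is exactly the point of Section \ref{SFP}. The remaining pieces — the holomorphic extension, the contour deformation, and the integrability of $V$ on the shifted contour — are technical but standard; the only delicate point is that all constants be uniform in $h$, so that the $4h\ln$ limit annihilates them and only the exponential factor $e^{-\phi(g;\tau)/(4h)}$ survives when passing to the conclusion via \eqref{VF}.
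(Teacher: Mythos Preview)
Your proposal is correct and follows essentially the same route as the paper: deform the contour in \eqref{2c0} to $\R^m + i\tau$, invoke the exponential decay of $V$ along the shifted line (the paper's Lemma~\ref{NL53}), and reduce everything to the matrix inequality $\Re\langle U(\lambda+i\tau)\coth U(\lambda+i\tau)\,x,x\rangle \ge \langle U(\tau)\cot U(\tau)\,x,x\rangle$, after which Varadhan's formula \eqref{VF} finishes the job. The one point worth sharpening is your attribution of that matrix inequality to ``operator convexity'': the paper isolates it as the first claim of Proposition~\ref{1PM} and proves it not by an abstract appeal to Loewner's theorem but by the partial-fraction expansion \eqref{IS1}, writing $-s\cot s$ as a superposition of resolvents $(1-\gamma s)^{-1}$ and then diagonalizing the skew-Hermitian operator $(1-\gamma U(\tau))^{-1/2}\widetilde U(v_0)(1-\gamma U(\tau))^{-1/2}$ term by term; this is of course the same integral representation that underlies operator convexity, but the argument is hands-on rather than black-box.
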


We will show that if $\phi(g; \cdot)$ attains its maximum at $\theta \in \OA$, then $d(g)^2 = \phi(g; \theta)$ (see Theorem \ref{THN5} below). However we first provide a slightly weaker result which will allow us to characterize, in an  enormous class of step-two groups, the cut locus of $o$,
\begin{align} \label{DCUT}
\mathrm{Cut}_o := \mathcal{S}^c, \qquad \mathcal{S} := \{g; \, \mbox{$d^2$ is $C^{\infty}$ in a neighborhood of $g$}\}
\end{align}
(cf. Theorem \ref{aT121} below),  and understand Theorem \ref{THN4} below better. Moreover we can obtain, in its proof, some results about short-time asymptotic behavior, as well as asymptotic behavior at infinity with time $1$, of the heat kernel.

To begin with, let $\nabla_{\tau} = (\frac{\partial}{\partial \tau_1}, \ldots, \frac{\partial}{\partial \tau_m})$ denote the usual gradient on $\R^m$. Inspired by the method of stationary phase, we set
\begin{align} \label{m}
\M &:= \left\{ (x, t); \exists \, \theta \in \OA \mbox{ s.t. } t = -\frac{1}{4} \nabla_{\theta} \langle U(\theta) \cot{U(\theta)} \, x, x  \rangle, \ \mathrm{det} \left(- \mathrm{Hess}  \, \phi(g; \theta) \right) \neq 0 \right\} \nonumber \\
&=  \left\{ (x, t); \exists \, \theta \in \OA \mbox{ s.t. $\theta$ is a nondegenerate critical point of $\phi(g; \cdot)$ in $\OA$}  \right\}.
\end{align}

Obviously, $\M \subset \G$ is symmetric and scaling invariant; namely, if $g \in \M$, then we have $g^{-1} = -g \in \M$ and $\delta_r(g) \in \M$ for all $r > 0$. We can show that for $g \in \M$, the $\theta$ defined by \eqref{m} is unique, see Theorem \ref{TH3N} or a slight improvement in Proposition \ref{nP65} below. Notice that $\M \neq \emptyset$, this is exactly \cite[Proposition 6]{MM16} which plays a crucial role therein. More precisely, using \cite[Lemma 10]{MM16}, \cite[Proposition 6]{MM16} can be reformulated as Proposition \ref{nPP510} below and will be proved by a new method. However, from a sub-Riemannian geometric point of view, the fact that $\M \neq \emptyset$ is clear, see for example \cite[Remark 4]{LZ20}.

By \cite[IMPORTANT, p.146]{NP18},
$\theta$ is a critical point of $\phi(g; \cdot)$ iff. $\theta$ is its global maximizer. Then we have the following characterisation of the squared distance.

\begin{theo} \label{TH2}
Let $g_0 = (x_0, t_0) \in \M$ and $\theta_0$ is the unique nondegenerate critical point of $\phi(g_0; \cdot)$ in $\OA$. Then
\begin{align} \label{csd}
d(g_0)^2  = \phi(g_0; \theta_0) = \sup_{\tau \in \OA}  \phi(g_0; \tau).
\end{align}
\end{theo}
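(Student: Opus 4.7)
The plan is to combine Proposition \ref{TH1}, Theorem \ref{MP1}, and Varadhan's formula \eqref{VF} with a Laplace/stationary-phase analysis of the integral representation \eqref{2c0} of the heat kernel. The second equality in \eqref{csd}, namely $\phi(g_0;\theta_0)=\sup_{\tau\in\OA}\phi(g_0;\tau)$, is immediate: by Proposition \ref{TH1}, $\phi(g_0;\cdot)$ is smooth and concave on $\OA$, and a critical point of a concave function is a global maximizer, as already noted via \cite{NP18} in the paragraph preceding the theorem. Moreover, Theorem \ref{MP1} applied to $\tau=\theta_0$ already yields $d(g_0)^2\geq\phi(g_0;\theta_0)$. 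The content of Theorem \ref{TH2} thus reduces to the reverse inequality $d(g_0)^2\leq\phi(g_0;\theta_0)$, which by Varadhan's formula \eqref{VF} is equivalent to
\[
\liminf_{h\to 0^+} 4h\ln p_h(g_0) \,\geq\, -\phi(g_0;\theta_0).
\]

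To produce such a lower bound on $p_h(g_0)$, I would combine the scaling identity \eqref{sp} with \eqref{2c0} to write, up to a positive constant,
\[
p_h(g_0) \,=\, C\,h^{-\frac{q}{2}-m}\int_{\R^m} V(\lambda)\,\exp\!\Bigl(-\tfrac{1}{4h}\widetilde{\phi}(g_0;\lambda)\Bigr)\,d\lambda.
\]
Since $\theta_0\in\OA$, both $V$ and $\widetilde{\phi}(g_0;\cdot)$ are holomorphic in a tubular neighbourhood of the horizontal strip $\R^m+i[0,\theta_0]$, and I would deform the contour to $\R^m+i\theta_0$, writing $\lambda=\mu+i\theta_0$ with real $\mu$. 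At $\mu=0$ the phase equals $\widetilde{\phi}(g_0;i\theta_0)=\phi(g_0;\theta_0)$, its $\mu$-gradient vanishes because $\nabla_\tau\phi(g_0;\theta_0)=0$, and a direct Cauchy--Riemann computation yields
\[
\mathrm{Hess}_{\mu}\widetilde{\phi}(g_0;i\theta_0) \,=\, -\,\mathrm{Hess}_{\tau}\phi(g_0;\theta_0),
\]
which is positive definite by the nondegeneracy built into $g_0\in\M$ combined with the concavity of Proposition \ref{TH1}. A standard multidimensional Laplace estimate in the real variable $\mu$ then gives
\[
p_h(g_0) \,\sim\, C'\,h^{-\frac{q}{2}-\frac{m}{2}}\,\exp\!\Bigl(-\tfrac{\phi(g_0;\theta_0)}{4h}\Bigr) \qquad (h\to 0^+),
\]
and taking $4h\ln$ produces exactly the required $\liminf$.

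The main obstacles are making the contour shift and the Laplace step rigorous. For the shift one must check that no singularity of $V$ is crossed in the strip $\{\mathrm{Im}\,\lambda\in[0,\theta_0]\}$, which amounts to $U(\sigma)$ having no eigenvalue in $\pi\Z\setminus\{0\}$ for $\sigma$ on the segment $[0,\theta_0]\subset\OA$; this is precisely the meaning of the $\OA$-condition. One also needs uniform control of the integrand as $|\mathrm{Re}\,\lambda|\to\infty$, which comes from the fact that $\mathrm{Re}\,\langle U(\mu+i\theta_0)\coth U(\mu+i\theta_0)x_0,x_0\rangle$ grows linearly in $|\mu|$ (note that $x_0\neq 0$ is automatic, since otherwise $\mathrm{Hess}_\tau\phi(g_0;\cdot)\equiv 0$, precluding $g_0\in\M$). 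The Laplace step itself rests on the quadratic upper bound $\phi(g_0;\tau)\leq \phi(g_0;\theta_0)-c|\tau-\theta_0|^2$ near $\theta_0$, following from the negative definiteness of $\mathrm{Hess}_\tau\phi(g_0;\theta_0)$ together with concavity; this allows one to split the $\mu$-integral into a Gaussian peak at $\mu=0$ and an exponentially smaller tail. Since only a $\liminf$ lower bound on $4h\ln p_h(g_0)$ is needed, no full stationary-phase expansion is required; a clean comparison with a single Gaussian at the critical point is enough, and the hardest part of the argument is organising these analytic ingredients cleanly around the matrix-valued analytic map $U(\lambda)\coth U(\lambda)$.
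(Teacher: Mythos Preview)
Your overall strategy---contour shift to $\R^m+i\theta_0$, then stationary phase, then Varadhan---is exactly what the paper does, and your identification of the Hessian relation $\mathrm{Hess}_\mu\widetilde\phi(g_0;i\theta_0)=-\mathrm{Hess}_\tau\phi(g_0;\theta_0)$ is correct. The second equality in \eqref{csd} and the reduction to a lower bound on $p_h(g_0)$ are also handled as you say.

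Where your sketch diverges from the paper is in the two analytic controls you need after the shift, and both of your proposed justifications are problematic. First, the tail integrability: your claim that $\Re\langle U(i\theta_0+\mu)\coth U(i\theta_0+\mu)x_0,x_0\rangle$ grows linearly in $|\mu|$ is not true in general---for any direction $v$ with $\widetilde U(v)x_0=0$ (which is not excluded by $g_0\in\M$; cf.\ condition (b) in Proposition~\ref{nP65}) the quadratic form stays bounded along $\mu=sv$. The paper instead gets integrability from the \emph{amplitude}: Lemma~\ref{NL53} shows that $V(i\theta_0+\mu)$ decays exponentially as $|\mu|\to\infty$, while Proposition~\ref{1PM} (first claim) guarantees only that $\Re\widetilde\phi(g_0;i\theta_0+\mu)\ge\phi(g_0;\theta_0)$, i.e.\ the exponential factor is merely bounded. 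Second, and more importantly, your ``split into Gaussian peak plus exponentially smaller tail'' needs a \emph{global} statement: that $\Re\widetilde\phi(g_0;i\theta_0+\mu)>\phi(g_0;\theta_0)$ for every $\mu\neq0$, not just for small $\mu$. Local nondegeneracy of the Hessian does not give this. The paper supplies it via the second claim of Proposition~\ref{1PM} (strict radial monotonicity of $\Xi(\theta_0,v_0;x_0;s)$ in $|s|$), whose proof rests on the operator convexity of $-s\cot s$ established in Lemma~\ref{OCP}. This is the genuinely new technical ingredient, and it is precisely what your sketch is missing; once you have it, the reference to H\"ormander's Theorem~7.7.5 in \cite{H90} closes the argument cleanly.
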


\begin{remark}
The implicit function theorem implies that $\M \subset \G$ is an open set and $d^2 \in C^{\infty}(\M)$. Furthermore, if $\overline{\M} = \G$, such as generalized Heisenberg-type groups and star graphs studied in Sections \ref{ghg} and \ref{star} below, it follows from Theorem \ref{aT121} below that $\M^c$ is exactly the cut locus of $o$. In such cases, the change of variables defined via (\ref{m}) has been used for example in \cite{Li06}.
\end{remark}

\medskip

\subsection{Structure of $\M$}

\medskip

Now we discuss in more detail the set $\M$. To begin with, let us set
\begin{align} \label{nOM}
\OM := \{g; \, \phi(g; \cdot) \mbox{  attains its maximum in } \OA \} := \OM_1 \cup \OM_2,
\end{align}
with
\begin{align}
\OM_1 &:= \{g; \, \mbox{the set of  global maximizers of $\phi(g; \cdot)$ in $\OA$ is a singleton} \}, \\
\OM_2 &:= \{g; \, \mbox{the set of  global maximizers of $\phi(g; \cdot)$ in $\OA$ has at least two points} \}. \label{OM2}
\end{align}

Notice that $o \in \OM_2$ and
\[
\OM \setminus\! \{ o \} = \left\{ \left(x, -\frac{1}{4} \nabla_{\theta} \langle U(\theta) \cot{U(\theta)} \, x, \, x \rangle \right); \, x \in \R^q \setminus\! \{ 0 \}, \theta \in \OA  \right\}.
\]

Since the map
\begin{align*}
(\R^q \setminus\! \{0\}) \times \OA &\longrightarrow (\R^q \setminus\! \{0\}) \times \R^m \subset \R^{q + m} \\
(x, \tau)  &\longmapsto \left( x, -\frac{1}{4} \nabla_{\tau} \langle U(\tau) \cot{U(\tau)} \, x, \, x \rangle \right)
\end{align*}
is smooth, it follows from Proposition \ref{nP65} below and Morse-Sard-Federer Theorem (cf.  \cite[p. 72]{KP02}) that $\OM_2$ has measure zero in $\G$. More properties of elements in $\OM_2$ will be found in Corollary \ref{nCCn} and Section \ref{S5s} below. Remark that $\M \subset \OM$. Moreover,

\begin{theo} \label{TH3N}
It holds that $\M = \OM_1$.
\end{theo}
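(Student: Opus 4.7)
The plan is to prove $\M \subseteq \OM_1$ and $\OM_1 \subseteq \M$ separately. The inclusion $\M \subseteq \OM_1$ is immediate: given $g \in \M$ with nondegenerate critical point $\theta_0 \in \OA$ of $\phi(g;\cdot)$, Proposition~\ref{TH1} makes $\phi(g;\cdot)$ smooth and concave on $\OA$, so its Hessian at $\theta_0$ is negative semidefinite and (by nondegeneracy) strictly negative definite. Combined with global concavity and the ``critical point $\Leftrightarrow$ global maximizer'' equivalence from \cite{NP18} recalled in the text, this forces $\theta_0$ to be the unique global maximizer, so $g \in \OM_1$. For the reverse inclusion, fix $g = (x,t) \in \OM_1$ with unique maximizer $\theta_0 \in \OA$; note first that $x \neq 0$, for otherwise $\phi(g;\tau) = 4\,t\cdot\tau$ is linear on the bounded open set $\OA$ and attainment of a maximum would force $t = 0$, making every $\tau$ a maximizer. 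I argue by contradiction: assume $\mathrm{Hess}_\tau \phi(g;\theta_0)$ admits a null vector $v \neq 0$, and plan to produce a whole segment of global maximizers along $\theta_0 + sv$, contradicting uniqueness.

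The engine is the Mittag--Leffler expansion of $s\cot s$ refined by partial fractions,
\[
s\cot s = 1 - 2\sum_{k=1}^{\infty} \frac{s^2}{k^2\pi^2 - s^2}, \qquad \frac{s^2}{k^2\pi^2 - s^2} = -1 + \frac{k\pi}{2}\Big[\frac{1}{k\pi - s} + \frac{1}{k\pi + s}\Big],
\]
valid on $|s| < \pi$. Via functional calculus applied to the Hermitian matrix $U(\tau)$ on $\OA$ (where $\|U(\tau)\| < \pi$), these display $\phi(g;\tau)$, up to linear terms in $\tau$, as a convergent sum of convex functions, each a linear combination of constants and of the resolvent forms $\langle (k\pi I \pm U(\tau))^{-1} x, x\rangle$; every such resolvent form is convex in $\tau$ because $X \mapsto X^{-1}$ is operator convex on positive Hermitian matrices and $\tau \mapsto k\pi I \pm U(\tau)$ is affine with positive values on $\OA$. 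Consequently $-\mathrm{Hess}_\tau \phi(g;\cdot)$ decomposes as a convergent sum of positive semidefinite quadratic forms, and the hypothesis that $v$ lies in its kernel forces each resolvent piece to have Hessian annihilating $v$ at $\theta_0$.

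The rigidity step then observes: for Hermitian $A, B$ and scalar $C$ with $A \prec CI$, setting $R(s) := (CI - A - sB)^{-1}$, the function $h(s) := \langle R(s) x, x\rangle$ satisfies $h''(s) = 2 \langle R(s)(B R(s) x), B R(s) x\rangle$; vanishing of $h''(0)$ therefore forces $B R(0) x = 0$, and the resolvent identity $R(s) x = R(0) x + s R(s) B R(0) x$ then collapses to $R(s) x \equiv R(0) x$, so $h'' \equiv 0$ and $h$ is \emph{affine} in $s$. Applying this rigidity to each resolvent piece with $A = \pm U(\theta_0)$, $B = \pm U(v)$, $C = k\pi$, and summing, $\phi(g;\theta_0 + sv)$ is affine in $s$ on the connected segment $\{\theta_0 + sv\} \cap \OA$; combined with $\partial_s \phi(g;\theta_0 + sv)\big|_{s=0} = 0$ it is in fact constant, supplying the forbidden extra maximizers.

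The main obstacle is precisely this rigidity step. Smooth concavity alone is insufficient (for example $-s^4$ has a unique maximum at a point with degenerate Hessian), so the implication ``$h''(0) = 0 \Rightarrow h$ affine'' must be extracted from the specific operator-resolvent structure of $-s\cot s$ developed in Section~\ref{SFP}. The partial-fraction decomposition is the key bridge: it isolates pieces for which this strong resolvent rigidity holds, and distributes the single scalar condition $\mathrm{Hess}_\tau\phi(g;\theta_0)[v,v] = 0$ across infinitely many such pieces, allowing the local second-order information to be upgraded to full affine behavior along the segment.
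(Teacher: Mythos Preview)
Your proof is correct and follows essentially the same route as the paper: both arguments decompose $-s\cot s$ via the Mittag--Leffler/partial-fraction expansion into resolvent pieces $(k\pi I \pm U(\tau))^{-1}$, use positivity to see that a null direction $v$ of the Hessian forces $U(v)(k\pi I \pm U(\theta_0))^{-1}x = 0$ for every $k$, and then upgrade this to constancy of $\phi(g;\cdot)$ along the segment $\theta_0 + sv$, contradicting uniqueness of the maximizer. The only cosmetic difference is that the paper passes through the intermediate condition $U(v)U(\theta_0)^j x = 0$ for all $j \ge 0$ (its Proposition~\ref{nP65}(b)) before showing constancy, whereas you reach constancy of each resolvent piece directly via the resolvent identity $R(s)x = R(0)x + sR(s)BR(0)x$.
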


\begin{remark}
The result says that if $\theta$ is the unique critical point of the concave smooth function $\phi(g; \cdot)$ in $\OA$, then it has to be nondegenerate. It is interesting to notice that this property is obviously valid under the assumption of ``strongly concave''. However, for a smooth function $u$, it is usually no longer true if we only assume ``concave'', even ``strictly concave''; a typical counter-example is the strictly concave and smooth function on $\R$, $u(s) = - s^4$ with its unique critical point $0$.
\end{remark}

\medskip

\subsection{Characterization of (``good'') normal geodesics} \label{nSSn23}

\medskip

From now on, we will study shortest and normal geodesics from $o$ to any given $g \neq o$. For this purpose, we start by recalling some elementary notations or facts in the setting of $2$-step groups.
We refer the reader to
\cite{K83}, \cite{GK95}, \cite{LS95}, \cite{M02}, \cite{R14}, \cite{ABB20} and the references therein for more details:
\begin{enumerate}[(I)]
  \item  A horizontal path is said to be a shortest geodesic if it realizes the Carnot-Carath\'eodory distance between its extremities.

 \item It follows from \cite[\S 2.4]{R14} that all shortest geodesics are normal.

\item Normal geodesics are projections on $\G$ of normal Pontryagin extremals, namely the integral curves of the sub-Riemannian Hamiltonian on the cotangent bundle $T^* \G$. The equation of normal Pontryagin extremals $(\gamma(s), (\xi(s), \tau(s)))$ ($0 \le s \le 1$) and normal geodesics \textit{that begin at the identity} $\gamma(s)$ ($0 \le s \le 1$) can be found in many articles. More precisely, the normal geodesic with initial covector $(\zeta(0), 2 \, \theta_0) \in \R^q \times \R^m$,
\[
\gamma(s) := \gamma(\zeta(0), 2 \, \theta_0; s) = (x(s), t(s)): \  [0, \ 1] \longrightarrow \G,
\]
is defined by
\begin{gather}
x(s) := \int_0^s \zeta(r) \, dr,  \label{GEn1*} \\
t(s) := \frac{1}{2} \int_0^s \langle \U x(r), \, \zeta(r) \rangle \, dr, \label{GEn2}
\end{gather}
with
\begin{align} \label{GEn3*}
\zeta(s) :=  e^{2 s \widetilde{U}(\theta_0)} \, \zeta(0).
\end{align}

The normal extremal lift of $\gamma$ is $(\gamma(s), (\xi(s), \tau(s)))$ ($0 \le s \le 1$) where
\begin{align} \label{HaEP}
\tau(s) \equiv 2 \theta_0, \qquad \xi(s) := \zeta(s) - \frac{1}{2} \widetilde{U}(\tau(s)) x(s) = \zeta(s) - \widetilde{U}(\theta_0) x(s).
\end{align}
And $(\xi(0), \tau(0)) = (\zeta(0), 2 \theta_0)$ (resp. $(\xi(1), \tau(1)) = (\xi(1), 2 \theta_0)$) is called the initial covector (resp. final covector) of $(\gamma(s), (\xi(s), \tau(s)))$.
Furthermore, the length of $\gamma$, $\ell(\gamma)$, equals $|\zeta(s)| = |\zeta(0)|$. See for example \cite{RT05} (or \cite[\S 7.5.1]{ABB20} with slight modification) for more details. Also notice that the normal extremal lift of $\gamma$ is defined as $(\gamma(s), (\xi(s), \tau(s)), -\frac{1}{2}, \zeta(s))$ ($0 \le s \le 1$) in \cite{RT05}.

Recall that a normal geodesic $\gamma$ is said to be \textit{abnormal} (i.e. singular) if it has two distinct normal extremal lifts (see \cite[Remark 8]{RT05} and \cite[Remark 2.4]{R14}). And $\gamma$ is called \textit{strictly normal} if it is not abnormal.

\item The sub-Riemannian exponential map from $o$ is defined by
\begin{align} \label{nFE}
\exp: \  \R^q \times \R^m &\longrightarrow \R^q \times \R^m \nonumber \\
(\zeta(0), 2 \, \theta_0) &\longmapsto \gamma(\zeta(0), 2 \, \theta_0; 1) = (x(1), t(1)).
\end{align}
Notice that it is smooth. Also it is surjective. See for example \cite[Corollary 12.14 and Proposition 8.38]{ABB20}, or \cite[Theorem 2.14]{R14} for a slightly weaker but much more general result. Recall that
\begin{align} \label{nFG}
\gamma(\zeta(0), 2 \, \theta_0; s) = \exp\left\{ s \, (\zeta(0), 2 \, \theta_0) \right\}, \qquad 0 \le s \le 1.
\end{align}

\item By \cite[\S 11.1]{ABB20} (cf. also \cite{RT05} and \cite{R14}), for any $g \in \mathrm{Cut}_o^c = \mathcal{S}$ (cf. \eqref{DCUT}), there exist a unique shortest geodesic $\gamma_g$ joining $o$ to $g$ which is not abnormal, that is $\gamma_g$ admits a unique normal extremal lift. Moreover, the following result is a simplified version of \cite[proof of Theorem 11.8]{ABB20}:

\begin{lem} \label{AxL}
Assume that $g_0 = \exp{(\zeta_0, 2 \theta_0)} \in \mathcal{S}$ and the unique shortest geodesic from $o$ to $g_0$ is $\gamma_{g_0}(s) = \exp\{s \, (\zeta_0, 2 \theta_0)\}$ ($0 \le s \le 1$). Then, there exists a neighborhood  of $(\zeta_0, 2 \theta_0)$,  $V_{(\zeta_0, 2 \theta_0)} \subset \R^q \times \R^m$, such that the sub-Riemannian exponential map is a diffeomorphism from $V_{(\zeta_0, 2 \theta_0)}$ onto its image $O_{g_0} := \exp{(V_{(\zeta_0, 2 \theta_0)})} \subset \mathcal{S}$. Furthermore, for any $g' = \exp{(\zeta, 2 \theta)}$ with $(\zeta, 2 \theta) \in V_{(\zeta_0, 2 \theta_0)}$, the unique shortest geodesic from $o$ to $g'$ is $\gamma_{g'} = \exp{(s \, (\zeta, 2 \theta))}$ ($0 \leq s \leq 1$), which is not abnormal. Also, $d(g')^2 = |\zeta|^2$.
\end{lem}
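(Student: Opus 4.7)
The plan is to follow the strategy behind the proof of Theorem 11.8 in \cite{ABB20}, adapted and simplified to our concrete 2-step setting. The two essential ingredients are: (i) the inverse function theorem applied to the sub-Riemannian exponential map $\exp$, and (ii) upper semicontinuity of minimizing geodesics, used to promote the resulting local inverse of $\exp$ into a genuine parametrization of shortest geodesics near $g_0$ by their initial covectors.

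The crux is to verify that $d\exp_{(\zeta_0, 2\theta_0)}$ is an isomorphism. This is exactly where the hypotheses enter: since $g_0 \in \mathcal{S}$ and the unique shortest geodesic $\gamma_{g_0}$ is strictly normal (it admits a unique normal extremal lift, by item (V) of subsection \ref{nSSn23}), a nontrivial element in the kernel of $d\exp_{(\zeta_0, 2\theta_0)}$ would allow one to perturb the initial covector into either a second minimizer with a different initial covector (contradicting uniqueness of $\gamma_{g_0}$) or, by a first-order variation argument on the energy, into a horizontal path with the same endpoint and strictly smaller length (contradicting minimality). Since this invertibility is precisely what \cite[Theorem 11.8]{ABB20} establishes in the general sub-Riemannian setting, I would invoke that theorem rather than reproduce the Jacobi curve argument; this is the main technical obstacle.

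Granted the invertibility of $d\exp_{(\zeta_0, 2\theta_0)}$, the inverse function theorem supplies a neighborhood $V_{(\zeta_0, 2\theta_0)} \subset \R^q \times \R^m$ that $\exp$ maps diffeomorphically onto an open neighborhood $O_{g_0}$ of $g_0$. Since $\mathcal{S}$ is open, we may shrink $V_{(\zeta_0, 2\theta_0)}$ so that $O_{g_0} \subset \mathcal{S}$. For any $g' = \exp(\zeta, 2\theta) \in O_{g_0}$, the curve $s \mapsto \exp\{s(\zeta, 2\theta)\}$ is a normal geodesic from $o$ to $g'$ of length $|\zeta|$, so $d(g') \leq |\zeta|$.

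Finally, I would identify this curve with \emph{the} unique shortest geodesic from $o$ to $g'$ by upper semicontinuity of minimizers. Given any shortest geodesic $\tilde\gamma$ from $o$ to $g'$ with initial covector $(\tilde\zeta, 2\tilde\theta)$, the set of initial covectors of shortest geodesics whose endpoints lie in a fixed compact neighborhood of $g_0$ is bounded, so as $g' \to g_0$ any such $\tilde\gamma$ subconverges to a shortest geodesic from $o$ to $g_0$, which by hypothesis must be $\gamma_{g_0}$; in particular $(\tilde\zeta, 2\tilde\theta) \to (\zeta_0, 2\theta_0)$. After a further shrinking of $V_{(\zeta_0, 2\theta_0)}$ (and of $O_{g_0}$), we may therefore assume $(\tilde\zeta, 2\tilde\theta) \in V_{(\zeta_0, 2\theta_0)}$; the injectivity of $\exp$ on $V_{(\zeta_0, 2\theta_0)}$ then forces $(\tilde\zeta, 2\tilde\theta) = (\zeta, 2\theta)$, giving uniqueness of the shortest geodesic, the identity $d(g')^2 = |\zeta|^2$, and strict normality of $\gamma_{g'}$ (the last via item (V) of subsection \ref{nSSn23} applied at $g' \in \mathcal{S}$).
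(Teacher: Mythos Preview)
Your proposal is correct and takes essentially the same approach as the paper: the paper does not give an independent proof of this lemma but simply presents it as ``a simplified version of \cite[proof of Theorem 11.8]{ABB20}'', which is exactly the reference you invoke for the invertibility of $d\exp_{(\zeta_0, 2\theta_0)}$ and whose strategy (inverse function theorem plus upper semicontinuity of minimizers) you outline. Your write-up is more detailed than what the paper provides, but the route is the same.
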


\item We have the following simple observation:

\begin{lem} \label{AxL1}
Suppose that $g^{(j)} = \exp\{(\zeta^{(j)}, \, 2 \theta^{(j)})\}$ ($j \in \N^*$) and $\exp\{s \, (\zeta^{(j)}, \, 2 \theta^{(j)})\}$ ($0 \le s \le 1$) is a shortest geodesic from $o$ to $g^{(j)}$. Assume that $(\zeta^{(j)}, \, 2 \theta^{(j)}) \longrightarrow (\zeta^{(0)}, \, 2 \theta^{(0)})$ as $j \longrightarrow +\infty$. Then $\exp\{s \, (\zeta^{(0)}, \, 2 \theta^{(0)})\}$  ($0 \le s \le 1$) is a shortest geodesic from $o$ to $g^{(0)} = \exp\{(\zeta^{(0)}, \, 2 \theta^{(0)})\}$.
\end{lem}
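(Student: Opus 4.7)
The plan is a straightforward continuity argument based on three facts already established in the paper: the length of the normal geodesic $\gamma(\zeta(0), 2\theta_0; \cdot)$ on $[0,1]$ equals $|\zeta(0)|$ (item (III) above), the sub-Riemannian exponential map $\exp$ is smooth (hence continuous), and the Carnot-Carath\'eodory distance $d$ is continuous on $\G$.

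By hypothesis, $\exp\{s\,(\zeta^{(j)}, 2\theta^{(j)})\}$ ($0 \le s \le 1$) is a shortest geodesic joining $o$ to $g^{(j)}$, so its length equals $d(g^{(j)})$. Combining this with the length formula in (III), I get $d(g^{(j)}) = |\zeta^{(j)}|$ for every $j \in \N^*$. The convergence of initial covectors forces $|\zeta^{(j)}| \to |\zeta^{(0)}|$, and continuity of $\exp$ yields $g^{(j)} = \exp\{(\zeta^{(j)}, 2\theta^{(j)})\} \to \exp\{(\zeta^{(0)}, 2\theta^{(0)})\} = g^{(0)}$. Continuity of $d$ then gives $d(g^{(j)}) \to d(g^{(0)})$, and passing to the limit in the equality above yields
\[
d(g^{(0)}) = \lim_{j \to \infty} d(g^{(j)}) = \lim_{j \to \infty} |\zeta^{(j)}| = |\zeta^{(0)}|.
\]

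To conclude, the curve $\gamma^{(0)}(s) := \exp\{s\,(\zeta^{(0)}, 2\theta^{(0)})\}$ ($0 \le s \le 1$) is by construction a normal geodesic from $o$ to $g^{(0)}$ parametrized with constant speed, and its length equals $|\zeta^{(0)}|$ by (III). Since $|\zeta^{(0)}| = d(g^{(0)})$, the curve $\gamma^{(0)}$ realizes the Carnot-Carath\'eodory distance between its endpoints and is therefore a shortest geodesic, as claimed. I do not anticipate any genuine obstacle: the statement is a standard limit-closure property of the set of initial covectors generating minimizers, and the argument uses only continuity of $d$ and of $\exp$, without invoking uniqueness, nondegeneracy, or any information about abnormality.
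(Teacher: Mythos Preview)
Your argument is correct and is precisely the intended one. The paper does not give a proof of this lemma at all: it is introduced as a ``simple observation'' and left to the reader, so your continuity argument (length formula from (III), continuity of $\exp$, continuity of $d$) is exactly what is expected.
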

\end{enumerate}

However, a fundamental and much more interesting problem is to determine all normal geodesics as well as the shortest one(s) \textit{from $o$ to any given $g \neq o$}. There is a great lack of known result about this basic question. A complete description is only known for general Heisenberg groups (see \cite{BGG00}).

Theorems \ref{TH2} and \ref{MP1} allow us to describe the shortest geodesic between $o$ and $g_0 = (x_0, t_0) \in \M$, $\gamma_{g_0}$. Indeed, applying \cite[Proposition 2]{RT05} to the function $\phi(g; \theta_0)$, there exists a unique shortest geodesic between $o$ and $g_0$ which admits a normal extremal lift with final covector $(\xi(1), \tau(1)) = (U(\theta_0) \cot{U(\theta_0)} \, x_0, 2 \theta_0)$. Inspired by this observation as well as the results obtained in \cite{BGG00} in the setting of generalized Heisenberg groups, we adapt our method to find all normal geodesics joining $o$ to any given $g$ except a set of measure zero.

To begin with, we divide all normal geodesics into two cases: the ``good'' ones and the ``bad'' ones. More precisely, let $\mathbb{I}_q$ denote the $q \times q$ identity matrix and set
\begin{align} \label{nD1V}
\mathcal{V} := \left\{ \vartheta \in \R^m; \,  \mathrm{det} (k \, \pi \, \mathbb{I}_q - U(\vartheta)) \neq 0, \quad \forall k \in \N^* \right\},
\end{align}
which is obviously an open subset in $\R^m$. Notice that its complement, $\mathcal{V}^c$, is a countable union of algebraic sets $\left\{ \vartheta \in \R^m; \,  \mathrm{det} (k \, \pi \, \mathbb{I}_q - U(\vartheta)) = 0 \right\}$ ($k \in \N^*$). Thus $\mathcal{V}^c$ has measure zero and $\mathcal{V} \subset \R^m$ is dense.

The normal geodesic $\gamma(\zeta(0), 2 \, \theta_0; s)$ ($0 \le s \le 1$) is called \textit{good} if $\theta_0 \in \mathcal{V}$ and \textit{bad} if $\theta_0 \in \mathcal{V}^c$. Notice that even a nontrivial normal geodesic $\gamma$ can be both ``good'' and ``bad'', since there may exist two different parameters $(\zeta, \theta) \in \left( \R^q \setminus \{ 0 \} \right) \times \mathcal{V}$ and $(\zeta', \theta') \in \left( \R^q \setminus \{ 0 \} \right) \times \mathcal{V}^c$ such
that $\gamma(s) = \gamma(\zeta, 2 \, \theta; s) = \gamma(\zeta', 2 \, \theta'; s)$. In such case, we can get more information about $\gamma$ and its endpoint $\gamma(1)$, see for example \cite[Remark~8]{RT05},  \cite[Remark~2.4]{R14} and \cite{LZ20} for more details.

Remark that the set of the endpoints of ``bad'' normal geodesics,
\begin{align} \label{nDoW}
\mathcal{W} := \left\{\gamma(\zeta(0), 2 \, \theta_0; 1) = (x(1), t(1)); \ \zeta(0) \in \R^q, \ \theta_0 \in \mathcal{V}^c \right\},
\end{align}
is negligible. Indeed, it is the image under a smooth map, the sub-Riemannian exponential map, of
\[
\{(\zeta(0), 2 \, \theta); \ \zeta(0) \in \R^q, \ \theta \in \mathcal{V}^c\}
\]
which has measure zero. Thus we focus mainly on the ``good'' normal geodesics. Let us begin by providing more information about them.

Assume that
\[
\gamma(s) := \gamma(\zeta(0), 2 \, \theta_0; s) = (x(s), t(s)), \qquad 0 \le s \le 1,
\]
is a non-trivial ``good'' normal geodesic, namely $\zeta(0) \neq 0$ and $\theta_0 \in \mathcal{V}$. Using the Spectral Theorem (in $\C^q$ but not in $\R^q$), a direct calculation shows that
\begin{align} \label{iniC}
x_0 := x(1) \neq 0, \qquad \zeta(0) = \frac{U(\theta_0)}{\sin{U(\theta_0)}} e^{- \widetilde{U}(\theta_0)} \, x_0.
\end{align}
Substituting this in \eqref{GEn3*} and \eqref{GEn1*}, a simple calculation implies that
\begin{align} \label{GEn1}
x(s) = \frac{\sin{(s \, U(\theta_0))}}{\sin{U(\theta_0)}} e^{(s - 1) \widetilde{U}(\theta_0)} \, x_0,
 \qquad
t(s) = \frac{1}{2} \int_0^s \langle \U x(r), \, \zeta(r) \rangle \, dr,
\end{align}
where we set by convention $\frac{\sin{(s \cdot 0)}}{\sin{0}} = s$ and
\begin{align}
\zeta(s) = \frac{U(\theta_0)}{\sin{U(\theta_0)}} e^{(2 s - 1) \widetilde{U}(\theta_0)} \, x_0.   \label{GEn3}
\end{align}
Also notice that \eqref{GEn3} and the first equality in \eqref{GEn1} could be considered as a counterpart of \cite[(2.14)]{BGG96}.

We characterize all ``good'' normal geodesics as the following result.

\begin{theo} \label{THN4}
Assume that $x_0 \neq 0$ and $\theta_0 \in \mathcal{V}$. If $\theta_0$ is a critical point of $\phi(g_0; \cdot)$ for some $g_0 = (x_0, t_0)$, then the ``good'' normal geodesic $\gamma$, defined by \eqref{GEn1} and \eqref{GEn3}, steers $o$ to $g_0$. In particular, we have
\begin{align} \label{UED}
d(g_0)^2 \leq \ell(\gamma)^2 = \left| \frac{U(\theta_0)}{\sin{U(\theta_0)}} x_0 \right|^2 = \phi(g_0; \theta_0) .
\end{align}
Conversely, let $t_0 = t(1)$ and $g_0 = (x_0, t_0)$, then $\theta_0$ is a critical point of $\phi(g_0; \cdot)$.
\end{theo}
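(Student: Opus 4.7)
I would begin by observing that both directions of Theorem \ref{THN4} reduce to a single identity. Set $I(\tau):=\langle U(\tau)\cot U(\tau)\,x_0,x_0\rangle$, so that $\phi(g_0;\tau)=I(\tau)+4\,t_0\cdot\tau$. The critical point condition $\nabla_\tau\phi(g_0;\theta_0)=0$ is equivalent to $t_0=-\tfrac14\nabla I(\theta_0)$. Hence, granting the key identity
\[
t(1) \;=\; -\tfrac{1}{4}\,\nabla_\tau I(\tau)\big|_{\tau=\theta_0} \qquad (\star)
\]
(valid for all $x_0\in\R^q$ and all $\theta_0\in\mathcal{V}$), the forward direction is immediate: the critical point assumption forces $t(1)=t_0$, so $\gamma(1)=g_0$. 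The converse is the tautological reading of $(\star)$: whatever the endpoint $t(1)$ turns out to be, setting $t_0:=t(1)$ makes $\theta_0$ a critical point of $\phi((x_0,t_0);\cdot)$.

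\textbf{Auxiliary verifications.} Before attacking $(\star)$, one checks that $\gamma$ given by \eqref{GEn1}, \eqref{GEn3} is indeed a normal geodesic from $o$ with initial covector $(\zeta(0),2\theta_0)$: substitution of \eqref{iniC} into \eqref{GEn1*}, \eqref{GEn3*} reproduces \eqref{GEn1} and \eqref{GEn3}, and \eqref{iniC} itself is obtained by integrating $\zeta(s)=e^{2s\widetilde U(\theta_0)}\zeta(0)$ on $[0,1]$ and inverting $e^{\widetilde U(\theta_0)}\sin U(\theta_0)/U(\theta_0)$, which is legal because $\theta_0\in\mathcal{V}$ excludes nonzero integer multiples of $\pi$ from the spectrum of $U(\theta_0)$. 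In particular $x(1)=x_0$. For the length: since $\widetilde U(\theta_0)$ is real skew-symmetric, $e^{2s\widetilde U(\theta_0)}$ is orthogonal, whence $\ell(\gamma)=|\zeta(0)|$; and $U(\theta_0)/\sin U(\theta_0)$, being an even function of $U(\theta_0)$ and hence a function of $-\widetilde U(\theta_0)^2$, commutes with $e^{-\widetilde U(\theta_0)}$, yielding $\ell(\gamma)^2=\langle(U(\theta_0)/\sin U(\theta_0))^2 x_0,x_0\rangle=|(U(\theta_0)/\sin U(\theta_0))x_0|^2$.

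\textbf{The key identity $(\star)$.} This is the heart of the proof; componentwise it reads
\[
\tfrac12\int_0^1\langle U^{(j)}x(r),\,\zeta(r)\rangle\,dr \;=\; -\tfrac14\,\partial_{\tau_j}I(\tau)\big|_{\tau=\theta_0}, \qquad j=1,\dots,m.
\]
I would prove it by passing to the Hermitian spectral decomposition $U(\theta_0)=\sum_k \lambda_k P_k$ in $\C^q$; the condition $\theta_0\in\mathcal{V}$ guarantees $\lambda_k\notin\pi(\Z\setminus\{0\})$, so the formulas \eqref{GEn1}, \eqref{GEn3} for $x(r)$ and $\zeta(r)$ diagonalize into explicit scalar functions of $\lambda_k$ applied to $P_k x_0$. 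The left-hand side then collapses into a double sum over pairs $(k,k')$ of elementary trigonometric integrals $\int_0^1 \sin(r\lambda_k)\,e^{i(1-2r)\lambda_{k'}}\,dr$ weighted by the matrix coefficients $\langle U^{(j)}P_k x_0,P_{k'}x_0\rangle$. On the right, I expand $z\cot z=1-\sum_{n\ge 1}c_n z^{2n}$ and use the non-commutative Leibniz rule
\[
\partial_{\tau_j}\langle U(\tau)^{2n}x_0,x_0\rangle \;=\; \sum_{\ell=0}^{2n-1}\langle U(\tau)^{\ell}(iU^{(j)})\,U(\tau)^{2n-1-\ell}x_0,\,x_0\rangle,
\]
then insert the same spectral resolution to obtain a matching double sum; the cyclic structure of the $\ell$-summation, once the geometric progressions are collapsed, produces exactly the same scalar factors as the explicit time integrals on the left. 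The main obstacle is precisely the bookkeeping through the non-commutativity of $U^{(j)}$ with $U(\theta_0)$, and the spectral reduction to a scalar identity is what makes the computation tractable.

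\textbf{The equality $\ell(\gamma)^2=\phi(g_0;\theta_0)$.} Once $(\star)$ is in hand, the critical point condition yields
\[
\phi(g_0;\theta_0)=I(\theta_0)+4\,t_0\cdot\theta_0=I(\theta_0)-\theta_0\cdot\nabla I(\theta_0).
\]
Since $U(s\theta_0)=s\,U(\theta_0)$, the function $f(s):=I(s\theta_0)=\langle sU(\theta_0)\cot(sU(\theta_0))\,x_0,x_0\rangle$ is a function of one real variable and $\theta_0\cdot\nabla I(\theta_0)=f'(1)$ by Euler's relation. The scalar identity
\[
\frac{d}{ds}\bigl[sz\cot(sz)\bigr]\;=\;z\cot(sz)-\frac{sz^2}{\sin^2(sz)},
\]
applied to each eigenvalue of $U(\theta_0)$ and evaluated at $s=1$, gives $f(1)-f'(1)=\langle(U(\theta_0)/\sin U(\theta_0))^2 x_0,x_0\rangle=\ell(\gamma)^2$. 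The inequality $d(g_0)^2\le\ell(\gamma)^2$ is then immediate from the definition of the Carnot--Carath\'eodory distance as the infimum of lengths of horizontal curves, completing the proof.
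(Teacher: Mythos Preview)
Your overall architecture matches the paper's: reduce both directions to the identity $(\star)$, prove $(\star)$ by the spectral decomposition of $U(\theta_0)$, and establish $\ell(\gamma)^2=\phi(g_0;\theta_0)$ separately. Two technical points deserve comment.

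For the derivative $\nabla_\tau I(\theta_0)$ on the right of $(\star)$, you propose the Taylor expansion $z\cot z=1-\sum_{n\ge 1}c_n z^{2n}$, which converges only for $|z|<\pi$. Since $\theta_0\in\mathcal V$ permits $\|U(\theta_0)\|\ge\pi$, this is a gap as written. The paper instead uses the partial-fraction series \eqref{IS1} (packaged as Lemma~\ref{DNL1}), which is valid on all of $\mathcal V$ and leads directly to the divided-difference coefficients $C(\alpha,\beta)=(\lambda_\beta\cot\lambda_\beta-\lambda_\alpha\cot\lambda_\alpha)/(\lambda_\alpha-\lambda_\beta)$ and $C(\alpha,\alpha)=\mu(\lambda_\alpha)$. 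Your route is easily repaired: either switch to the partial-fraction expansion, or observe that both sides of $(\star)$ extend to meromorphic functions of $\theta_0$, so agreement on the open set $\Omega_*$ propagates by the identity principle. One further subtlety your sketch elides: in the paper's computation the explicit time-integral coefficients $D(\alpha,\beta)$ differ from $C(\alpha,\beta)$ by the constant $-i$, and it is the skew-symmetry of $U^{(k)}$ (forcing $\langle U^{(k)}x_0,x_0\rangle=0$) that kills this discrepancy; your phrase ``produces exactly the same scalar factors'' hides this step.

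Your treatment of $\ell(\gamma)^2=\phi(g_0;\theta_0)$ via the radial derivative of $s\mapsto\langle sU(\theta_0)\cot(sU(\theta_0))\,x_0,x_0\rangle$ is cleaner than the paper's Proposition~\ref{NC51}, which re-derives the identity term by term from the partial-fraction series. The one-variable calculus $z\cot z-\frac{d}{ds}\big|_{s=1}\bigl[sz\cot(sz)\bigr]=(z/\sin z)^2$, applied spectrally, is both shorter and more transparent.
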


\begin{remark} \label{nRk26}
(1) The second equality in \eqref{UED} could be considered as a counterpart of \cite[(2.14) and (2.15)]{BGG96}. We will provide a direct proof which is of independent interest, see Proposition \ref{NC51} below.

(2) We notice that by Morse-Sard-Federer Theorem, the set
\begin{align} \label{nDoN}
\mathcal{N} := \left\{g = (x, t); \ x \neq 0, \mbox{$\exists \theta \in \mathcal{V}$ is a degenerate critical point of $\phi(g; \cdot)$} \right\}
\end{align}
has measure zero in $\G$.

(3) Under the additional assumption that $q > m$, by \cite[Proposition 7]{AGL15}, $\phi(g; \cdot)$ has only a finite number of critical points except some subset of measure zero.

(4) In the setting of generalized Heisenberg groups and Heisenberg-type groups, the equation of geodesics becomes very concise via their special group structure. See \cite{G77}, \cite{BGG00}, \cite{TY04} and \cite{R05} for more details. In general, this is very complicated, but we should simplify a little the second equation in \eqref{GEn1}, that is \eqref{GEn2}, by the Spectral
Theorem (see the proof of \eqref{EFT} below).
\end{remark}

As a consequence, we obtain the following qualitative result:

\begin{cor} \label{nCx}
Let $g = (x, t) \in \mathcal{W}^c$. Then
\[
d^2(g) = \inf\left\{ \phi(g; \theta) = \left| \frac{U(\theta)}{\sin{U(\theta)}} x \right|^2; \  \mbox{$\theta$ is a critical point of $\phi(g; \cdot)$ in $\mathcal{V}$} \right\}.
\]
\end{cor}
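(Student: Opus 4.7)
The plan is to read the corollary as a direct packaging of Theorem \ref{THN4} together with item~(II) of Section \ref{nSSn23} and the standard existence of length-minimizing horizontal curves in $\G$. Theorem \ref{THN4} is a ``biconditional'' between good normal geodesics ending at $g$ and critical points of $\phi(g;\cdot)$ in $\mathcal{V}$, and the value of $\phi$ at such a critical point is precisely the squared length of the associated geodesic; the corollary is just the passage from this correspondence to the infimum over all competitors.

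For the upper bound, I would fix any critical point $\theta\in\mathcal{V}$ of $\phi(g;\cdot)$. Theorem \ref{THN4} produces a good normal geodesic $\gamma$, defined by \eqref{GEn1} and \eqref{GEn3}, which joins $o$ to $g$ and satisfies
\[
\ell(\gamma)^2 \;=\; \phi(g;\theta) \;=\; \left|\frac{U(\theta)}{\sin U(\theta)}\,x\right|^2 .
\]
Since $\gamma$ is in particular a horizontal curve from $o$ to $g$, we have $d(g)^2\le \ell(\gamma)^2=\phi(g;\theta)$; taking the infimum over all admissible $\theta$ gives ``$\le$''.

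For the reverse inequality, the case $g=o$ being trivial, I would first argue that $x\neq 0$ whenever $g\in\mathcal{W}^c\setminus\{o\}$: if $x=0$ and $t\neq 0$, then \eqref{iniC} together with the fact that $U(\theta_0)/\sin U(\theta_0)$ is invertible for $\theta_0\in\mathcal{V}$ forces the initial covector of every good normal geodesic ending at $g$ to satisfy $\zeta(0)=0$, i.e.\ $\gamma\equiv o\ne g$; by surjectivity of $\exp$, $g$ would then be the endpoint of some bad normal geodesic, contradicting $g\in\mathcal{W}^c$. With $x\ne 0$, the existence of a shortest geodesic from $o$ to $g$ (standard in the sub-Riemannian setting, see \cite{ABB20}), together with item~(II), produces a normal geodesic $\gamma=\gamma(\zeta(0),2\theta_0;\cdot)$ realizing $d(g)$. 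Because $g\notin\mathcal{W}$, the defining condition \eqref{nDoW} forces $\theta_0\in\mathcal{V}$, so $\gamma$ is good; the converse part of Theorem \ref{THN4} then certifies that $\theta_0$ is a critical point of $\phi(g;\cdot)$ and
\[
d(g)^2 \;=\; \ell(\gamma)^2 \;=\; \phi(g;\theta_0),
\]
which is bounded below by the claimed infimum. Combining the two bounds gives equality.

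The argument has no substantive obstacle, since the analytic content is already concentrated in Theorem \ref{THN4}; the only delicate bookkeeping is the handling of the degenerate stratum $\{x=0\}$ and the verification that a shortest geodesic ending in $\mathcal{W}^c$ is necessarily good, both of which reduce to unpacking the definitions \eqref{nD1V}--\eqref{nDoW}. One further remark worth recording is that the infimum is actually a minimum, attained at the $\theta_0$ furnished by any shortest geodesic.
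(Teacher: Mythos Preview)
Your proof is correct and is exactly the argument the paper has in mind: the corollary is stated there as a direct consequence of Theorem~\ref{THN4} without further proof, and your two inequalities (via the forward and converse directions of Theorem~\ref{THN4}, together with existence of shortest geodesics and item~(II)) are precisely the intended unpacking. Your handling of the degenerate case $x=0$ and the observation that the infimum is attained are also in line with the paper's implicit reasoning.
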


\medskip

\subsection{Characterization of $d(g_0)^2$ for $g_0 \in \widetilde{\M} \setminus \{ o \}$ as well as the shortest geodesic from $o$ to such $g_0$}

\medskip

Recall that (cf. \cite[\S 2.1]{RT05} or \cite{CLSW98}) the \textit{proximal sub-differential} of a continuous function on $\G \cong \R^q \times \R^m$, $u: \R^q \times \R^m \longrightarrow \R$, at $g$ is defined by
\begin{align} \label{PSD}
\partial_P u(g) := \{dw(g); \, &\mbox{$w$ is smooth in a neighborhood of $g$} \nonumber \\
\mbox{} & \mbox{and $u - w$ attains a local minimum at $g$} \}.
\end{align}

Let $g_0 = (x_0, t_0) \in \widetilde{\M} \setminus \{o\}$ and $\theta_0 \in \OA$ is a global maximizer of $\phi(g_0; \cdot)$. Consider the $C^{\infty}$ function
$\phi_{g_0}(g) := \phi(g; \theta_0)$.
By Theorems \ref{THN4} and \ref{MP1}, we have
\[
d(g_0)^2 = \phi(g_0; \theta_0) = \phi_{g_0}(g_0), \qquad \phi_{g_0}(g) \leq d(g)^2, \ \forall g.
\]

Then the vector $d\phi_{g_0}(g_0) = 2 \, (U(\theta_0) \cot{U(\theta_0)} \, x_0, \ 2 \theta_0)$ belongs to the proximal sub-differential of $d^2$ at the point $g_0$. Notice that for $g_0 \in \OM_2$, $\partial_P d^2(g_0)$ has at least two distinct elements. In conclusion, by \cite[Proposition~2 and Remark~8]{RT05}, we get the following:

\begin{theo} \label{THN5}
Suppose that $g_0 = (x_0, t_0) \in \widetilde{\M} \setminus \{o\}$ and $\theta_0 \in \OA$ is a global maximizer of $\phi(g_0; \cdot)$.
Then $d(g_0)^2 = \phi(g_0; \theta_0)$ and there exists a unique shortest geodesic joining $o$ to $g_0$, which is defined by \eqref{GEn1} and \eqref{GEn3}. In addition, the shortest geodesic is also abnormal (i.e. singular) if $g_0 \in \OM_2$.
\end{theo}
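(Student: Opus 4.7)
The plan is to execute the strategy already half-outlined in the paragraph immediately preceding the statement: read the distance off from matching upper and lower bounds supplied respectively by Theorem~\ref{THN4} and Theorem~\ref{MP1}, and then deduce uniqueness and (when relevant) abnormality of the shortest geodesic via the proximal-subdifferential criterion of \cite{RT05}.

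First, I would identify $d(g_0)^2 = \phi(g_0;\theta_0)$. Since every eigenvalue $\mu$ of $U(\tau)$ for $\tau \in \OA$ satisfies $|\mu| \leq \|U(\tau)\| < \pi$, one has $\OA \subset \mathcal{V}$ (cf.\ (\ref{nD1V})); in particular the global maximizer $\theta_0 \in \OA$ is an interior critical point of $\phi(g_0;\cdot)$ lying in $\mathcal{V}$. Theorem~\ref{THN4} then furnishes a ``good'' normal geodesic $\gamma$ joining $o$ to $g_0$, given explicitly by (\ref{GEn1}) and (\ref{GEn3}), with $\ell(\gamma)^2 = \phi(g_0;\theta_0)$. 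Combined with the lower bound $d(g_0)^2 \geq \sup_{\tau \in \OA} \phi(g_0;\tau) = \phi(g_0;\theta_0)$ from Theorem~\ref{MP1}, this yields both the stated formula for $d(g_0)^2$ and the existence of the shortest geodesic $\gamma$ in the claimed form.

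Second, for each global maximizer $\theta \in \OA$ of $\phi(g_0;\cdot)$ I would introduce the smooth function $\phi_\theta(g) := \phi(g;\theta)$. By Theorem~\ref{MP1}, $\phi_\theta(g) \leq \sup_{\tau \in \OA}\phi(g;\tau) \leq d(g)^2$ for every $g \in \G$, with equality at $g_0$ since $\theta$ is a maximizer. Consequently $d^2 - \phi_\theta \geq 0$ vanishes at $g_0$, and by (\ref{PSD}) its differential
\[
d\phi_\theta(g_0) = \bigl(2\,U(\theta)\cot U(\theta)\,x_0,\ 4\theta\bigr)
\]
belongs to $\partial_P d^2(g_0)$. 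Comparing with (\ref{iniC}) and (\ref{HaEP}), this vector is twice the final covector of the normal extremal lift of the ``good'' normal geodesic (\ref{GEn1})--(\ref{GEn3}) parametrized by $\theta$. Invoking \cite[Proposition~2]{RT05} identifies each such element of $\partial_P d^2(g_0)$ with the final covector of a normal extremal lift of a shortest geodesic to $g_0$; backward integration of the Hamiltonian flow then reconstructs that geodesic uniquely as (\ref{GEn1})--(\ref{GEn3}). When $g_0 \in \OM_2$, two distinct maximizers $\theta \neq \theta'$ produce two distinct proximal subgradients (already differing in their last $m$ coordinates), so the unique shortest geodesic admits at least two distinct normal extremal lifts; by \cite[Remark~8]{RT05} it is therefore abnormal (singular).

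The main point where I expect to need care is reconciling ``unique shortest geodesic'' with ``several proximal subgradients'' in the $\OM_2$ case: the resolution is precisely the definition of an abnormal normal geodesic, whose multiple extremal lifts all project onto the same horizontal curve in $\G$. Matching the sign and scaling conventions between (\ref{iniC})--(\ref{HaEP}) and those of \cite{RT05} (where the Pontryagin multiplier is normalized differently) is the only technical bookkeeping required; everything else is a direct combination of Theorems~\ref{MP1} and~\ref{THN4} with standard nonsmooth analysis.
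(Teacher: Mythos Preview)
Your proposal is correct and follows essentially the same route as the paper's own argument, which is sketched in the paragraph immediately preceding Theorem~\ref{THN5}: combine Theorems~\ref{MP1} and~\ref{THN4} to pin down $d(g_0)^2=\phi(g_0;\theta_0)$, observe that $\phi(\cdot\,;\theta)$ touches $d^2$ from below at $g_0$ so that $d\phi(\cdot\,;\theta)(g_0)\in\partial_P d^2(g_0)$, and then appeal to \cite[Proposition~2 and Remark~8]{RT05}. Your added observations (e.g.\ $\OA\subset\mathcal{V}$, and that $g_0\in\widetilde{\M}\setminus\{o\}$ forces $x_0\neq 0$ so that Theorem~\ref{THN4} applies) are the natural bookkeeping steps the paper leaves implicit.
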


\begin{remark} \label{nRKn25}
(1) For example, we have
\[
d(x, 0)^2 = |x|^2 =  \phi((x, 0); 0) = \sup_{\tau \in \OA} \phi((x, 0); \tau), \qquad x \in \R^q,
\]
which can be found in \cite[p. 2104]{LM14}.

(2) Let $\zeta(0) \in \R^q \setminus \{ 0 \}$ and $\theta_0 \in \OA$. Then $g_0 = \exp{(\zeta(0), 2 \theta_0)} \in \widetilde{\M} \setminus \{o\}$, and the unique shortest geodesic from $o$ to $g_0$ is $\exp\{s \, (\zeta(0), 2 \theta_0)\}$ ($0 \le s \le 1$).
\end{remark}

Let us set
\begin{align} \label{Dabn}
\mathrm{Abn}_o^* = \left\{ g; \ \mbox{$\exists$ abonormal \textit{shortest} geodesic joining $o$ to $g$} \right\} \ni o,
\end{align}
which is included in
\[
\mathrm{Abn}_o = \left\{ g; \ \mbox{$\exists$ abonormal-normal geodesic joining $o$ to $g$} \right\}
\]
introduced in \cite{LDMOPV16}. Recall that $\mathrm{Abn}_o = \{ o \}$ iff. $\G$ is of M\'etivier (see section \ref{MG} below for the definition).
Combining Theorem \ref{THN5} with \cite[Proposition 11.4]{ABB20}, we get immediately:

\begin{cor} \label{nCCn}
It holds that $\OM_2 \subseteq \mathrm{Abn}_o^* \subseteq \mathrm{Cut}_o$.
\end{cor}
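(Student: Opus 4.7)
The plan is to assemble the two inclusions directly from results already in place, with the identity element $o$ treated separately in each case.

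For the first inclusion $\OM_2 \subseteq \mathrm{Abn}_o^*$, I would argue as follows. By definition \eqref{nOM}, $\OM_2 \subset \OM = \widetilde{\M}$. Take any $g_0 \in \OM_2$. If $g_0 = o$, then $o \in \mathrm{Abn}_o^*$ by the convention built into \eqref{Dabn} (the trivial constant path). If $g_0 \neq o$, then $g_0 \in \widetilde{\M} \setminus \{o\}$ and by definition \eqref{OM2} the concave function $\phi(g_0;\cdot)$ admits at least two distinct global maximizers $\theta_0, \theta_0' \in \OA$. The last assertion of Theorem \ref{THN5} then states precisely that the unique shortest geodesic from $o$ to $g_0$ is abnormal, so $g_0 \in \mathrm{Abn}_o^*$.

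For the second inclusion $\mathrm{Abn}_o^* \subseteq \mathrm{Cut}_o$, I would take $g \in \mathrm{Abn}_o^*$. If $g \neq o$, I argue by contraposition: suppose $g \in \mathcal{S} = \mathrm{Cut}_o^c$. Then by item (V) of Subsection \ref{nSSn23} (which records the standard fact recalled from \cite[\S 11.1]{ABB20}), there exists a unique shortest geodesic from $o$ to $g$, and it is not abnormal. This contradicts the assumption that $g$ admits an abnormal shortest geodesic from $o$. Hence $g \in \mathrm{Cut}_o$. The base case $g = o$ follows from the same reference \cite[Proposition 11.4]{ABB20}, which gives $o \in \mathrm{Cut}_o$ in the genuinely sub-Riemannian setting (equivalently, $d^2$ fails to be smooth at the identity).

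The only mildly delicate point is making sure the definitions line up: in \eqref{Dabn} the identity is included in $\mathrm{Abn}_o^*$ by convention, and one must verify that $o$ is also in $\OM_2$ (noted just after \eqref{OM2}: any two distinct $\tau, \tau' \in \OA$ are global maximizers of the identically-zero function $\phi(o;\cdot)$) and in $\mathrm{Cut}_o$. Beyond this bookkeeping there is no real obstacle, since the substantive content has been pushed into Theorem \ref{THN5} and into the cited proposition from \cite{ABB20}; the corollary is essentially a one-line reorganization of those two facts.
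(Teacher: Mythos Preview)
Your proposal is correct and follows essentially the same approach as the paper: the paper states the corollary as an immediate consequence of Theorem~\ref{THN5} (giving $\OM_2 \subseteq \mathrm{Abn}_o^*$) combined with \cite[Proposition~11.4]{ABB20} (giving $\mathrm{Abn}_o^* \subseteq \mathrm{Cut}_o$), and your argument unpacks exactly these two ingredients, with the only difference being that for the second inclusion you invoke item~(V) of Subsection~\ref{nSSn23} (itself drawn from \cite[\S 11.1]{ABB20}) rather than citing Proposition~11.4 directly.
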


From (b) of Proposition \ref{nP65} below, we have that $\OM_2 = \{ o \}$ iff. $\G$ is of M\'etivier. Furthermore, it is easy to describe formally $\mathrm{Abn}_o^*$ and $\mathrm{Abn}_o$ by $\OM_2$. See \cite{LZ20} for more details.

\medskip

Moreover, we have the following:

\begin{theo} \label{THN6}
For any $g \in \overline{\widetilde{\M}}$, we have
\[
d(g)^2 = \sup_{\tau \in \OA}  \phi(g; \tau).
\]
\end{theo}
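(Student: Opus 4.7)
By Theorem \ref{MP1}, $d(g)^2 \geq \sup_{\tau \in \OA} \phi(g;\tau)$ already holds on all of $\G$, so the entire task is to establish the reverse inequality on $\overline{\widetilde{\M}}$. The plan is to approximate $g \in \overline{\widetilde{\M}}$ by a sequence $g_n \in \widetilde{\M}$ for which the formula $d(g_n)^2 = \phi(g_n;\theta_n)$ is guaranteed by Theorem \ref{THN5}, and then pass to the limit by exploiting continuity of the Carnot–Carathéodory distance and the upper-semicontinuous closure $\mathrm{cl}\phi(g;\cdot)$ recalled in Remark \ref{nRk21}(1). The case $g=o$ is trivial since $\phi(o;\tau)\equiv 0 = d(o)^2$, so I fix $g \neq o$ and pick $g_n = (x_n,t_n) \in \widetilde{\M}\setminus\{o\}$ with $g_n \to g$.

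For each $n$, Theorem \ref{THN5} furnishes a global maximizer $\theta_n \in \OA$ of $\phi(g_n;\cdot)$ with $\phi(g_n;\theta_n) = d(g_n)^2$, and continuity of $d$ yields $\phi(g_n;\theta_n) \to d(g)^2$. Since $\OA$ is bounded, after extraction $\theta_n \to \theta^* \in \overline{\OA}$. If $\theta^* \in \OA$, the function $\phi$ is smooth on a neighborhood of $(g,\theta^*)$, so $\phi(g_n;\theta_n) \to \phi(g;\theta^*)$; hence $\phi(g;\theta^*) = d(g)^2$, which together with Theorem \ref{MP1} forces $d(g)^2 = \sup_{\tau\in\OA}\phi(g;\tau)$, and we are done in this case.

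The main obstacle is the boundary case $\theta^* \in \partial\OA$, where some eigenvalue of $|U(\theta_n)|$ approaches $\pi$ and $U(\theta_n)\cot U(\theta_n)$ becomes singular. My plan is to decompose $x_n$ spectrally as $x_n = P_n x_n + Q_n x_n$, where $P_n$ is the spectral projection of $U(\theta_n)^2$ onto the cluster of eigenvalues approaching $\pi^2$, so that $P_n \to P$ (the projection onto the $\pi^2$-eigenspace of $U(\theta^*)^2$) and $Q_n \to Q = \mathbb{I}_q - P$ in operator norm. On the range of $Q_n$ the eigenvalues of $|U(\theta_n)|$ stay in a compact subset of $[0,\pi)$, so continuous functional calculus gives
\[
\langle U(\theta_n)\cot U(\theta_n)\,Q_n x_n, Q_n x_n\rangle \;\longrightarrow\; \langle U(\theta^*)\cot U(\theta^*)\,Qx, Qx\rangle;
\]
on the range of $P_n$, the scalar $s\cot s \to -\infty$ as $|s|\to\pi^-$ (property collected in Section \ref{SFP}), hence the $P_n$-contribution is $\leq 0$ for all large $n$. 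Combined with $4t_n\cdot\theta_n \to 4t\cdot\theta^*$ this gives
\[
d(g)^2 \;=\; \lim_{n\to\infty}\phi(g_n;\theta_n) \;\leq\; \langle U(\theta^*)\cot U(\theta^*)\,Qx, Qx\rangle + 4t\cdot\theta^*.
\]
Finiteness of the right-hand side combined with the blow-up of the $P_n$-contribution forces $Px = 0$, so by Remark \ref{nRk21}(1) (with the convention $0\cdot\infty = 0$) the right-hand side is precisely $\mathrm{cl}\phi(g;\theta^*)$. Finally, the definition $\mathrm{cl}\phi(g;\theta^*) = \limsup_{\tau \to \theta^*,\,\tau\in\OA}\phi(g;\tau)$ gives $\mathrm{cl}\phi(g;\theta^*) \leq \sup_{\tau\in\OA}\phi(g;\tau)$, so $d(g)^2 \leq \sup_{\tau\in\OA}\phi(g;\tau)$, completing the proof. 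The only genuinely delicate step is the spectral analysis at $\theta^* \in \partial\OA$ together with the continuity of the spectral projections, which is where the fine properties of $-s\cot s$ from Section \ref{SFP} enter crucially.
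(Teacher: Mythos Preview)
Your argument is correct, but the paper takes a different and considerably shorter route. Rather than analyzing what happens when the maximizers $\theta_n$ escape to $\partial\OA$, the paper proves an abstract continuity result (Proposition \ref{NPA}): for any continuous $h:\R^k\times\Omega\to\R$ that is concave in the second variable over an open bounded convex set $\Omega$, the function $H(\xx)=\sup_{\tau\in\Omega}h(\xx;\tau)$ is continuous. The proof of that proposition avoids the boundary entirely by a midpoint trick: if $\tau_j\to\tau_*\in\partial\Omega$ were bad, one fixes an interior $\tau_0$ and uses concavity to control $h(\xx_j,\tfrac{\tau_0+\tau_j}{2})$, where the midpoint stays safely inside $\Omega$. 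Theorem \ref{THN6} then follows in one line: $d^2$ and $\sup_\tau\phi(\cdot;\tau)$ are both continuous and agree on $\widetilde{\M}$, hence on its closure.

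Your approach, by contrast, confronts the boundary case head-on via spectral decomposition and the behavior of $s\cot s$ near $\pi$. This is heavier---you need continuity of the Riesz projections $P_n\to P$, a cutoff argument for the functional calculus on $\mathrm{ran}\,Q_n$, and the observation that finiteness of $d(g)^2$ (you wrote ``right-hand side'', but you mean the left) forces $Px=0$---but it yields extra information: the limiting maximizer $\theta^*$ actually realizes $\mathrm{cl}\phi(g;\theta^*)=d(g)^2$, and $x$ must be orthogonal to the $\pi^2$-eigenspace of $U(\theta^*)^2$. The paper's method is cleaner and more portable (it uses only concavity, nothing specific to $\phi$), while yours is more informative about the limiting geometry at $\partial\OA$.
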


\begin{remark} \label{Nr27}
(1) Let $g \in \overline{\widetilde{\M}} \setminus \widetilde{\M}$. By the compactness of $\overline{\OA}$, from \cite[Proposition 4]{RT05} or Lemma \ref{AxL1}, there exist $\theta \in \partial \OA$ and $\zeta(0) \in \R^q$ satisfying $d(g)^2 = |\zeta(0)|^2$ such that the ``bad'' normal geodesic $\exp\{s \, (\zeta(0), 2 \, \theta)\}$ ($0 \le s \le 1$) is a shortest one from $o$ to $g$.

(2) In the case where $\overline{\M} = \G$ so $\overline{\widetilde{\M}} = \G$, there may exist infinitely many shortest geodesics from $o$ to any given $g \in \widetilde{\M}^c$. See \cite{BGG00} for the specific case of generalized Heisenberg groups, also \cite{R05} for Heisenberg-type groups.

(3) Indeed, we have $\overline{\M} = \overline{\widetilde{\M}}$, see \cite{LZ20} for more details.
\end{remark}

\medskip

\subsection{Basic properties of $2$-step groups satisfying $\overline{\M} = \G$}

\medskip

Recall that one of the important motivations of obtaining the exact formula for $d$ is to study asymptotic estimates for the heat kernel on the underlying group. However, there exists an essential difficulty in the case where $\overline{\M} \subsetneqq \G$. A very interesting problem is to characterize $2$-step groups satisfying $\overline{\M} = \G$, that will be called \textit{GM-groups} or groups \textit{of type GM}. Since the main goal of this article is to study the sub-Riemannian distance on general step-two groups, we leave this task to be completed in \cite{LZ20}. Here we only give a few basic properties of GM-groups that we will use. First of all, motivated by the case of general Heisenberg groups studied in \cite{BGG00}, we have the following sufficient condition:

\medskip

\begin{prop} \label{nPr210}
Let $x \in \R^q \setminus\! \{0\}$. Assume that for any $\tau \in \partial \OA$,  the orthogonal projection of $x$ on $\pi^2$-eigenspace of $U^2(\tau)$ is nonzero. Then we have $(x, t) \in \M$ for all $t \in \R^m$. Moreover, if the set of such $x$ is dense in $\R^q$, then $\overline{\M} = \G$.
\end{prop}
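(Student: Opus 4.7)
The plan is in two steps: first use the boundary hypothesis to guarantee that $\phi((x,t);\cdot)$ attains its supremum in the interior of $\OA$, and then promote that maximizer to a nondegenerate critical point of $\phi((x,t);\cdot)$.

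For the first step, Remark \ref{nRk21}(1) combined with the hypothesis yields $\mathrm{cl}\,\phi((x,t);\tau)=-\infty$ for every $\tau\in\partial\OA$: along any sequence $\tau'\to\tau\in\partial\OA$, the quadratic form $\langle U(\tau')\cot U(\tau')\,x,x\rangle$ diverges to $-\infty$, because $x$ has a nonzero component in the $\pi^2$-eigenspace of $U^2(\tau)$, while the linear term $4t\cdot\tau'$ stays bounded on the bounded set $\overline{\OA}$. Since $\mathrm{cl}\,\phi((x,t);\cdot)$ is upper semicontinuous and proper concave on the compact set $\overline{\OA}$, and $\phi((x,t);0)=|x|^2>0$, its maximum is attained at some $\theta^*\in\OA$. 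In particular $(x,t)\in\widetilde{\M}$.

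The second step is the main obstacle: upgrading $\theta^*$ to a nondegenerate critical point, equivalently (by Theorem \ref{TH3N}) showing the global maximizer in $\OA$ is unique. The natural approach is to exploit the refined operator convexity of $-s\cot s$ collected in Section \ref{SFP}. Assume for contradiction that there are two distinct global maximizers $\theta_1,\theta_2\in\OA$; by concavity of $\phi((x,t);\cdot)$ (Proposition \ref{TH1}), the function is then constant on the segment $[\theta_1,\theta_2]$, so its second directional derivative along $v:=\theta_2-\theta_1$ vanishes identically on this segment. Expanding this second derivative through the spectral decomposition of $U(\theta^*+rv)$ and invoking the strict-type refinement of operator convexity of $-s\cot s$, the vanishing forces $x$ to lie in a proper subspace tied to the kernel of the second differential. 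Now extending the segment inside $\OA$ until it reaches $\partial\OA$ and passing to a limit, this forced subspace must be orthogonal to the $\pi^2$-eigenspace of $U^2$ at the corresponding boundary point, contradicting the hypothesis on $x$. Hence $\theta^*$ is the unique global maximizer, and Theorem \ref{TH3N} delivers $(x,t)\in\OM_1=\M$.

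For the concluding assertion, let $X\subset\R^q\setminus\{0\}$ denote the set of $x$ satisfying the boundary hypothesis. The previous paragraphs yield $X\times\R^m\subset\M$. If $X$ is dense in $\R^q$, then $X\times\R^m$ is dense in $\R^q\times\R^m=\G$, whence $\overline{\M}=\G$.
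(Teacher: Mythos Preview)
Your approach matches the paper's: force the maximum into the interior via Remark \ref{nRk21}, then rule out degeneracy by pushing a flat direction out to $\partial\OA$ and contradicting the boundary hypothesis. The only soft spot is the middle of your second step. From two maximizers $\theta_1,\theta_2$ and concavity you correctly get constancy on the segment $[\theta_1,\theta_2]$, but concavity alone does \emph{not} extend that constancy past $\theta_2$ along the line; a concave function can be flat on $[0,1]$ and strictly decreasing on $(1,\infty)$. Your appeal to a ``strict-type refinement of operator convexity'' and a ``proper subspace tied to the kernel of the second differential'' is doing the work of Proposition \ref{nP65} without naming it, and as written it does not clearly bridge the gap between a local algebraic condition at $\theta^*$ and the needed constancy all the way to the boundary.

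The paper closes this gap by citing Proposition \ref{nP65}(c) directly: if $\theta_0\in\OA$ is a degenerate critical point of $\phi(g_0;\cdot)$, then there exists $v_0\in\mathbb{S}^{m-1}$ with $\phi(g_0;\theta_0+sv_0)=\phi(g_0;\theta_0)$ for \emph{every} $s$ with $\theta_0+sv_0\in\OA$. Since $\OA$ is bounded, this ray meets $\partial\OA$ at some $\theta_0+s_0v_0$, and upper semicontinuity of $\mathrm{cl}\,\phi(g_0;\cdot)$ forces $\mathrm{cl}\,\phi(g_0;\theta_0+s_0v_0)\ge\phi(g_0;\theta_0)>-\infty$, contradicting step one. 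Replace your spectral-decomposition paragraph by this citation and the argument is complete; the concluding density claim is fine as you wrote it.
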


It follows from Proposition \ref{nPr210} that GM groups form a wild set. More precisely, for any given $\G(q, m, \U)$, we can construct an uncountable number of GM-groups $\G(q + 2 n, m, \widetilde{\U})$. See Subsection \ref{sNss81} below for more details.

It is well-known that the distance and the cut locus of a Riemannian manifold cannot in general be explicitly computed. Also, as we can find in this work, the expression of $d^2(g)$ on a step-two group is generally extremely complicated. However, GM-groups have consummate sub-Riemannian properties. A direct consequence of Theorem \ref{THN6} is the following:

\begin{cor}
Suppose that $\overline{\M} = \G$. Then  $d(g)^2 = \sup_{\tau \in \OA} \phi(g; \tau)$ for any $g \in \G$.
\end{cor}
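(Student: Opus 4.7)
The plan is to derive this corollary directly from Theorem \ref{THN6} by a trivial closure argument, without invoking the deeper equality $\overline{\M} = \overline{\widetilde{\M}}$ of Remark \ref{Nr27}(3) (which is postponed to \cite{LZ20}).

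First, I would establish the one-sided inclusion $\M \subseteq \widetilde{\M}$. This is immediate from Theorem \ref{TH3N}, which identifies $\M = \OM_1$, combined with the definition \eqref{nOM} of $\widetilde{\M} = \OM = \OM_1 \cup \OM_2$. Taking closures preserves set inclusion, so
\[
\G = \overline{\M} \subseteq \overline{\widetilde{\M}} \subseteq \G,
\]
where the first equality uses the GM-hypothesis. Consequently $\overline{\widetilde{\M}} = \G$.

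Next, I would invoke Theorem \ref{THN6} directly: it asserts that $d(g)^2 = \sup_{\tau \in \OA} \phi(g; \tau)$ for every $g \in \overline{\widetilde{\M}}$. Since we have just shown $\overline{\widetilde{\M}} = \G$, this identity holds for every $g \in \G$, which is the desired conclusion.

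There is essentially no obstacle here: all the substantive work — the lower bound $d(g)^2 \ge \sup_{\tau} \phi(g;\tau)$ from Theorem \ref{MP1}, the matching upper bound for $g \in \widetilde{\M} \setminus \{o\}$ from Theorem \ref{THN5}, and the continuity/compactness passage to the closure in Theorem \ref{THN6} — has already been done. The corollary is simply the observation that the GM-condition propagates the validity of the formula from the dense subset $\widetilde{\M}$ to all of $\G$. The only mild point worth checking is that the closure-inclusion argument only requires $\M \subseteq \widetilde{\M}$, so the corollary is logically independent of the (more delicate) assertion $\overline{\M} = \overline{\widetilde{\M}}$.
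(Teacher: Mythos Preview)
Your proof is correct and takes essentially the same approach as the paper, which simply states the corollary as ``a direct consequence of Theorem \ref{THN6}''. You have merely made explicit the trivial step $\M \subseteq \widetilde{\M}$ (hence $\overline{\widetilde{\M}} = \G$) that the paper leaves implicit.
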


Furthermore, recall that $\M$ is open and $\M \subset \mathrm{Cut}_o^c = \mathcal{S}$, we have the following:

\begin{theo} \label{aT121}
If $\overline{\M} = \G$, then the cut locus of $o$ is exactly $\M^c = \partial \M$.
\end{theo}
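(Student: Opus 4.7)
The plan is to prove the equality $\mathrm{Cut}_o=\M^c=\partial\M$ by combining a short topological observation with the two set inclusions $\M\subseteq\mathcal{S}$ and $\mathcal{S}\subseteq\M$. The easy ingredients come first. Since $\M$ is open and $d^2\in C^\infty(\M)$ by the remark after Theorem~\ref{TH2}, one immediately has $\M\subseteq\mathcal{S}$, i.e.\ $\mathrm{Cut}_o\subseteq\M^c$. Since $\M$ is open with $\overline{\M}=\G$, the identity $\partial\M=\overline{\M}\setminus\M=\G\setminus\M=\M^c$ is automatic. Everything therefore reduces to proving $\mathcal{S}\subseteq\M$.

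For this core step, I would fix $g_0\in\mathcal{S}$ and invoke Lemma~\ref{AxL}: write $g_0=\exp(\zeta_0,2\theta_0)$ for the initial covector of the unique non-abnormal shortest geodesic, and observe that $\exp$ is a diffeomorphism from an open neighbourhood $V_{(\zeta_0,2\theta_0)}\subset\R^q\times\R^m$ onto an open neighbourhood $O_{g_0}\subseteq\mathcal{S}$ of $g_0$. Define $\theta:O_{g_0}\to\R^m$ to be half the second component of $\exp^{-1}|_{O_{g_0}}$; this map is smooth and, crucially, open, being the composition of a diffeomorphism with the (open) coordinate projection.

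The key is to locate the image of $\theta$. For every $g\in\M\cap O_{g_0}$, Theorem~\ref{TH2} supplies a unique nondegenerate critical point $\theta_{\M}(g)\in\OA$ of $\phi(g;\cdot)$ whose associated normal geodesic is the unique shortest one joining $o$ to $g$; since $g\in\mathcal{S}$ renders this geodesic non-abnormal, the covector furnished by $\theta_{\M}$ coincides with the one given by Lemma~\ref{AxL}, so $\theta(g)=\theta_{\M}(g)\in\OA$. Density of $\M\cap O_{g_0}$ in $O_{g_0}$ (a consequence of $\overline{\M}=\G$) together with continuity of $\theta$ gives $\theta(O_{g_0})\subseteq\overline{\OA}$. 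But $\theta$ is open, so $\theta(O_{g_0})$ is an open subset of $\R^m$ contained in $\overline{\OA}$; the convexity of $\OA$ yields $\mathrm{int}(\overline{\OA})=\OA$, whence $\theta(O_{g_0})\subseteq\OA$, and in particular $\theta_0\in\OA$.

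Once $\theta_0\in\OA\subset\mathcal{V}$, the converse part of Theorem~\ref{THN4} guarantees that $\theta_0$ is a critical point of $\phi(g_0;\cdot)$; by Proposition~\ref{TH1} it is then a global maximizer, placing $g_0\in\widetilde{\M}=\OM_1\cup\OM_2$. Corollary~\ref{nCCn} gives $\OM_2\subseteq\mathrm{Cut}_o=\mathcal{S}^c$, so $g_0\notin\OM_2$, and Theorem~\ref{TH3N} identifies $\OM_1$ with $\M$, delivering $g_0\in\M$ as required. The main obstacle I anticipate is the step that promotes $\theta(O_{g_0})\subseteq\overline{\OA}$ to $\theta(O_{g_0})\subseteq\OA$: it rests on the convex-analysis identity $\mathrm{int}(\overline{\OA})=\OA$ for the open convex set $\OA$, together with the openness of $\theta$, both of which are routine but must be stated carefully to avoid the temptation of computing the Jacobian of $\exp$ directly.
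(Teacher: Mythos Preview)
Your proof is correct and, while it shares the same underlying ingredients with the paper (Lemma~\ref{AxL}, the density hypothesis $\overline{\M}=\G$, and the fact that on $\M$ the geodesic parameter lies in $\OA$), the route is genuinely different. The paper argues by contradiction: it assumes $g_0\in\partial\M\setminus\mathrm{Cut}_o$, uses Corollary~\ref{nCCn} to conclude $g_0\in\overline{\OM}\setminus\OM$, and then invokes Remark~\ref{Nr27} to obtain a shortest geodesic to $g_0$ with covector $\theta_0\in\partial\OA$; Lemma~\ref{AxL} then produces a nearby $g_*\in\M$ whose unique shortest geodesic has parameter $\theta_*\notin\OA$, contradicting \eqref{n1n}. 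Your argument instead proves the inclusion $\mathcal{S}\subseteq\M$ directly: you read off $\theta$ as the smooth, open second coordinate of $\exp^{-1}$ on $O_{g_0}$, observe it lands in $\OA$ on the dense subset $\M\cap O_{g_0}$, hence in $\overline{\OA}$ by continuity, and then use openness together with $\mathrm{int}(\overline{\OA})=\OA$ to force $\theta_0\in\OA$; you finish with the converse of Theorem~\ref{THN4} and Theorem~\ref{TH3N}. What your approach buys is a cleaner topological mechanism (open image inside the closure of a convex set lies in its interior) that avoids the detour through Remark~\ref{Nr27}; what the paper's approach buys is a more explicit identification of where the obstruction sits (precisely at $\partial\OA$). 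One small point worth stating explicitly in your write-up: when you invoke the converse of Theorem~\ref{THN4} you need $x_0\neq 0$, which follows because $g_0\in\mathcal{S}$ forces $g_0\neq o$ (since $o\in\OM_2\subseteq\mathrm{Cut}_o$), hence $\zeta_0\neq 0$, and then $\theta_0\in\OA\subset\mathcal{V}$ together with \eqref{iniC} gives $x_0=x(1)\neq 0$.
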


\begin{remark}
Indeed, the following properties are equivalent: (i) $\overline{\M} = \G$; (ii) $\mathrm{Cut}_o = \partial \M$; and (iii) $d(g)^2 = \sup_{\tau \in \OA} \phi(g; \tau)$ for any $g \in \G$. We refer the reader to \cite{LZ20} for more details as well as other equivalent characterizations of GM-groups, especially the geometric meaning of $\OM^c$.
\end{remark}

In this article, we will only describe the squared sub-Riemannian distance as well as the cut locus on two representative type of GM-groups: generalized Heisenberg-type groups as well as star graphs. More examples can be found in \cite{LZ20}.

\medskip

Next, we provide

\medskip

\subsection{Some improvements}

\medskip

Let us begin by improving Corollary \ref{nCx}. It follows from \cite[Corollary~12.14]{ABB20} that the squared sub-Riemannian distance in the setting of step-two groups is locally Lipschitz w.r.t. the usual Euclidean distance. So $d^2$ is Lipschitz on
\[
\mathcal{C}_r := \{(x, t); \, |x| < r, \ |t| < r^2\}, \qquad 0 < r \leq 1.
\]
Then, it deduces from the scaling property (see \eqref{scaling}) that the set defined formally by
\begin{align} \label{Rset}
\F &:= \left\{ \theta = \frac{1}{4} \nabla_t d(g)^2; \, g \in \mathcal{C}_1, \ g = (x, t) \not\in \mathrm{Cut}_o \right\} \nonumber \\
&= \left\{ \theta = \frac{1}{4} \nabla_t d(g)^2; \, g = (x, t) \not\in \mathrm{Cut}_o \right\} \subset \R^m
\end{align}
is bounded. It is also open from Lemma \ref{AxL}.

Thus, the global reference set
\begin{align} \label{GRs}
\FS := \overline{\F}
\end{align}
is compact in $\R^m$. Combining this with Corollary \ref{nCx} and \cite[Proposition 4]{RT05} (or Lemma \ref{AxL1}), we yield immediately

\begin{cor} \label{nC121}
We have for $g = (x, t) \in \mathcal{W}^c$,
\[
d^2(g) = \inf\left\{ \phi(g; \theta) = \left| \frac{U(\theta)}{\sin{U(\theta)}} x \right|^2; \  \mbox{$\theta$ is a critical point of $\phi(g; \cdot)$ in $\mathcal{V} \cap \FS$} \right\}.
\]
Moreover, for any $g \neq o$, there exists a shortest geodesic joining $o$ to $g$, $\exp\{s \, (\zeta_g(0), 2 \, \theta_g)\}$ ($0 \le s \le 1$), satisfies $\theta_g \in \FS$. Also, for any $\theta \in \FS$ and $r > 0$, there is $|\zeta_r| = r$ such that $\exp\{s \, (\zeta_r, 2 \, \theta)\}$ ($0 \le s \le 1$) is a shortest geodesic from $o$ to $\exp\{(\zeta_r, 2 \, \theta)\}$.
\end{cor}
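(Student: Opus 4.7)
\textbf{Proof plan for Corollary \ref{nC121}.}

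The plan is to exploit that $\FS$ is, by construction, a compact set which contains the "$\theta$-component" of every shortest geodesic starting at $o$ whose endpoint lies in the smooth locus $\mathcal{S}$, and then to pass this property to arbitrary endpoints by a limiting argument via Lemma \ref{AxL1}.

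First I would establish the statement that for any $g \neq o$ there is a shortest geodesic from $o$ to $g$ with $\theta_g \in \FS$. Pick a sequence $g_j \in \mathcal{S}$ with $g_j \to g$ (this is possible since $\mathrm{Cut}_o$ has measure zero, hence $\mathcal{S}$ is dense). By Lemma \ref{AxL}, each $g_j$ has a unique shortest geodesic $\exp\{s\,(\zeta_j, 2\theta_j)\}$ with $|\zeta_j|^2 = d(g_j)^2$ and, by the very definition \eqref{Rset} of $\F$, with $\theta_j = \tfrac14 \nabla_t d^2(g_j) \in \F \subset \FS$. Continuity of $d$ yields $|\zeta_j| \to d(g)$, and the compactness of $\FS$ together with the boundedness of $\{\zeta_j\}$ produces a subsequence $(\zeta_j, 2\theta_j) \to (\zeta_g, 2\theta_g)$ with $\theta_g \in \FS$ and $|\zeta_g| = d(g)$. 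Lemma \ref{AxL1} then identifies $\exp\{s\,(\zeta_g, 2\theta_g)\}$ as a shortest geodesic from $o$ to $g$.

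For the first assertion of the corollary, specialize to $g = (x, t) \in \mathcal{W}^c$. Then the shortest geodesic produced above cannot be "bad" (otherwise $g \in \mathcal{W}$), so $\theta_g \in \mathcal{V} \cap \FS$. By the converse half of Theorem \ref{THN4} applied to this geodesic, $\theta_g$ is a critical point of $\phi(g;\cdot)$ and $d(g)^2 = \phi(g;\theta_g)$. Combined with Corollary \ref{nCx}, which gives $d(g)^2$ as the infimum over \emph{all} critical points in $\mathcal{V}$, we get the sharpened infimum over $\mathcal{V}\cap \FS$.

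For the final assertion, fix $\theta \in \FS$ and $r > 0$. Choose $\theta_j \in \F$ with $\theta_j \to \theta$; by definition of $\F$ there exist $g_j \in \mathcal{S}$ with unique shortest geodesic $\exp\{s\,(\zeta_j, 2\theta_j)\}$. Apply the dilation $\delta_{c_j}$ with $c_j := r/|\zeta_j|$: using the scaling formula \eqref{scaling} together with the explicit form \eqref{GEn1*}--\eqref{GEn3*}, one checks that $\delta_{c_j}$ sends this geodesic to the shortest geodesic $\exp\{s\,(c_j\zeta_j, 2\theta_j)\}$ from $o$ to $\delta_{c_j} g_j$, and crucially $\theta_j$ is \emph{unchanged} under the dilation while $|c_j\zeta_j| = r$. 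Passing to a subsequence so that $c_j \zeta_j \to \zeta_r$ with $|\zeta_r| = r$, Lemma \ref{AxL1} again gives that $\exp\{s\,(\zeta_r, 2\theta)\}$ is a shortest geodesic from $o$ to $\exp\{(\zeta_r, 2\theta)\}$.

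The only mildly technical point is verifying that a dilation $\delta_c$ acts on the initial covector $(\zeta(0), 2\theta_0)$ by $(\zeta(0), 2\theta_0) \mapsto (c\zeta(0), 2\theta_0)$ — i.e.\ that the $\theta$-component is preserved; this is a short direct computation from \eqref{GEn1*}--\eqref{GEn3*} but is the key feature that makes the boundedness of $\FS$ (rather than of the full covector space) suffice for the argument.
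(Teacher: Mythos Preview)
Your proposal is correct and follows essentially the same approach as the paper's own (very terse) argument: the paper simply says the corollary follows by combining the compactness of $\FS$ with Corollary~\ref{nCx} and Lemma~\ref{AxL1} (equivalently \cite[Proposition~4]{RT05}), and for the last claim invokes the observation that $\delta_r(\exp\{(\zeta(0),2\theta_0)\}) = \exp\{(r\zeta(0),2\theta_0)\}$, i.e.\ dilation rescales $\zeta$ while leaving $\theta$ fixed. You have correctly fleshed out these hints, including the identification $\theta_j = \tfrac14\nabla_t d^2(g_j)$ with the $\theta$-component of the initial covector via \eqref{HaEP}.
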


To show the last claim, we can use the following simple observation: if $g_0 = \exp\{(\zeta(0), \, 2 \, \theta_0)\} \in \mathcal{S}$, then $\delta_r(g_0) = \exp\{(r \, \zeta(0), \, 2 \, \theta_0)\} \in \mathcal{S}$ for any $r > 0$ because of \eqref{scaling}.

Next, we will provide the formal expression of $d(g)^2$ for $g \not\in \overline{\OM} \subsetneqq \G$. Recall that $\mathrm{Cut}_o \cup \mathcal{W} \cup \mathcal{N}$ has measure zero (see \eqref{nDoW} and \eqref{nDoN}),
combining Corollary \ref{nC121} with Theorem \ref{THN4} and Lemma \ref{AxL},
it follows from the implicit function theorem that:

\begin{cor} \label{nC124}
Assume that $x_0 \neq 0$, $\theta_0 \in \mathcal{V} \cap \F$,
\[
g_0 = \left(x_0, -\frac{1}{4} \nabla_{\theta = \theta_0} \langle U(\theta) \cot{U(\theta)} \, x_0, \, x_0 \rangle \right) \not\in \mathrm{Cut}_o, \quad \mbox{ and } d(g_0)^2 = \phi(g_0; \theta_0).
\]
Moreover, we suppose that the Hessian matrix $\mathrm{Hess}_{\theta = \theta_0} \langle U(\theta) \cot{U(\theta)} \, x_0, \, x_0 \rangle$ is non-singular. Then we have in a neighborhood of $(x_0, \theta_0) \in \R^q \times \R^m$
\[
d^2\left(x, -\frac{1}{4} \nabla_{\theta} \langle U(\theta) \cot{U(\theta)} \, x, \, x \rangle \right) = \left| \frac{U(\theta)}{\sin{U(\theta)}} x \right|^2.
\]
\end{cor}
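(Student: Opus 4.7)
The plan is to combine the implicit function theorem with Theorem \ref{THN4} (supplying the upper bound $d^2 \le \phi$) and Lemma \ref{AxL} (supplying the reverse inequality at the smooth point $g_0$). First I would introduce the smooth map
\[
T(x, \theta) := \Bigl(x,\ -\tfrac{1}{4}\nabla_\theta \langle U(\theta) \cot U(\theta)\, x, x\rangle\Bigr),
\]
whose differential at $(x_0, \theta_0)$ is block-triangular with diagonal blocks $\mathbb{I}_q$ and $-\tfrac{1}{4}\mathrm{Hess}_{\theta=\theta_0}\langle U(\theta)\cot U(\theta)\, x_0, x_0\rangle$, the latter being non-singular by hypothesis. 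The implicit function theorem then yields a neighborhood $W$ of $(x_0, \theta_0)$ on which $T$ is a diffeomorphism onto a neighborhood of $g_0$. I would shrink $W$ so that $W \subset (\R^q \setminus\!\{0\}) \times \mathcal{V}$ (using $x_0 \ne 0$ and $\theta_0 \in \mathcal{V}$) and $T(W) \subset \mathcal{S}$ (possible since $g_0 \in \mathcal{S}$ and $\mathcal{S}$ is open).

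Next, for $(x, \theta) \in W$ I set
\[
\zeta(x, \theta) := \frac{U(\theta)}{\sin U(\theta)}\, e^{-\widetilde{U}(\theta)}\, x,
\]
which is smooth on $W$ because $\theta \in \mathcal{V}$. By the very definition of $T$, $\theta$ is a critical point of $\phi(T(x,\theta); \cdot\,)$, so Theorem \ref{THN4} produces the good normal geodesic $s \mapsto \exp\{s(\zeta(x,\theta), 2\theta)\}$ steering $o$ to $T(x,\theta)$, whence
\[
d(T(x,\theta))^2 \le |\zeta(x,\theta)|^2 = \phi(T(x,\theta); \theta) = \left|\frac{U(\theta)}{\sin U(\theta)}\, x\right|^2.
\]
At $(x_0, \theta_0)$ this is an equality by hypothesis, so $s \mapsto \exp\{s(\zeta_0, 2\theta_0)\}$ with $\zeta_0 := \zeta(x_0, \theta_0)$ is in fact a shortest geodesic from $o$ to $g_0$.

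For the reverse inequality in a neighborhood I would apply Lemma \ref{AxL} at $g_0 = \exp(\zeta_0, 2\theta_0) \in \mathcal{S}$: it supplies an open neighborhood $V$ of $(\zeta_0, 2\theta_0)$ on which the sub-Riemannian exponential is a diffeomorphism onto its image in $\mathcal{S}$, with $d(\exp(\zeta', 2\theta'))^2 = |\zeta'|^2$ for all $(\zeta', 2\theta') \in V$. Using continuity of $\zeta(\cdot, \cdot)$ I would shrink $W$ once more so that $(\zeta(x,\theta), 2\theta) \in V$ for every $(x,\theta) \in W$. Injectivity of $\exp$ on $V$ then forces $(\zeta(x,\theta), 2\theta)$ to coincide with the distinguished shortest-geodesic parameter of $T(x,\theta)$ produced by Lemma \ref{AxL}, and therefore
\[
d(T(x,\theta))^2 = |\zeta(x,\theta)|^2 = \left|\frac{U(\theta)}{\sin U(\theta)}\, x\right|^2,
\]
which is the desired identity.

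The only non-routine point is the parameter-matching in the last step, namely ensuring that the pair $(\zeta(x,\theta), 2\theta)$ coming from the change of variables $T$ is exactly the pair on which Lemma \ref{AxL} operates at $T(x,\theta)$. This is handled cleanly by the injectivity of $\exp$ on $V$ together with the continuity of $\zeta$, once $W$ has been shrunk small enough; no further control on how the sub-Riemannian exponential map behaves away from $(\zeta_0, 2\theta_0)$ is needed.
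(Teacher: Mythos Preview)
Your proposal is correct and follows essentially the same approach the paper indicates: Theorem~\ref{THN4} to build the good normal geodesic with endpoint $T(x,\theta)$, Lemma~\ref{AxL} at the smooth point $g_0$ to identify $d^2$ with $|\zeta|^2$ on a neighborhood $V$ of $(\zeta_0,2\theta_0)$, and continuity (together with the inverse/implicit function theorem) to shrink $W$ appropriately. One minor simplification: in your final step you do not actually need to invoke injectivity of $\exp$ for ``parameter matching''---once $(\zeta(x,\theta),2\theta)\in V$, Lemma~\ref{AxL} directly gives $d(\exp(\zeta(x,\theta),2\theta))^2=|\zeta(x,\theta)|^2$, and you already know $\exp(\zeta(x,\theta),2\theta)=T(x,\theta)$ from Theorem~\ref{THN4}.
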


\medskip

\subsection{Afterwords}

\medskip

In this paper, we consider only Sub-Riemannian geometry related to the sub-Laplacian, such as the exact formula of the Carnot-Carath\'eodry distance, (shortest) geodesic from $o$ to a given point, and the cut locus of $o$. However, a straightforward modification allows us to obtain the corresponding results of Riemannian geometry related to the full Laplacian. Obviously, there are significant differences between the Sub-Riemannian geometry and the Riemannian geometry, and the latter is less difficult than the former since the corresponding reference function always is strongly concave in the initial reference set. Also notice that the scaling property (see \eqref{scaling}) is no longer valid for the Riemannian geometry.

Also remark that the main idea and
method can be adapted to other situations.

\medskip

\renewcommand{\theequation}{\thesection.\arabic{equation}}
\section{Fine properties of the function $-s \cot{s}$ and its derivatives}\label{SFP}
\setcounter{equation}{0}

\medskip

For $-\pi < s < \pi$,  define
\begin{align} \label{FF}
f(s) := 1 - s \cot{s},  \qquad \mu(s) := f'(s), \qquad \psi(s) := \frac{f(s)}{s^2}.
\end{align}

In this section, we collect some properties of $f$ and its derivatives such as $\psi$. These fine properties will be employed throughout the rest of this paper, the first and most important applications being the Propositions \ref{TH1} above and \ref{1PM} below.

It is well known that  we have for $-\pi < s < \pi$ (indeed for $s \in \C \setminus \{\pm k \pi; \ k \in \N^*\}$)
\begin{align} \label{IS0}
s \cot{s} = 1 - 2 s^2 \sum_{j = 1}^{+\infty} \frac{1}{(j \, \pi)^2 - s^2} = 1 - s^2  \sum_{j = 1}^{+\infty} (j \pi)^{-2} \left(  \frac{1}{1 -  \frac{s}{j \, \pi}} +  \frac{1}{1 +  \frac{s}{j \, \pi}} \right),
\end{align}
see for example \cite[\S 1.421 3, p.\,44]{GR15}  with slight modification. That is,
\begin{align} \label{IS}
-s \cot{s} = -1 + s^2  \sum_{j = 1}^{+\infty} (j \pi)^{-2} \left(  \frac{1}{1 -  \frac{s}{j \, \pi}} +  \frac{1}{1 +  \frac{s}{j \, \pi}}  \right) = -1 + \int_{-\frac{1}{\pi}}^{\frac{1}{\pi}} \frac{s^2}{1 - \lambda s} \, d\nu(\lambda),
\end{align}
where the positive finite measure on $[-\frac{1}{\pi}, \, \frac{1}{\pi}]$, $d\nu$ is given by
\begin{align*}
\nu = \sum_{j = 1}^{+\infty} \left( \frac{1}{j \, \pi} \right)^2 \left( \delta_{\frac{1}{j \, \pi}} + \delta_{-\frac{1}{j \, \pi}} \right) \quad \mbox{with $\delta_a$ the usual Dirac measure at point $a$.}
\end{align*}

The following lemma has been used by Gaveau (see \cite[Lemme 3, \, p.112]{G77}), which can be deduced directly by the first equality in \eqref{IS0}:

\begin{lem} \label{NL31}
The function $\mu$ defined in \eqref{FF} is an odd function, and a monotonely increasing diffeomorphism between $(-\pi, \, \pi)$ and $\R$. And we have
{\em \begin{align} \label{mu}
\mu(s) = \frac{2 s - \sin{2 s}}{2 \sin^2{s}}.
\end{align}}
\end{lem}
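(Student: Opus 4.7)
The plan is to verify the explicit identity by direct differentiation, read oddness off the formula, deduce strict monotonicity by computing $\mu'$, and then check the boundary limits in order to invoke the inverse function theorem.

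First I would compute $\mu(s) = f'(s)$ directly:
$$\mu(s) = -\cot s + \frac{s}{\sin^2 s} = \frac{s - \sin s \cos s}{\sin^2 s} = \frac{2s - \sin 2s}{2 \sin^2 s},$$
which gives the displayed formula \eqref{mu}. Oddness is then immediate, since the numerator $2s - \sin 2s$ is odd and the denominator $2 \sin^2 s$ is even. The Taylor expansions $2s - \sin 2s = \tfrac{4}{3} s^3 + O(s^5)$ and $2\sin^2 s = 2 s^2 + O(s^4)$ show that $\mu$ extends smoothly across $s = 0$ with $\mu(0) = 0$ and $\mu'(0) = 2/3$.

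For strict monotonicity I would differentiate once more, using the clean observation that $(s - \sin s \cos s)' = 1 - \cos 2s = 2 \sin^2 s$. The quotient rule together with the factorization $\sin^3 s - s \cos s + \sin s \cos^2 s = \sin s - s \cos s$ collapses the expression into
$$\mu'(s) = \frac{2(\sin s - s \cos s)}{\sin^3 s}.$$
Setting $g(s) := \sin s - s \cos s$, one has $g(0) = 0$ and $g'(s) = s \sin s > 0$ on $(0, \pi)$, so $g > 0$ there and hence $\mu'(s) > 0$ on $(0, \pi)$. Since $\mu$ is odd, $\mu'$ is even, so $\mu'(s) > 0$ also on $(-\pi, 0)$, and the value $\mu'(0) = 2/3 > 0$ computed above handles the origin. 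Thus $\mu$ is strictly increasing on $(-\pi, \pi)$.

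Finally I would read off the boundary behavior from the explicit formula: as $s \to \pi^-$, $\sin^2 s \to 0^+$ while $2s - \sin 2s \to 2\pi > 0$, whence $\mu(s) \to +\infty$; by oddness $\mu(s) \to -\infty$ as $s \to -\pi^+$. Combined with smoothness and strict monotonicity, the inverse function theorem provides a smooth inverse $\mu^{-1} : \R \to (-\pi, \pi)$, so $\mu$ is a diffeomorphism of the stated form. No step is really an obstacle here; the only moment requiring a small trick is recognizing $(s - \sin s \cos s)' = 2 \sin^2 s$, which makes the sign analysis of $\mu'$ essentially trivial. An alternative route would differentiate the Mittag-Leffler series \eqref{IS0} term by term, yielding only manifestly positive terms on $(-\pi, \pi)$, but the direct approach above is shorter.
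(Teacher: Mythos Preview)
Your argument is correct in every detail: the derivation of \eqref{mu}, the computation $\mu'(s)=2(\sin s - s\cos s)/\sin^3 s$, the sign analysis via $g'(s)=s\sin s$, and the boundary limits are all fine.

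The paper does not really give a proof; it merely cites Gaveau and remarks that the lemma ``can be deduced directly by the first equality in \eqref{IS0}''. That route, which you mention as an alternative at the end, differentiates the Mittag--Leffler expansion $f(s)=2s^2\sum_j((j\pi)^2-s^2)^{-1}$ term by term to get $\mu(s)=\sum_j 4s(j\pi)^2/((j\pi)^2-s^2)^2$, from which oddness, $\mu'>0$, and the blow-up at $\pm\pi$ are read off without any trigonometric manipulation. Your direct computation is more elementary and self-contained (no appeal to the series or to uniform convergence for termwise differentiation), at the cost of the small algebraic trick $(s-\sin s\cos s)'=2\sin^2 s$ that you already flagged. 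The series approach, on the other hand, fits more organically with the rest of Section~\ref{SFP}, where \eqref{IS0} is the workhorse for operator convexity and the later estimates. Either route is perfectly adequate here.
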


\begin{lem} \label{n32l}
We have
\begin{gather}
f^{''}(r) > \frac{f'(r)}{r} > 2 \psi(r) \geq 2 \psi(0) > 0, \quad 0 < r < \pi,    \label{Ii1} \\
\psi^{''}(r) > \frac{\psi'(r)}{r} \geq \lim_{r \longrightarrow 0} \frac{\psi'(r)}{r} > 0, \quad 0 < r < \pi,     \label{Ii2} \\
\psi(r) \, \psi^{''}(r) > 2 \psi'(r)^2, \quad 0 \leq r < \pi.    \label{Ii3}
\end{gather}
\end{lem}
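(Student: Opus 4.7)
The plan is to exploit the partial-fraction expansion recorded in \eqref{IS0}. Dividing that identity through by $r^2$ gives
\[
\psi(r) \;=\; 2 \sum_{j=1}^{\infty} \frac{1}{(j\pi)^2 - r^2},
\]
which converges uniformly on compact subsets of $(-\pi,\pi)$ and may therefore be differentiated termwise. Setting $b_j := ((j\pi)^2 - r^2)^{-1} > 0$ for $0 \le r < \pi$, I obtain
\[
\psi(r) = 2 \sum_{j} b_j, \qquad \psi'(r) = 4 r \sum_{j} b_j^2, \qquad \psi''(r) = 4 \sum_{j} b_j^2 + 16 r^2 \sum_{j} b_j^3.
\]
Each series has positive terms, so $\psi$ and $\psi''$ are strictly positive and $\psi'$ is nonnegative on $[0,\pi)$, strictly positive for $r > 0$.

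For \eqref{Ii1}, the identity $f(r) = r^2 \psi(r)$ yields directly
\[
\frac{f'(r)}{r} - 2\psi(r) = r\,\psi'(r), \qquad f''(r) - \frac{f'(r)}{r} = 3 r\,\psi'(r) + r^2\,\psi''(r),
\]
and both right-hand sides are strictly positive for $r \in (0,\pi)$; the bound $\psi(r) \ge \psi(0) = \tfrac13$ is immediate from $\psi' \ge 0$.

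For \eqref{Ii2}, the explicit series give
\[
\frac{\psi'(r)}{r} = 4 \sum_{j} b_j^2, \qquad \psi''(r) - \frac{\psi'(r)}{r} = 16 r^2 \sum_{j} b_j^3.
\]
The former is increasing in $r$ on $[0,\pi)$, since each $b_j$ is, hence it is bounded below by its value $4 \sum_{j} (j\pi)^{-4} > 0$ at $r = 0$; the latter is strictly positive for $r > 0$.

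The inequality \eqref{Ii3} is the only step that is not mere bookkeeping. Expanding using the same series,
\[
\psi\,\psi'' - 2(\psi')^2 \;=\; 8 \Bigl(\sum_{j} b_j\Bigr)\Bigl(\sum_{j} b_j^2\Bigr) + 32 r^2 \Bigl[\Bigl(\sum_{j} b_j\Bigr)\Bigl(\sum_{j} b_j^3\Bigr) - \Bigl(\sum_{j} b_j^2\Bigr)^2\Bigr].
\]
Cauchy--Schwarz applied to the sequences $(b_j^{1/2})$ and $(b_j^{3/2})$ gives $(\sum_{j} b_j^2)^2 \le (\sum_{j} b_j)(\sum_{j} b_j^3)$, so the bracketed quantity is nonnegative; the leading term is strictly positive because every $b_j > 0$. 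Hence $\psi\,\psi'' - 2(\psi')^2 > 0$ on $[0,\pi)$. This last step is the main (albeit mild) obstacle: positivity of $\psi,\psi',\psi''$ alone cannot control a quadratic in the derivatives, and the trick is to recognise the series representation as a Cauchy--Schwarz inequality for the weighted sequences $(b_j^{k/2})$.
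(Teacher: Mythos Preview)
Your argument is correct. It is close in spirit to the paper's, but the route differs in a pleasant way. For \eqref{Ii1} and \eqref{Ii2} the paper appeals to the Taylor expansion $f(s)=\sum_{j\ge 1} b_j s^{2j}$ with $b_j>0$ and reads off the inequalities from positivity of the coefficients; you instead differentiate the partial-fraction series \eqref{IS0} directly and reduce everything to the identities $f'(r)/r-2\psi(r)=r\psi'(r)$ and $f''(r)-f'(r)/r=3r\psi'(r)+r^2\psi''(r)$, together with monotonicity of $b_j(r)=((j\pi)^2-r^2)^{-1}$. For \eqref{Ii3} both arguments are essentially Cauchy--Schwarz: the paper applies H\"older to the integral representation in \eqref{IS}, while you apply the discrete Cauchy--Schwarz $(\sum b_j^2)^2\le(\sum b_j)(\sum b_j^3)$ to the same series. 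The advantage of your presentation is uniformity---all three parts flow from the single expansion $\psi(r)=2\sum_j b_j$; the paper's mix of Taylor series and integral representation is slightly more ad hoc, though equally short.
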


\begin{proof}
A simple and direct proof for \eqref{Ii1} and \eqref{Ii2} is to use the fact that (see the first equality in \eqref{IS}, or \cite[\S 1.411 7, p.\,42]{GR15} with slight modification of notations)
\begin{align*}
f(s) = 1 - s \cot{s} = \sum_{j = 1}^{+\infty} b_j s^{2 j}, \quad \mbox{with } b_j > 0, \, \forall j.
\end{align*}

To prove \eqref{Ii3}, we apply \eqref{IS} and H\"older's inequality. Since
\begin{align*}
\psi(r) = \int_{-\frac{1}{\pi}}^{\frac{1}{\pi}} \frac{d\nu(\lambda)}{1 - r \lambda}, \quad \psi'(r) = \int_{-\frac{1}{\pi}}^{\frac{1}{\pi}} \frac{\lambda}{(1 - r \lambda)^2} \, d\nu(\lambda), \quad \psi^{''}(r) = 2 \int_{-\frac{1}{\pi}}^{\frac{1}{\pi}} \frac{\lambda^2}{(1 - r \lambda)^3} \, d\nu(\lambda),
\end{align*}
by H\"older's inequality, we obtain
\begin{align*}
2 \psi'(r)^2 < 2 \left( \int_{-\frac{1}{\pi}}^{\frac{1}{\pi}} \frac{|\lambda|}{(1 - r \lambda)^2} \, d\nu(\lambda) \right)^2 < \psi(r) \, \psi^{''}(r).
\end{align*}

This concludes the proof of the lemma.
\end{proof}

\begin{lem}
We have
\begin{align} \label{iN38}
\psi(r) > \sqrt{\frac{\psi'(r)}{r}}, \qquad \forall 0 \leq r < \pi.
\end{align}
\end{lem}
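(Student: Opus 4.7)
The plan is to exploit the partial fraction expansion already recorded in (3.4) and the lemma proof above. On $[0,\pi)$, the first equality in \eqref{IS0} and the definition $\psi(r) = f(r)/r^2$ give
\[
\psi(r) = 2 \sum_{j=1}^{\infty} \frac{1}{(j\pi)^2 - r^2},
\]
and the series converges uniformly on compact subsets of $[0,\pi)$, so termwise differentiation yields
\[
\psi'(r) = 4 r \sum_{j=1}^{\infty} \frac{1}{\bigl((j\pi)^2 - r^2\bigr)^2}, \qquad \frac{\psi'(r)}{r} = 4 \sum_{j=1}^{\infty} \frac{1}{\bigl((j\pi)^2 - r^2\bigr)^2}.
\]
(Alternatively, one can read both identities off the integral representation in \eqref{IS} after using the evenness of $d\nu$ to restrict to $\lambda>0$.)

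Next, I would set $a_j = a_j(r) := \bigl((j\pi)^2 - r^2\bigr)^{-1}$ for $j \geq 1$. Since $0 \le r < \pi$, every $a_j$ is strictly positive. With this notation the formulas above read $\psi(r) = 2 \sum_{j\ge 1} a_j$ and $\psi'(r)/r = 4 \sum_{j\ge 1} a_j^2$, so the desired inequality $\psi(r)^2 > \psi'(r)/r$ is equivalent to
\[
\Bigl( \sum_{j=1}^{\infty} a_j \Bigr)^2 > \sum_{j=1}^{\infty} a_j^2.
\]

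Finally, I would expand the square:
\[
\Bigl( \sum_{j=1}^{\infty} a_j \Bigr)^2 = \sum_{j=1}^{\infty} a_j^2 + 2 \sum_{1\le i < j} a_i a_j,
\]
and observe that the double sum on the right is strictly positive because every $a_j > 0$ and there are infinitely many terms. Taking square roots (both sides of the previous display being positive) gives \eqref{iN38}. The case $r=0$ follows either as a limit or directly from $\psi(0)^2 = b_1^2 = 1/9$ versus $\lim_{r\to 0^+}\psi'(r)/r = 2 b_2 = 2/45$. There is no serious obstacle here; the only step that requires a little care is justifying the termwise differentiation and the restriction to $\lambda>0$ in the representation of $\psi'(r)/r$, but both are routine consequences of the uniform convergence on compacts of $[0,\pi)$ and the symmetry of the measure $d\nu$.
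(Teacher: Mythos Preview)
Your proof is correct and follows essentially the same approach as the paper: both derive the series representations $\psi(r)=2\sum_{j\ge 1}\bigl((j\pi)^2-r^2\bigr)^{-1}$ and $\psi'(r)/r=4\sum_{j\ge 1}\bigl((j\pi)^2-r^2\bigr)^{-2}$ from \eqref{IS0}, after which the inequality is immediate. The paper simply states that the series formulas ``imply immediately the lemma,'' whereas you spell out the elementary step $\bigl(\sum a_j\bigr)^2>\sum a_j^2$ for strictly positive $a_j$.
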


\begin{proof}
By the first equality in \eqref{IS0}, we have
\begin{align} \label{N32ei1}
\psi(r) = 2 \sum_{j = 1}^{+\infty} \frac{1}{(j \, \pi)^2 - r^2}, \qquad \frac{\psi'(r)}{r} = 4 \sum_{j = 1}^{+\infty} \left[(j \, \pi)^2 - r^2 \right]^{-2},
\end{align}
which implies immediately the lemma.
\end{proof}

Let $\V$ denote the unique solution of $\tan{s} = s$ in the interval $(\pi, \, 1.5 \, \pi)$, more precisely
\begin{align} \label{cN32}
\sqrt{2} \pi < 4.4933 < \V < 4.4935 < \frac{3}{2} \pi \mbox{ such that } \tan{\V} = \V.
\end{align}
We have the following simple lemma:

\begin{lem} \label{Ll34}
Let $\pi < r < \V$.  We have
\begin{gather}
0 >  \frac{1 -  r \cot{r}}{r^2} = \psi(r) = - 2 \left[ \frac{1}{r^2 - \pi^2} - \sum_{j = 2}^{+\infty} \frac{1}{(j \, \pi)^2 - r^2} \right], \label{32ei2} \\
\K_1 := \frac{\psi'(r)}{r} = 4 \sum_{j = 1}^{+\infty} \left[(j \, \pi)^2 - r^2 \right]^{-2} > 0, \label{32ei3}\\
\K_2 := r^{-1} \left( \frac{\psi'(r)}{r} \right)'  = -16  \left[ \frac{1}{[r^2 - \pi^2]^3} - \sum_{j = 2}^{+\infty} \frac{1}{[(j \, \pi)^2 - r^2]^3} \right] < 0.  \label{32ei4}
\end{gather}
Moreover, we have
\begin{align} \label{32ei5}
 2 \psi(r) \K_2 - 4 \K_1^2 < 0.
\end{align}
\end{lem}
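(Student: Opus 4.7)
My plan is to deduce \eqref{32ei5} from the partial-fraction expansions \eqref{32ei2}, \eqref{32ei3} and \eqref{32ei4} by an algebraic rearrangement, after which the inequality will reduce to two elementary facts already contained in the lemma. Introduce the strictly positive quantities
\[
A := (r^2 - \pi^2)^{-1},\qquad \alpha_j := [(j\pi)^2 - r^2]^{-1}\ (j \geq 2),\qquad S_k' := \sum_{j \geq 2}\alpha_j^k\ (k = 1, 2, 3),
\]
so that \eqref{32ei2}--\eqref{32ei4} become $\psi(r) = -2A + 2S_1'$, $\K_1 = 4A^2 + 4S_2'$, $\K_2 = -16A^3 + 16S_3'$; on $(\pi, \vartheta_1)$ the $j = 1$ term is the sole source of negativity.

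The first step is a direct expansion: multiplying out $\psi\K_2$ and $2\K_1^2$, collecting terms and using the factorization $\alpha_j^3 + 2A\alpha_j^2 + A^2\alpha_j = \alpha_j(\alpha_j + A)^2$ gives
\[
\psi\K_2 - 2\K_1^2 = -32\,\Bigl[A\sum_{j\geq 2}\alpha_j(\alpha_j + A)^2 - \bigl(S_1'S_3' - (S_2')^2\bigr)\Bigr].
\]
Consequently \eqref{32ei5} is equivalent to the inequality
\[
A\sum_{j\geq 2}\alpha_j(\alpha_j + A)^2 > S_1'S_3' - (S_2')^2.
\]

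I then plan to establish this inequality through three elementary estimates. Since $\alpha_j > 0$ we have $(\alpha_j + A)^2 > A^2$, so the left-hand side exceeds $A^3 S_1'$. By Cauchy--Schwarz with the positive weights $\{\alpha_j\}_{j\geq 2}$ (applied as in the proof of \eqref{Ii3}, via $\alpha_j^2 = \alpha_j^{1/2}\cdot\alpha_j^{3/2}$), $(S_2')^2 \leq S_1' S_3'$, hence $S_1' S_3' - (S_2')^2 \leq S_1' S_3'$. It therefore suffices to prove $A^3 > S_3'$. For this I invoke \eqref{cN32}: since $\vartheta_1 < 3\pi/2$, one has $r^2 - \pi^2 < 4\pi^2 - r^2 \leq (j\pi)^2 - r^2$ for every $j \geq 2$, so $\alpha_j < A$; combined with $\psi(r) < 0$ from \eqref{32ei2}, which reads $S_1' < A$, we obtain $S_3' = \sum_{j\geq 2}\alpha_j \cdot \alpha_j^2 < A^2 S_1' < A^3$. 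Chaining the estimates,
\[
A\sum_{j\geq 2}\alpha_j(\alpha_j + A)^2 > A^3 S_1' > S_1' S_3' \geq S_1'S_3' - (S_2')^2,
\]
which closes the argument.

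The main obstacle is the bookkeeping in the first step, namely discovering the rearrangement which isolates the manifestly positive quantity $A\sum_{j\geq 2}\alpha_j(\alpha_j + A)^2$ on one side. Once that factorization is in place, the rest of the proof reduces to the two sharp constraints already present in the hypotheses of the lemma: $r > \pi$, which makes the $j = 1$ summand of $\psi$ negative and (via the third estimate) strictly dominant, and $r < \vartheta_1 < 3\pi/2$, which keeps $A$ strictly larger than every subsequent $\alpha_j$.
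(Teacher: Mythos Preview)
Your proof is correct, but it is considerably more intricate than the argument the paper actually gives. The paper disposes of \eqref{32ei5} in one line: since $\psi(r)<0$ and $\K_2<0$, one has $2\psi(r)\K_2=2|\psi(r)|\,|\K_2|$; dropping the tail sums in \eqref{32ei2} and \eqref{32ei4} gives $|\psi(r)|<2A$ and $|\K_2|<16A^3$, whence $2\psi(r)\K_2<64A^4=4(4A^2)^2<4\K_1^2$, the last step coming from keeping only the $j=1$ term in \eqref{32ei3}. No algebraic rearrangement, no Cauchy--Schwarz, no use of $\psi<0$ in the form $S_1'<A$, and no need for the observation $\alpha_j<A$.

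Your route---expanding $\psi\K_2-2\K_1^2$ exactly, discovering the factorization $\alpha_j(\alpha_j+A)^2$, and then chaining three estimates---is sound and has the merit of revealing more of the internal structure of the expression; in particular it shows precisely how the two hypotheses $\psi<0$ and $r<3\pi/2$ enter. But for the purposes of this lemma the paper's crude product bound is sharper in economy: it uses nothing beyond the sign information already established in \eqref{32ei2}--\eqref{32ei4}, and the entire argument fits in a single displayed inequality.
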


\begin{proof}
We get easily \eqref{32ei2} and \eqref{32ei3} by \eqref{N32ei1}. Estimation \eqref{32ei4} follows directly from \eqref{32ei2} and the fact that $(j \, \pi)^2 - r^2 > r^2 - \pi^2$ for any $j \geq 2$. To get \eqref{32ei5}, by \eqref{32ei2} and \eqref{32ei4}, we have
\[
2 \psi(r) \, \K_2 < 2 \times 2 \times 16 \frac{1}{r^2 - \pi^2} \times \frac{1}{(r^2 - \pi^2)^3} = 4 \times \left( \frac{4}{(r^2 - \pi^2)^2} \right)^2 < 4 \, \K_1^2,
\]
where we have used \eqref{32ei3} in the last inequality. This completes the proof of Lemma \ref{Ll34}.
\end{proof}

Now we are in a position to provide a crucial ingredient of this paper: operator convexity of the function  $- s \cot{s}$. First we recall some notations, see for example \cite{U00}. Let $\T_1$, $\T_2$ be bounded self-adjoint operators on a Hilbert space $\mathcal{H}$. We denote $\T_1 \leq \T_2$ (resp. $\T_1 < \T_2$) if $(\T_1 v, v)  \leq (\T_2 v, v)$ (resp. $(\T_1 v, v)  < (\T_2 v, v)$) for all $v \neq 0$. Let $u$ be a real-valued continuous function defined on a finite interval $J$. Then $u$ is said to be operator convex on $J$ if
\begin{align*}
u\!\left(s \T_1 + (1 - s) \T_2 \right) \leq s  \, u\!\left(\T_1\right) + (1 - s) \, u\!\left(\T_2\right), \quad \forall 0 \leq s \leq 1,
\end{align*}
for all self-adjoint operators $\T_1$, $\T_2 \in \mathbb{B}(\mathcal{H})$ with spectra in $J$.

\begin{lem} \label{OCP}
The function $- s \cot{s}$ is operator convex on $(-\pi, \, \pi)$.
\end{lem}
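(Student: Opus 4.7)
The plan is to exploit the integral representation \eqref{IS},
\begin{align*}
-s \cot s \;=\; -1 + \int_{-1/\pi}^{1/\pi} \frac{s^2}{1 - \lambda s} \, d\nu(\lambda),
\end{align*}
which displays $-s\cot{s}$ as a constant plus a positive superposition of the elementary functions $g_\lambda(s) := s^2/(1 - \lambda s)$. Operator convexity is preserved by the addition of operator-affine functions, by multiplication by a positive scalar, and---by testing the defining inequality against an arbitrary vector $v$ and applying scalar Fubini---by integration against a positive measure. Hence the assertion reduces to showing that $g_\lambda$ is operator convex on $(-\pi, \pi)$ for each $\lambda$ in the support of $d\nu$, namely $\lambda = \pm 1/(j\pi)$ with $j \in \N^*$.

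To handle a fixed such $\lambda \neq 0$, I would use the algebraic identity
\begin{align*}
\frac{s^2}{1 - \lambda s} \;=\; \frac{1}{\lambda^2} \left( \frac{1}{1 - \lambda s} - 1 - \lambda s \right),
\end{align*}
which is straightforward to verify by clearing denominators. For any bounded self-adjoint operator $T$ with spectrum in $(-\pi, \pi)$ one has $|\lambda|\,\|T\| < 1$, so $I - \lambda T$ is strictly positive with spectrum in $(0, 2)$. Since $t \mapsto 1/t$ is operator convex on $(0, \infty)$ (a classical fact, itself a prototypical consequence of Loewner's theorem), and since composition with the affine map $s \mapsto 1 - \lambda s$ preserves operator convexity, the map $s \mapsto (1 - \lambda s)^{-1}$ is operator convex on $(-\pi, \pi)$. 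The summand $-1 - \lambda s$ is operator affine, and multiplication by $1/\lambda^2 > 0$ preserves operator convexity; hence $g_\lambda$ is operator convex on $(-\pi, \pi)$, which completes the reduction.

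The main point to check carefully, rather than a deep obstacle, is the transfer from the scalar to the operator-valued setting. One must verify that for self-adjoint $T_1, T_2$ with spectra in $(-\pi, \pi)$, every convex combination $(1-t) T_1 + t T_2$ satisfies the same bound (which follows from $\|(1-t) T_1 + t T_2\| < \pi$), so that the relevant functional calculi are well defined; and that the scalar inequality $\bigl(g_\lambda((1-t) T_1 + t T_2) v, v\bigr) \leq (1-t)\bigl(g_\lambda(T_1) v, v\bigr) + t \bigl(g_\lambda(T_2) v, v\bigr)$, valid for each $\lambda$ and each vector $v$, may be integrated in $\lambda$ against the positive measure $d\nu$ to yield the desired operator inequality for $-s \cot s$.
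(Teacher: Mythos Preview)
Your proof is correct and follows essentially the same route as the paper: both invoke the integral representation \eqref{IS} and Loewner's theory of operator convex functions. The paper's proof is a one-line citation of Loewner's theorem applied to \eqref{IS} (i.e., the characterization of operator convex functions on a symmetric interval as those of the form $\alpha + \beta s + \int \frac{s^2}{1-\lambda s}\,d\mu(\lambda)$ with $\mu \geq 0$), whereas you unpack this by reducing to the operator convexity of $t \mapsto 1/t$ on $(0,\infty)$ and integrating; the content is the same.
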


\begin{proof}
This result is a direct consequence of Loewner's theorem (see for example \cite{L34}, \cite{BS55}, \cite{HJ91} or \cite{B97}) and  \eqref{IS}.
\end{proof}

\begin{remark}
It follows from \cite[Theorem 5.1]{U00}  that $- s \cot{s}$ is even strongly (so strictly) operator convex on $(-\pi, \, \pi)$ in the sense therein.
\end{remark}

{\bf Proof of Proposition \ref{TH1}:}

\medskip

Proposition \ref{TH1} is a straightforward consequence of  the operator convexity of $-s \cot{s}$ on $(-\pi, \, \pi)$.  \qed

\medskip

\renewcommand{\theequation}{\thesection.\arabic{equation}}
\section{Proof of Theorems \ref{MP1} and  \ref{TH2}}\label{SP2}
\setcounter{equation}{0}

\medskip

We will need the following lemma, which is a special case of \cite[Proposition III.5.3. (Ky Fan)]{B97}. We include an elementary proof for  the sake of clarity.

\begin{lem} \label{NL51}
Let $\mathrm{A}, \mathrm{B}$ be $q \times q$ skew-symmetric real matrices. If $\| i \mathrm{A} \| = \|\mathrm{A} \| = a$ and $\lambda = \alpha + i \beta$ ($\alpha, \beta \in \R$) is an eigenvalue of $i \mathrm{A} + \mathrm{B}$, then we have $|\alpha| \le a$.
\end{lem}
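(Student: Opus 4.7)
The plan is to exploit the observation that, viewed as operators on $\C^q$, the matrix $iA$ is Hermitian while $B$ is anti-Hermitian, and then to apply the classical Bendixson-type bound: the real part of any eigenvalue of a matrix $M$ is controlled by the norm of its Hermitian part. This reduces the whole lemma to a one-line inner product computation combined with the Cauchy-Schwarz inequality.

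First I would verify the Hermiticity properties. For any real skew-symmetric matrix $M$, the conjugate transpose (on $\C^q$) equals $M^* = M^T = -M$, so $M$ is anti-Hermitian; multiplying by $i$ gives $(iM)^* = \bar{i}\,M^* = (-i)(-M) = iM$, so $iM$ is Hermitian. Applied here, $iA$ is Hermitian with operator norm $\|iA\| = a$, and $B$ itself is anti-Hermitian. Thus $iA + B$ has Hermitian part $iA$ and anti-Hermitian part $B$.

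Next, I would choose a unit eigenvector $v \in \C^q$ with $(iA + B)v = \lambda v$ and $\langle v, v\rangle_{\C} = 1$, which exists in finite dimensions. Taking the Hermitian inner product of both sides with $v$ gives
\[
\lambda = \langle (iA + B)v, v\rangle_{\C} = \langle iAv, v\rangle_{\C} + \langle Bv, v\rangle_{\C}.
\]
The first term on the right is real since $iA$ is Hermitian, and the second term is purely imaginary since $B$ is anti-Hermitian. Therefore $\alpha = \mathrm{Re}(\lambda) = \langle iAv, v\rangle_{\C}$, while $\beta$ corresponds to the anti-Hermitian contribution.

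Finally, I would apply the Cauchy-Schwarz inequality to conclude
\[
|\alpha| = |\langle iAv, v\rangle_{\C}| \le \|iA\|\cdot |v|^2 = a,
\]
which is the desired bound. There is no real obstacle here; the only point worth noting is the identification of the Hermitian/anti-Hermitian decomposition of $iA + B$, after which the result is immediate. This is essentially why the lemma is quoted as a very special case of the Ky Fan inequality in \cite[Proposition III.5.3]{B97}.
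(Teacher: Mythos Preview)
Your proof is correct and is genuinely different from the paper's. You use the classical Bendixson-type argument: decompose $iA+B$ into its Hermitian part $iA$ and anti-Hermitian part $B$, pick a unit eigenvector, and read off $\alpha=\langle iAv,v\rangle_{\C}$ directly, bounding it by Cauchy--Schwarz. This is short and completely elementary.

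The paper instead argues by contradiction: assuming $|\alpha|>a$, it notes that $\alpha\,\mathbb{I}_q-iA$ is positive definite, rewrites the eigenvalue equation as $(\alpha\,\mathbb{I}_q-iA)v=(B-i\beta\,\mathbb{I}_q)v$, and concludes that $1$ is an eigenvalue of $(\alpha\,\mathbb{I}_q-iA)^{-1}(B-i\beta\,\mathbb{I}_q)$. Using that $CD$ and $DC$ share eigenvalues with $C=(\alpha\,\mathbb{I}_q-iA)^{-1}$ positive definite, $1$ would then be an eigenvalue of the skew-Hermitian matrix $C^{1/2}(B-i\beta\,\mathbb{I}_q)C^{1/2}$, a contradiction. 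Your approach is more direct and transparent; the paper's argument, while longer, makes the role of positive definiteness and the similarity trick $CD\leftrightarrow DC$ explicit, which is closer in spirit to how the general Ky Fan result is usually proved.
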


\begin{proof}
Suppose that the assertion claimed is false. Then there is an $|\alpha| > a$ and a vector $v \neq 0$ such that
\[
\left( i \mathrm{A} + \mathrm{B} \right) v = (\alpha + i \beta) v,
\]
which implies
\[
\left( \alpha  \, \mathbb{I}_q - i \mathrm{A} \right) v = \left( \mathrm{B} - i \beta  \, \mathbb{I}_q \right) v.
\]

Without loss of generality, we may assume that $\alpha > 0$. By the fact that $\alpha > \| i \mathrm{A} \|$, the matrix $\alpha  \, \mathbb{I}_q - i \mathrm{A}$ is positive definite and invertible, so $1$ is an eigenvalue of
\[
\left( \alpha \, \mathbb{I}_q - i \mathrm{A} \right)^{-1} \left( \mathrm{B} - i \beta  \, \mathbb{I}_q \right).
\]

Set $\mathrm{C} = \left( \alpha  \, \mathbb{I}_q - i \mathrm{A} \right)^{-1}$ which is positive definite. Recall that for any $q \times q$ matrices $A$ and $B$, $AB$ and $BA$ have the same eigenvalues, cf. for example \cite[p.\,11]{B97}. Hence $1$ is an eigenvalue of
\[
\mathrm{C}^{\frac{1}{2}} \left( \mathrm{B} - i \beta  \, \mathbb{I}_q \right) \mathrm{C}^{\frac{1}{2}}
\]
which is obviously skew-Hermitian. This leads to a contradiction.
\end{proof}

In order to use the method of stationary phase,  we need the following key ingredient:

\begin{prop}   \label{1PM}
Let $\tau_0 \in \OA$, $v_0 \in \mathbb{S}^{m - 1}$, and $x_0 \in \R^{q}$. Then the even function
\begin{eqnarray*}
\Xi(\tau_0, v_0; x_0; s) := - \Re \langle U(\tau_0 + i (s v_0)) \cot{U(\tau_0 + i (s v_0))} x_0, x_0 \rangle, \quad s \in \R,
\end{eqnarray*}
is nonincreasing w.r.t. $|s|$. Furthermore, under the assumption of Theorem \ref{TH2}, $\Xi(\theta_0, v_0; x_0; s)$ is strictly decreasing w.r.t. $|s|$.
\end{prop}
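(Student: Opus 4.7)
My plan is to set $M(s) := U(\tau_0 + is v_0) = H + i s K$, where $H := U(\tau_0)$ and $K := U(v_0)$ are complex Hermitian $q \times q$ matrices with $\|H\| < \pi$ since $\tau_0 \in \OA$. Applying Lemma~\ref{NL51} with $A := \tilde U(\tau_0)$ and $B := -s\,\tilde U(v_0)$ (both real skew-symmetric), every eigenvalue of $M(s) = iA + B$ has real part of modulus at most $\|H\| < \pi$, so the spectrum of $M(s)$ avoids $\pm k\pi$ for all $k \in \N^*$. Hence $M(s)\cot M(s)$ is well-defined and real-analytic in $s \in \R$, and the evenness of $\Xi$ follows from $\overline{M(s)} = -M(-s)$, the evenness of $z \mapsto z\cot z$, and the reality of $x_0$ via complex conjugation inside the quadratic form.

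For the monotonicity, I would invoke the Mittag-Leffler expansion \eqref{IS0} via functional calculus, giving the absolutely convergent series
\[
-M\cot M \;=\; -I + 2\sum_{j=1}^{\infty} M^2 \bigl((j\pi)^2 I - M^2\bigr)^{-1}.
\]
Writing $M^2 = (H^2 - s^2 K^2) + i s (HK + KH)$, set $L := HK + KH$ (Hermitian), $P_j(s) := (j\pi)^2 I - H^2 + s^2 K^2$ (Hermitian and positive definite, since $(j\pi)^2 \geq \pi^2 > \|H\|^2$) and $Q(s) := -s L$, so that $(j\pi)^2 I - M(s)^2 = P_j(s) + i Q(s)$. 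The key algebraic identity to prove is that, for real $x_0$,
\[
\Re \bigl\langle (P + iQ)^{-1} x_0, x_0 \bigr\rangle_\C \;=\; \bigl\langle (P + Q P^{-1} Q)^{-1} x_0, x_0 \bigr\rangle_\C ,
\]
by factoring $P + iQ = P^{1/2}(I + iQ') P^{1/2}$ with $Q' := P^{-1/2} Q P^{-1/2}$ Hermitian, expanding $(I + iQ')^{-1} = (I - iQ')(I + Q'^2)^{-1}$, and noticing that the $-iQ'$ term becomes a Hermitian sandwich whose quadratic form against a real $x_0$ is real and therefore contributes only to the imaginary part.

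Combining this with $M^2((j\pi)^2 I - M^2)^{-1} = -I + (j\pi)^2 \bigl((j\pi)^2 I - M^2\bigr)^{-1}$ reduces the monotonicity of $\Xi$ to showing that each Hermitian operator
\[
P_j(s) + s^2 L P_j(s)^{-1} L \;=\; \bigl((j\pi)^2 I - H^2\bigr) + s^2 K^2 + L \,\bigl[\,\bigl((j\pi)^2 I - H^2\bigr)/s^2 + K^2\bigr]^{-1} L
\]
is nondecreasing in $|s|$. This is transparent from the right-hand side: as $|s|$ grows, the positive-definite operator $\bigl((j\pi)^2 I - H^2\bigr)/s^2$ shrinks in the operator order, so the bracketed operator shrinks, its inverse grows, and the Hermitian sandwich by $L$ preserves order; meanwhile $s^2 K^2$ also grows. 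Hence its inverse is nonincreasing in $|s|$, each term of the series is nonincreasing, and summing yields that $\Xi$ itself is nonincreasing in $|s|$.

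To upgrade to strict decrease under the hypotheses of Theorem~\ref{TH2}, note that $\Xi$ is real-analytic on $\R$ (by the spectral argument above), so if it were constant on any subinterval of $(0,\infty)$ it would be constant on $\R$. A holomorphic Taylor expansion of $\Phi(\tau) := \langle U(\tau)\cot U(\tau)\,x_0, x_0\rangle$ around $\tau_0 = \theta_0$, evaluated at $\theta_0 + i s v_0$, yields
\[
\Xi(s) \;=\; -\Phi(\theta_0) + \frac{s^2}{2}\, \bigl\langle \mathrm{Hess}_{\tau = \theta_0}\Phi \cdot v_0,\, v_0 \bigr\rangle + O(s^4),
\]
and the assumed nondegeneracy (combined with concavity from Proposition~\ref{TH1}) forces $\mathrm{Hess}_{\tau = \theta_0}\phi(g_0; \cdot) = \mathrm{Hess}_{\tau = \theta_0}\Phi$ to be strictly negative definite, so $\Xi''(0) < 0$. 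This precludes constancy and, combined with the global monotonicity, gives strict decrease in $|s|$. I expect the main obstacle to be establishing the real-part identity for $(P + iQ)^{-1}$ on a real test vector and arranging the resulting expression in a form that makes the $|s|$-monotonicity manifest; once these are in hand, the positivity inherent in \eqref{IS0} carries the rest.
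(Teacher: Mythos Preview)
Your argument is correct and takes a genuinely different route from the paper's. Both proofs start from the Mittag--Leffler expansion \eqref{IS0}, but the paper works with the \emph{first-order} resolvents $h_\gamma(M)=(I-\gamma M)^{-1}$: it writes $2\Re h_\gamma(U(\tau_0+isv_0)) = h_\gamma(U(\tau_0+isv_0))+h_\gamma(U(\tau_0-isv_0))$, factors out $(I-\gamma U(\tau_0))^{-1/2}$ on each side, and then \emph{diagonalizes} the resulting skew-Hermitian matrix $(I-\gamma U(\tau_0))^{-1/2}\widetilde U(v_0)(I-\gamma U(\tau_0))^{-1/2}$, obtaining an explicit diagonal expression $-2\,\mathrm{diag}\bigl(1-(1+(\gamma\varrho_k)^2 s^2)^{-1}\bigr)$ whose monotonicity in $|s|$ is immediate. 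You instead group the series into \emph{second-order} resolvents $\bigl((j\pi)^2 I-M^2\bigr)^{-1}$, use the Schur-complement-style identity $\Re\langle (P+iQ)^{-1}x_0,x_0\rangle=\langle(P+QP^{-1}Q)^{-1}x_0,x_0\rangle$, and deduce monotonicity from operator-monotonicity of $t\mapsto t^{-1}$ applied to $((j\pi)^2 I-H^2)/s^2+K^2$. For the strict-decrease upgrade, the paper reads it off directly from the diagonal form (some $\varrho_k\neq 0$ forces strict decrease of that summand), whereas you invoke real-analyticity of $\Xi$ together with $\Xi''(0)=\langle\mathrm{Hess}_{\theta_0}\Phi\cdot v_0,v_0\rangle<0$ from the nondegeneracy hypothesis. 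The paper's approach yields more explicit pointwise control (useful for the heat-kernel asymptotics that follow), while yours is more structural and avoids the diagonalization step entirely; both are valid.
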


\begin{proof}
The first statement is trivial if $x_0 = 0$; assume therefore $x_0 \neq 0$.

By \eqref{IS0}, we have
\begin{align} \label{IS1}
- r \cot{r} = -1 + \sum_{j = 1}^{+\infty} \left(   \frac{1}{1 -  \frac{r}{j \, \pi}} +  \frac{1}{1 +  \frac{r}{j \, \pi}}  - 2 \right).
\end{align}

Now, let us define
\begin{align} \label{HDG}
h_{\gamma}(z) := \frac{1}{1 - \gamma \, z}, \qquad -\frac{1}{\pi} \le \gamma \le \frac{1}{\pi}, \  -\pi  <  \Re z  < \pi,
\end{align}
and
\begin{align*}
\overrightarrow{h_{\gamma}}(\tau_0, s v_0) := h_{\gamma}(\widetilde{U}(i \tau_0 - s v_0)) = h_{\gamma}(U(\tau_0 + i s v_0)).
\end{align*}

Remark that
\[
2 \Re \overrightarrow{h_{\gamma}}(\tau_0, s v_0) = h_{\gamma}(U(\tau_0 + i s v_0)) + h^*_{\gamma}(U(\tau_0 + i s v_0)) = h_{\gamma}(U(\tau_0 + i s v_0)) + h_{\gamma}(U(\tau_0 - i s v_0))
\]
which implies that
\[
2 \, \left( \Xi(\tau_0, v_0; x_0; s) - \Xi(\tau_0, v_0; x_0; 0) \right) = \sum_{j = 1}^{+\infty} \left[ 2 \widetilde{h}(\frac{1}{j \, \pi}; \tau_0, v_0; x_0; s) + 2 \widetilde{h}(-\frac{1}{j \, \pi}; \tau_0, v_0; x_0; s) \right]
\]
where we have defined for $0 \neq \gamma \in [-\frac{1}{\pi}, \, \frac{1}{\pi}]$, $\tau_0 \in \OA$, $v_0 \in \mathbb{S}^{m - 1}$ and $x_0 \in \R^{q}$, the even function
\begin{align} \label{nnn1}
2 \widetilde{h}(\gamma; \tau_0, v_0; x_0; s) := \left\langle \left( \overrightarrow{h_{\gamma}}(\tau_0, s v_0) + \overrightarrow{h_{\gamma}}(\tau_0, - s v_0)  - 2 \overrightarrow{h_{\gamma}}(\tau_0, 0)  \right) x_0, x_0 \right\rangle, \quad s \in \R.
\end{align}

It remains to show that $2 \widetilde{h}(\gamma; \tau_0, v_0; x_0; s)$ is decreasing w.r.t. $|s|$.  Notice that $1 - \gamma \, U(\tau_0)$ is positive definite, a simple calculation gets
\begin{align} \label{nnn2}
&\overrightarrow{h_{\gamma}}(\tau_0, s v_0) + \overrightarrow{h_{\gamma}}(\tau_0, - s v_0)  - 2 \overrightarrow{h_{\gamma}}(\tau_0, 0) = (1 - \gamma \, U(\tau_0))^{-\frac{1}{2}} \, \overrightarrow{W_{\gamma}} (s)  \, (1 - \gamma \, U(\tau_0))^{-\frac{1}{2}}
\end{align}
with
\begin{align*}
\overrightarrow{W_{\gamma}} (s) &:= h_{\gamma}\left(-s \, (1 - \gamma \, U(\tau_0) )^{-\frac{1}{2}}  \,  \widetilde{U}(v_0) \,  (1 - \gamma \, U(\tau_0))^{-\frac{1}{2}} \right)  \\
&+  h_{\gamma}\left(s \, (1 - \gamma \, U(\tau_0))^{-\frac{1}{2}}  \,  \widetilde{U}(v_0) \,  (1 - \gamma \, U(\tau_0))^{-\frac{1}{2}} \right) - 2 \, \mathbb{I}_q.
\end{align*}

Since $(1 - \gamma \, U(\tau_0))^{-\frac{1}{2}}  \,  \widetilde{U}(v_0) \,  (1 - \gamma \, U(\tau_0))^{-\frac{1}{2}}$ is skew-Hermitian, there exists a unitary matrix $P_{\gamma} = P_{\gamma}(\tau_0, v_0)$ such that
\begin{align*}
P_{\gamma}^*  \, (1 - \gamma \, U(\tau_0))^{-\frac{1}{2}}  \,  \widetilde{U}(v_0) \,  (1 - \gamma \, U(\tau_0))^{-\frac{1}{2}} \,  P_{\gamma} = \left(
	\begin{array}{ccc}
	i \varrho_1& \ & \ \\
	\  & \ddots & \ \\
	\ & \ & i \varrho_q\\
	\end{array}
	\right)
\end{align*}
where $\varrho_j = \varrho_j(\gamma, \tau_0, v_0) \in \R$. Thus
\begin{align} \label{nnn3}
\overrightarrow{W_{\gamma}} (s) = - 2 P_{\gamma}  \left(
	\begin{array}{ccc}
	1 - \frac{1}{1 + (\gamma \, \varrho_1)^2 \, s^2} & \ & \ \\
	\  & \ddots & \ \\
	\ & \ & 1 - \frac{1}{1 + (\gamma \, \varrho_q )^2 \, s^2}\\
	\end{array}
	\right)  P_{\gamma}^*,
\end{align}
which implies that $2 \, \widetilde{h}(\gamma; \tau_0, v_0; x_0; s)$ decreases w.r.t. $|s|$, so does $\Xi$.

To obtain the conclusion under the assumption of Theorem \ref{TH2}, we argue in a similar way. In fact, in such case, by replacing $s$ by $i s$ with $|s| \ll 1$, it is not hard to find that there exists at least a $\gamma \in \{ \pm \frac{1}{j \pi}; j = 1, 2, \ldots,\}$ such that (see also \eqref{nEs2} below)
\[
2 \, \widetilde{h}(\gamma; \theta_0, v_0; x_0; i s) > 0, \quad \forall 0 < |s| \ll 1,
\]
which implies easily, via \eqref{nnn1}-\eqref{nnn3}, that $\Xi(\theta_0, v_0; x_0; s)$ is strictly decreasing w.r.t. $|s|$.
\end{proof}

We are in a position to provide {\bf the proof of Theorems \ref{MP1} and  \ref{TH2}}:

\medskip

Fix $o \neq g \in \G$. Let us define
\begin{align*}
\D := i \, \OA + \R^m = \{ i \vartheta + \zeta; \ \vartheta \in \OA, \ \zeta \in \R^m \} \subset \C^m.
\end{align*}
It is clear that $V(\lambda) \exp\left\{-\widetilde{\phi}(g; \lambda)/4\right\}$ is holomorphic on $\D$.
For $a > 0$ small enough, set
\[
\D_a := \{ i \vartheta + \zeta; \ \zeta \in \R^m, \  \vartheta \in \OA, \ \mathrm{dist}(\vartheta, \partial \OA) \geq a \} \subset \D.
\]
By the first conclusion of Proposition \ref{1PM},  $\exp\left\{-\widetilde{\phi}(g; \lambda)/4\right\}$ is bounded in $\D_a$ for any fixed $g \in \G$.

On the other hand, we have the following lemma.

\begin{lem} \label{NL53}
$V(\lambda)$ is bounded and decays exponentially at infinity in $\D_a$.
\end{lem}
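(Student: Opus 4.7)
The plan is to control the eigenvalues $\mu_1, \ldots, \mu_q$ (counted with multiplicity) of $U(\lambda)$ on $\D_a$ and then exploit the factorization
\[
|V(\lambda)|^2 = \left| \det \frac{U(\lambda)}{\sinh U(\lambda)} \right| = \prod_{j=1}^{q} \left| \frac{\mu_j}{\sinh \mu_j} \right|,
\]
which reduces both assertions to scalar estimates for $z/\sinh z$ on a horizontal strip of $\C$.

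First, I would show that the $\mu_j$ all lie in the strip $\{z\in\C:\, |\Im z| \le \pi - c_0\, a\}$ for some $c_0 > 0$ depending only on $\U$. For $\lambda = i\vartheta + \zeta \in \D_a$ one has $U(\lambda) = -\widetilde{U}(\vartheta) + i\, \widetilde{U}(\zeta)$, hence $i\,U(\lambda) = -\widetilde{U}(\zeta) + i\,(-\widetilde{U}(\vartheta))$. Applying Lemma~\ref{NL51} with $A = -\widetilde{U}(\vartheta)$ and $B = -\widetilde{U}(\zeta)$ (both real skew-symmetric), every eigenvalue of $i\,U(\lambda)$ has absolute real part at most $\|\widetilde{U}(\vartheta)\| = \|U(\vartheta)\|$; rotating by $i$ translates this to $|\Im \mu_j| \le \|U(\vartheta)\|$ for every $j$. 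The estimate $\|U(\vartheta)\| \le \pi - c_0\, a$ then follows from the fact that $\tau \mapsto \|U(\tau)\|$ is a norm on $\R^m$ (by linear independence of the $U^{(j)}$), hence equivalent to the Euclidean norm, so that the Euclidean hypothesis $\mathrm{dist}(\vartheta, \partial \OA) \ge a$ gives a comparable gap in the $\|U(\cdot)\|$-norm.

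Boundedness of $V$ is then immediate: using $|\sinh(\alpha + i\beta)|^2 = \sinh^2\alpha + \sin^2\beta$, the scalar function $z/\sinh z$ is continuous on the closed strip $|\Im z| \le \pi - c_0\, a$, equals $1$ at $z = 0$, and is $O(|z|\, e^{-|\Re z|})$ at infinity; so it is bounded there by some $M = M(a)$, and $|V(\lambda)| \le M^{q/2}$ throughout $\D_a$.

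The genuinely delicate step is exponential decay as $|\lambda| \to \infty$ in $\D_a$. Since $\OA$ is bounded, this forces $|\zeta| \to \infty$; writing $\zeta = t\,\omega$ with $t = |\zeta|$ and $\omega \in \mathbb{S}^{m-1}$ gives $U(\lambda)/t = U(\omega) - t^{-1}\widetilde{U}(\vartheta)$, a bounded perturbation of the Hermitian matrix $U(\omega)$. Setting $c := \min_{|\omega|=1}\|U(\omega)\|$, which is positive by compactness and the norm property, continuity of the spectrum under perturbation produces, for $t$ large enough, an eigenvalue $\mu^\ast$ of $U(\lambda)$ with $|\mu^\ast| \ge c\,t/2$, uniformly in $\omega$ and in $\vartheta$. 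Combined with the strip bound from the first step, this forces $|\Re \mu^\ast| \ge c\,t/4$, so that $|\mu^\ast/\sinh \mu^\ast| \le C\,t\, e^{-c\,t/4}$; the remaining $q-1$ factors being bounded by the boundedness step, we conclude $|V(\lambda)| \le C'\, e^{-c\,|\zeta|/8}$. The main obstacle I anticipate is making the lower bound on $|\mu^\ast|$ uniform in the direction $\omega$ and in $\vartheta$; this should follow from a compactness argument on $\mathbb{S}^{m-1}$ applied to the roots of the characteristic polynomial of $U(\lambda)/t$, whose coefficients depend polynomially on $\omega$ and $\vartheta/t$.
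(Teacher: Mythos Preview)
Your approach is essentially the paper's: factor $|V|^2$ through the eigenvalues, use Lemma~\ref{NL51} to trap the eigenvalues of $U(\lambda)$ in the horizontal strip $|\Im z|\le \|U(\vartheta)\|\le \pi-c_0 a$, and then reduce to scalar bounds for $z/\sinh z$ on that strip. The paper works with $\widetilde U(\lambda)$ and $\sin$ rather than $U(\lambda)$ and $\sinh$, but this is cosmetic.

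The one substantive difference is in the decay step. The paper argues in a single line via the triangle inequality $\|\widetilde U(i\vartheta+\zeta)\|\ge \|\widetilde U(\zeta)\|-\|\widetilde U(i\vartheta)\|$; this produces a lower bound on the \emph{operator norm}, and the passage to a large \emph{eigenvalue} (which is what the product formula actually needs) is left implicit. Your perturbation argument---viewing $U(\lambda)/t$ as a small perturbation of the Hermitian matrix $U(\omega)$ and invoking continuity of spectra---makes this step explicit and is arguably cleaner, since for a non-normal matrix the spectral radius can be strictly smaller than the norm. The uniformity concern you flag is easily handled: the perturbation has norm at most $C/t$ with $C=\sup_{\vartheta\in\overline{\OA}}\|\widetilde U(\vartheta)\|$, independent of $\omega$ and $\vartheta$, and the roots of a monic polynomial of fixed degree depend uniformly continuously on its coefficients on compact sets; so there is a single threshold $t_0$ beyond which some eigenvalue of $U(\lambda)$ has modulus at least $ct/2$.
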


\begin{proof}
By the definition of $V$ (see \eqref{2c'}), it suffices to show that
\begin{align*}
\mathrm{det} \frac{U(\lambda)}{\sinh{U(\lambda)}} = \mathrm{det} \frac{\widetilde{U}(\lambda)}{\sin{\widetilde{U}(\lambda)}}
\end{align*}
is bounded and decays exponentially at infinity in $\D_a$.

Recall that if $\lambda_1, \ldots, \lambda_q$ (not necessarily distinct) are eigenvalues of $q \times q$ matrix $A$ with $\lambda_j \not\in \{\pm \pi, \pm 2 \pi, \ldots, \}$, we have
\[
\mathrm{det} \frac{A}{\sin{A}} = \prod_{j = 1}^q \frac{\lambda_j}{\sin{\lambda_j}}.
\]
Obviously, there exists a constant $0 < c(a) < \pi$ such that $\| U(\vartheta) \| = \| \widetilde{U}(i \vartheta) \| \leq c(a)$ for any $i \vartheta \in \D_a$.  By the fact that $\|  \widetilde{U}(i \vartheta + \zeta) \| \geq  \| \widetilde{U}(\zeta) \| - \| \widetilde{U}(i \vartheta) \|$, the desired result is a direct consequence of Lemma \ref{NL51}.
\end{proof}

{\em Added:} We can get the following much more stronger estimate without using Lemma \ref{NL51}:
\[
|V(i \theta + \lambda)| \leq V(i \theta) \, V(\lambda), \qquad \forall \theta \in \OA, \  \lambda \in \R^m.
\]

\medskip

Thus, we can deform the contour and write
\begin{align} \label{ME52}
p(\delta_{h^{-1/2}}(g)) = \int_{\R^m} V(\lambda) e^{-\frac{\widetilde{\phi}(g; \lambda)}{4 h}} \, d\lambda = \int_{\R^m} V(i \vartheta + \lambda) e^{-\frac{\widetilde{\phi}(g; i \vartheta + \lambda)}{4 h}} \, d\lambda,
\end{align}
for any $\vartheta \in \OA$.

\medskip

\subsection{End of the proof of Theorem \ref{MP1}}

\medskip

Using the first claim of Proposition \ref{1PM}, the RHS of \eqref{ME52} is controlled by
\begin{align*}
\int_{\R^m} \left| V(i \vartheta + \lambda) e^{-\frac{\widetilde{\phi}(g; i \vartheta + \lambda)}{4 h}} \right| \, d\lambda \leq \int_{\R^m} \left| V(i \vartheta + \lambda) \right| e^{-\frac{\widetilde{\phi}(g; i \vartheta)}{4 h}} \, d\lambda.
\end{align*}
It follows from Lemma \ref{NL53} and the definition of $\phi$ (see \eqref{2cn}) that the last term is majored by $C(\vartheta) e^{-\frac{\phi(g; \vartheta)}{4 h}}$.

Recall that $p_h(g) = C h^{-\frac{q}{2} - m} p(\delta_{h^{-1/2}}(g))$ (see \eqref{sp} and \eqref{2c0}), it deduces from Varadhan's formula (see \eqref{VF}) that
\begin{align*}
-d(g)^2 \leq - \phi(g; \vartheta), \qquad \forall \vartheta \in \OA.
\end{align*}

This finishes the proof of Theorem \ref{MP1}.    \qed

\medskip

\subsection{End of the proof of Theorem \ref{TH2}}

\medskip

Under our assumptions, take $\vartheta = \theta_0$ in \eqref{ME52} which is uniquely determined by \eqref{m}. By Lemma \ref{NL53} and the second claim in Proposition \ref{1PM}, we can use the method of stationary phase (see for example \cite[Theorem 7.7.5]{H90}) to study the asymptotic behavior of $p_h(g_0) = C h^{-\frac{q}{2} - m} p(\delta_{h^{-1/2}}(g_0))$ as $h \longrightarrow 0^+$. More precisely, we have
\begin{align*}
p_h(g_0) = C \,  (8 \pi)^{\frac{m}{2}} \, h^{-\frac{q + m}{2}} V(i \theta_0) e^{-\frac{\phi(g_0; \theta_0)}{4 h}} \left[  \mathrm{det} \left(- \mathrm{Hess}  \, \phi(g_0; \theta_0) \right) \right]^{-\frac{1}{2}} \left( 1 + O(h) \right), \qquad h \longrightarrow 0^+.
\end{align*}

Using Varadhan's formula again, we get $d(g_0)^2 = \phi(g_0; \theta_0)$, which finishes the proof of Theorem \ref{TH2}. \qed

\medskip

\renewcommand{\theequation}{\thesection.\arabic{equation}}
\section{Proof of Theorem \ref{TH3N}} \label{S5s}
\setcounter{equation}{0}

\medskip

In order to show Theorem \ref{TH3N}, we need two simple lemmas. The first one is the following:

\begin{lem} \label{DNL1}
Let $1 \leq k \leq m$. For $\vartheta \in \mathcal{V}$ (defined by \eqref{nD1V}) and $x \in \R^q$, we have
{\begin{align} \label{nbe1}
& \frac{\partial}{\partial \vartheta_k} \left\langle - U(\vartheta) \, \cot{U(\vartheta)} \, x, \, x \right\rangle \nonumber \\
&= \sum_{j = 1}^{+\infty} \frac{1}{j \pi} \left\{ \left\langle \left( 1 - \frac{U(\vartheta)}{j \pi} \right)^{-1} (i \, U^{(k)})  \left( 1 - \frac{U(\vartheta)}{j \pi} \right)^{-1} x, \, x \right\rangle  \right. \nonumber \\
&\qquad \mbox{} \qquad \mbox{} \quad \left. -  \left\langle \left( 1 + \frac{U(\vartheta)}{j \pi} \right)^{-1} (i \, U^{(k)})  \left( 1 + \frac{U(\vartheta)}{j \pi} \right)^{-1} x, \, x \right\rangle  \right\}.
\end{align}}
\end{lem}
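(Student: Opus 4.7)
The plan is to derive the identity from the scalar Mittag--Leffler expansion \eqref{IS1} applied to the matrix $U(\vartheta)$ via the functional calculus, and then to differentiate the resulting series term by term in $\vartheta_k$.

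Since each $U^{(j)}$ is real and skew-symmetric, $iU^{(j)}$ is Hermitian, so $U(\vartheta) = i\sum_{j}\vartheta_j U^{(j)}$ is Hermitian as well. For $\vartheta \in \mathcal{V}$ its spectrum avoids $\pi \mathbb{Z} \setminus \{0\}$, hence the scalar identity \eqref{IS1} can be applied eigenvalue by eigenvalue to yield
\[
-U(\vartheta)\cot U(\vartheta) \;=\; -\mathbb{I}_q \;+\; \sum_{j=1}^{+\infty}\left[\left(1 - \frac{U(\vartheta)}{j\pi}\right)^{-1} + \left(1 + \frac{U(\vartheta)}{j\pi}\right)^{-1} - 2\,\mathbb{I}_q\right],
\]
the series converging absolutely in operator norm.

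The second step is to differentiate each summand, using $\partial U(\vartheta)/\partial \vartheta_k = iU^{(k)}$ together with the resolvent identity $\partial(A^{-1}) = -A^{-1}(\partial A)A^{-1}$. A direct computation gives
\[
\frac{\partial}{\partial \vartheta_k}\left(1 \mp \frac{U(\vartheta)}{j\pi}\right)^{-1} \;=\; \pm\,\frac{1}{j\pi}\left(1 \mp \frac{U(\vartheta)}{j\pi}\right)^{-1}(iU^{(k)})\left(1 \mp \frac{U(\vartheta)}{j\pi}\right)^{-1}.
\]
Taking inner product with $x$, summing over $j$, and combining the two signs produces the right-hand side of \eqref{nbe1} verbatim.

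The only point deserving care is the interchange of $\partial/\partial \vartheta_k$ with the infinite summation, and this is the main (very mild) obstacle. I would handle it by fixing a compact neighborhood $K \subset \mathcal{V}$ of the chosen $\vartheta$: on $K$ the spectrum of $U(\vartheta')$ stays in a fixed compact set at positive distance from $\pi\mathbb{Z}\setminus\{0\}$, so for $j$ large each resolvent $(1 \pm U(\vartheta')/(j\pi))^{-1}$ is uniformly bounded and equals $\mathbb{I}_q + O(1/j)$ in operator norm. The two contributions of opposite sign in the $j$-th term of the derivative series therefore cancel at leading order, leaving a summand of size $O(j^{-2})$ uniformly in $\vartheta' \in K$. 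This absolute and uniform convergence of the differentiated series legitimizes termwise differentiation and completes the proof.
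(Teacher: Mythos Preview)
Your proof is correct and follows essentially the same route as the paper: apply the series expansion \eqref{IS1} via the functional calculus and differentiate termwise using the resolvent identity $\frac{\partial}{\partial \vartheta_k}(1-\gamma U(\vartheta))^{-1}=\gamma(1-\gamma U(\vartheta))^{-1}(iU^{(k)})(1-\gamma U(\vartheta))^{-1}$. The only difference is that you spell out the $O(j^{-2})$ uniform bound justifying the interchange of sum and derivative, a detail the paper leaves implicit.
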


\begin{proof}
It suffices to use \eqref{IS1} and the fact that
\[
\frac{\partial}{\partial \vartheta_k} (1 - \gamma U(\vartheta))^{-1} = \gamma \, (1 - \gamma U(\vartheta))^{-1} (i U^{(k)}) (1 - \gamma U(\vartheta))^{-1}.
\]
\end{proof}

And the second one is the following:

\begin{lem}
Let $g = (x, t)$ and $\theta_0$ be a critical point of $\phi(g; \cdot)$ in $\OA$. For $|s| \geq 1$ and $v \in \R^q$, set
\[
\xNx(s) := h_{\frac{1}{s \pi}}(U(\theta_0)) \, x = \left( 1 - \frac{U(\theta_0)}{s \pi} \right)^{-1} x, \qquad \xNx_*(s) := U(v) \, \xNx(s).
\]
Then
we have
\begin{align} \label{nEs1}
4 t \cdot v = \sum_{j = 1}^{+\infty} \frac{1}{j \pi} \Big\{  \left\langle  U(v) \, \xNx(j), \, \xNx(j)  \right\rangle_{\C}  -  \left\langle  U(v) \,  \xNx(-j), \, \xNx(-j)  \right\rangle_{\C} \Big\}
\end{align}
and
\begin{align} \label{nEs2}
&\langle \He \phi(g; \theta_0) v, \, v \rangle \nonumber \\
&= \sum_{j = 1}^{+\infty} \frac{2}{(j \pi)^2}  \left\{  \left\langle  h_{\frac{1}{j \pi}}(U(\theta_0)) \, \xNx_*(j), \, \xNx_*(j) \right\rangle_{\C} +  \left\langle  h_{-\frac{1}{j \pi}}(U(\theta_0)) \, \xNx_*(-j), \, \xNx_*(-j)  \right\rangle_{\C} \right\}.
\end{align}
In particular, we have $t = 0$ if $\theta_0 = 0$.
\end{lem}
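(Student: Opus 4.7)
The plan is to derive both identities by differentiating the Mittag--Leffler expansion \eqref{IS1} and combining the first-derivative formula already packaged in Lemma \ref{DNL1} with the critical-point condition $\nabla_{\tau}\phi(g;\theta_0) = 0$.

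For \eqref{nEs1}, I would start from the critical-point equation, which in view of $\phi(g;\tau) = \langle U(\tau)\cot U(\tau)\, x, x\rangle + 4\, t \cdot \tau$ rewrites as
\[
4 t_k = \frac{\partial}{\partial \tau_k}\langle -U(\tau)\cot U(\tau)\, x, x\rangle\Big|_{\tau = \theta_0}, \qquad k = 1, \ldots, m.
\]
Lemma \ref{DNL1} gives an explicit series representation for the right-hand side in terms of the resolvents $(\mathbb{I} \mp U(\theta_0)/(j\pi))^{-1}$. Contracting with $v_k$ and summing over $k$, the matrix $\sum_k v_k (iU^{(k)}) = U(v)$ emerges inside each summand. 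Since $U(\theta_0)$ is Hermitian (it equals $i$ times the real skew-symmetric matrix $\widetilde U(\theta_0)$), each resolvent $h_{\pm 1/(j\pi)}(U(\theta_0))$ is Hermitian as well, which lets one move one such resolvent across the sesquilinear pairing $\langle \cdot, \cdot\rangle_{\C}$. Inserting the notation $\xNx(\pm j) = h_{\pm 1/(j\pi)}(U(\theta_0))\, x$ produces \eqref{nEs1}.

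For \eqref{nEs2}, the linear term $4\, t \cdot \tau$ contributes nothing to the Hessian, so only $\langle U(\tau)\cot U(\tau)\, x, x\rangle$ needs to be differentiated twice. The elementary identity
\[
\partial_{\tau_k} h_\gamma(U(\tau)) = \gamma\, h_\gamma(U(\tau))\, (iU^{(k)})\, h_\gamma(U(\tau)),
\]
obtained by differentiating $(\mathbb{I}-\gamma U(\tau))\,(\mathbb{I}-\gamma U(\tau))^{-1} = \mathbb{I}$, iterates via the product rule to
\[
\partial_{\tau_l}\partial_{\tau_k} h_\gamma = \gamma^2 \bigl\{ h_\gamma (iU^{(l)}) h_\gamma (iU^{(k)}) h_\gamma + h_\gamma (iU^{(k)}) h_\gamma (iU^{(l)}) h_\gamma \bigr\}.
\]
Contracting with $v_l v_k$ yields the factor $2\gamma^2\, h_\gamma U(v) h_\gamma U(v) h_\gamma$. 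Summing the resulting contributions over the poles $\gamma = \pm 1/(j\pi)$ according to the expansion \eqref{IS1} and then over $j$, and finally absorbing the outermost copy of $h_\gamma$ into the vectors $\xNx_*(\pm j) = U(v)\,\xNx(\pm j)$ using Hermiticity of $h_\gamma(U(\theta_0))$ and of $U(v)$, produces \eqref{nEs2}. The final assertion is immediate from \eqref{nEs1}: when $\theta_0 = 0$ one has $h_{\pm 1/(j\pi)}(U(\theta_0)) = \mathbb{I}$, so $\xNx(j) = \xNx(-j) = x$ for every $j$, the two terms in each summand of \eqref{nEs1} cancel, and we conclude $4\, t \cdot v = 0$ for all $v \in \R^m$, hence $t = 0$.

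The one technical point is the justification of termwise differentiation of the series in \eqref{IS1}. Since $\theta_0 \in \OA$, i.e.\ $\|U(\theta_0)\| < \pi$, there is a neighborhood of $\theta_0$ on which $\|U(\tau)\| \le \pi - \varepsilon$; on this neighborhood $\|h_{\pm 1/(j\pi)}(U(\tau))\|$ is uniformly bounded in $j$, while the $j$-th summand of $\sum_j \langle (h_{1/(j\pi)} + h_{-1/(j\pi)} - 2\mathbb{I})\, x, x\rangle$ is of size $O(j^{-2})$. The once- and twice-differentiated series enjoy the same $O(j^{-2})$ control, which legitimates the interchange of $\sum$ and $\partial^2$; after that, the algebra is routine, and the only bookkeeping worth watching is the $\pm$-symmetry between the two branches $\gamma = \pm 1/(j\pi)$.
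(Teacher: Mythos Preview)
Your proposal is correct and follows essentially the same route as the paper: \eqref{nEs1} comes from contracting Lemma~\ref{DNL1} against $v$ and using the critical-point condition, while \eqref{nEs2} comes from twice differentiating the resolvent $h_\gamma(U(\tau))$ and summing over the poles via \eqref{IS1}. The paper's proof is terser---it packages the second derivative as the single directional formula
\[
\frac{d^2}{ds^2}\Big|_{s=0} h_\gamma(U(\theta_0+sv)) = 2\gamma^2\, h_\gamma(U(\theta_0))\,U(v)\,h_\gamma(U(\theta_0))\,U(v)\,h_\gamma(U(\theta_0))
\]
rather than computing mixed partials and contracting---but the content is identical, and your added remarks on termwise differentiation and on the $\theta_0=0$ case are welcome details the paper leaves implicit.
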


\begin{proof}
The proof is easy. Indeed, \eqref{nEs1} is a direct consequence of \eqref{nbe1} and the fact that
\[
4 t = - \nabla_{\theta = \theta_0} \langle U(\theta) \cot{U(\theta)} x, \, x \rangle.
\]

To prove \eqref{nEs2}, by \eqref{IS1} and \eqref{HDG}, it suffices to use
\begin{align*}
\frac{d^2}{ds^2}\Big|_{s = 0} h_{\gamma}(U(\theta + s \, v))  = 2 \gamma^2 \, h_{\gamma}(U(\theta)) U(v) h_{\gamma}(U(\theta)) U(v) h_{\gamma}(U(\theta)).
\end{align*}
\end{proof}

The following proposition is an improvement of Theorem \ref{TH3N}:

\begin{prop} \label{nP65}
Let $g = (x, t)$, $v_0 \in \s^{m - 1}$ and $\theta_0$ be a critical point of $\phi(g; \cdot)$ in $\OA$. Then the following are equivalent: \\
(a) $\langle \He \phi(g; \theta_0) v_0, \, v_0 \rangle = 0$; \\
(b) $t \cdot v_0 = 0$ and $U(v_0) U(\theta_0)^j x = 0$ for all $j = 0, 1, \ldots$; \\
(c) $t \cdot v_0 = 0$ and $\phi(g; \theta_0 + s v_0) = \phi(g; \theta_0)$ for all $s \in \R$ satisfying $\theta_0 + s v_0 \in \OA$; \\
(d) $\phi(g; \theta_0 + s_0 v_0) = \phi(g; \theta_0)$ for some $s_0 \neq 0$ such that $\theta_0 + s_0 v_0 \in \OA$.
\end{prop}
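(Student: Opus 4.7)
The plan is to prove the cyclic implications (a) $\Rightarrow$ (b) $\Rightarrow$ (c) $\Rightarrow$ (d) $\Rightarrow$ (a), using the explicit formulas \eqref{nEs1} and \eqref{nEs2} together with the concavity of $\phi(g; \cdot)$ established in Proposition \ref{TH1}.

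For (a) $\Rightarrow$ (b), the crucial observation is that each operator $h_{\pm 1/(j\pi)}(U(\theta_0)) = (1 \mp U(\theta_0)/(j\pi))^{-1}$ appearing in \eqref{nEs2} is Hermitian and positive definite, since $U(\theta_0)$ is Hermitian with $\|U(\theta_0)\| < \pi \leq j\pi$. Thus every summand in \eqref{nEs2} is non-negative, and $\langle \He \phi(g; \theta_0) v_0, v_0 \rangle = 0$ forces $\xNx_*(\pm j) = U(v_0) (1 \mp U(\theta_0)/(j\pi))^{-1} x = 0$ for every $j \geq 1$. Substituting this into \eqref{nEs1} immediately gives $t \cdot v_0 = 0$. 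For the polynomial kernel condition, expand
\begin{align*}
(1 \mp U(\theta_0)/(j\pi))^{-1} x = \sum_{k \geq 0} (\pm 1/(j\pi))^k U(\theta_0)^k x
\end{align*}
(convergent in operator norm since $\|U(\theta_0)\| < \pi$). The $\C^q$-valued analytic function $F(z) := \sum_{k \geq 0} z^k U(v_0) U(\theta_0)^k x$ then vanishes at the sequence $z = \pm 1/(j\pi)$, which accumulates at $0$; hence all its Taylor coefficients vanish, giving $U(v_0) U(\theta_0)^k x = 0$ for every $k \geq 0$.

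For (b) $\Rightarrow$ (c), the previous argument reverses: from $U(v_0) U(\theta_0)^k x = 0$ one reads off $U(v_0) (1 - \alpha U(\theta_0))^{-1} x = 0$ for $\alpha = \pm 1/(j\pi)$. Using the Neumann expansion
\begin{align*}
(1 - \alpha U(\theta_0) - s \alpha U(v_0))^{-1} x = \sum_{k \geq 0} \bigl[(1 - \alpha U(\theta_0))^{-1} s \alpha U(v_0)\bigr]^k (1 - \alpha U(\theta_0))^{-1} x,
\end{align*}
every $k \geq 1$ term vanishes, so $(1 - \alpha U(\theta_0 + s v_0))^{-1} x = (1 - \alpha U(\theta_0))^{-1} x$ for $|s|$ small. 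Summing \eqref{IS1} termwise in $j$ yields $\langle U(\theta_0 + s v_0) \cot U(\theta_0 + s v_0) x, x\rangle = \langle U(\theta_0) \cot U(\theta_0) x, x\rangle$, and combined with $t \cdot v_0 = 0$ this gives $\phi(g; \theta_0 + s v_0) = \phi(g; \theta_0)$ for small $s$. Real-analyticity of both sides in $s$ on the open segment $\{s : \theta_0 + s v_0 \in \OA\}$ extends the identity to the whole segment.

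The implication (c) $\Rightarrow$ (d) is trivial. For (d) $\Rightarrow$ (a), since $\phi(g; \cdot)$ is concave on $\OA$ and $\theta_0$ is a critical point, it is automatically a global maximizer (as noted in the paragraph preceding Theorem \ref{TH2}). The one-variable function $\varphi(s) := \phi(g; \theta_0 + s v_0)$ is therefore concave, attains its maximum at $s = 0$, and achieves the same maximum value at $s = s_0 \neq 0$; hence $\varphi$ is constant on the closed interval joining $0$ and $s_0$, so $\varphi''(0) = \langle \He \phi(g; \theta_0) v_0, v_0 \rangle = 0$. The main technical obstacle is the termwise summation and analytic continuation in step (b) $\Rightarrow$ (c); the (a) $\Leftrightarrow$ (b) equivalence is essentially algebraic once the positivity of each summand in \eqref{nEs2} is recognized, and (d) $\Rightarrow$ (a) is a clean concavity argument in one variable.
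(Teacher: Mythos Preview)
Your proof is correct and follows essentially the same route as the paper's: the same positivity-of-summands argument from \eqref{nEs2} for (a) $\Rightarrow$ (b), the same Neumann-series computation plus analytic continuation for (b) $\Rightarrow$ (c), and the same one-variable concavity argument for (d) $\Rightarrow$ (a). The only cosmetic differences are that the paper packages (a) $\Leftrightarrow$ (b) as a single equivalence (phrasing the extraction of $U(v_0)U(\theta_0)^k x = 0$ as ``induction and a limiting argument'' rather than via your accumulating-zeros argument for $F(z)$), and that the paper invokes holomorphicity of $z \mapsto \phi(g; \theta_0 + z v_0)$ where you invoke real-analyticity in $s$; both amount to the same identity-principle step.
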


\begin{proof}
We first prove the equivalence of (a) and (b). Notice that $h_{\gamma}(U(\theta_0))$ is positive definite for all $-\pi^{-1} \leq \gamma \leq \pi^{-1}$. By \eqref{nEs2}, the assertion (a) is equivalent to the fact that
\[
0 = U(v_0) \left(1 - \frac{U(\theta_0)}{k \pi} \right)^{-1} x = \sum_{j = 0}^{+\infty} (k \pi)^{-j} \, U(v_0) \, U(\theta_0)^j \, x, \qquad \forall k = \pm 1, \pm 2, \ldots,
\]
which is equivalent to, by induction and a limiting argument, $U(v_0) U(\theta_0)^j x = 0$ for all $j = 0, 1, \ldots$. Moreover, it follows from \eqref{nEs1} that $t \cdot v_0 = 0$.

Now we show that (b) implies (c). Notice that the function of $z$, $\phi(g; \theta_0 + z v_0)$ is holomorphic in a neighborhood of $\{s \in \R; \  \theta_0 + s v_0 \in \OA\}$. It remains to show that $\phi(g; \theta_0 + s v_0) = \phi(g; \theta_0)$ for $|s| \ll 1$. Indeed, by \eqref{IS1}, it suffices to remark that it deduces from the second condition in (b) that we have for any $-\pi^{-1} \leq \gamma \leq \pi^{-1}$ and $\| s \gamma U(v_0) (1 - \gamma \, U(\theta_0))^{-1} \| < 1$
\begin{align*}
&\left[ (1 - \gamma (U(\theta_0 + s v_0)))^{-1} - (1 - \gamma \, U(\theta_0))^{-1} \right] x \\
&= \sum_{j = 1}^{+\infty} (s \gamma)^j (1 - \gamma \, U(\theta_0))^{-1} \left[ U(v_0) (1 - \gamma \, U(\theta_0))^{-1} \right]^j x = 0,
\end{align*}
where we have used in the last equality
\[
U(v_0) (1 - \gamma \, U(\theta_0))^{-1} \, x = \sum_{j = 0}^{+\infty} U(v_0) \, \left( \gamma \, U(\theta_0) \right)^j \, x = 0.
\]

It is trivial that (c) implies (d). To show that (d) implies (a), it is enough to remark that $\phi(g; \theta_0 + s v_0) = \phi(g; \theta_0)$ for any $s$ between $0$ and $s_0$ by the concavity of $\phi(g; \cdot)$.
\end{proof}

\begin{remark}
The second condition in (b) is inspired by \cite[Lemma 10]{MM16}.
\end{remark}

\medskip

For $g \in \OM_2$ defined by \eqref{OM2}, set
\begin{align} \label{Som2}
\Sigma_g  := \left\{  \theta; \,  \theta \mbox{ is a critical point (i.e. global maximizer) of  $\phi(g; \cdot)$ in $\OA$}  \right\}.
\end{align}
In particular, we have the following simple observation that should be useful to study short-time asymptotic behavior of the heat kernel:

\begin{cor}
Let $g = (x, t) \in \OM_2$. There exists $\theta(g) \in \Sigma_g$ and $k(g)$-dimensional linear subspace of $\R^m$, $\Pi_g$ such that
\[
\Sigma_g = \OA \cap \left( \theta(g) + \Pi_g \right), \qquad t \cdot v = 0, \  \forall v \in  \Pi_g.
\]
Moreover,  the orthogonal projection of $x$ on $\pi^2$-eigenspace of $U^2( \theta(g) + v)$ is zero for any $v  \in  \Pi_g$ satisfying $\theta(g) + v \in \partial \OA$. Furthermore, if $0 \in \Sigma_g$, then $t = 0$.
\end{cor}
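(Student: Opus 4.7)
The plan is to define $\Pi_g$ as the kernel of the Hessian of $\phi(g;\cdot)$ at a chosen global maximizer, and then let Proposition \ref{nP65} translate the algebraic statement ``$v\in\Pi_g$'' into the analytic statement ``$\phi(g;\cdot)$ is constant in the direction $v$'' (together with the orthogonality $t\cdot v=0$).

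First I would pick any $\theta(g)\in\Sigma_g$ and set $\Pi_g:=\ker\He\phi(g;\theta(g))$. Since $\phi(g;\cdot)$ is concave by Proposition \ref{TH1} and $\theta(g)$ is an interior maximizer, the matrix $\He\phi(g;\theta(g))$ is negative semi-definite, so $v\in\Pi_g$ if and only if $\langle\He\phi(g;\theta(g))v,v\rangle=0$. Applying Proposition \ref{nP65} at $\theta_0=\theta(g)$ to each unit direction $v\in\Pi_g$ then simultaneously delivers $t\cdot v=0$ and $\phi(g;\theta(g)+sv)\equiv\phi(g;\theta(g))$ for every $s\in\R$ with $\theta(g)+sv\in\OA$. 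This immediately gives the identity $t\cdot v=0$ for all $v\in\Pi_g$ and the inclusion $\OA\cap(\theta(g)+\Pi_g)\subseteq\Sigma_g$.

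For the opposite inclusion, given $\theta'\in\Sigma_g\setminus\{\theta(g)\}$, I would put $v:=\theta'-\theta(g)$ and observe that $\phi(g;\theta(g)+v)=\phi(g;\theta')=\phi(g;\theta(g))$ since both are global maxima; the implication (d)$\Rightarrow$(a) in Proposition \ref{nP65} then forces $v\in\Pi_g$, yielding $\Sigma_g=\OA\cap(\theta(g)+\Pi_g)$. Because $g\in\OM_2$ ensures at least two distinct maximizers, $k(g):=\dim\Pi_g\geq1$. For the boundary claim, fix $v\in\Pi_g$ with $\theta(g)+v\in\partial\OA$; the identity $\phi(g;\theta(g)+sv)\equiv\phi(g;\theta(g))$ on $[0,1)$ yields
\[
\mathrm{cl}\,\phi(g;\theta(g)+v)\geq\limsup_{s\to1^-}\phi(g;\theta(g)+sv)=\phi(g;\theta(g))>-\infty,
\]
so Remark \ref{nRk21}(1) forces the orthogonal projection of $x$ onto the $\pi^2$-eigenspace of $U^2(\theta(g)+v)$ to vanish (otherwise the closure at that boundary point would equal $-\infty$). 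Finally, when $0\in\Sigma_g$ I would take $\theta(g)=0$ and invoke the last sentence following \eqref{nEs2}, which asserts $t=0$ as soon as $\theta_0=0$ is a critical point (equivalently, one may feed $\theta_0=0$ into \eqref{nEs1} and note that the two bracketed terms collapse to equal quantities and cancel).

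The whole argument is essentially bookkeeping once Proposition \ref{nP65} is available; the most delicate point is the boundary statement, where one must carefully match the one-sided left limit along a line segment in $\OA$ with the $\limsup$ definition of $\mathrm{cl}\,\phi(g;\cdot)$ and then invoke the dichotomy of Remark \ref{nRk21}(1) to conclude the eigenspace projection condition.
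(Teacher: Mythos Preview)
Your proposal is correct and follows precisely the route the paper intends: the corollary is stated as a ``simple observation'' immediately after Proposition \ref{nP65}, and your argument---defining $\Pi_g$ as the Hessian kernel at a chosen maximizer and then letting the equivalences (a)$\Leftrightarrow$(c)$\Leftrightarrow$(d) do the work for both inclusions, with Remark \ref{nRk21}(1) handling the boundary claim and the lemma containing \eqref{nEs1} handling the case $0\in\Sigma_g$---is exactly the intended unpacking.
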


Now, we provide a sufficient condition so that $g \in \M$ which should be useful in application. We define as in \cite[\S 3]{MM16} $\mathcal{E} := \{ \widetilde{U}(\theta); \, \theta \in \R^m \}$ and
\[
\mathcal{Q} := \{ \theta; \, \widetilde{U}(\theta) \mbox{ has maximal number of distinct eigenvalues among the elements of $\mathcal{E}$} \}
\]
which is a homogeneous Zariski-open subset of  $\R^m$.

\begin{prop} \label{nPP510}
Let $\theta \in \OA \cap \mathcal{Q}$ and let $x$ be such that the orthogonal projection of $x$ on each eigenspace of $U^2(\theta)$ is nonzero. Then for $t =  -\frac{1}{4} \nabla_{\theta} \langle U(\theta) \cot{U(\theta)} \, x, x  \rangle$, we have $(x, t) \in \M$.
\end{prop}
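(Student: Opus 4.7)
The plan is to show directly that $\theta$ is a \emph{non-degenerate} critical point of $\phi(g;\cdot)$, which by the definition of $\M$ yields $(x,t)\in\M$. That $\theta$ is a critical point is immediate from the definition of $t$. Non-degeneracy of $\He\,\phi(g;\theta)$ is equivalent, via the equivalence (a)$\Leftrightarrow$(b) in Proposition~\ref{nP65}, to the statement that no nonzero $v_0\in\R^m$ satisfies $U(v_0)\,U(\theta)^j\,x=0$ for every $j\ge 0$. I will suppose such a $v_0\neq 0$ exists and derive $v_0=0$.

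Decompose $x=\sum_k x_k$ with $x_k=P_k(\theta)x\in F_k(\theta)$, where $F_k(\theta)\subset\C^q$ is the eigenspace of the Hermitian operator $U(\theta)$ associated with its $k$-th distinct (real) eigenvalue $\lambda_k$. Since $F_{-\mu}=\overline{F_\mu}$, the real eigenspace of $U^2(\theta)$ for the eigenvalue $\mu^2$ is $\{u+\bar u:u\in F_\mu\}$, and the orthogonal projection of the real vector $x$ onto it equals $x_\mu+\overline{x_\mu}$; the hypothesis on $x$ therefore translates into $x_k\neq 0$ for every $k$. A Vandermonde argument applied to $\sum_k\lambda_k^j\,U(v_0)x_k=0$ (with the distinct $\lambda_k$'s and $j=0,1,\ldots$) then yields $U(v_0)x_k=0$ for every $k$.

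The crux of the argument is the identity
\[
U(\theta+sv_0)\,x_k \;=\; U(\theta)x_k+s\,U(v_0)x_k \;=\; \lambda_k\,x_k, \qquad \forall s\in\R,
\]
so $\lambda_k$ remains an eigenvalue of $U(\theta+sv_0)$ for every $s$, witnessed by $x_k\neq 0$. Since $\mathcal{Q}$ is open and $\theta\in\mathcal{Q}$, $\theta+sv_0\in\mathcal{Q}$ for $|s|$ small, and on $\mathcal{Q}$ the number of distinct eigenvalues and their multiplicities are constant. By continuity of the eigenvalue branches, for $|s|$ small the branches $\lambda_j(s)$ remain mutually disjoint and close to $\lambda_j$; since $\lambda_k$ is in the spectrum of $U(\theta+sv_0)$, it coincides with exactly one branch $\lambda_{j^*}(s)$, and continuity together with $\lambda_{j^*}(0)=\lambda_k$ forces $j^*=k$. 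Hence $\lambda_k(s)\equiv\lambda_k$ for small $s$, and the full spectrum of $U(\theta+sv_0)$ (counted with multiplicities) coincides with that of $U(\theta)$.

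Consequently $\operatorname{tr}U(\theta+sv_0)^2=\sum_k d_k\lambda_k^2$ is constant in $s$; expanding
\[
\operatorname{tr}\bigl(U(\theta)+sU(v_0)\bigr)^2 = \operatorname{tr}U(\theta)^2 + 2s\,\operatorname{tr}\bigl(U(\theta)U(v_0)\bigr) + s^2\,\operatorname{tr}U(v_0)^2,
\]
the vanishing of the coefficient of $s^2$ gives $\operatorname{tr}U(v_0)^2=0$. As $U(v_0)$ is Hermitian, $\operatorname{tr}U(v_0)^2=\|U(v_0)\|_{\mathrm{HS}}^2\ge 0$ with equality iff $U(v_0)=0$, and the linear independence of $U^{(1)},\ldots,U^{(m)}$ then forces $v_0=0$, the desired contradiction. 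The main obstacle is the spectral pinning $\lambda_k(s)\equiv\lambda_k$: one must rule out that the branch issued from $\lambda_k$ drifts away while $\lambda_k$ re-emerges as some other eigenvalue. This is precisely where the openness of $\mathcal{Q}$ (i.e.\ the local constancy of the multiplicity pattern), together with the continuity of the eigenvalue branches in $s$, is used essentially.
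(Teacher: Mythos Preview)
Your proof is correct and follows the same route as the paper: reduce non-degeneracy to condition~(b) of Proposition~\ref{nP65}, then show that $U(v_0)U(\theta)^j x=0$ for all $j\ge 0$ forces $v_0=0$. The paper simply cites \cite[Lemma~10]{MM16} for this last implication, whereas you supply a self-contained argument (Vandermonde to isolate $U(v_0)x_k=0$, then spectral pinning via openness of $\mathcal{Q}$, then the trace identity). Your version has the advantage of not relying on an external reference; the paper's version is shorter but opaque to a reader without \cite{MM16} at hand.
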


\begin{proof}
This result is a direct consequence of \cite[Lemma 10]{MM16} and Proposition \ref{nP65}.
\end{proof}

To finish this section, we provide the

\subsection{Proof of Proposition \ref{nPr210}}

\begin{proof}
Obviously, it suffices to show the first claim. Let $t_0 \in \R^m$ and $x_0 \in \R^q \setminus\! \{0\}$. Suppose that for any $\tau \in \partial \OA$, the orthogonal projection of $x_0$ on $\pi^2$-eigenspace of $U^2(\tau)$ is nonzero, we shall prove $g_0 = (x_0, t_0) \in \M$.

By Remark \ref{nRk21}, the reference function $\phi(g_0; \cdot)$ attains its maximum in $\OA$, since $\mathrm{cl} \phi(g_0; \tau) = -\infty$ for any $\tau \in \partial \OA$. Then $g_0 \in \OM$.

Let $\theta_0 \in \OA$ be a critical point of $\phi(g_0; \cdot)$. It remains to show that $\theta_0$ is nondegenerate. By contradiction, assume that $\theta_0$ is degenerate. Then, by (c) in Proposition \ref{nP65}, there exist $v_0 \in \s^{m - 1}$ and $s_0 > 0$ such that $\theta_0 + s_0 \, v_0 \in \partial \OA$,
\[
\theta_0 + s \, v_0 \in \OA, \quad \phi(g_0; \theta_0) = \phi(g_0; \theta_0 + s \, v_0), \quad \forall 0 \le s < s_0.
\]
Using Remark \ref{nRk21} again, we get that $\mathrm{cl}\phi(g_0; \theta_0 + s_0 \, v_0) \neq - \infty$, which gives a contradiction.

This finishes the proof of Proposition  \ref{nPr210}.
\end{proof}

\medskip

\renewcommand{\theequation}{\thesection.\arabic{equation}}
\section{Proof of Theorem \ref{THN4}}  \label{PTHN4}
\setcounter{equation}{0}

\medskip

Let us begin by the proof of the characterization of ``good'' normal geodesics. Set
\[
\xNx(j) := \left( 1 - \frac{U(\theta_0)}{j \pi} \right)^{-1} x_0, \qquad j \in \Z \setminus \{0\}.
\]
By \eqref{GEn2} and \eqref{nbe1}, it suffices to show that for all $1 \leq k \leq m$, we have
\begin{align} \label{nXE1}
&\sum_{j = 1}^{+\infty} \frac{1}{j \pi} \Big\{ \left\langle U^{(k)}  \, \xNx(j) , \, \xNx(j) \right\rangle_{\C}  -  \left\langle  U^{(k)} \, \xNx(-j), \, \xNx(-j) \right\rangle_{\C}  \Big\} \nonumber \\
&= \frac{2}{i} \int_0^1 \left\langle U^{(k)} \frac{\sin{(s U(\theta_0))}}{\sin{U(\theta_0)}} e^{(s - 1) \widetilde{U}(\theta_0)} \, x_0,  \,  \frac{U(\theta_0)}{\sin{U(\theta_0)}} e^{(2 s - 1) \widetilde{U}(\theta_0)} \, x_0 \right\rangle_{\C} \, ds.
\end{align}

We show the equality via the Spectral Theorem. More precisely, let $\lambda_1 < \ldots < \lambda_{k_0}$ denote the distinct eigenvalues of the Hermitian matrix $U(\theta_0)$, and $u_l \in \C^q$ ($1 \leq l \leq k_0$) the orthogonal projection of $x_0$ onto the $\lambda_l$-eigenspace of $U(\theta_0)$.
Then $x_0 = \sum_{l = 1}^{k_0} u_l$, and we have for a suitable function $W$,
\[
W(U(\theta_0))x_0 = \sum_{l = 1}^{k_0} W(\lambda_l) u_l \mbox{ \ and in particular \ } \xNx(j) = \sum_{l = 1}^{k_0} (1 - \frac{\lambda_l}{j \pi})^{-1} u_l.
\]

Now, the LHS of \eqref{nXE1} equals to
\begin{align} \label{nXE2}
\sum_{\alpha, \beta = 1}^{k_0} C(\alpha, \beta) \, \langle U^{(k)} u_{\alpha}, \, u_{\beta} \rangle_{\C}
\end{align}
with
\[
C(\alpha, \beta) := \sum_{j = 1}^{+\infty} \frac{1}{j \pi} \left[ (1 - \frac{\lambda_{\alpha}}{j \pi})^{-1} (1 - \frac{\lambda_{\beta}}{j \pi})^{-1} - (1 + \frac{\lambda_{\alpha}}{j \pi})^{-1} (1 + \frac{\lambda_{\beta}}{j \pi})^{-1} \right].
\]

In the case where $\alpha \neq \beta$, we have
\begin{align*}
C(\alpha, \beta) &= \frac{2}{\lambda_{\alpha} - \lambda_{\beta}} \sum_{j = 1}^{+\infty} \frac{(\frac{\lambda_{\alpha}}{j \pi})^2 - (\frac{\lambda_{\beta}}{j \pi})^2}{[ 1 - (\frac{\lambda_{\alpha}}{j \pi})^2] \cdot [ 1 - (\frac{\lambda_{\beta}}{j \pi})^2]} \\
&= \frac{2}{\lambda_{\alpha} - \lambda_{\beta}} \sum_{j = 1}^{+\infty} \left[ \frac{\lambda^2_{\alpha}}{(j \pi)^2 - \lambda^2_{\alpha}} - \frac{\lambda^2_{\beta}}{(j \pi)^2 - \lambda^2_{\beta}} \right] \\
&= \frac{\lambda_{\beta} \cot{\lambda_{\beta}} - \lambda_{\alpha} \cot{\lambda_{\alpha}} }{\lambda_{\alpha} - \lambda_{\beta}}
\end{align*}
where we have used \eqref{IS0} in the last equality.

In the opposite case $\alpha = \beta$, by the fact that $f'(s) = \mu(s)$ (cf. \eqref{FF}), we have
\begin{align*}
C(\alpha, \alpha) = 4 \lambda_{\alpha} \sum_{j = 1}^{+\infty} \frac{(j \pi)^2}{((j \pi)^2 - \lambda^2_{\alpha})^2} = f'(\lambda_{\alpha}) = \mu(\lambda_{\alpha})
\end{align*}
by using \eqref{IS0} again.

In conclusion, the LHS of \eqref{nXE1} can be written as
\begin{align} \label{EFT}
\sum_{\alpha = 1}^{k_0} \mu(\lambda_{\alpha}) \, \langle U^{(k)} u_{\alpha}, \, u_{\alpha} \rangle_{\C} + \sum_{\alpha \neq \beta} \frac{\lambda_{\beta} \cot{\lambda_{\beta}} - \lambda_{\alpha} \cot{\lambda_{\alpha}} }{\lambda_{\alpha} - \lambda_{\beta}} \, \langle U^{(k)} u_{\alpha}, \, u_{\beta} \rangle_{\C}.
\end{align}

On the other hand, the RHS of \eqref{nXE1} can be written as
\begin{align*}
\sum_{\alpha, \beta = 1}^{k_0} D(\alpha, \beta) \, \langle U^{(k)} u_{\alpha}, \, u_{\beta} \rangle_{\C}
\end{align*}
with
\begin{align*}
D(\alpha, \beta) &:= \frac{2}{i} \frac{\lambda_{\beta}}{\sin{\lambda_{\beta}}} e^{i (\lambda_{\alpha} - \lambda_{\beta})} \int_0^1 \frac{\sin{(s \lambda_{\alpha})}}{\sin{\lambda_{\alpha}}} e^{i s (2 \lambda_{\beta} - \lambda_{\alpha})} \, ds  \\
&= - \frac{\lambda_{\beta}}{\sin{\lambda_{\beta}}} e^{i (\lambda_{\alpha} - \lambda_{\beta})} \int_0^1 \frac{e^{2 i s \lambda_{\beta}} - e^{2 i s (\lambda_{\beta} - \lambda_{\alpha})}}{\sin{\lambda_{\alpha}}} \, ds.
\end{align*}

A direct calculation implies that
\[
D(\alpha, \beta) = C(\alpha, \beta) - i, \qquad 1 \leq \alpha, \beta \leq k_0.
\]
Thus, the difference between the RHS and LHS of \eqref{nXE1} is
\[
- i \sum_{\alpha, \beta = 1}^{k_0} \langle U^{(k)} u_{\alpha}, \, u_{\beta} \rangle_{\C} = -i \, \langle U^{(k)} x_0, \, x_0 \rangle = 0,
\]
because the real matrix $U^{(k)}$ is skew-symmetric.

This completes the proof of the characterization.    \qed

\medskip

It remains to provide the

\subsection{Proof of \eqref{UED}}

By Subsection \ref{nSSn23} (II)-(III), \eqref{GEn3} implies that $d(g_0)^2 \le \ell(\gamma)^2 = \left| \frac{U(\theta_0)}{\sin{U(\theta_0)}} x_0 \right|^2$.
It remains to show the following:

\begin{prop} \label{NC51}
Let $g = (x, t)$, and $\vartheta \in  \mathcal{V}$ be a critical point of $\phi(g; \cdot)$. Then
\begin{align} \label{nbe2}
\phi(g; \vartheta) = \left| \frac{U(\vartheta)}{\sin{U(\vartheta)}} x \right|^2 = \left\langle \left( \frac{U(\vartheta)}{\sin{U(\vartheta)}} \right)^2 x, \, x \right\rangle.
\end{align}
\end{prop}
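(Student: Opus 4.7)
The plan is to use the critical-point condition to express $4t\cdot\vartheta$ as the negative of a radial derivative, and then compute everything explicitly via the spectral decomposition of the Hermitian matrix $U(\vartheta)$. The key observation that makes things clean is the linearity $U(s\vartheta)=sU(\vartheta)$, which means directional differentiation along the radial ray at $\vartheta$ only requires the scalar identity $\frac{d}{ds}\bigl[s\lambda\cot(s\lambda)\bigr]=\lambda\cot(s\lambda)-s\lambda^{2}\csc^{2}(s\lambda)$.

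More precisely, set $F(\theta):=\langle U(\theta)\cot U(\theta)\,x,x\rangle$, so that $\phi(g;\theta)=F(\theta)+4t\cdot\theta$. The critical-point hypothesis $\nabla_\theta\phi(g;\vartheta)=0$ gives $4t=-\nabla F(\vartheta)$, hence
\[
4t\cdot\vartheta \;=\; -\,\vartheta\cdot\nabla F(\vartheta)\;=\;-\,\frac{d}{ds}\Big|_{s=1}F(s\vartheta).
\]
By the homogeneity $U(s\vartheta)=sU(\vartheta)$, we have $F(s\vartheta)=\langle sU(\vartheta)\cot(sU(\vartheta))\,x,x\rangle$.

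Next I would diagonalize. Since $\widetilde U(\vartheta)$ is real skew-symmetric, $U(\vartheta)=i\widetilde U(\vartheta)$ is Hermitian; let $\lambda_1<\cdots<\lambda_{k_0}$ be its distinct real eigenvalues and $u_l$ the orthogonal projection of $x$ onto the $\lambda_l$-eigenspace, so $x=\sum_{l}u_l$. The assumption $\vartheta\in\mathcal V$ ensures $\lambda_l\notin\pi\mathbb Z\setminus\{0\}$, so every occurrence of $\cot\lambda_l$ and $\csc\lambda_l$ is legitimate (the value $\lambda_l=0$ being handled by the usual removable-singularity convention $\lambda\cot\lambda\big|_{\lambda=0}=1=(\lambda/\sin\lambda)^{2}\big|_{\lambda=0}$). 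Then
\[
F(s\vartheta)=\sum_{l}s\lambda_l\cot(s\lambda_l)\,|u_l|^{2},
\qquad
\frac{d}{ds}\Big|_{s=1}F(s\vartheta)=\sum_{l}\bigl[\lambda_l\cot\lambda_l-\lambda_l^{2}\csc^{2}\lambda_l\bigr]|u_l|^{2}.
\]

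Combining, I get
\[
\phi(g;\vartheta)=F(\vartheta)+4t\cdot\vartheta
=\sum_l \lambda_l\cot\lambda_l\,|u_l|^{2}-\sum_l\bigl[\lambda_l\cot\lambda_l-\lambda_l^{2}\csc^{2}\lambda_l\bigr]|u_l|^{2}
=\sum_l\frac{\lambda_l^{2}}{\sin^{2}\lambda_l}\,|u_l|^{2},
\]
which is precisely $\bigl\langle\bigl(U(\vartheta)/\sin U(\vartheta)\bigr)^{2}x,x\bigr\rangle=|(U(\vartheta)/\sin U(\vartheta))x|^{2}$, establishing \eqref{nbe2}. I do not anticipate a real obstacle here: the only point requiring a word of care is the convention at the zero eigenvalues of $U(\vartheta)$, which is resolved by continuity; everything else is a one-line scalar derivative followed by bookkeeping with the spectral decomposition. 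An alternative, more computational route would be to start from \eqref{nEs1} with $v=\vartheta$ and use the partial-fraction expansion \eqref{IS0}, but the radial-derivative argument above is shorter and makes transparent why the critical-point condition converts $\langle U(\vartheta)\cot U(\vartheta)x,x\rangle$ into $\langle(U(\vartheta)/\sin U(\vartheta))^{2}x,x\rangle$.
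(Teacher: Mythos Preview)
Your argument is correct and is genuinely cleaner than the paper's. Both proofs start from the same observation
\[
\phi(g;\vartheta)=F(\vartheta)-\vartheta\cdot\nabla F(\vartheta),\qquad F(\theta)=\langle U(\theta)\cot U(\theta)\,x,x\rangle,
\]
but diverge in how $\vartheta\cdot\nabla F(\vartheta)$ is evaluated. The paper invokes Lemma~\ref{DNL1} (the partial-fraction formula \eqref{nbe1}) to expand $-\vartheta\cdot\nabla_\vartheta\langle U(\vartheta)\cot U(\vartheta)x,x\rangle$ as a series in $j$, then pairs it against the known series for $(s/\sin s)^2$ and checks a scalar identity term by term. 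You instead exploit the homogeneity $U(s\vartheta)=sU(\vartheta)$ to rewrite $\vartheta\cdot\nabla F(\vartheta)$ as the radial derivative $\tfrac{d}{ds}\big|_{s=1}F(s\vartheta)$, diagonalize once, and reduce everything to the one-line scalar identity $r\cot r-\tfrac{d}{dr}(r\cot r)=r^2/\sin^2 r$. This bypasses the partial-fraction machinery of \eqref{IS0}--\eqref{IS1} entirely and makes the mechanism transparent; the paper's route, while longer here, has the advantage of reusing formulas (\eqref{nbe1}, \eqref{nEs1}) that are needed elsewhere in Sections~\ref{S5s}--\ref{PTHN4}. Your remark about the $\lambda_l=0$ eigenvalue is the only delicate point and you handle it correctly by continuity.
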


\begin{proof}
By the definition of $\vartheta$, we have $4 \, t = - \nabla_{\vartheta} \langle U(\vartheta) \, \cot{U(\vartheta)} x, \, x \rangle$. Thus
\[
\phi(g; \vartheta) = \langle U(\vartheta) \, \cot{U(\vartheta)} x, \, x \rangle - \vartheta \cdot \nabla_{\vartheta} \langle U(\vartheta) \, \cot{U(\vartheta)} x, \, x \rangle.
\]

It follows from the definition of $U(\vartheta)$ (see \eqref{2c'}) and \eqref{nbe1} that
\[
-\vartheta \cdot \nabla_{\vartheta} \langle U(\vartheta) \, \cot{U(\vartheta)} x, \, x \rangle = \sum_{j = 1}^{+\infty} \frac{1}{j \pi} \left\langle \left( \frac{U(\vartheta)}{  \left( 1 - \frac{U(\vartheta)}{j \pi} \right)^2} -  \frac{U(\vartheta)}{\left( 1 + \frac{U(\vartheta)}{j \pi} \right)^2} \right) \, x, \, x \right\rangle.
\]

Moreover, we have (cf. \cite[\S 1.4.22 4, p. 44]{GR15})
\begin{align}
\left( \frac{s}{\sin{s}} \right)^2 = 1 + s^2 \sum_{j = 1}^{+\infty} \left[ (j \pi - s)^{-2} + (j \pi + s )^{-2} \right].
\end{align}

By the Spectral Theorem and the first equality in \eqref{IS0}, it suffices to show that we have for any $j = 1, 2, \ldots$,
\begin{align*}
-2 \frac{s^2}{(j \pi)^2 - s^2} + \frac{s}{j \pi} \left[ (1 - \frac{s}{j \pi})^{-2} -  (1 + \frac{s}{j \pi})^{-2} \right]  = s^2 \left[ (j \pi - s)^{-2} + (j \pi + s )^{-2} \right],
\end{align*}
which is easy to check.

This finishes the proof of the proposition.
\end{proof}

\medskip

\renewcommand{\theequation}{\thesection.\arabic{equation}}
\section{Proof of Theorems \ref{THN6} and \ref{aT121}}  \label{Pthn6}
\setcounter{equation}{0}

\medskip

In order to show Theorem \ref{THN6}, we need the following:

\begin{prop} \label{NPA}
Let $\Omega \subset \R^m$ be an open, bounded, convex set. Assume that the function
\[
h: \, \R^k \times \Omega \longrightarrow \R
\]
is continuous and for any $\xx \in \R^k$, $h(\xx, \cdot)$ is concave in $\Omega$. Then the function defined by
\begin{align*}
H(\xx) := \sup_{\tau \in \Omega} h(\xx; \tau)
\end{align*}
is continuous on $\R^k$.
\end{prop}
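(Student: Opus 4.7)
The plan is to establish the two semicontinuities of $H$ separately; the lower one is essentially free, while the upper one is where the concavity hypothesis has to do real work to prevent approximate maximizers that escape to $\partial \Omega$ from inflating $H$.

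Lower semicontinuity: for each fixed $\tau \in \Omega$, the continuity of $h$ yields $h(\xx, \tau) \to h(\xx_0, \tau)$ as $\xx \to \xx_0$. Since $H(\xx) \geq h(\xx, \tau)$, I take $\liminf$ and then supremum over $\tau \in \Omega$ to obtain $\liminf_{\xx \to \xx_0} H(\xx) \geq H(\xx_0)$.

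Upper semicontinuity is the main step. Fix once and for all a reference point $\tau_0 \in \Omega$. Take $\xx_n \to \xx_0$, pass to a subsequence realizing $A := \limsup_n H(\xx_n)$, and pick approximate maximizers $\tau_n \in \Omega$ with $h(\xx_n, \tau_n) \geq H(\xx_n) - 1/n$. Boundedness of $\Omega$ lets me extract a further subsequence with $\tau_n \to \tau^* \in \overline{\Omega}$. If $\tau^* \in \Omega$, continuity of $h$ immediately gives $h(\xx_n, \tau_n) \to h(\xx_0, \tau^*) \leq H(\xx_0)$, hence $A \leq H(\xx_0)$. The delicate case is $\tau^* \in \partial \Omega$. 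Here I invoke the standard convex-geometric fact that, because $\Omega$ is open and convex with $\tau_0$ an interior point and $\tau^* \in \overline{\Omega}$, the point $\sigma_s^* := s \tau_0 + (1-s) \tau^*$ lies in $\Omega$ for every $s \in (0, 1]$; and by convexity $\sigma_{n,s} := s\tau_0 + (1-s)\tau_n \in \Omega$ with $\sigma_{n,s} \to \sigma_s^*$. The concavity of $h(\xx_n, \cdot)$ yields
\[
h(\xx_n, \sigma_{n,s}) \geq s\, h(\xx_n, \tau_0) + (1-s)\, h(\xx_n, \tau_n),
\]
and passing to the limit in $n$ (all arguments lie in $\Omega$, so continuity applies) gives
\[
(1-s) A \leq h(\xx_0, \sigma_s^*) - s\, h(\xx_0, \tau_0) \leq H(\xx_0) - s\, h(\xx_0, \tau_0).
\]
Letting $s \to 0^+$ produces $A \leq H(\xx_0)$, which is the desired upper semicontinuity.

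A small by-product of the same argument, applied with $\xx_n \equiv \xx_0$, is the finiteness of $H(\xx_0)$: any approximate maximizer of $h(\xx_0, \cdot)$ converging to a boundary point is controlled by the real quantity $[h(\xx_0, \sigma_s^*) - s\, h(\xx_0, \tau_0)]/(1-s)$ for any fixed $s \in (0, 1)$, so $H(\xx_0) < +\infty$, and combining this with the two semicontinuities gives continuity of $H$ on $\R^k$. The only genuinely nontrivial point is the boundary case: concavity of $h(\xx, \cdot)$ (holding uniformly in $\xx$) is what lets me project $\tau_n$ inward to $\sigma_{n,s}$ and trade the unknown boundary behavior for values of $h$ at interior points, where continuity can be used without obstruction.
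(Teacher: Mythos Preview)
Your proof is correct and follows essentially the same idea as the paper's: both arguments use the concavity of $h(\xx,\cdot)$ to pull approximate maximizers $\tau_n$ toward a fixed interior point $\tau_0$ via a convex combination, so that the limit lies in $\Omega$ and joint continuity of $h$ can be invoked. The paper works by contradiction and takes the midpoint $\frac{\tau_0+\tau_n}{2}$, whereas you argue directly via lower/upper semicontinuity with a general barycenter $s\tau_0+(1-s)\tau_n$ and then send $s\to 0^+$; these are cosmetic variations of the same core trick.
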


\begin{proof}
We argue by contradiction. Suppose that $H$ is discontinuous at $\xx_0$. Without loss of generality, we may assume that $H(\xx_0) = 0$.

Then, there exists $\epsilon_0 > 0$ and $\{ \xx_j \}_{j = 1}^{+\infty}$ such that
\begin{align} \label{nAH}
\xx_j \longrightarrow \xx_0, \qquad |H(\xx_j)| \geq 4 \epsilon_0, \  \forall j.
\end{align}

First we claim that $H(\xx_j) \geq 4 \epsilon_0$ if $j$ is large enough. Indeed, by the definition of $H$, there exists $\tau_0 \in \Omega$ such that
\[
-\frac{\epsilon_0}{4} \leq h(\xx_0, \tau_0) < 0.
\]
Now the continuity of $h$ implies that for some neighborhood of $(\xx_0, \tau_0)$, $V_{\xx_0} \times V_{\tau_0}$, we have
\[
|h(\xx, \tau)| \leq \epsilon_0, \qquad \forall (\xx, \tau) \in V_{\xx_0} \times V_{\tau_0}.
\]
Therefore, for $j \gg 1$ we have $\xx_j \in V_{\xx_0}$ and
\[
H(\xx_j) = \sup_{\tau \in \Omega} h(\xx_j, \tau) \geq \sup_{\tau \in V_{\tau_0}} h(\xx_j, \tau) \geq - \epsilon_0.
\]
Combining this with \eqref{nAH}, we obtain the assertion.

From now on, we may assume that $\xx_j \in V_{\xx_0}$ and $H(\xx_j) \geq 4 \epsilon_0$ for all $j \geq 1$. By the definition of $H$, there exists $\tau_j \in \Omega$ such that $h(\xx_j, \tau_j) \geq 3 \epsilon_0$. By the compactness of $\overline{\Omega}$, up to subsequences, we may further assume that $\tau_j \longrightarrow \tau_* \in \overline{\Omega}$. It follows from the concavity of $h(\xx_j, \cdot)$ that
\[
h(\xx_j, \frac{\tau_0 + \tau_j}{2}) \geq \frac{1}{2} \left( h(\xx_j, \tau_0) + h(\xx_j, \tau_j) \right) \geq \frac{3 \epsilon_0 - \epsilon_0}{2} = \epsilon_0.
\]

It is easy to see (cf. also \cite[Theorem 6.1]{R70})
\[
\frac{\tau_0 + \tau_j}{2} \longrightarrow \frac{\tau_0 + \tau_*}{2} \in \Omega.
\]
Using the continuity of $h$ again, we get
\[
h(\xx_0, \frac{\tau_0 + \tau_*}{2}) = \lim_{j \longrightarrow +\infty} h(\xx_j, \frac{\tau_0 + \tau_j}{2}) \geq \epsilon_0 > 0 = H(\xx_0) = \sup_{\tau \in \Omega} h(\xx_0, \tau),
\]
which leads to a contradiction.

This completes the proof of Proposition \ref{NPA}.
\end{proof}

\medskip

{\bf Proof of Theorem \ref{THN6}}:

\medskip

Recall $d$ is continuous w.r.t. the usual Euclidean topology (cf. \cite{VSC92}), applying Proposition \ref{NPA} to the function $\phi(g; \tau)$ with $\OA$, we obtain $d(g)^2 = \sup_{\tau \in \OA} \phi(g; \tau)$ for all $g \in \overline{\widetilde{\M}}$.  \qed

\subsection{Proof of Theorem \ref{aT121}}

\begin{proof}
We argue by contradiction: assume that there exists $g_0 \in \partial \M$ such that $g_0 \not\in \mathrm{Cut}_o$.

First, for any $g \not\in \mathrm{Cut}_o$, the proximal sub-differential of the squared sub-Riemannian distance at the point $g$, $\partial_P d(g)^2$, is the singleton
\[
\partial_P d(g)^2 = \left\{ (\nabla_x d(g)^2, \nabla_t d(g)^2)  \right\} \ \mbox{with $\nabla$ the usual gradient in Euclidean space,}
\]
since $d^2$ is $C^{\infty}$ in a neighborhood of $g$ (see \cite{RT05} and \cite[p.\! 36]{CLSW98}). Let $\gamma_g(s)$ ($0 \leq s \leq 1$) denote the unique shortest geodesic joining $o$ to $g$, and $(\gamma_g(s), (\xi_g(s), \tau_g(s)))$ its unique normal  extremal lift. By \cite[Proposition 2]{RT05} (see also \cite[\S 11.1]{ABB20}), the  final covector $(\xi_g(1), \tau_g(1))$ equals $\frac{1}{2} \partial_P d(g)^2$. Moreover, \eqref{HaEP} implies that
\begin{align} \label{n1n}
\frac{1}{4} \nabla_t d(g)^2 = \frac{1}{2} \tau_g(1) \equiv \frac{1}{2} \tau_g(s) (0 \leq s \leq 1) \in \OA, \qquad \forall g \in \M.
\end{align}

Recall that $\OM_2 \subseteq \mathrm{Abn}_o^* \subseteq \mathrm{Cut}_o$ (see Corollary \ref{nCCn}), then $g_0 \in \overline{\OM} \setminus \OM$. From (1) of Remark \ref{Nr27}, there exists $\varsigma_0 = (\zeta(0), \tau(0) = 2 \theta_0) \in \R^q \times \R^m$ such that
\[
\theta_0 \in \partial \OA, \quad  |\zeta(0)| = d(g_0), \quad g_0 = \exp{\varsigma_0}.
\]

Under our assumption that $\overline{\M} = \G$, it follows from Lemma \ref{AxL} that in a neighborhood of $\varsigma_0$, there exists $(\zeta_*, 2 \theta_*)$ with $\theta_* \not\in \OA$ such that $g_* = \exp\{(\zeta_*, 2 \theta_*)\} \in \M \subset \mathcal{S}$ and the unique shortest geodesic from $o$ to $g_*$ is $\gamma_{g_*}(s) = \exp\{s \, (\zeta_*, 2 \theta_*)\}$ ($0 \leq s \leq 1$). Recall that $\gamma_{g_*}$ is not abnormal. Consider the unique normal  extremal lift of $\gamma_{g_*}$, its final covector $(\xi_{g_*}(1), \tau_{g_*}(1))$ satisfies $\tau_{g_*}(1)/2 = \theta_* \not\in \OA$ which is in contradiction with \eqref{n1n}.
\end{proof}

\medskip

\renewcommand{\theequation}{\thesection.\arabic{equation}}
\section{Application 1: Generalized H-type groups}  \label{ghg}
\setcounter{equation}{0}

\medskip

Recall that $f(s) := 1 - s \cot{s}$ and  $\mu(s) := f'(s)$.

We generalize the known results in this section. A step-2 group $\G(q, m, \U)$ is of generalized Heisenberg-type if there exists a positive definite $q \times q$ real matrix $S$ such that
\begin{align} \label{GHd1}
U(\lambda) U(\lambda') + U(\lambda') U(\lambda) = 2 \lambda \cdot \lambda' S^2, \qquad \forall \lambda, \lambda' \in \R^m.
\end{align}
It is well-known that $q = 2 n$ is even. Without loss of generality, we may assume that
\begin{eqnarray} \label{GHd2}
S = \left(
	\begin{array}{ccc}
	\widetilde{a}_1 \, \mathbb{I}_{2 k_1}& \ & \ \\
	\  & \ddots & \ \\
	\ & \ & \widetilde{a}_{l_0} \, \mathbb{I}_{2 k_{l_0}}\\
	\end{array}
	\right)
\end{eqnarray}
with
\begin{align} \label{GHd3}
0 < \widetilde{a}_1 < \cdots < \widetilde{a}_{l_0} = 1, \qquad 2 n = \sum_{j = 1}^{l_0} 2 k_j.
\end{align}
Otherwise, it suffices to use a linear change of basis on $\R^{2 n} \times \R^m$
\[
(x, t) \longmapsto (O \cdot x, r \, t)
\]
with a suitable $(2 n) \times (2 n)$ orthogonal matrix $O$ and $r \in \R \setminus\! \{0\}$.

Recall that the case $m = 1$ corresponds to generalized Heisenberg groups. Moreover the case $S = \mathbb{I}_{2 n}$, denoted by $\H$, is classical Heisenberg-type groups (H-type group, in short). Notice that there exists an H-type group of type $(2 n, m)$, $\H$, if and only if $m < \rho(2n)$, where the Hurwitz-Radon function $\rho$ is defined by
\begin{align} \label{RMN}
\rho(2n) := 8 k + 2^l, \quad \mbox{where } 2 n = \mbox{(odd)} \cdot 2^{4k + l}, \ 0 \leq l \leq 3, \ k = 0, 1, \ldots,
\end{align}
see for example \cite{K80} or \cite[\S 3.6 and \S 18]{BLU07}.

Thus for general $S$, we have $m < \min_{1 \le j \le l_0} \rho(2 k_j)$. Let us write in this section
\begin{align*}
x = (\x_1, \ldots, \x_{l_0}) \in \R^{2 k_1} \times \cdots \times \R^{2 k_{l_0}}.
\end{align*}
A simple calculation implies that
\begin{align} \label{N62}
U(\lambda) \cot{U(\lambda)} = \left(
	\begin{array}{ccc}
	\widetilde{a}_1 |\lambda| \cot{(\widetilde{a}_1 |\lambda|)} \, \mathbb{I}_{2 k_1}& \ & \ \\
	\  & \ddots & \ \\
	\ & \ & \widetilde{a}_{l_0} |\lambda| \cot{(\widetilde{a}_{l_0} |\lambda|)} \, \mathbb{I}_{2 k_{l_0}}\\
	\end{array}
	\right), \nonumber \\
\OA = B_{\R^m}(0, \pi) \mbox{ \  and \  } \phi((x, t); \tau) = 4 \, t \cdot \tau + \sum_{j = 1}^{l_0} |\x_j|^2 \, \a_j |\tau| \cot{(\a_j |\tau|)}.
\end{align}

In this case, a direct calculation shows that for any $x \neq 0$, $\phi((x, t); \tau)$ is strongly concave in $\OA$. Then the Hessian matrix of $\phi((x, t); \cdot)$ at any $\tau \in \OA$ is negative-definite, and the set $\M$ defined by \eqref{m} is in fact
\begin{align*}
\left\{ (x, t); x \neq 0, \exists \theta \in \OA \mbox{ such that } 4 t = - \nabla_{\theta} \langle U(\theta) \cot{U(\theta)} \, x, x  \rangle \right\}.
\end{align*}
Moreover, for $x \neq 0$, the proper concave function in $\overline{\OA}$, $\phi((x, t); \tau)$ has a unique maximizer because $\overline{B_{\R^m}(0, \pi)}$ is strictly convex. Indeed, these results are still valid in the general setting of M\'etivier groups, see Theorem \ref{nTH73} below.

Using the explicit expression of $\phi$, we can depict $\M$. Furthermore, Theorems \ref{TH2} and \ref{THN6} allow us to obtain the exact formulas for $d^2$. More precisely, we have the following:

\begin{theo} \label{nNTH61}
(1) We have
{\em\begin{align} \label{DoM}
\M = \left\{ (x, t); \ \x_{l_0} \neq 0, \mbox{ or $\x_{l_0} = 0$ with }
4 |t| < \sum_{j = 1}^{l_0 - 1} \a_j |\x_j|^2 \mu(\a_j \pi) \right\}.
\end{align}}
For $g = (x, t) \in \M$, there exists a unique $\theta = \theta(g) \in B_{\R^m}(0, \pi)$ such that
{\em\begin{align} \label{ECP}
4 t = \theta \, \sum_{j = 1}^{l_0} \a_j^2 \,  |\x_j|^2 \, \frac{\mu(\a_j |\theta|)}{\a_j |\theta|}, \  \mbox{ i.e. } \  \theta = 4 \, \left( \sum_{j = 1}^{l_0} \a_j^2 \,  |\x_j|^2 \, \frac{\mu(\a_j |\theta|)}{\a_j |\theta|} \right)^{-1} t.
\end{align}}
Moreover,
{\em \begin{align} \label{GHD1}
d(g)^2 = 4 |t| \, |\theta| + \sum_{j = 1}^{l_0} |\x_j|^2 \, \a_j |\theta| \cot{(\a_j |\theta|)} = \sum_{j = 1}^{l_0} |\x_j|^2 \left( \frac{\a_j |\theta|}{ \sin{(\a_j |\theta|)}} \right)^2 = \left| \frac{U(\theta)}{\sin{U(\theta)}} x \right|^2.
\end{align}}

(2) The cut locus of $o$, $\mathrm{Cut}_o$, is
\[
\M^c = \left\{ (x, t); \ \x_{l_0} = 0,
4 |t| \geq \sum_{j = 1}^{l_0 - 1} \a_j |\x_j|^2 \mu(\a_j \pi) \right\}.
\]
If $(x, t) \in \M^c$, then
{\em\begin{align} \label{GHD2}
d(x, t)^2 = \pi \left( 4  |t| + \sum_{j = 1}^{l_0 - 1} \a_j |\x_j|^2 \cot{(\a_j \pi)} \right).
\end{align}}

(3) For any $g \neq o$, there exists exactly one $\theta \in \overline{\OA}$ such that
\[
d(g)^2 =  \phi(g; \theta) = \sup_{\tau \in \OA} \phi(g; \tau).
\]
\end{theo}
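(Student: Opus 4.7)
The plan is to reduce the maximization of $\phi((x,t);\tau)$ to a one-dimensional radial problem, which is enabled by the special structure \eqref{GHd1} of generalized H-type groups. First I would substitute the normal form \eqref{GHd2} of $S$ to verify the block-diagonal expression \eqref{N62} of $U(\lambda)\cot{U(\lambda)}$: since \eqref{GHd1} implies $\widetilde{U}(\lambda)^2 = -|\lambda|^2 S^2$, functional calculus on each $\a_j$-block is immediate and yields the displayed form of $\phi$. Strong concavity of $\phi((x,t);\cdot)$ on $\OA = B_{\R^m}(0,\pi)$ for $x\neq 0$ then follows from Lemma \ref{n32l} applied blockwise (or from the operator convexity established in Proposition \ref{TH1} together with explicit positive lower bounds per block).

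Next I would analyze critical points of $\phi((x,t);\cdot)$ in $\OA$. Since $\phi$ depends on $\tau$ only through $t\cdot\tau$ and $|\tau|$, any interior critical point $\theta$ must be parallel to $t$, and $\nabla_{\tau}\phi=0$ collapses to the scalar equation $4|t|=\sum_{j=1}^{l_0}\a_j|\x_j|^2\mu(\a_j|\theta|)$, which is exactly \eqref{ECP}. By Lemma \ref{NL31}, $\mu$ is a strictly increasing diffeomorphism from $(-\pi,\pi)$ onto $\R$, so the right-hand side is strictly increasing in $|\theta|\in[0,\pi)$. Its limit as $|\theta|\to\pi^-$ is $+\infty$ iff $\x_{l_0}\neq 0$, and otherwise equals $\sum_{j<l_0}\a_j|\x_j|^2\mu(\a_j\pi)$. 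This yields both existence and uniqueness of the interior critical point in exactly the regime \eqref{DoM}, proving the characterization of $\M$. Substituting the critical point equation into $\phi(g;\theta)$ gives the first equality of \eqref{GHD1}; the second follows from the elementary identity $\mu(s)+\cot s = s/\sin^2 s$ (just $\mu=f'$ unpacked, cf. \eqref{FF}); the third is a diagonal computation on the $\a_j$-blocks. Theorem \ref{TH2} then provides $d(g)^2 = \phi(g;\theta)$, concluding part~(1).

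For part~(2), I would first establish $\overline{\M} = \G$: any $g=(x,t)$ with $\x_{l_0}\neq 0$ lies in $\M$ (directly, or via Proposition \ref{nPr210}, since the $\pi^2$-eigenspace of $-U(\tau)^2$ on $\partial\OA$ is exactly the $l_0$-block), and such $g$ are dense in $\G$. Theorem \ref{aT121} then identifies $\mathrm{Cut}_o$ with $\M^c$. For $g\in\M^c\setminus\{o\}$ no interior critical point exists, and $\mathrm{cl}\,\phi(g;\cdot)$ from Remark \ref{nRk21} extends $\phi$ continuously to $\overline{\OA}$ (using $|\x_{l_0}|^2\cdot\a_{l_0}|\tau|\cot(\a_{l_0}|\tau|)=0$ on $\partial\OA$ under $0\cdot\infty=0$). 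On the sphere $|\tau|=\pi$ the reference function reduces to the affine expression $4t\cdot\tau + \pi\sum_{j<l_0}\a_j|\x_j|^2\cot(\a_j\pi)$, maximized uniquely at $\tau_*=\pi t/|t|$ (note $t\neq 0$, since $\x_{l_0}=0$ combined with $g\neq o$ and $\mu(\a_j\pi)>0$ forces $t\neq 0$). Theorem \ref{THN6} then produces formula \eqref{GHD2}.

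Finally, part~(3) collects the uniqueness assertions: on $\M$, strong concavity forces a unique interior maximizer; on $\M^c\setminus\{o\}$ the maximizer lies on $\partial\OA$, where the linear functional $4t\cdot\tau$ has the unique spherical maximum $\tau_*=\pi t/|t|$. The main obstacle I anticipate is the boundary bookkeeping: verifying that the equality case $4|t|=\sum_{j<l_0}\a_j|\x_j|^2\mu(\a_j\pi)$ indeed belongs to $\M^c$ (the strict monotonicity of the right-hand side in $|\theta|$ prevents an interior solution precisely at the endpoint value), and confirming that the convention on $\mathrm{cl}\,\phi$ extends $\phi$ continuously across $\partial\OA$ when $\x_{l_0}=0$, so that Theorem \ref{THN6} applies unambiguously.
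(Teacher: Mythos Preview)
Your proposal is correct and follows essentially the same approach as the paper, just with considerably more detail: the paper's own proof merely records the critical point equation $0=\nabla_\theta\phi=4t-\theta\sum_j\a_j^2|\x_j|^2\,\mu(\a_j|\theta|)/(\a_j|\theta|)$ and then defers everything else to Theorems~\ref{THN6} and~\ref{aT121}, together with the remarks preceding the theorem (strong concavity for $x\neq0$ and strict convexity of $\overline{B_{\R^m}(0,\pi)}$). Your use of Lemma~\ref{NL31}, Proposition~\ref{nPr210}, Remark~\ref{nRk21}, and the identity $\mu(s)+\cot s=s/\sin^2 s$ simply makes explicit the steps the paper leaves to the reader.
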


\begin{proof} (1) follows from the fact that the stationary point satisfies the following equation
\begin{align*}
0 = \nabla_{\theta} \phi(x, t; \theta) = 4 t - \theta \, \sum_{j = 1}^{l_0} \a_j^2 \, |\x_j|^2 \, \frac{\mu(\a_j |\theta|)}{\a_j |\theta|}.
\end{align*}
And other results are clear because of Theorems \ref{THN6} and \ref{aT121}.
\end{proof}

\begin{remark}
(1) We can obtain shortest geodesic(s) as well as normal geodesics joining $o$ to any fixed $g \neq o$. This is left to the interested reader.

(2) In the case of generalized Heisenberg groups, namely $m = 1$, the main results of Theorem \ref{nNTH61} can be found in \cite{BGG00}.
\end{remark}

Let us recall some facts: in the special case of general Heisenberg groups (namely, $m = 1$) and of H-type groups (i.e. $S = \mathbb{I}_q$), the Carnot-Carath\'eodory distances (also the cut locus and the shortest geodesics) are obtained, for example in \cite{G77, BGG00, TY04} and \cite{R05}, by using a classical method from control theory. And our approach is completely different. In the next section, we adapt our method to study the sub-Riemannian distance on more general case: M\'etivier groups. To finish this section, we explain this fact that

\subsection{For any given $\G(q, m, \U)$, there is an uncountable number of GM-groups $\G(q + 2 n, m, \widetilde{\U})$} \label{sNss81}

Indeed, for a fixed $\G(q, m, \U)$ with $\U = \left\{ U^{(1)}, \ldots, U^{(m)} \right\}$. Taking $\epsilon_j > 0$ ($1 \le j \le m$) such that $\sum_{j = 1}^m \epsilon_j^2 \, \| U^{(j)} \|^2 < 1$, and a generalized Heisenberg-type group $\G(2 n, m, \U_*)$ defined by \eqref{GHd1}-\eqref{GHd3} with $\U_* = \left\{ U_*^{(1)}, \ldots, U_*^{(m)} \right\}$, one consider $\G(q + 2 n, m, \widetilde{\U})$ where
\begin{align*}
\widetilde{\U} := \left\{ \u^{(1)}, \ldots, \u^{(m)} \right\}, \qquad \u^{(j)} := \left(
  \begin{array}{cc}
    \epsilon_j \, U^{(j)} & \mbox{} \\
    \mbox{} & U_*^{(j)} \\
  \end{array}
\right).
\end{align*}

Let $g = (x', (\x_1, \ldots, \x_{l_0}), t)$ with $x' \in \R^q$, $t \in \R^m$ and $(\x_1, \ldots, \x_{l_0}) \in \R^{2 k_1} \times \cdots \times \R^{2 k_{l_0}} = \R^{2n}$. Notice that $\| ( i \sum_{j = 1}^m \lambda_j \u^{(j)})^2 \| = |\lambda|^2$, and $\G(q + 2 n, m, \widetilde{\U})$ is of type GM because of $g \in \M$ whenever $\x_{l_0} \neq 0$ by Proposition \ref{nPr210}.

\medskip

\renewcommand{\theequation}{\thesection.\arabic{equation}}
\section{Application 2: M\'etivier groups}  \label{MG}
\setcounter{equation}{0}

\medskip

Recall that $\G(q, m, \U)$ is of M\'etivier if $U(\lambda)$ is invertible for any $\lambda \in \R^m \setminus \{ 0 \}$  (cf. \cite[Hypoth\`ese (H)]{M80}, namely, nonsingular in the sense of \cite{E94}). Notice that all generalized H-type groups are of M\'etivier. As in Subsection \ref{sNss81}, for any given M\'etivier group $\G(q, m, \U)$, there is an uncountable number of GM-groups of M\'etivier  $\G(q + 2 n, m, \widetilde{\U})$. In this section, we do not care about the explicit expression of $\OA$ defined by \eqref{oa}.

Recall that $f(s) := 1 - s \cot{s}$ (see \eqref{FF}),  $h_{\gamma}(s) := (1 - \gamma \, s)^{-1}$ (see \eqref{HDG}). Also for $\tau \in \R^m$, $g = (x, t)$,
\begin{align*}
\widetilde{U}(\lambda) := \sum_{j = 1}^m \lambda_j U^{(j)},  \quad U(\tau) := i \sum_{j = 1}^m \tau_j U^{(j)},\qquad \phi(g; \tau) := \langle U(\tau) \cot{U(\tau)} \, x, x  \rangle + 4 t \cdot \tau,
\end{align*}
defined in  \eqref{2c'} and \eqref{2cn} respectively.
Let us begin with the following:

\begin{lem} \label{nLm73}
If $\G$ is of M\'etivier, then the set $\overline{\OA}$ is strictly convex, namely, for any $\tau \neq \tau' \in \overline{\OA}$ and $0 < s < 1$, we have $s \tau + (1 - s) \tau' \in \OA$.
\end{lem}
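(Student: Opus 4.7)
\textbf{Proof plan for Lemma \ref{nLm73}.}
My plan is to exploit that $U(\tau) = i\widetilde{U}(\tau)$ is Hermitian (because $\widetilde{U}(\tau)$ is real skew-symmetric) and that $\tau \mapsto U(\tau)$ is $\R$-linear. For Hermitian operators the operator norm coincides with the numerical radius, that is, $\|U(\tau)\| = \sup_{|v|=1, \, v\in\C^q} |\langle U(\tau)v, v\rangle_\C|$. First I would use linearity to write $U(s\tau + (1-s)\tau') = sU(\tau) + (1-s)U(\tau')$, and then by the triangle inequality applied to numerical radii I would immediately obtain
\[
\|U(s\tau + (1-s)\tau')\| \le s\,\|U(\tau)\| + (1-s)\,\|U(\tau')\| \le \pi.
\]
This shows $\overline{\OA}$ is convex; the real content of the lemma is the strict inequality.

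Next I would argue strict inequality by contradiction. Suppose $\|U(s\tau+(1-s)\tau')\|=\pi$. Then there is a unit vector $v\in \C^q$ with
\[
\bigl|s\langle U(\tau)v,v\rangle_\C + (1-s)\langle U(\tau')v,v\rangle_\C\bigr| = \pi.
\]
Since each inner product is real (both operators are Hermitian) and bounded in absolute value by $\|U(\tau)\|\le\pi$ and $\|U(\tau')\|\le\pi$ respectively, the equality case of the triangle inequality in $\R$ forces $\langle U(\tau)v,v\rangle_\C = \langle U(\tau')v,v\rangle_\C = \varepsilon\,\pi$ for a common sign $\varepsilon\in\{\pm 1\}$. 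The standard spectral argument (expanding $v$ in an eigenbasis of $U(\tau)$ and using that all eigenvalues lie in $[-\pi,\pi]$) then shows $v$ is an $\varepsilon\pi$-eigenvector of $U(\tau)$, and similarly of $U(\tau')$. Subtracting, $U(\tau-\tau')v=0$.

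Finally I would invoke the M\'etivier hypothesis: since $\tau\neq\tau'$, the matrix $U(\tau-\tau')$ is invertible, contradicting $v\neq 0$. Hence $\|U(s\tau+(1-s)\tau')\|<\pi$, i.e.\ $s\tau+(1-s)\tau'\in\OA$. The step most worth writing carefully is the passage from extremal value of the numerical radius to an eigenvector equation; everything else is essentially bookkeeping. No other obstacle is expected, and the proof is very short.
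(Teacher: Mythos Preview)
Your proposal is correct and follows essentially the same contradiction argument as the paper: both reach a nonzero vector annihilated by $U(\tau-\tau')$, contradicting the M\'etivier hypothesis. The only minor difference is that the paper works with the image vector rather than the quadratic form: it picks a unit vector $\xi$ attaining $\|s\,\widetilde{U}(\tau)\xi + (1-s)\,\widetilde{U}(\tau')\xi\| = \pi$, and the equality case of the triangle inequality for vectors in $\R^q$ immediately gives $\widetilde{U}(\tau)\xi = \widetilde{U}(\tau')\xi$, bypassing your spectral step from ``numerical radius $=\pi$'' to ``eigenvector.'' Your route makes the Hermitian structure explicit and is equally valid; the paper's is a line shorter.
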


\begin{proof}
It suffices to show that for $\tau \neq \tau' \in \partial \OA$ and $0 < s < 1$, we have
\[
\| s \, \widetilde{U}(\tau) + (1 - s) \, \widetilde{U}(\tau') \| < \pi.
\]

Suppose that the assertion claimed is false. Then there exist $\tau_o \neq \tau'_o$ and $0 < s_0 < 1$ such that
\[
\| \widetilde{U}(\tau_o) \| = \| \widetilde{U}(\tau'_o) \| = \| s_0 \, \widetilde{U}(\tau_o) + (1 - s_0) \, \widetilde{U}(\tau'_o) \| = \pi.
\]

Therefore, there exists a unit vector $\xi$ so that
$\| s_0 \, \widetilde{U}(\tau_o) \xi + (1 - s_0) \, \widetilde{U}(\tau'_o) \xi \|^2 = \pi^2$.
Set $a = \widetilde{U}(\tau_o) \xi$ and $b = \widetilde{U}(\tau'_o) \xi$. We get $|a|, |b| \leq \pi$ and $| s_0 a + (1 - s_0) b| = \pi$ with $0 < s_0 < 1$. These facts imply that $a = b$ and $|a| = \pi$. Namely $U(\tau_o) \xi = U(\tau'_o) \xi$, i.e. $U(\tau_o - \tau'_o) \xi = 0$. Then $U(\tau_o - \tau'_o)$ is singular which contradicts the definition of M\'etivier groups.
\end{proof}

\begin{remark}
The result of Lemma \ref{nLm73} is obviously no longer valid for general $2$-step groups.
\end{remark}

In the setting of M\'etivier groups, Proposition \ref{TH1} can be improved to the following:

\begin{prop} \label{P71}
Let $g = (x, t)$ with $x \neq 0$. Then $\phi(g; \cdot)$ is strongly concave in $\OA$.
\end{prop}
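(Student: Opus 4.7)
The plan is to upgrade the concavity of $\phi(g;\cdot)$ established in Proposition~\ref{TH1} to uniform strict negativity of its Hessian, namely $-\He_\tau \phi(g;\theta) \succeq 2C\,\mathbb{I}_m$ for some constant $C = C(g) > 0$ and every $\theta \in \OA$. Since this is the standard infinitesimal criterion for strong concavity of a $C^\infty$ function on an open convex set, it will give the proposition. The linear term $4\,t\cdot\tau$ contributes nothing to the Hessian, so I only need to bound the Hessian of $\tau \mapsto \langle U(\tau)\cot U(\tau)\,x, x\rangle$.

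For this I would invoke the computation behind \eqref{nEs2}, noting that its derivation uses only the $C^2$ expansion of $h_\gamma(U(\theta + s v))$ in $s$ and so is valid at an arbitrary $\theta \in \OA$, not merely at a critical point. With the correct sign convention it reads
\begin{align*}
-\langle \He\,\phi(g;\theta)\,v, v\rangle \;=\; \sum_{j=1}^{+\infty} \frac{2}{(j\pi)^2}\Bigl\{\langle h_{\frac{1}{j\pi}}(U(\theta))\,\xNx_*(j), \xNx_*(j)\rangle_{\C} + \langle h_{-\frac{1}{j\pi}}(U(\theta))\,\xNx_*(-j), \xNx_*(-j)\rangle_{\C}\Bigr\},
\end{align*}
with $\xNx_*(\pm j) = U(v)\bigl(1 \mp U(\theta)/(j\pi)\bigr)^{-1} x$, and every summand is nonnegative because $h_{\pm 1/(j\pi)}(U(\theta))$ is positive definite whenever $\|U(\theta)\| < \pi$. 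I would then discard all terms except $j = 1$ and use the uniform spectral bound $h_{\pm 1/\pi}(U(\theta)) \succeq \tfrac{1}{2}\,\mathbb{I}_q$, which follows from $\lambda/\pi \in (-1, 1)$ for every eigenvalue $\lambda$ of $U(\theta)$, to arrive at
\begin{align*}
-\langle \He\,\phi(g;\theta)\,v, v\rangle \;\geq\; \frac{1}{\pi^2}\bigl|U(v)\,h_{\frac{1}{\pi}}(U(\theta))\,x\bigr|^2.
\end{align*}

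Next I would convert this into a bound of the form $C'(g)|v|^2$ using the M\'etivier hypothesis. Since $U(v)$ is Hermitian and, by hypothesis, invertible whenever $v \neq 0$, the matrix $U(v)^2$ is positive definite for $v \neq 0$; by continuity of the spectrum and compactness of $\s^{m-1}$, its smallest eigenvalue admits a positive lower bound $c_* > 0$ on $\s^{m-1}$, which by homogeneity gives $U(v)^2 \succeq c_*|v|^2\,\mathbb{I}_q$ for all $v \in \R^m$, hence $|U(v) y|^2 \geq c_*|v|^2|y|^2$ for every $y \in \C^q$. Applying this with $y = h_{1/\pi}(U(\theta))\,x$ and using $h_{1/\pi}(U(\theta))^2 \succeq \tfrac{1}{4}\,\mathbb{I}_q$ once more to obtain $|y|^2 \geq |x|^2/4$, I would conclude
\begin{align*}
-\langle \He\,\phi(g;\theta)\,v, v\rangle \;\geq\; \frac{c_*\,|x|^2}{4\pi^2}\,|v|^2, \qquad \forall\,\theta \in \OA,\ v \in \R^m,
\end{align*}
which, since $x \neq 0$, yields strong concavity with $C = c_*|x|^2/(8\pi^2) > 0$.

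The only real difficulty is that $\OA$ is open and things might \emph{a priori} deteriorate as $\theta$ approaches $\partial\OA$; this is handled by the observation that the estimate only exploits the ``harmless'' half of the spectrum of $h_{\pm 1/\pi}(U(\theta))$ (the eigenvalues that stay bounded below by $1/2$), whereas the ``blowing-up'' half near $\partial\OA$ only makes things better. The M\'etivier hypothesis enters exactly once, to convert the invertibility of $U(v)$ for $v \neq 0$ into a quantitative uniform lower bound for $U(v)^2$ with a constant $c_*$ independent of both $\theta$ and $y$.
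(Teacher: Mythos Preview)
Your proof is correct. The paper takes a related but distinct route: rather than bounding the Hessian, it verifies the secant inequality for strong convexity directly. It splits $f = f^* + h_{1/(2\pi)} + h_{-1/(2\pi)}$, observes that $f^*$ is still operator convex by Loewner's theorem (so that piece contributes plain convexity), and then shows that $\tau \mapsto \langle h_{\pm 1/(2\pi)}(U(\tau))\,x, x\rangle$ is strongly convex via the matrix identity
\[
sA^{-1} + (1-s)B^{-1} - (sA + (1-s)B)^{-1} = s(1-s)\,(A^{-1}-B^{-1})\bigl(sB^{-1}+(1-s)A^{-1}\bigr)^{-1}(A^{-1}-B^{-1}),
\]
with $A = \mathbb{I}_q - \tfrac{1}{2\pi}U(\tau_o)$, $B = \mathbb{I}_q - \tfrac{1}{2\pi}U(\tau'_o)$. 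The choice $\gamma = \tfrac{1}{2\pi}$ (rather than your $\gamma = \tfrac{1}{\pi}$) buys uniform \emph{two-sided} spectral bounds on $A^{-1}$, $B^{-1}$ over all of $\OA$, so no boundary discussion is needed. Your approach is more direct in that it recycles the Hessian formula behind \eqref{nEs2} already derived in the paper and simply keeps one term; the trade-off is the one-sided bound $h_{\pm 1/\pi}(U(\theta)) \succeq \tfrac12\,\mathbb{I}_q$, which you correctly note suffices even as $\theta \to \partial\OA$. Both arguments use the M\'etivier hypothesis in exactly the same way, to convert invertibility of $U(v)$ on $\s^{m-1}$ into a uniform lower bound $U(v)^2 \succeq c_*|v|^2\,\mathbb{I}_q$.
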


\begin{proof}
It suffices to show that $\langle f(U(\tau)) x, x \rangle$ is strongly convex in $\OA$.

By \eqref{IS1} and \eqref{IS}, we can write
$f = f^* + h_{\frac{1}{2 \pi}} + h_{- \frac{1}{2 \pi}}$ with
\begin{align*}
f^*(s) = -2 + \int_{\lambda \in [-\frac{1}{\pi}, \, \frac{1}{\pi}] \setminus \{\pm \frac{1}{2 \pi} \}} \frac{s^2}{1 - \lambda s} \, d\nu(\lambda). \end{align*}

It follows from Loewner's theorem that $f^*$ is operator convex on $(-\pi, \, \pi)$, so the choice of $\OA$ implies that $\langle f^*(U(\tau)) x, x \rangle$ is convex in $\OA$. Then it remains to show that $\langle h_{\frac{1}{2 \pi}}(U(\tau)) x, x \rangle$ and $\langle h_{- \frac{1}{2 \pi}}(U(\tau)) x, x \rangle$
are strongly convex  in $\OA$, namely, there exists a constant $c = c(x) > 0$ such that
\begin{gather*}
\left\langle \left( s h_{\pm \frac{1}{2 \pi}}(U(\tau_o))  + (1 - s)  h_{\pm \frac{1}{2 \pi}}(U(\tau'_o)) - h_{\pm \frac{1}{2 \pi}}(U(s \tau_o + (1 - s) \tau'_o)) \right) x, x \right\rangle  \\
:= \widetilde{h}_x^{(\pm)}(\tau_o, \tau'_o, s) \geq c \, s \, (1 - s) |\tau_o - \tau'_o|^2, \quad \forall 0 < s < 1, \  \tau_o \neq \tau'_o \in \OA.
\end{gather*}

We prove the above inequality only for the case $h_{\frac{1}{2 \pi}}$, since the other case $h_{-\frac{1}{2 \pi}}$ follows similarly. Notice that for any $q \times q$ invertible matrices $A$ and $B$, we have  (see for example \cite[(6.6.60), p. 555]{HJ91})
\begin{gather*}
s A^{-1} + (1 - s) B^{-1} - \left( s A + (1 - s) B \right)^{-1} \\
= s (1 - s) A^{-1} \left( B - A \right) B^{-1} \left( s B^{-1} + (1 - s) A^{-1} \right)^{-1} A^{-1} \left( B - A \right) B^{-1}.
\end{gather*}
Take now $A = \mathbb{I}_q - \frac{1}{2 \pi} U(\tau_o)$ and $B = \mathbb{I}_q - \frac{1}{2 \pi} U(\tau'_o)$ in the last formula. Using again the definition of $\OA$, we find that $A^{-1}$, $B^{-1}$ and $\left( s B^{-1} + (1 - s) A^{-1} \right)^{-1}$ are all positive definite with spectra in $[10^{-1}, \, 10]$. Observe that $A^{-1} \left( B - A \right) B^{-1} = A^{-1} - B^{-1}$ is Hermitian, then
\begin{align*}
 \widetilde{h}_x^{(+)}(\tau_o, \tau'_o, s) &\geq \frac{s \, (1 - s)}{10} | A^{-1} \left( B - A \right) B^{-1} x|^2 \\
 &=  \frac{s \, (1 - s)}{40 \pi^2} \, |\tau_o - \tau'_o|^2 \, \left| A^{-1} U(\frac{\tau_o - \tau'_o}{|\tau_o - \tau'_o|}) B^{-1} x \right|^2
\end{align*}
since $B - A = \frac{|\tau_o - \tau'_o|}{2 \pi} U(\frac{\tau_o - \tau'_o}{|\tau_o - \tau'_o|})$. Thus it suffices to show that there exists a constant $c(x) > 0$ such that
\begin{align*}
 \left| A^{-1} U(\frac{\tau_o - \tau'_o}{|\tau_o - \tau'_o|}) B^{-1} x \right| \geq c(x).
\end{align*}

Indeed, for a $q \times q$ Hermitian matrix $\T$, we set
\begin{align*}
\widetilde{\lambda}_{\T} = \inf \left\{ |\lambda|; \lambda \mbox{ is an eigenvalue of } \T \right\}.
\end{align*}
Then
\begin{align*}
 \left| A^{-1} U(\frac{\tau_o - \tau'_o}{|\tau_o - \tau'_o|}) B^{-1} x \right| \geq \widetilde{\lambda}_{A^{-1}}  \widetilde{\lambda}_{B^{-1}} |x| \inf_{\tau \in \mathbb{S}^{m - 1}} \widetilde{\lambda}_{U(\tau)} \geq \frac{|x|}{100}  \inf_{\tau \in \mathbb{S}^{m - 1}} \widetilde{\lambda}_{U(\tau)} > 0,
\end{align*}
where we used in the last inequality the fact that the function $\widetilde{\lambda}_{U(\tau)} > 0$ is continuous on the compact set $\mathbb{S}^{m - 1}$.
\end{proof}

By Lemma \ref{nLm73} and Proposition \ref{P71}, the following result is a direct consequence of Theorems \ref{TH2} and \ref{THN6}:

\begin{theo} \label{nTH73}
Let $\G$ be of M\'etivier, then:

(1) The set $\M$ defined by \eqref{m} can be characterized as
\begin{align*}
\M = \left\{  g = \left(x, -\frac{1}{4} \nabla_{\theta} \langle U(\theta) \cot{U(\theta)} \, x, x  \rangle\right); \ x \neq 0, \  \theta \in \OA \right\}.
\end{align*}

(2) For any $g = (x, t)$ with $x \neq 0$, there exists exactly one $\theta \in \overline{\OA}$ such that
\[
\phi(g; \theta) = \sup_{\tau \in \OA} \phi(g; \tau).
\]
Moreover, we have $d(g)^2 = \max_{\tau \in \OA} \phi(g; \tau)$ for $g \in \M$, and $d(g)^2 = \sup_{\tau \in \OA} \phi(g; \tau)$ for $g \in \overline{\M}$.
\end{theo}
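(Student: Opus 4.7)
The plan is to combine the two preparatory results of this section — strict convexity of $\overline{\OA}$ (Lemma \ref{nLm73}) and strong concavity of $\phi(g;\cdot)$ on $\OA$ for $x \neq 0$ (Proposition \ref{P71}) — with the main results already established in Sections \ref{SP2} and \ref{Pthn6}. The structural characterization in (1) will be a direct consequence of strong concavity; (2) splits into existence (soft) and uniqueness on the full closure $\overline{\OA}$ (the delicate part), after which the distance formulas drop out of Theorems \ref{TH2} and \ref{THN6}.

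For part (1), the argument should be brief. If $x = 0$, then $\phi(g;\tau) = 4 t \cdot \tau$ has identically zero Hessian, so no such $g$ belongs to $\M$. If $x \neq 0$, Proposition \ref{P71} furnishes a constant $C = C(x) > 0$ with $-\mathrm{Hess}\,\phi(g;\theta) \geq 2 C\, \mathbb{I}_m$ throughout $\OA$, hence every critical point is automatically nondegenerate. Comparing with the definition \eqref{m} of $\M$ yields the stated description. (Alternatively, one can deduce nondegeneracy from Proposition \ref{nP65}(b) together with invertibility of $U(v_0)$ for $v_0 \neq 0$, which is precisely the Métivier assumption.)

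For part (2), existence follows from Remark \ref{nRk21}: $\mathrm{cl}\,\phi(g;\cdot)$ is an upper semicontinuous, proper concave function on the compact set $\overline{\OA}$ and therefore attains its maximum. Uniqueness is the heart of the theorem, and the step I expect to require the most care, since $\phi(g;\cdot)$ fails to be continuous on $\overline{\OA}$. Suppose for contradiction that $\tau_o \neq \tau'_o$ in $\overline{\OA}$ both attain the value $M := \sup_{\tau \in \OA} \phi(g;\tau)$. The crucial input is Lemma \ref{nLm73}: the open segment $\{s \tau_o + (1 - s) \tau'_o : 0 < s < 1\}$ lies entirely inside $\OA$, so that $\mathrm{cl}\,\phi$ coincides with $\phi$ along it. Concavity of $\mathrm{cl}\,\phi(g;\cdot)$ forces $\phi(g; s \tau_o + (1-s)\tau'_o) \geq M$ for every $s \in (0,1)$, and maximality forces equality along the whole open segment. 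This contradicts the strong (in particular strict) concavity of $\phi(g;\cdot)$ on $\OA$ given by Proposition \ref{P71}. It is precisely this boundary-to-interior passage, made possible by strict convexity of $\overline{\OA}$, that would fail for general $2$-step groups where $\overline{\OA}$ need not be strictly convex.

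Finally, the distance formulas follow at once. For $g \in \M$, part (1) ensures the unique critical point $\theta(g) \in \OA$ is nondegenerate, so Theorem \ref{TH2} gives $d(g)^2 = \phi(g;\theta(g)) = \max_{\tau \in \OA} \phi(g;\tau)$. For $g \in \overline{\M}$, the inclusion $\M \subseteq \widetilde{\M}$ built into the definitions \eqref{m} and \eqref{nOM} yields $\overline{\M} \subseteq \overline{\widetilde{\M}}$, and Theorem \ref{THN6} delivers $d(g)^2 = \sup_{\tau \in \OA} \phi(g;\tau)$ directly.
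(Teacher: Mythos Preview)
Your proof is correct and follows precisely the route the paper indicates: the paper states only that the theorem is ``a direct consequence of Theorems \ref{TH2} and \ref{THN6}'' via Lemma \ref{nLm73} and Proposition \ref{P71}, and you have fleshed out exactly those implications. In particular, your uniqueness argument---pushing the open segment between two putative maximizers into $\OA$ by Lemma \ref{nLm73} and then invoking strict concavity from Proposition \ref{P71}---is the intended mechanism.
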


\begin{remark}
We can find in \cite{LZ20} some M\'etivier groups that do not satisfy $\overline{\M} = \G$. However, we will provide, in Section \ref{N32} below, a method to determine the squared distance on general step-two groups.
\end{remark}

\medskip

\renewcommand{\theequation}{\thesection.\arabic{equation}}
\section{Application 3: Star graphs $K_{1, n}$ ($n \geq 2$)}  \label{star}
\setcounter{equation}{0}

\medskip

Recall that $f(s) := 1 - s \cot{s}$ and $\psi(s) := \frac{f(s)}{s^2}$.

The star graphs $K_{1, n}$ are prototypes of Heisenberg-like but not M\'etivier Lie groups, see for example \cite{GM00}, \cite{DDM11} and \cite{DCDMM18}.
Let $K_{1, n} = (\left\{ \X_j \right\}_{j = 1}^{n +1}, \left\{  \Y_k \right\}_{k = 1}^n) \cong \R^{n + 1} \times \R^n$. The Lie algebra is defined by the brackets $[\X_1, \, \X_{j + 1}] = -\Y_j$ for $j = 1, \dots, n$, and $0$ for others. Hence, for a column vector $\tau  \in \R^n$, we have
\begin{align*}
U(\tau) = i \left(
                \begin{array}{cc}
                  0 & \tau^T \\
                  -\tau & \mathbb{O}_{n \times n}
                \end{array}
              \right),
\end{align*}
where $\mathbb{O}_{q_1 \times q_2}$ denotes the $q_1 \times q_2$ null matrix. As a consequence, we get
\begin{align*}
U(\tau)^2 = \left(
                \begin{array}{cc}
                  |\tau|^2 & \mathbb{O}_{1 \times n} \\
                  \mathbb{O}_{n \times 1} & \tau \, \tau^T
                \end{array}
              \right),
\qquad \OA = B_{\R^n}(0, \pi).
\end{align*}
Let $x = (x_1, x_*) \in \R \times \R^n$, a simple calculation yields the identity
\begin{align} \label{star1}
\phi((x, t); \tau) = |x|^2 + 4 t \cdot \tau - \left[ x_1^2 \, f(|\tau|) + \langle \tau, x_* \rangle^2 \, \psi(|\tau|) \right].
\end{align}

Let $\M$ be the set defined in \eqref{m}, our main result is the following:

\begin{theo} \label{Kn1T}
(1) For $g \in K_{1, n}$, we have $d(g)^2 = \sup_{\tau \in \OA} \phi(g; \tau)$.

(2) The cut locus of $o$, $\mathrm{Cut}_o$,
is
\[
\M^c = \left\{((0, x_*), t);  \ |x_* \cdot t| \leq \frac{|x_*|^2}{\sqrt{\pi}} \sqrt{\left|  t - \langle t, \frac{x_*}{|x_*|} \rangle  \frac{x_*}{|x_*|} \right|}  \right\},
\]
with the understanding $\frac{x_*}{|x_*|} = 0$ if $x_* = 0$. And $d(0, t)^2 = 4 \pi |t|$. Moreover for $((0, x_*), t) \in \M^c$ with $x_* \neq 0$, we have
{\em\begin{align} \label{nDE*}
d((0, x_*), t)^2 = |x_*|^2 + 4 \pi \left|  t - \langle t, \frac{x_*}{|x_*|} \rangle  \frac{x_*}{|x_*|} \right|.
\end{align}}

(3) If $(x, t) = ((x_1, x_*), t) \in \M$, then there exists a unique $\theta = \theta(x, t) \in \OA$ such that
{\em \begin{align} \label{n1KE}
t &= \frac{1}{4} \nabla_{\theta} \left[ f(|\theta|) \, x_1^2 + \psi(|\theta|) \langle \theta, x_* \rangle^2 \right] \nonumber \\
&= \frac{1}{4} \left[ \frac{f'(|\theta|)}{|\theta|} \, x_1^2 \, \theta  + \frac{\psi'(|\theta|)}{|\theta|} \, \langle \theta, x_* \rangle^2 \, \theta + 2 \, \psi(|\theta|) \langle \theta, x_* \rangle \, x_* \right].
\end{align}}
Furthermore, we have
{\em\begin{align} \label{n2Kd}
d(x, t)^2 &= \phi((x, t); \theta) = |x|^2 + 4 \, t \cdot \theta - \left[ x_1^2 \, f(|\theta|) + \langle \theta, x_* \rangle^2 \, \psi(|\theta|) \right] \nonumber \\
&= \left( \frac{|\theta|}{\sin{|\theta|}} \right)^2 x_1^2 + |x_*|^2 + \left[ \left( \frac{|\theta|}{\sin{|\theta|}} \right)^2 - 1 \right] \left( x_* \cdot \frac{\theta}{|\theta|} \right)^2 = \left| \frac{U(\theta)}{\sin{U(\theta)}} x \right|^2.
\end{align}}
\end{theo}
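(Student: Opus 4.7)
My plan is to begin by verifying that $K_{1,n}$ is a GM-group (i.e.\ $\overline{\M}=K_{1,n}$), after which part~(1) and the identity $\mathrm{Cut}_o=\M^c$ of part~(2) follow immediately from the corollary preceding Theorem~\ref{aT121} and from Theorem~\ref{aT121} itself, while part~(3) is a direct specialization of Theorems~\ref{TH2}, \ref{TH3N} and Proposition~\ref{NC51} to the explicit reference function~\eqref{star1}. The remaining task, which I expect to be the main obstacle, is to make the description of $\M^c$ and the formula~\eqref{nDE*} fully explicit.

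\paragraph{Step 1: $K_{1,n}$ is of type GM.}
I would apply Proposition~\ref{nPr210}. Since $\tau\tau^T$ is rank one with unique nonzero eigenvalue $|\tau|^2$ (eigenvector $\tau$), the block form of $U(\tau)^2$ shows that for $\tau\in\partial\OA=\{|\tau|=\pi\}$ the $\pi^2$-eigenspace of $U(\tau)^2$ equals $\mathrm{span}\{e_1,(0,\tau)\}\subset\R\times\R^n$. The orthogonal projection of $x=(x_1,x_*)$ onto this subspace is $x_1 e_1+|\tau|^{-2}\langle x_*,\tau\rangle(0,\tau)$, which is nonzero whenever $x_1\neq 0$. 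As $\{x:x_1\neq 0\}$ is dense in $\R^{n+1}$, Proposition~\ref{nPr210} yields $\overline{\M}=K_{1,n}$.

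\paragraph{Step 2: Parts~(1), the first claim of~(2), and~(3).}
Part~(1) is then immediate from the corollary preceding Theorem~\ref{aT121}, and $\mathrm{Cut}_o=\M^c$ is Theorem~\ref{aT121}. For part~(3), the system~\eqref{n1KE} is just $\nabla_\tau\phi((x,t);\theta)=0$ read off from~\eqref{star1} by the chain rule; uniqueness of $\theta\in\OA$ is $\M=\OM_1$ (Theorem~\ref{TH3N}), and $d(g)^2=\phi(g;\theta)$ is Theorem~\ref{TH2}. The penultimate equality in~\eqref{n2Kd} is Proposition~\ref{NC51}, and the middle formula is obtained by diagonalizing $U(\theta)$ over $\C^{n+1}$: its $\pm|\theta|$-eigenspace is spanned by $(1,\mp i\theta/|\theta|)/\sqrt{2}$ and its zero-eigenspace is $\{(0,w):w\perp\theta\}$; expanding $x$ in this orthonormal basis and applying $(U(\theta)/\sin U(\theta))^2$ produces the stated formula with $c=x_*\cdot\theta/|\theta|$.

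\paragraph{Step 3 and the main obstacle: explicit $\M^c$ and formula~\eqref{nDE*}.}
Step~1 forces $x_1=0$ for every $g\in\M^c$. For $x_*=0$ the function $\phi((0,t);\tau)=4t\cdot\tau$ is affine, so its maximum on $\overline{\OA}$ is $4\pi|t|$, giving $d(0,t)^2=4\pi|t|$. For $x_*\neq 0$, Remark~\ref{nRk21} applied to the eigenspace computation above shows $\mathrm{cl}\,\phi(g;\tau)=-\infty$ on $\partial\OA\cap\{\langle\tau,x_*\rangle\neq 0\}$; on the orthogonal circle $\{|\tau|=\pi,\,\tau\perp x_*\}$ the closure reduces to $|x_*|^2+4t\cdot\tau$, whose maximum there equals $|x_*|^2+4\pi|t_\perp|$, attained at $\tau_*:=\pi t_\perp/|t_\perp|$ when $t_\perp\neq 0$, with $t_\perp:=t-\langle t,x_*/|x_*|\rangle\,x_*/|x_*|$. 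Since $d^2=\max_{\overline{\OA}}\mathrm{cl}\,\phi$ by part~(1), this already gives~\eqref{nDE*} whenever the supremum is attained on the boundary.

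The crux is therefore to determine precisely when the supremum is interior versus boundary, i.e.\ when the system~\eqref{n1KE} admits $\theta\in\OA$. Writing $\theta=s\,x_*/|x_*|+w$ with $w\perp x_*$, the $w$-component of~\eqref{n1KE} forces $w$ to be a positive multiple of $t_\perp$ (and $w=0$ when $t_\perp=0$ with $s\neq 0$), while the $x_*$-component becomes the scalar equation $4t_\parallel=|x_*|^2 s\bigl(2\psi(|\theta|)+s^2\psi'(|\theta|)/|\theta|\bigr)$. Using the strict positivity and the blow-up of $\psi,\psi'$ at $r=\pi^-$ provided by Lemma~\ref{n32l} together with~\eqref{32ei2}--\eqref{32ei3}, a one-variable monotonicity analysis of this reduced system should show that a solution with $|\theta|<\pi$ exists precisely when $\pi t_\parallel^2>|x_*|^2|t_\perp|$, equivalently $|x_*\cdot t|>|x_*|^2\pi^{-1/2}|t_\perp|^{1/2}$. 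The complementary non-strict inequality is then exactly the description of $\M^c$ in part~(2), and the failure of interior solvability forces the supremum onto the boundary circle, yielding~\eqref{nDE*}.
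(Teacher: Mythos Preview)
Your outline is correct and follows the paper's strategy almost exactly: Step~1 is precisely the paper's observation that $\M_1=\{x_1\neq 0\}\subset\M$ via Proposition~\ref{nPr210}, Step~2 matches the paper's deduction of (1), (3) and $\mathrm{Cut}_o=\M^c$ from the general machinery, and your decomposition $\theta=s\,x_*/|x_*|+w$ is equivalent to the paper's symmetry reduction (Lemma~\ref{nKL1}) to $x_*=|x_*|e_1$, $t=t_1e_1+t_2e_2$, after which your scalar pair $(s,|w|)$ becomes the paper's $(\tau_1,\tau_2)$.

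The one point where you under-specify is the phrase ``a one-variable monotonicity analysis of this reduced system''. The reduced system is genuinely two-dimensional: you must show that the map $\Upsilon(\tau_1,\tau_2)=\nabla_{\R^2}(\tau_1^2\psi(|\tau|))$ is a diffeomorphism from the quarter-disc $\{\tau_1,\tau_2>0,\ |\tau|<\pi\}$ onto $\{u_1>2\pi^{-1/2}\sqrt{u_2}>0\}$. The paper does this via Hadamard's theorem, and the nontrivial ingredient is that the Jacobian of $\Upsilon$ (equivalently $\mathrm{Hess}(\tau_1^2\psi(|\tau|))$) is positive definite on this region. That positivity is \emph{not} a consequence of monotonicity of $\psi$ or $\psi'$ alone: the paper's Proposition~\ref{StarP} reduces it to the discriminant inequality $\psi(r)\psi''(r)>2\psi'(r)^2$ of Lemma~\ref{n32l}~\eqref{Ii3}, which in turn relies on the measure representation~\eqref{IS} and H\"older. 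You should flag this as the key estimate. The properness half of Hadamard (your ``blow-up'' remark) is indeed handled by the limits in~\eqref{N32ei1}--\eqref{nnp}, as you indicate; note however that your references to~\eqref{32ei2}--\eqref{32ei3} point to the range $\pi<r<\vartheta_1$, whereas here you need~\eqref{N32ei1} for $0<r<\pi$.

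Finally, when an interior critical point exists you still need it to be nondegenerate to conclude $g\in\M$; this is again the Hessian positivity above (or, as you note, Theorem~\ref{TH3N} once uniqueness is established), so the same estimate closes that gap as well.
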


\begin{remark}
In this case, we can obtain shortest geodesic(s) as well as normal geodesics joining $o$ to any fixed $g \neq o$. See \cite{LZ20} for more details.
\end{remark}

It follows from Proposition \ref{nPr210} that $\M$ contains
\begin{align}
\M_1 := \left\{((x_1, x_*), t) \in \R^{n + 1} \times \R^n; \ x_1 \neq 0 \right\},
\end{align}
which is dense in $K_{1, n}$. Hence the first and the third assertion of Theorem \ref{Kn1T} as well as $d(0, t)^2 = 4 \pi |t|$ are direct consequence of Theorems \ref{TH2} and \ref{THN6}. From Theorem \ref{aT121}, it remains to prove that $\M = \M_1 \cup \M_2$ where
\begin{align}
\M_2 := \left\{((0, x_*), t); \ x_* \neq 0, \, |x_* \cdot t| > \frac{|x_*|^2}{\sqrt{\pi}} \sqrt{\left|  t - \langle t, \frac{x_*}{|x_*|} \rangle  \frac{x_*}{|x_*|} \right|} \right\},
\end{align}
and \eqref{nDE*}. Let us begin by the following simple observation:

\begin{lem} \label{nKL1}
For any $((x_1, x_*), t) \in K_{1, n}$ and any $n \times n$ orthogonal matrix $O$, we have
\begin{align*}
d((x_1, x_*), t) = d((|x_1|, O \, x_*), O \, t).
\end{align*}
Moreover, set $e_1 = (1, 0, \ldots, 0)$ and $e_2 \in \R^n$ with $1$ in the second coordinate and zeros elsewhere, we have
\[
d((|x_1|, |x_*| \, e_1), t_1 \, e_1 + t_2 \, e_2) = d((|x_1|, |x_*| \, e_1), |t_1| \, e_1 + t_2 \, e_2).
\]
\end{lem}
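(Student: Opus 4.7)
The plan is to reduce both identities to invariance properties of the supremum
\[
d(g)^2 = \sup_{\tau \in \OA}\phi(g;\tau),
\]
which is valid on the whole of $K_{1,n}$ by Theorem \ref{Kn1T}(1) (already proved independently as a consequence of Theorems \ref{TH2} and \ref{THN6}). The relevant expression from \eqref{star1} is
\[
\phi((x,t);\tau) = |x|^2 + 4\,t\cdot\tau - x_1^2\, f(|\tau|) - \langle \tau, x_*\rangle^2\, \psi(|\tau|),
\]
and the domain $\OA = B_{\R^n}(0,\pi)$ is rotation-invariant, so any orthogonal change of the $\tau$-variable is admissible in the supremum.

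For assertion (1), I first observe that $\phi$ depends on $x_1$ only through $x_1^2 = |x_1|^2$, so replacing $x_1$ by $|x_1|$ does not affect the supremum, hence does not affect $d$. For the action of an orthogonal matrix $O$, I will substitute $\tau = O\tau'$ in the supremum; since $|O\tau'| = |\tau'|$, $(Ot)\cdot(O\tau') = t\cdot\tau'$, and $\langle O\tau', Ox_*\rangle = \langle\tau', x_*\rangle$, a direct inspection yields
\[
\phi\bigl((|x_1|, Ox_*, Ot);\, O\tau'\bigr) = \phi\bigl((|x_1|, x_*, t);\tau'\bigr),
\]
and taking $\sup_{\tau'}$ on both sides gives the claim.

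For assertion (2), the choice $x_* = |x_*|\, e_1$ gives $\langle\tau, x_*\rangle^2 = |x_*|^2\, \tau_1^2$, so $\phi$ depends on $\tau$ only through $|\tau|$, $\tau_1^2$, and the linear piece $4\,t\cdot\tau$. The substitution $\tau_1 \mapsto -\tau_1$ (leaving the other coordinates of $\tau$ unchanged) preserves $\OA$, $|\tau|$, and $\tau_1^2$, while flipping the sign of the $t_1\tau_1$ contribution. Applying this change of variables in the supremum yields
\[
d\bigl((|x_1|, |x_*|e_1),\, t_1 e_1 + t_2 e_2\bigr) = d\bigl((|x_1|, |x_*|e_1),\, -t_1 e_1 + t_2 e_2\bigr),
\]
from which the desired equality follows by selecting the sign so that the first coordinate of the covector reads $|t_1|$ (trivial when $t_1\ge 0$, and the identity above when $t_1<0$).

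No real obstacle is expected: the argument is essentially a bookkeeping exercise once the explicit form of $\phi$ is in hand, and the only point that truly needs checking is that every change of variable leaves $\OA$ invariant, which is immediate from the rotational symmetry of the ball $B_{\R^n}(0,\pi)$.
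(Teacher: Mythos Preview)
Your argument is correct, but it proceeds by a different route than the paper. The paper does not use the sup-formula $d(g)^2 = \sup_{\tau\in\OA}\phi(g;\tau)$ at this stage; instead it observes directly that the heat kernel $p((x_1,x_*),t)$ given by the integral \eqref{2c0} is invariant under $(x_1,x_*,t)\mapsto(|x_1|,Ox_*,Ot)$ and under the sign flip in $t_1$ (via the obvious orthogonal change of variables in the $\lambda$-integral), and then pulls these symmetries back to $d$ through the scaling property \eqref{sp} and Varadhan's formula \eqref{VF}. Your approach is more elementary once Theorem \ref{Kn1T}(1) has been established from Theorems \ref{TH2} and \ref{THN6}, since it avoids the heat kernel entirely and reads the symmetries off the closed form \eqref{star1} of $\phi$; the paper's approach, by contrast, would remain available in contexts where the sup-characterization of $d^2$ is not yet known. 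Since the paper explicitly records that assertion (1) of Theorem \ref{Kn1T} is proved before this lemma is invoked, there is no circularity in your argument.
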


\begin{proof}
In the setting of $K_{1, n}$, by means of the explicit expression for the heat kernel (see \eqref{2c0}), a simple orthogonal transformation shows that
\begin{gather*}
p((x_1, x_*), t) = p((|x_1|, O \, x_*), O \, t), \\
p((|x_1|, |x_*| \, e_1), t_1 \, e_1 + t_2 \, e_2) = p((|x_1|, |x_*| \, e_1), |t_1| \, e_1 + t_2 \, e_2).
\end{gather*}
Then the scaling property (see \eqref{sp}) and Varadhan's formulas (see \eqref{VF}) imply the desired results.
\end{proof}

Thus, we may assume in the sequel, when we need to, that
\begin{align*}
x = \left( |x_1|, |x_*| \, (1, 0, \ldots, 0)  \right) = (|x_1|, |x_*| \, e_1) , \qquad t = t_1 \, e_1 + t_2 \, e_2 \mbox{ with } t_1, \ t_2 \geq 0.
\end{align*}

\medskip

\subsection{Proof of $\M = \M_1 \cup \M_2$}

\medskip

We begin by considering the case where $x_1 \neq 0$. Under our assumption, Proposition \ref{TH1} can be improved to the following:

\begin{prop} \label{StarP}
Let $x_1 \neq 0$. Then the function $\phi((x, t); \cdot)$ is strongly concave in $\OA$.
\end{prop}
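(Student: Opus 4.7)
The plan is to reduce the statement to strong convexity of a scalar quantity and then split that quantity into a strongly convex piece and a convex piece, each of which can be handled by tools already in the paper.

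Using \eqref{star1}, strong concavity of $\phi((x,t);\cdot)$ on $\OA$ is equivalent (the $|x|^2+4t\cdot\tau$ part being affine) to strong convexity on $\OA$ of
\[
\Phi(\tau):=x_1^2\,f(|\tau|)+\psi(|\tau|)\,\langle\tau,x_*\rangle^2.
\]
My first step is to isolate the second summand and show it is already convex ``for free''. Setting $x':=(0,x_*)\in\R\times\R^n$, the spectral decomposition of $U(\tau)$ (whose nonzero eigenvalues in the star-graph setting are the simple values $\pm|\tau|$) together with $\phi((x',0);\tau)=\langle U(\tau)\cot U(\tau)\,x',x'\rangle$ (cf. \eqref{2cn}) and the explicit formula \eqref{star1} applied to $x'$ yields the identity
\[
\psi(|\tau|)\,\langle\tau,x_*\rangle^2 \;=\; |x'|^2-\langle U(\tau)\cot U(\tau)\,x',x'\rangle \;=\; \langle f(U(\tau))x',x'\rangle.
\]
By Proposition \ref{TH1} applied with $x$ replaced by $x'$, this is a convex function of $\tau$ on $\OA$, so it contributes non-negatively to the Hessian of $\Phi$ and it is enough to prove strong convexity of $x_1^2\,f(|\tau|)$.

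For the strongly convex piece I will use the one-variable inequality \eqref{Ii1}. A chain-rule computation gives, on $\OA\setminus\{0\}$,
\[
\He_\tau\bigl[f(|\tau|)\bigr]=\frac{f'(|\tau|)}{|\tau|}\!\left(\mathbb{I}_n-\frac{\tau\tau^T}{|\tau|^2}\right)+f''(|\tau|)\,\frac{\tau\tau^T}{|\tau|^2},
\]
a sum of two orthogonal projections weighted by $f'(|\tau|)/|\tau|$ and $f''(|\tau|)$. By \eqref{Ii1} both coefficients exceed $2\psi(0)>0$ uniformly on $\overline{\OA}\setminus\{0\}$, so $\He_\tau f(|\tau|)\ge 2\psi(0)\,\mathbb{I}_n$ pointwise; at $\tau=0$ the power series $f(s)=\sum_{j\ge1}b_j s^{2j}$ gives the same value $2b_1\mathbb{I}_n=2\psi(0)\mathbb{I}_n$, extending the bound to all of $\OA$. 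Multiplying by $x_1^2>0$ and adding the convex second piece yields
\[
\He_\tau\Phi(\tau)\ge 2x_1^2\,\psi(0)\,\mathbb{I}_n\quad\text{on }\OA,
\]
which gives the desired strong concavity of $\phi((x,t);\cdot)$ with a constant depending only on $x_1$.

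The only non-routine step is the algebraic identification $\psi(|\tau|)\langle\tau,x_*\rangle^2=\langle f(U(\tau))x',x'\rangle$, which is the place where the specific spectral structure of $U(\tau)$ for $K_{1,n}$ is used; once this is in hand, everything else is a direct appeal to \eqref{Ii1} and to the already-proved Proposition \ref{TH1}. I expect no further obstacle; in particular, the cross term $\langle\tau,x_*\rangle^2\psi(|\tau|)$ does \emph{not} need to be analyzed by a delicate Cauchy--Schwarz argument involving \eqref{Ii3}, because its convexity is inherited directly from the operator convexity of $-s\cot s$.
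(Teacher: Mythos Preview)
Your proof is correct and takes a genuinely different route from the paper. Both arguments handle the radial piece $x_1^2 f(|\tau|)$ in the same way, via \eqref{Ii1}. For the cross term, the paper computes $\He\bigl(\tau_1^2\psi(|\tau|)\bigr)$ explicitly and shows it is positive semidefinite by treating the quadratic form as a polynomial in $\tau_1$ and bounding its discriminant using \eqref{Ii2} and \eqref{Ii3}. You instead identify $\psi(|\tau|)\langle\tau,x_*\rangle^2$ with $\langle f(U(\tau))x',x'\rangle$ for $x'=(0,x_*)$ and invoke Proposition~\ref{TH1} (operator convexity of $-s\cot s$) to obtain convexity at once. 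Your argument is shorter, more conceptual, and bypasses the need for \eqref{Ii3} altogether. The trade-off is that the paper's explicit computation also yields the finer conclusion that $\He\bigl(\tau_1^2\psi(|\tau|)\bigr)$ is \emph{strictly} positive definite whenever $\tau_1\neq 0$ (noted in the parenthetical remark about $u_1\neq 0$), and this strict positivity is used immediately afterward in the lead-up to Proposition~\ref{NRk84}; that refinement does not fall out of the operator-convexity shortcut without further work.
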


\begin{proof}
It suffices to show that for any $r := |\tau| < \pi$, we have $\He f(|\tau|) \ge 2 \psi(0)$ and $\He \left( \tau_1^2 \, \psi(|\tau|) \right) \ge 0$.

A simple calculus gives
\begin{align*}
\He f(r) = \frac{f'(r)}{r} \, \mathbb{I}_n + r^{-2} \, \left( f^{''}(r) - \frac{f'(r)}{r} \right) \, \tau \, \tau^T \geq 2 \psi(0) \, \mathbb{I}_n,
\end{align*}
by \eqref{Ii1}.

Moreover, using the following elementary property of Hessian
\begin{align*}
\He(f_1 f_2) = f_1 \, \He f_2 + f_2 \, \He f_1 + \left( \nabla f_1 \right)^T \nabla f_2 + \left( \nabla f_2 \right)^T \nabla f_1,
\end{align*}
$\He \left( \tau_1^2 \, \psi(|\tau|) \right)$ is given by
\begin{align*}
&\tau_1^2 \,  \left[ \frac{\psi'(r)}{r} \mathbb{I}_n + \left( \psi^{''}(r) - \frac{\psi'(r)}{r} \right) \frac{\tau}{r} \left( \frac{\tau}{r} \right)^T \right] + 2 \psi'(r) \frac{\tau}{r} (\tau_1, \mathbb{O}_{1 \times (n - 1)}) \\
&+ 2 \psi'(r) \left(
                    \begin{array}{c}
                      \tau_1 \\
                      \mathbb{O}_{(n - 1) \times 1} \\
                    \end{array}
                  \right)
\left( \frac{\tau}{r} \right)^T + 2 \psi(r) \left(
\begin{array}{cc}
1 & \mathbb{O}_{1 \times (n - 1)} \\
\mathbb{O}_{(n - 1) \times 1} & \mathbb{O}_{(n - 1) \times (n - 1)} \\
\end{array}
\right).
\end{align*}

It remains to show that for any $\tau = (\tau_1, \ldots, \tau_n) \in \OA$ and any $u = (u_1, \ldots, u_n) \in \R^n$ satisfying $|u| = 1$, we have
\begin{align} \label{stare}
0 &\le \langle \He \left( \tau_1^2 \, \psi(|\tau|) \right) \, u, u \rangle \nonumber \\
&= \left[ \frac{\psi'(r)}{r} + \left( \psi^{''}(r) - \frac{\psi'(r)}{r} \right) s^2 \right] \tau_1^2 + 4 \psi'(r) \, s \, u_1 \, \tau_1 + 2 \psi(r) \, u_1^2
\end{align}
where $s = \langle u, \frac{\tau}{r} \rangle \in [-1, 1]$. We consider the right-hand side of \eqref{stare} as a quadratic polynomial w.r.t. $\tau_1$. \eqref{Ii2} implies that the coefficient of the first term is strictly positive. On the other hand, its discriminant
\begin{align*}
&8 \left\{ 2 \left( \psi'(r) \, s \, u_1 \right)^2 - \psi(r)  \, u_1^2 \left[ \frac{\psi'(r)}{r} + \left( \psi^{''}(r) - \frac{\psi'(r)}{r} \right) s^2  \right] \right\} \\
&= - 8 \, u_1^2  \left\{ \psi(r) \frac{\psi'(r)}{r} (1 - s^2) + \left( \psi(r) \, \psi^{''}(r) - 2 \psi'(r)^2 \right) s^2 \right\}
\end{align*}
is non-positive by Lemma \ref{n32l} (remark also that it is strictly negative if $u_1 \neq 0$), which completes the proof of \eqref{stare}.

This proves the proposition.
\end{proof}

Recall that $\OM_2$ (resp. $\OM$) is defined by \eqref{OM2} (resp. \eqref{nOM}). For $g = ((0, |x_*| \, e_1), t_1 \, e_1 + t_2 \, e_2)$ with $t_1$, $t_2 \geq 0$, the reference function (cf. \eqref{star1}) can be simplified as
\begin{align} \label{nnKSnn1}
\phi(g; \tau) = |x_*|^2 + 4 \, (t_1 \tau_1 + t_2 \tau_2) - |x_*|^2 \, \tau_1^2 \, \psi(|\tau|), \qquad \tau \in \OA = B_{\R^n}(0, \pi).
\end{align}
We have the following simple observation:

\begin{lem} \label{nSTARnL}
It holds that $g = ((0, |x_*| \, e_1), 0) \in \OM_2$. Moreover, $g \not\in \OM$ in the case where: (1) $|x_*| = 0$ with $t \neq 0$; or (2) $|x_*| \neq 0$ with $t_1 = 0$ and $t_2 \neq 0$.
\end{lem}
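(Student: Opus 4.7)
The plan is to read off everything directly from the specialised expression \eqref{nnKSnn1} for $\phi(g;\tau)$ and to check, case by case, whether its supremum over the open ball $\OA = B_{\R^n}(0,\pi)$ is attained and, if so, with what multiplicity.

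For the first assertion, I would put $t=0$ in \eqref{nnKSnn1}. Since $x_* = |x_*|\,e_1$, one has $\langle\tau,x_*\rangle = |x_*|\tau_1$, so the formula collapses to $\phi(g;\tau) = |x_*|^2\,(1 - \tau_1^2\,\psi(|\tau|))$. Recalling from \eqref{N32ei1} that $\psi>0$ on $[0,\pi)$, the subtracted term is non-negative and vanishes precisely when $\tau_1=0$; hence $\sup_{\tau\in\OA}\phi(g;\tau) = |x_*|^2$ and this value is attained on the entire $(n-1)$-dimensional slice $\{\tau\in\OA:\tau_1=0\}$, which has infinitely many points. Therefore $g\in\OM_2$.

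For the second assertion I split into the two listed cases. In case (1), $x_*=0$ reduces \eqref{nnKSnn1} to the affine functional $\phi(g;\tau) = 4(t_1\tau_1 + t_2\tau_2)$, whose linear part $(4t_1, 4t_2, 0,\ldots,0)$ is nonzero because $t\neq 0$; hence its supremum over the nonempty open bounded convex set $\OA$ cannot be attained in $\OA$. In case (2), using $t_1=0$ and $\psi\geq 0$ on $[0,\pi)$ I bound, for every $\tau\in\OA$,
\[
\phi(g;\tau) \,=\, |x_*|^2 + 4t_2\tau_2 - |x_*|^2\tau_1^2\psi(|\tau|) \,\leq\, |x_*|^2 + 4t_2\tau_2 \,<\, |x_*|^2 + 4\pi|t_2|,
\]
where the strict inequality uses $|\tau_2|\leq|\tau|<\pi$ together with $t_2\neq 0$. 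Inserting $\tau^{(\varepsilon)} := (0,\,(\pi-\varepsilon)\,\mathrm{sgn}(t_2),\,0,\ldots,0)\in\OA$ and letting $\varepsilon\to 0^+$ shows the supremum equals $|x_*|^2 + 4\pi|t_2|$; since this value is not attained in $\OA$, we conclude $g\notin\OM$.

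The only point requiring any care is the strict inequality in case (2): a priori, attainment could happen either on the slice $\{\tau_1=0\}$ or at some interior $\tau$ that balances the linear gain $4t_2\tau_2$ against the quadratic penalty $|x_*|^2\tau_1^2\psi(|\tau|)$, so one must invoke both $\psi\geq 0$ and $|\tau|<\pi$ simultaneously to exclude attainment in $\OA$. Apart from this minor bookkeeping, the argument is purely elementary.
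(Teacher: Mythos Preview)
Your proof is correct and follows essentially the same approach as the paper's: a direct reading of the explicit formula \eqref{nnKSnn1}. The paper's own argument is terser---it simply exhibits $\pm e_2$ as two maximizers for the first claim, and for the second identifies the (closure) maximizer as $\pi\,t/|t|$ (resp.\ $\pi e_2$) lying on $\partial\OA$---while you spell out the strict upper bound and the approximating sequence; the content is the same.
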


\begin{proof}
The proof is easy. Indeed, for $g = ((0, |x_*| \, e_1), 0)$, we have $\phi(g; e_2) = \phi(g; - e_2) = \max_{\tau \in \OA} \phi(g; \tau)$. Furthermore, if $|x_*| = 0$ with $t \neq 0$ (resp. $|x_*| \neq 0$ with $t_1 = 0$ and $t_2 \neq 0$), $\phi(g; \cdot)$ has a unique maximum point $\pi \frac{t}{|t|}$ (resp. $\pi \, e_2$) which belongs to $\partial \OA$.
\end{proof}

Next, it follows from the proof of Proposition \ref{StarP} that $\He \left( \tau_1^2 \, \psi(|\tau|) \right) > 0$ for $\tau_1 \neq 0$. Hence we have, via Theorem \ref{TH3N}, the following:

\begin{prop} \label{NRk84}
Suppose that $g = ((0, |x_*| \, e_1), t_1 \, e_1 + t_2 \, e_2)$ with $|x_*| \cdot t_1 > 0$ and $t_2 \geq 0$. Then $g \in \M$ if and only if one of the following two conditions holds:

(1) $t_2 > 0$ and there exists $\tau \in \Omega_{+ +}$ with
\begin{align}
\Omega_{+ +} := \left\{(\tau_1, \tau_2); \  \tau_1, \tau_2 >0, \sqrt{\tau_1^2 + \tau_2^2} < \pi \right\}
 \end{align}
such that
\begin{align} \label{NSEo}
4 (t_1, t_2) = |x_*|^2 \, \Upsilon(\tau_1, \tau_2),
\end{align}
where the smooth function $\Upsilon$ is defined by
\begin{align} \label{nSEI}
\Upsilon(\tau_1, \tau_2) := \nabla_{\R^2} \left( \tau_1^2 \psi(|\tau|) \right) = \tau_1 \left( 2 \psi(|\tau|) + \psi'(|\tau|) \frac{\tau_1^2}{|\tau|}, \psi'(|\tau|) \frac{\tau_1 \tau_2}{|\tau|} \right).
\end{align}
Moreover, the Jacobian matrix of $\Upsilon$ is positive definite.

(2) $t_2 = 0$. In this case, we have $0 < \tau_1 < \pi$ and $\tau_2 = 0$ in \eqref{NSEo} and \eqref{nSEI}.

\end{prop}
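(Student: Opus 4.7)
The plan is to read off the critical-point condition from the simplified form \eqref{nnKSnn1} of the reference function and then verify that every such critical point is automatically nondegenerate, so that Theorem \ref{TH3N} (which gives $\M=\OM_1$) converts the membership $g\in\M$ into the explicit system stated in the proposition.

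First I would compute $\nabla_{\tau}\phi(g;\tau)$ directly from \eqref{nnKSnn1}: its $j$-th component equals
\[
\partial_{\tau_j}\phi(g;\tau)=4\bigl(t_1\delta_{j1}+t_2\delta_{j2}\bigr)-|x_*|^2\bigl(2\tau_1\psi(|\tau|)\delta_{j1}+\tau_1^2\psi'(|\tau|)\tau_j/|\tau|\bigr).
\]
For $j\ge 3$ the vanishing of this expression forces $\tau_1^2\psi'(r)\tau_j/r=0$ with $r=|\tau|$; since $\psi'(r)/r>0$ on $(0,\pi)$ by Lemma \ref{n32l}, one of $\tau_1$, $\tau_j$ must vanish. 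The possibility $\tau_1=0$ is ruled out by the $j=1$ equation (it would give $4t_1=0$, contradicting $t_1>0$), so $\tau_k=0$ for every $k\ge 3$. The two surviving equations then collapse to $4(t_1,t_2)=|x_*|^2\Upsilon(\tau_1,\tau_2)$; since $2\psi(r)+\tau_1^2\psi'(r)/r>0$ on $(0,\pi)$, the signs of $t_1,t_2$ force $\tau_1>0$ and $\mathrm{sign}(\tau_2)=\mathrm{sign}(t_2)$, producing the dichotomy between case (1) ($t_2>0$, $(\tau_1,\tau_2)\in\Omega_{++}$) and case (2) ($t_2=0$, $\tau_2=0$, $\tau_1\in(0,\pi)$).

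The main obstacle is the nondegeneracy of such a critical point, which is equivalent to positive-definiteness of $J_{\Upsilon}$. At $\theta=(\tau_1,\tau_2,0,\ldots,0)$ one has $\He\phi(g;\theta)=-|x_*|^2\He F(\theta)$ with $F(\tau):=\tau_1^2\psi(|\tau|)$. Because $\theta_k=0$ for $k\ge 3$, a direct calculation shows that every mixed partial $\partial_j\partial_k F(\theta)$ with $j\in\{1,2\}$ and $k\ge 3$ vanishes, so $\He F(\theta)$ splits into a $2\times 2$ upper-left block equal to $J_{\Upsilon}(\tau_1,\tau_2)$ and a lower-right $(n-2)\times(n-2)$ block equal to $\tau_1^2(\psi'(r)/r)\,\mathbb{I}_{n-2}$, the latter being clearly positive-definite by Lemma \ref{n32l}. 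For $J_{\Upsilon}$ I would invoke identity \eqref{stare} from the proof of Proposition \ref{StarP}: specialised to unit vectors $u\in\R^n$ supported on the first two coordinates, it rewrites $\langle\He F(\theta)u,u\rangle$ as a quadratic in $\tau_1$ whose discriminant simplifies (via the same algebra as in Proposition \ref{StarP}) to
\[
-8u_1^2\bigl\{\psi(r)(\psi'(r)/r)(1-s^2)+\bigl(\psi(r)\psi''(r)-2\psi'(r)^2\bigr)s^2\bigr\},\qquad s=\frac{u_1\tau_1+u_2\tau_2}{r}.
\]
By Lemma \ref{n32l} one has $\psi\psi'/r>0$ and $\psi\psi''>2(\psi')^2$, so the brace is strictly positive for every $s\in[-1,1]$; hence the discriminant is strictly negative when $u_1\neq 0$ and the quadratic is strictly positive for $\tau_1>0$. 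When $u_1=0$ the form reduces to $[\psi'(r)/r+(\psi''(r)-\psi'(r)/r)s^2]\tau_1^2\ge(\psi'(r)/r)\tau_1^2>0$. This proves $J_{\Upsilon}>0$, which establishes both the nondegeneracy and the ``moreover'' clause.

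Combining the two steps yields both implications. Forward: if $g\in\M$, Theorem \ref{TH3N} furnishes a (unique, nondegenerate) critical point of $\phi(g;\cdot)$, which by the gradient analysis satisfies \eqref{NSEo} in the appropriate regime. Backward: any solution $(\tau_1,\tau_2)$ of \eqref{NSEo} extended by zeros to $\theta\in\OA$ is a critical point of $\phi(g;\cdot)$ and is automatically nondegenerate by the $J_{\Upsilon}$ analysis, so $g\in\M$.
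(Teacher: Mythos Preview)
Your argument is correct and follows essentially the same route as the paper: the paper's proof consists of the single sentence preceding the statement, namely that the computation in the proof of Proposition~\ref{StarP} already yields $\He\!\left(\tau_1^2\psi(|\tau|)\right)>0$ whenever $\tau_1\neq 0$, and then Theorem~\ref{TH3N} converts this into the claimed description of $\M$. You have simply made each implicit step explicit --- the gradient reduction to the two-dimensional system, the block splitting of the Hessian at the critical point, and the reuse of the discriminant analysis~\eqref{stare} --- which is exactly what the paper is pointing to.
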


Consider the simply connected domains in $\R^2$, $\Omega_{+ +}$ and
\begin{align}
\R^2_{+, +, >} := \left\{ (u_1, u_2); u_1, u_2 > 0, \  u_1 > \frac{2}{\sqrt{\pi}} \sqrt{u_2} \right\}.
\end{align}
We have the following

\begin{prop} \label{Kp1}
$\Upsilon$, defined by \eqref{nSEI}, is a $C^{\infty}$-diffeomorphism from $\Omega_{+ +}$ onto $\R^2_{+, +, >}$.
\end{prop}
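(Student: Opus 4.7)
The plan combines three ingredients: the positive definite Jacobian already supplied by Proposition \ref{NRk84}, a pointwise inequality between $\psi$ and $\psi'$, and an asymptotic analysis near $\partial \Omega_{+ +}$. Proposition \ref{NRk84} gives that $J\Upsilon$ is positive definite (and symmetric, being the Hessian of $\tau \mapsto \tau_1^2\,\psi(|\tau|)$) throughout $\Omega_{+ +}$, so $\Upsilon$ is smooth and already a local diffeomorphism. It remains to upgrade this to a global diffeomorphism onto $\R^2_{+, +, >}$, which I would do via image inclusion, injectivity, and surjectivity in turn.

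For the image inclusion, positivity of both components of $\Upsilon$ on $\Omega_{+ +}$ follows from $\psi, \psi' > 0$ on $(0, \pi)$ (Lemma \ref{n32l}). The strict inequality $u_2 < \pi u_1^2/4$ follows from the bounds $u_1 \ge 2\tau_1\,\psi(|\tau|)$ and $u_2 \le \tau_1^2\,\psi'(|\tau|)$ together with the pointwise estimate
\[
\psi'(r) < \pi\,\psi(r)^2, \qquad 0 < r < \pi,
\]
which, via \eqref{N32ei1}, reduces to $r \sum_{j \ge 1}[(j\pi)^2 - r^2]^{-2} < \pi \bigl( \sum_{j \ge 1}[(j\pi)^2 - r^2]^{-1} \bigr)^2$ and follows from $r < \pi$ combined with the trivial bound $(\sum a_j)^2 \ge \sum a_j^2$ for positive reals. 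For injectivity, $\Omega_{+ +}$ is convex (intersection of the open disk of radius $\pi$ with the open first quadrant), so the monotonicity identity
\[
(b - a) \cdot \bigl( \Upsilon(b) - \Upsilon(a) \bigr) = \int_0^1 (b - a)^T\,J\Upsilon\bigl( a + s(b - a) \bigr)(b - a)\,ds
\]
is strictly positive whenever $a \neq b \in \Omega_{+ +}$, forcing $\Upsilon(a) \neq \Upsilon(b)$.

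For surjectivity, $\Upsilon(\Omega_{+ +})$ is open in the connected set $\R^2_{+, +, >}$, so it suffices to show $\Upsilon$ is proper as a map into $\R^2_{+, +, >}$. Given $\tau^{(n)} \in \Omega_{+ +}$ with $\Upsilon(\tau^{(n)}) \to u^* \in \R^2_{+, +, >}$, the boundedness of $\Omega_{+ +}$ lets us pass to a subsequence with $\tau^{(n)} \to \tau^* \in \overline{\Omega_{+ +}}$, and I would rule out each $\tau^* \in \partial \Omega_{+ +}$. Three of the four boundary subcases are easy: $\tau_1^* = 0$ with $\tau_2^* \in [0, \pi)$ forces $u_1^{(n)} \to 0$; $\tau_2^* = 0$ with $\tau_1^* \in (0, \pi)$ forces $u_2^{(n)} \to 0$; and $|\tau^*| = \pi$ with $\tau_1^* > 0$ forces $u_1^{(n)} \ge 2\tau_1^{(n)}\psi(|\tau^{(n)}|) \to \infty$. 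The only delicate case, and the main obstacle, is $\tau^* = (0, \pi)$: here I would use the asymptotics $\psi(r) \sim 2/(\pi^2 - r^2)$ and $\psi'(r) \sim \pi\,\psi(r)^2$ as $r \to \pi^-$, setting $\epsilon_n := \pi^2 - |\tau^{(n)}|^2$ and $\alpha_n := \tau_1^{(n)}$ to obtain $u_1^{(n)} \sim 4\alpha_n/\epsilon_n + 4\alpha_n^3/\epsilon_n^2$ and $u_2^{(n)} \sim 4\pi(\alpha_n/\epsilon_n)^2$. A case split on whether $\alpha_n/\epsilon_n$ tends to $0$, to a positive constant, or to $+\infty$ then forces $u_1^{(n)} \to 0$, $u_2^{(n)}/(u_1^{(n)})^2 \to \pi/4$, or $u_2^{(n)} \to \infty$ respectively; each contradicts $u^* \in \R^2_{+, +, >}$. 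This is precisely where the strict inequality $\psi' < \pi\psi^2$ from the first step degenerates to asymptotic equality, which explains why the boundary curve $u_2 = \pi u_1^2/4$ is attained in the limit. Once properness is in hand, combining with injectivity yields the desired $C^\infty$-diffeomorphism.
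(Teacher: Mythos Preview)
Your proof is correct and follows essentially the same route as the paper: image inclusion via an inequality relating $\psi$ and $\psi'$, nonvanishing Jacobian from Proposition~\ref{NRk84}, and a properness argument with the same four boundary cases (including the delicate corner $\tau^* = (0,\pi)$, where your ratio $\alpha_n/\epsilon_n$ plays the role of the paper's $v_2^{(j)}/(\pi - |v^{(j)}|)$). The only genuine difference is organizational: the paper packages injectivity and surjectivity together via Hadamard's theorem (proper local diffeomorphism between simply connected domains), whereas you prove injectivity separately by the monotonicity identity $(b-a)\cdot(\Upsilon(b)-\Upsilon(a)) > 0$ exploiting convexity of $\Omega_{++}$ and positive-definiteness of $J\Upsilon$, then conclude surjectivity by the open--closed argument in the connected target. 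Your image-inclusion inequality $\psi'(r) < \pi\,\psi(r)^2$ is precisely the paper's Lemma~\eqref{iN38}, $\psi(r)^2 > \psi'(r)/r$, weakened by $r < \pi$; both proofs reduce it to $\sum a_j^2 \le (\sum a_j)^2$ for $a_j = [(j\pi)^2 - r^2]^{-1}$.
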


\begin{proof}
First we claim that the smooth function $\Upsilon$ is from $\Omega_{+ +}$ into $\R^2_{+, +, >}$, namely,
\begin{align*}
 \tau_1 \left( 2 \psi(|\tau|) + \psi'(|\tau|) \frac{\tau_1^2}{|\tau|} \right) > \frac{2}{\sqrt{\pi}} \sqrt{\psi'(|\tau|) \frac{\tau_1^2}{|\tau|}  \tau_2}, \quad (\tau_1, \tau_2) \in \Omega_{+ +}.
\end{align*}
This is a direct consequence of the inequality $\psi(r) > \sqrt{\frac{\psi'(r)}{r}}$ for any $0 \leq r < \pi$, see \eqref{iN38}.

Recall that the Jacobian determinant of $\Upsilon$ vanishes nowhere. By Hadamard's theorem (see for example \cite[\S 6.2]{KP02}), it remains to show that $\Upsilon$ is proper, that is, whenever $\{ \tau^{(j)} \} \subset \Omega_{+ +}$ satisfies $\tau^{(j)} = (\tau^{(j)}_1, \tau^{(j)}_2)  \longrightarrow \partial \Omega_{+ +}$ then $\Upsilon(\tau^{(j)})  \longrightarrow \partial \R^2_{+, +, >}$. Indeed, by the definition of  $\psi$ (see \eqref{FF} and \eqref{IS0}), we have:

(1)  If  $\tau^{(j)}_1 \longrightarrow 0^+$ and $|\tau^{(j)}| \leq \pi - \varepsilon < \pi$, then $\Upsilon(\tau^{(j)})  \longrightarrow 0 \in  \partial \R^2_{+, +, >}$.

(2) If  $|\tau^{(j)}| \longrightarrow \pi^-$ and $\tau^{(j)}_1 \geq \varepsilon > 0$, by the fact that (cf. \eqref{N32ei1})
\begin{align} \label{nnp}
\lim_{r \longrightarrow \pi^-} (\pi - r) \psi(r) = \frac{1}{\pi}, \qquad \lim_{r \longrightarrow \pi^-} (\pi - r)^2 \psi'(r) = \frac{1}{\pi},
\end{align}
we have  $\Upsilon(\tau^{(j)})  \longrightarrow \infty \in  \partial \R^2_{+, +, >}$.

(3) If  $\tau^{(j)}_2 \longrightarrow 0^+$, $\tau^{(j)}_1 \geq \varepsilon > 0$ and $|\tau^{(j)}| \leq \pi - \varepsilon < \pi$, then $\Upsilon(\tau^{(j)})  \longrightarrow (0, +\infty) \times \{ 0 \} \subset  \partial \R^2_{+, +, >}$.

(4) If  $\tau^{(j)}_1 \longrightarrow 0^+$ and $|\tau^{(j)}| \longrightarrow \pi^-$, set $\Upsilon(\tau^{(j)})  = (u^{(j)}_1, u^{(j)}_2)$. Whenever $u^{(j)}_2  \longrightarrow +\infty$, then $\Upsilon(\tau^{(j)}) \longrightarrow \infty$. Assume now that  $u^{(j)}_2  \longrightarrow a \geq 0$. Then we have
\begin{align*}
a = \lim_{j \longrightarrow +\infty} u^{(j)}_2 =  \lim_{j \longrightarrow +\infty}  \frac{\tau^{(j)}_2}{|\tau^{(j)}|} \left( \tau^{(j)}_1 \right)^2 \psi'(|\tau^{(j)}|)   =  \lim_{j \longrightarrow +\infty}  \frac{1}{\pi} \left( \frac{\tau^{(j)}_1}{\pi - |\tau^{(j)}|} \right)^2,
\end{align*}
where we have used \eqref{nnp} in the last equality. Using \eqref{nnp} again, it
implies that
\begin{align*}
\lim_{j \longrightarrow +\infty} u^{(j)}_1 &=  \lim_{j \longrightarrow +\infty}  \left[ 2  \tau^{(j)}_1 \psi(|\tau^{(j)}|) + \frac{\tau^{(j)}_1}{|\tau^{(j)}|} \left( \tau^{(j)}_1 \right)^2 \psi'(|\tau^{(j)}|)  \right] \\
&=  \lim_{j \longrightarrow +\infty}  \frac{2}{\pi} \frac{\tau^{(j)}_1}{\pi - |\tau^{(j)}|}  = \frac{2}{\sqrt{\pi}} \sqrt{a}.
\end{align*}

This completes the proof.
\end{proof}

Let us define
\begin{align*}
\Omega_{(+)} := \left\{(\tau_1, \tau_2) \in B_{\R^2}(0, \pi); \  \tau_1 > 0 \right\}, \qquad \R^2_{(>)} := \left\{ (u_1, u_2); \ u_1 > \frac{2}{\sqrt{\pi}} \sqrt{|u_2|} \right\}.
\end{align*}
It follows from the proof of Proposition \ref{Kp1}, we have also

\begin{prop} \label{Kp2}
$\Upsilon$, defined by \eqref{nSEI}, is a $C^{\infty}$-diffeomorphism from $\Omega_{(+)}$ onto $\R^2_{(>)}$.
\end{prop}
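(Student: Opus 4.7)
The strategy is to repeat verbatim the Hadamard-theorem argument that established Proposition \ref{Kp1}, checking the three ingredients: $\Upsilon(\Omega_{(+)}) \subset \R^2_{(>)}$, the Jacobian determinant vanishes nowhere on $\Omega_{(+)}$, and $\Upsilon$ is proper as a map $\Omega_{(+)} \longrightarrow \R^2_{(>)}$. The only new feature compared to Proposition \ref{Kp1} is that $\Omega_{(+)}$ extends across the $\tau_2 = 0$ axis and $\R^2_{(>)}$ extends across the $u_2 = 0$ axis, so the bulk of the work reduces to a symmetry argument plus a verification at the axis.

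First I would exploit the evident symmetry of $\Upsilon$: from the explicit formula \eqref{nSEI} we have $\Upsilon_1(\tau_1, -\tau_2) = \Upsilon_1(\tau_1, \tau_2)$ and $\Upsilon_2(\tau_1, -\tau_2) = -\Upsilon_2(\tau_1, \tau_2)$, because $|\tau|$ and hence $\psi(|\tau|), \psi'(|\tau|)$ are even in $\tau_2$, while the explicit $\tau_2$ factor in $\Upsilon_2$ is odd. Thus $\Upsilon$ restricted to $\{\tau_2 < 0, \tau_1 > 0, |\tau| < \pi\}$ is conjugate via the reflection $(\tau_2, u_2) \mapsto (-\tau_2, -u_2)$ to $\Upsilon|_{\Omega_{++}}$, and Proposition \ref{Kp1} immediately supplies a $C^{\infty}$-diffeomorphism onto $\{(u_1, u_2); u_1 > 0, u_2 < 0, u_1 > \tfrac{2}{\sqrt{\pi}}\sqrt{|u_2|}\}$. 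For the remaining points $\tau_2 = 0$, $0 < \tau_1 < \pi$, observe $\Upsilon(\tau_1, 0) = (\mu(\tau_1), 0)$ by the definition $\mu(r) = f'(r) = (r^2 \psi(r))'$, and Lemma \ref{NL31} says $\mu$ is a diffeomorphism from $(0, \pi)$ onto $(0, +\infty)$. Putting these three pieces together gives a bijection $\Omega_{(+)} \longrightarrow \R^2_{(>)}$.

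To upgrade this to a $C^{\infty}$-diffeomorphism, I would verify the Jacobian $\mathrm{J}\Upsilon$ is nonsingular on the axis $\tau_2 = 0$, since Proposition \ref{Kp1} already handles $\tau_2 \neq 0$ via the reflection. A direct computation, using that every term in $\partial_{\tau_2}\Upsilon_1$ carries an explicit factor of $\tau_2$, gives at $(\tau_1, 0)$
\[
\mathrm{J}\Upsilon(\tau_1, 0) = \begin{pmatrix} \mu'(\tau_1) & 0 \\ 0 & \tau_1 \, \psi'(\tau_1) \end{pmatrix},
\]
and both diagonal entries are strictly positive by Lemma \ref{NL31} and \eqref{Ii2}. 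Alternatively, one applies Hadamard's theorem globally: smoothness and the image-containment $\Upsilon(\Omega_{(+)}) \subset \R^2_{(>)}$ both follow from \eqref{iN38} exactly as in the first paragraph of the proof of Proposition \ref{Kp1} (nothing there used the sign of $\tau_2$), and properness is an immediate extension of cases (1)--(4) there, since the limits investigated depend only on $|\tau|$ and on whether $\tau_1 \to 0^+$, quantities insensitive to $\mathrm{sgn}\, \tau_2$.

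The only genuinely new point, and the one I would be most careful about, is the boundary behavior as $(\tau_1, \tau_2) \to (0, 0)$ with $|\tau| \to \pi^-$ through $\tau_2 < 0$: one must check that the image still escapes to the curve $u_1 = \tfrac{2}{\sqrt{\pi}}\sqrt{|u_2|}$ rather than landing in the interior of $\R^2_{(>)}$. Using \eqref{nnp} one recomputes $\lim u_2 = \tfrac{1}{\pi} \lim (\tau_1/(\pi - |\tau|))^2 \cdot \mathrm{sgn}\, \tau_2 = -a$ (with $a \ge 0$) and $\lim u_1 = \tfrac{2}{\sqrt{\pi}}\sqrt{a} = \tfrac{2}{\sqrt{\pi}}\sqrt{|u_2|}$, recovering the boundary of $\R^2_{(>)}$ exactly as in case (4) of Proposition \ref{Kp1}. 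This completes the verification of properness and hence of the diffeomorphism claim.
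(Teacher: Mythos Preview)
Your proof is correct and follows exactly the approach the paper intends: the paper's own ``proof'' is the single sentence that the result follows from the proof of Proposition~\ref{Kp1}, and you have simply spelled out the details --- the reflection symmetry in $\tau_2$, the identification $\Upsilon(\tau_1,0)=(\mu(\tau_1),0)$ on the axis, the Jacobian check there, and the properness verification --- all of which are implicit in that one-line reference. One small slip: in your final paragraph you wrote ``as $(\tau_1,\tau_2)\to(0,0)$ with $|\tau|\to\pi^-$'', which is self-contradictory; you mean $\tau_1\to 0^+$ with $|\tau|\to\pi^-$ (equivalently $(\tau_1,\tau_2)\to(0,-\pi)$), and with that correction the limit computation is fine.
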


\medskip

{\bf Proof of $\M = \M_1 \cup \M_2$}:

\medskip

Combining Proposition \ref{Kp2} with Proposition \ref{NRk84}, Lemma \ref{nSTARnL} and the fact that $\M_1 \subset \M$, up to an orthogonal transform, we get $\M = \M_1 \cup \M_2$.    \qed

\medskip

\subsection{Proof of \eqref{nDE*}}

\medskip

\begin{proof}
Aiming at \eqref{nDE*}, let us set
\begin{align}\label{SCn0}
\M_* := \left\{ ((0, e_1), t_1 \, e_1 + t_2 \, e_2); \  0 \leq t_1 \leq \frac{1}{\sqrt{\pi}} \sqrt{t_2} \right\}.
\end{align}
Up to an orthogonal transform and scaling, it remains to show that
\begin{align} \label{SCn1}
d((0, e_1), t_1 \, e_1 + t_2 \, e_2)^2 = 1 + 4 \pi \, t_2, \qquad \forall ((0, e_1), t_1 \, e_1 + t_2 \, e_2) \in \M_*.
\end{align}

Indeed it follows from \eqref{nnKSnn1} and the proof of Proposition \ref{Kp1} that
\[
\sup_{\tau \in \OA} \phi((0, e_1), t_1 \, e_1 + t_2 \, e_2; \tau) = \phi((0, e_1), t_1 \, e_1 + t_2 \, e_2; \pi \, e_2) = 1 + 4 \pi t_2
\]
which implies \eqref{SCn1}.
\end{proof}

\medskip

To finish this section, we provide a corollary which follows easily from the proof of Theorem \ref{Kn1T}, the strictly convexity of $\overline{\OA}$, and a simple observation:

\begin{cor}
We have $d^2((0, x_*), 0) = \phi((0, x_*), 0; \theta)$ for any $\theta \in \OA$ satisfying $\theta \cdot x_* = 0$. For $g \not\in \{((0, x_*), 0); x_* \in \R^n \}$, there exists a unique $\theta \in \overline{\OA}$ such that $d(g)^2 = \phi(g; \theta)$.
\end{cor}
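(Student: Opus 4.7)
The plan is to reduce both claims to understanding the maximizers of $\mathrm{cl}\,\phi(g;\cdot)$ on the compact set $\overline{\OA}$, which by Theorem \ref{Kn1T}(1) coincides with the set of $\theta$ achieving $d(g)^2 = \sup_{\tau\in\OA}\phi(g;\tau)$. Existence of a maximizer is automatic by upper semicontinuity of $\mathrm{cl}\,\phi(g;\cdot)$ on $\overline{\OA}$ (Remark \ref{nRk21}), so the entire issue is uniqueness (resp.\ its failure).

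For the first assertion, with $g=((0,x_*),0)$, specializing \eqref{star1} gives $\phi(g;\tau) = |x_*|^2 - \langle \tau,x_*\rangle^2\,\psi(|\tau|)$. Since $\psi(r)\ge \psi(0)>0$ on $[0,\pi)$ by Lemma \ref{n32l}, one has $\phi(g;\tau)\le |x_*|^2$, with equality on the slice $\{\tau\in\OA:\,\tau\cdot x_*=0\}$; combined with Theorem \ref{Kn1T}(1) this yields $d(g)^2=|x_*|^2=\phi(g;\theta)$ for every such $\theta$.

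For the second assertion I would split according to whether $x_1$ vanishes. If $x_1\neq 0$, Proposition \ref{StarP} provides strong, hence strict, concavity of $\phi(g;\cdot)$ on $\OA$, while the divergence $f(r)\to +\infty$ as $r\to\pi^-$ forces $\phi(g;\tau)\to -\infty$ at $\partial\OA$; the supremum is therefore attained strictly inside $\OA$ and strict concavity delivers uniqueness. If $x_1=x_*=0$ then necessarily $g=(0,t)$ with $t\neq 0$ (the excluded case $g=o$ corresponds to $x_*=t=0$), and $\phi(g;\tau)=4t\cdot\tau$ is linear; strict convexity of $\overline{\OA}=\overline{B_{\R^n}(0,\pi)}$ gives the unique maximizer $\pi t/|t|$.

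The main obstacle is the remaining case $x_1=0$, $x_*\neq 0$, $t\neq 0$. I would first compute from \eqref{star1} the key identity at any interior critical point $\theta\in\OA$,
\[
0 \;=\; \nabla_\tau\phi(g;\theta) \;=\; 4t \;-\; 2(\theta\cdot x_*)\,\psi(|\theta|)\,x_* \;-\; (\theta\cdot x_*)^2\,\frac{\psi'(|\theta|)}{|\theta|}\,\theta,
\]
so that $\theta\cdot x_*=0$ would force $t=0$, which is excluded. Suppose now $\theta_1\neq\theta_2$ in $\overline{\OA}$ are both maximizers. Strict convexity of the closed ball places the open segment between them inside $\OA$, and concavity of $\mathrm{cl}\,\phi(g;\cdot)$ forces $\phi(g;\cdot)$ to be constant, equal to the maximum, along that segment; every point on the open segment is therefore an interior critical point of $\phi(g;\cdot)$, with $v_0:=\theta_2-\theta_1$ a degenerate direction for the Hessian. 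Invoking condition (b) of Proposition \ref{nP65}, and using the explicit identities $U(v_0)\,x = i(v_0\cdot x_*)\,e$ and $U(v_0)\,e = (0,-iv_0)$ with $e=(1,0,\ldots,0)\in\R^{n+1}$, the requirement $U(v_0)U(\theta)^j x=0$ for $j=0,1$ translates into $v_0\cdot x_*=0$ together with $\theta\cdot x_*=0$ at every interior maximizer $\theta$ on the open segment; the gradient identity above then yields $t=0$, the desired contradiction. The delicate point, and the reason I expect this case to be the main obstacle, is precisely the need to combine the strict-convexity argument on $\overline{\OA}$ (which rules out two boundary maximizers) with the Prop.~\ref{nP65}-type analysis of degenerate directions at interior critical points, simultaneously ruling out mixed and purely-interior configurations.
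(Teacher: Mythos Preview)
Your argument is correct. The paper gives only a one-line justification (``follows easily from the proof of Theorem~\ref{Kn1T}, the strictly convexity of $\overline{\OA}$, and a simple observation''), and your proof matches that skeleton: for $x_1\neq 0$ you use Proposition~\ref{StarP} exactly as the paper does, and for the remaining cases you use strict convexity of the closed ball to produce a segment of interior maximizers from any two distinct maximizers in $\overline{\OA}$.

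The one genuine difference is in the case $x_1=0$, $x_*\neq 0$, $t\neq 0$. The paper's implicit route is to invoke the classification already obtained in the proof of Theorem~\ref{Kn1T}: after the reduction of Lemma~\ref{nKL1}, Lemma~\ref{nSTARnL} and Proposition~\ref{Kp2} together show that either there is no interior critical point (whence two boundary maximizers would, by strict convexity, force an interior one---contradiction) or there is a unique nondegenerate one; in either case one lands in $\OM_1$ or $\OM^c$, never $\OM_2$. You instead bypass that classification and argue directly via Proposition~\ref{nP65}(b): the degenerate direction $v_0$ along the segment must satisfy $U(v_0)U(\theta)^j x=0$, and your explicit computation for $j=0,1$ forces $v_0\cdot x_*=0$ and $\theta\cdot x_*=0$, whence the gradient identity gives $t=0$. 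This is a clean self-contained shortcut that avoids the diffeomorphism analysis (Propositions~\ref{Kp1}--\ref{Kp2}); the paper's route, on the other hand, reuses work already done and makes the connection to the structure of $\OM_2$ explicit. Both are valid and of comparable length once the earlier machinery is in place.
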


\medskip

\renewcommand{\theequation}{\thesection.\arabic{equation}}
\section{Application 4:  The free $2$-step group $N_{3, 2}$}  \label{N32}
\setcounter{equation}{0}

\medskip

The purpose of this section is twofold.  On one hand, we provide an example of $\overline{\M} \subsetneqq \G$. On the other hand, we explain how to determine $d(g)^2$ for $g \in \G \setminus \overline{\M}$ (up to a possible negligible set) via our known results. See also \cite[Lemma 5]{LZ20} for a closely related result. Here we only consider the simplest case: the free Carnot group of step two and 3 generators  $N_{3, 2} = \R^3 \times \R^3$.

For column vector $\tau = (\tau_1, \tau_2, \tau_3) \in \R^3$, we set
\begin{align*}
U(\tau) = i \left(
                \begin{array}{ccc}
                  0 & \tau_3 & -\tau_2 \\
                  -\tau_3 & 0 & \tau_1 \\
                  \tau_2 & -\tau_1 & 0 \\
                \end{array}
              \right).
\end{align*}
As a consequence, we get
\begin{align*}
U(\tau)^2 = |\tau|^2 \, \mathbb{I}_3 - \tau \, \tau^T, \qquad \OA = B_{\R^3}(0, \pi).
\end{align*}
By a simple calculus, we find that
\begin{align} \label{n32na}
\frac{U(\tau)}{\sin{U(\tau)}} = \frac{|\tau|}{\sin{|\tau|}} \mathbb{I}_3 - \left( \frac{|\tau|}{\sin{|\tau|}} - 1 \right) \frac{\tau}{|\tau|} \frac{\tau^T}{|\tau|},
\end{align}
and
\begin{align} \label{n32c}
\phi((x, t); \tau) = |x|^2 \, |\tau| \cot{|\tau|} + \frac{1 - |\tau| \cot{|\tau|}}{|\tau|^2} \langle \tau, x \rangle^2  + 4 \, t \cdot \tau.
\end{align}

The following lemma is a counterpart of Lemma \ref{nKL1}:

\begin{lem} \label{Ln91}
For any $(x, t) \in N_{3, 2}$ and any $3 \times 3$ orthogonal matrix $O$, we have $d(x, t) = d(O \, x, O \, t)$. Moreover, we have $d(|x| \, e_1, t_1 \, e_1 + t_2 \, e_2) = d(|x| \, e_1, |t_1| \, e_1 + |t_2| \, e_2)$.
\end{lem}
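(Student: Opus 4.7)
The plan is to follow the strategy of Lemma \ref{nKL1}: compare heat kernels through the integral representation \eqref{2c0} and invoke Varadhan's formula \eqref{VF} together with the scaling identity \eqref{sp}. The key algebraic fact, specific to $N_{3,2}$, is that $\tau\mapsto U(\tau)$ is, up to the factor $i$, the standard identification of $\R^3$ with the space of $3\times 3$ real skew-symmetric matrices via the cross product. Consequently, for every orthogonal $3\times 3$ matrix $O$ one has the identity
\[
U(O\tau)=(\det O)\,O\,U(\tau)\,O^{T},
\]
which I would verify by direct computation from the explicit form of $U(\tau)$ recalled at the beginning of this section.

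Given this identity, I would substitute $\lambda=O\mu$ in $p(Ox,Ot)=\int_{\R^3}V(\lambda)\,e^{-\widetilde{\phi}((Ox,Ot);\lambda)/4}\,d\lambda$. Since $\coth$ is odd and spectral functions commute with unitary conjugation, the sign $\det O$ cancels between the two factors of $U(O\mu)\coth U(O\mu)$, yielding the even-parity identity
\[
U(O\mu)\coth U(O\mu)=O\,U(\mu)\coth U(\mu)\,O^{T},
\]
and hence $\langle U(O\mu)\coth U(O\mu)(Ox),Ox\rangle=\langle U(\mu)\coth U(\mu)x,x\rangle$. The same cancellation gives $V(O\mu)=V(\mu)$, and $(Ot)\cdot(O\mu)=t\cdot\mu$ is trivial. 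Thus $p(Ox,Ot)=p(x,t)$, so via \eqref{sp} and \eqref{VF} we conclude $d(Ox,Ot)=d(x,t)$.

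For the second assertion, orthogonal symmetry alone cannot flip the sign of $t_1$ while keeping the vector $|x|\,e_1$ fixed, so I would complement it with the center-inversion identity $p(x,t)=p(x,-t)$. This follows from the representation \eqref{2c0} by the substitution $\lambda\mapsto-\lambda$ after observing that $V$ and $U(\cdot)\coth U(\cdot)$ are both even in $\lambda$ (again using that $\coth$ is odd), which yields $d(x,t)=d(x,-t)$. Choosing $O=\mathrm{diag}(1,-1,-1)\in SO(3)$, which fixes $e_1$ and sends $t_1e_1+t_2e_2$ to $t_1e_1-t_2e_2$, allows one to flip the sign of $t_2$; composing with the center inversion produces an arbitrary sign flip of $t_1$. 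A short case analysis on the signs of $(t_1,t_2)$ then reduces $(t_1,t_2)$ to $(|t_1|,|t_2|)$, as required.

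The only subtlety I anticipate is the bookkeeping for $\det O=\pm 1$ in the identity $U(O\tau)=(\det O)\,O\,U(\tau)\,O^T$, but the oddness of $\coth$ and the evenness of $V$ absorb this sign cleanly, so I do not expect any real obstacle.
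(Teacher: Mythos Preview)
Your proposal is correct and follows essentially the same route as the paper: the lemma is stated as the $N_{3,2}$ counterpart of Lemma~\ref{nKL1}, and its intended proof is precisely this heat-kernel symmetry argument combined with \eqref{sp} and Varadhan's formula \eqref{VF}. Your additional use of the center inversion $p(x,t)=p(x,-t)$ to flip the sign of $t_1$ is exactly what is needed beyond the orthogonal conjugation, and the bookkeeping with $\det O$ via the oddness of $\coth$ (equivalently, the evenness of $s\coth s$ and $s/\sinh s$) is handled correctly.
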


So we may assume that $x = |x| \, (1, 0, 0) = |x| \, e_1$ and $t = (t_1, t_2, 0)$ with  $t_1, t_2 \geq 0$ if we need to. Recall that $\psi(s) := \frac{1 - s \cot{s}}{s^2}$. Under our assumption, the function $\phi$ above can be reformulated as
\begin{align*}
\phi((x, t); \tau) = |x|^2 + 4 \, t \cdot \tau - |x|^2 \, \psi_*(\tau)
\end{align*}
with
\begin{align*}
\psi_*(\tau) := \left( \tau_2^2 + \tau_3^2 \right) \, \psi(|\tau|).
\end{align*}

It is a long-standing open problem to find the exact formula for the sub-Riemannian distance on the free Carnot group of step two and $k$ ($k \geq 3$) generators $N_{k, 2} = \R^k \times \R^{\frac{k (k - 1)}{2}}$, namely the Gaveau-Brockett optimal control problem, see \cite{G77} and \cite{B82}. To our knowledge, the best known result for $k = 3$ has been obtained in \cite{MM17} where the authors solved only the special case of $t = \pm |t| \, e_1$. When $k \geq 4$, there exists fewer known result; more precisely, the exact formulas of $d^2(g)$ has been got only for $g = (0, t)$ with $t \in \R^{\frac{k (k - 1)}{2}}$ or $g = (x, 0)$ with $x \in \R^k$, see for example \cite{B82}, \cite{LS95}, \cite[\S 12.3.5]{M02} and \cite{LM14}. Now, Theorems \ref{TH2} or \ref{THN5} allow us to have some progress on the problem. In particular, we have the formulas of $d(g)^2$ at least when $g \in \widetilde{\M}$, which is a symmetric, scaling invariant set with non-empty interior. Furthermore, a necessary condition can be found in Corollary \ref{nCx}, and Corollary \ref{nC124} will be very useful if we know $\mathrm{Cut}_o$. Also recall that our main results remain valid for all $2$-step groups.

Here we will restrict our attention to the special case $N_{3, 2}$. We begin by establishing the following proposition:

\begin{prop} \label{n32P}
Let $\OA^* = \left\{ \tau = (\tau_1, \tau_2, 0) \in \OA; \tau_2 > 0 \right\}$. Then the Hessian matrix of $\psi_*$ at $\tau \in \OA^*$, $\He \, \psi_*(\tau)$, is positive definite.
\end{prop}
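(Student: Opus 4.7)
The strategy is to use the fact that $\psi_*(\tau)=(\tau_2^{2}+\tau_3^{2})\,\psi(|\tau|)$ is even in $\tau_3$, which forces its Hessian at any point with $\tau_3=0$ to be block-diagonal with a $2\times 2$ block in the $(\tau_1,\tau_2)$-directions and a $1\times 1$ block in the $\tau_3$-direction. A direct differentiation at $\tau_3=0$ confirms the vanishing of the cross entries $\partial_{\tau_3}\partial_{\tau_j}\psi_*$ ($j=1,2$) and yields
\[
\partial_{\tau_3}^{2}\psi_*(\tau_1,\tau_2,0) \;=\; 2\psi(r)+\tau_2^{2}\,\frac{\psi'(r)}{r}, \qquad r:=|\tau|,
\]
which is strictly positive on $(0,\pi)$ by Lemma \ref{n32l}. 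The task therefore reduces to proving that the $2\times 2$ block $M$, which is the Hessian of $g(\tau_1,\tau_2):=\tau_2^{2}\,\psi\bigl(\sqrt{\tau_1^{2}+\tau_2^{2}}\bigr)$ on $\R^{2}$, is positive definite.

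The second step is to change to the orthonormal basis $\{\tau/r,\,\tau^{\perp}/r\}$ with $\tau^{\perp}:=(-\tau_2,\tau_1)$, which diagonalizes the radial Hessian $\He\psi(r)$ and dramatically simplifies the algebra. Writing $p:=\psi(r)/r^{2}$, $b:=\psi'(r)/r$, $c:=\psi''(r)$, a routine product-rule computation (parallel to the one carried out in the proof of Proposition \ref{StarP}) gives the entries of $M$ in this basis:
\[
M'_{11}=\tau_2^{2}(2p+4b+c),\qquad M'_{22}=2p\tau_1^{2}+b\tau_2^{2},\qquad M'_{12}=2\tau_1\tau_2(p+b).
\]
Lemma \ref{n32l} ensures that $p,b,c>0$ on $(0,\pi)$, and hence both diagonal entries are strictly positive.

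The main step, and the only place where the delicate operator-convexity input enters, is to prove $\det M'>0$. Expanding $M'_{11}M'_{22}-(M'_{12})^{2}$, the contributions $4p^{2}\tau_1^{2}$ and $8pb\tau_1^{2}$ cancel exactly against their counterparts in $4\tau_1^{2}(p+b)^{2}$, leaving
\[
\det M' \;=\; \tau_2^{2}\left[\,\tau_2^{2}\,b\,(2p+4b+c) \;+\; \frac{2\tau_1^{2}}{r^{2}}\bigl(\psi(r)\psi''(r)-2\psi'(r)^{2}\bigr)\,\right].
\]
The first summand is strictly positive (as $\tau_2>0$ and $p,b,c>0$), while the second is nonnegative, and strictly positive whenever $\tau_1\neq 0$, thanks to the key inequality $\psi\psi''>2\psi'^{2}$ in Lemma \ref{n32l}. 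Thus $\det M'>0$, which combined with $M'_{11}>0$ gives the strict positive definiteness of $M'$, hence of $\He\psi_*(\tau)$.

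The subtle obstacle is precisely this final cancellation: one could not get away with the weaker positivity statements $\psi''>0$ or $\psi'/r>0$ alone, because the cross term $M'_{12}$ is large enough (when $\tau_1\neq 0$) to drown out the easy positive contributions; it is the \emph{stronger} inequality $\psi\psi''>2\psi'^{2}$, i.e.\ the operator-convexity-type bound from Lemma \ref{n32l}, that is needed to make the determinant strictly positive along the ``radial'' direction $\tau^{\perp}$ when $\tau_1\neq 0$.
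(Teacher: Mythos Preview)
Your proof is correct and follows essentially the same approach as the paper, which indicates that Proposition~\ref{n32P} is ``a slight modification'' of the proof of Proposition~\ref{StarP}: a direct Hessian computation reduced to the key inequality $\psi\psi''>2\psi'^{2}$ of Lemma~\ref{n32l}. The only organizational difference is that you first exploit the block structure at $\tau_3=0$ and then pass to the rotated basis $\{\tau/r,\tau^{\perp}/r\}$ to compute a $2\times 2$ determinant, whereas the paper's argument (as in Proposition~\ref{StarP}) treats $\langle \He\psi_*\,u,u\rangle$ as a quadratic polynomial in the distinguished coordinate and checks positivity via the sign of its discriminant; both routes lead to the same decisive cancellation and the same appeal to \eqref{Ii3}.
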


The proof of Proposition \ref{n32P} is a slight modification of that of Proposition \ref{StarP}.

\medskip

By Proposition \ref{n32P} and Theorem \ref{TH2}, an explicit calculation yields the following:

\begin{theo} \label{n32T}
Let $x = |x| \, e_1$ with $|x| > 0$ and $\theta = (\theta_1, \theta_2, 0) \in \OA^*$. Set $t = \frac{|x|^2}{4} \nabla_{\theta} \psi_*(\theta)$, namely
\begin{align} \label{32nm}
t = \frac{|x|^2}{4} \, \theta_2 \left[ \frac{\psi'(|\theta|)}{|\theta|} \, \theta_2 \, \theta + 2 \, \psi(|\theta|) \, e_2 \right] \in \R \times (0, \, +\infty) \times \{ 0 \}.
\end{align}
Then we have
\begin{align} \label{dEn32}
d(x, t)^2 &= \phi((x, t); \theta) = |x|^2 \left[ 1 + \psi(|\theta|) \, \theta_2^2 + \psi'(|\theta|) \, |\theta| \, \theta_2^2 \right]  \nonumber \\
&= |x|^2 \left[ \frac{\theta_1^2}{|\theta|^2} + \left( \frac{\theta_2}{\sin{|\theta|}} \right)^2 \right] = \left| \frac{U(\theta)}{\sin{U(\theta)}} x \right|^2.
\end{align}
\end{theo}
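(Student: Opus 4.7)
The plan is to view this as a direct application of Theorem \ref{TH2}, using Proposition \ref{n32P} to supply the nondegeneracy of the critical point. To begin, I would rewrite the reference function \eqref{n32c} for $x = |x|\,e_1$ via the identity $|\tau|\cot|\tau| = 1 - |\tau|^2 \psi(|\tau|)$ together with $\langle\tau,x\rangle^2 = |x|^2\tau_1^2$:
$$\phi((x,t);\tau) = |x|^2 + |x|^2\psi(|\tau|)(\tau_1^2 - |\tau|^2) + 4t\cdot\tau = |x|^2 - |x|^2\psi_*(\tau) + 4t\cdot\tau.$$
Hence the critical-point equation $\nabla_\tau\phi = 0$ reduces to $t = \tfrac{|x|^2}{4}\nabla\psi_*(\tau)$. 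A direct differentiation
$$\nabla\psi_*(\tau) = 2\psi(|\tau|)(0,\tau_2,\tau_3) + (\tau_2^2+\tau_3^2)\frac{\psi'(|\tau|)}{|\tau|}\tau,$$
evaluated at $\theta = (\theta_1,\theta_2,0)$, produces exactly the expression \eqref{32nm}, so our given $\theta$ is a critical point of $\phi((x,t);\cdot)$. Since $\mathrm{Hess}_\tau\,\phi((x,t);\theta) = -|x|^2\,\mathrm{Hess}\,\psi_*(\theta)$ is negative definite by Proposition \ref{n32P}, this critical point is nondegenerate. Therefore $(x,t)\in\M$, and Theorem \ref{TH2} yields $d(x,t)^2 = \phi((x,t);\theta)$.

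It remains to verify the three equalities in \eqref{dEn32}. Substituting the critical-point relation, one gets
$$4t\cdot\theta = |x|^2\,\theta\cdot\nabla\psi_*(\theta) = |x|^2\bigl[2\theta_2^2\psi(|\theta|) + |\theta|\,\theta_2^2\,\psi'(|\theta|)\bigr],$$
and $\psi_*(\theta) = \theta_2^2\psi(|\theta|)$, so $\phi((x,t);\theta) = |x|^2\bigl[1 + \psi(|\theta|)\theta_2^2 + \psi'(|\theta|)|\theta|\theta_2^2\bigr]$, which is the first equality. For the second, I would use the scalar identity
$$\psi(r) + r\psi'(r) = \frac{d}{dr}\bigl[r\psi(r)\bigr] = \frac{d}{dr}\Bigl[\frac{1}{r} - \cot r\Bigr] = \frac{1}{\sin^2 r} - \frac{1}{r^2},$$
combined with $\theta_1^2 + \theta_2^2 = |\theta|^2$, to rewrite the bracket as $\theta_1^2/|\theta|^2 + \theta_2^2/\sin^2|\theta|$. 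The third equality follows by plugging $x = |x|\,e_1$ into \eqref{n32na} and expanding the resulting squared norm directly, using $\langle\theta,x\rangle = |x|\theta_1$ and $\theta_1^2 + \theta_2^2 = |\theta|^2$ so that the cross terms collapse cleanly.

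The only nontrivial step is the negative definiteness supplied by Proposition \ref{n32P}; once that is granted, everything above is essentially bookkeeping. One expects Proposition \ref{n32P} itself to be proved along the lines of Proposition \ref{StarP}, by writing $\mathrm{Hess}\,\psi_*$ as a manifestly positive semidefinite part arising from differentiating $\psi(|\tau|)$ twice, plus a quadratic form whose positivity on $\OA^*$ follows from the inequalities collected in Lemma \ref{n32l}.
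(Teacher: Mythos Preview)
Your proposal is correct and follows exactly the paper's approach: the paper simply states that the result follows from Proposition~\ref{n32P} and Theorem~\ref{TH2} together with an explicit calculation, and you have carried out precisely that calculation in detail. The only additional remark is that the last equality in \eqref{dEn32} is also an instance of Proposition~\ref{NC51}, so you could cite \eqref{nbe2} instead of computing directly from \eqref{n32na}; but your direct verification is equally valid.
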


\medskip

Unfortunately, Theorem \ref{n32T} determines the squared distance on ``half space'' not on the whole space. More precisely, let
\begin{align}
\OB := \{ (v_1, v_2) \in \R^2; \, v_2 > 0, \ v_1^2 + v_2^2 < \pi^2 \}
\end{align}
and
\begin{align}  \label{N32O}
\RN := \left\{ u = (u_1, u_2) \in \R^2; u_2 > \frac{2}{\sqrt{\pi}} \sqrt{|u_1|} \geq 0 \right\}.
\end{align}
See the sketch map in Figure \ref{figure2}.

\begin{figure}
\centering
\begin{overpic}[width = 15cm, height=7.5cm]{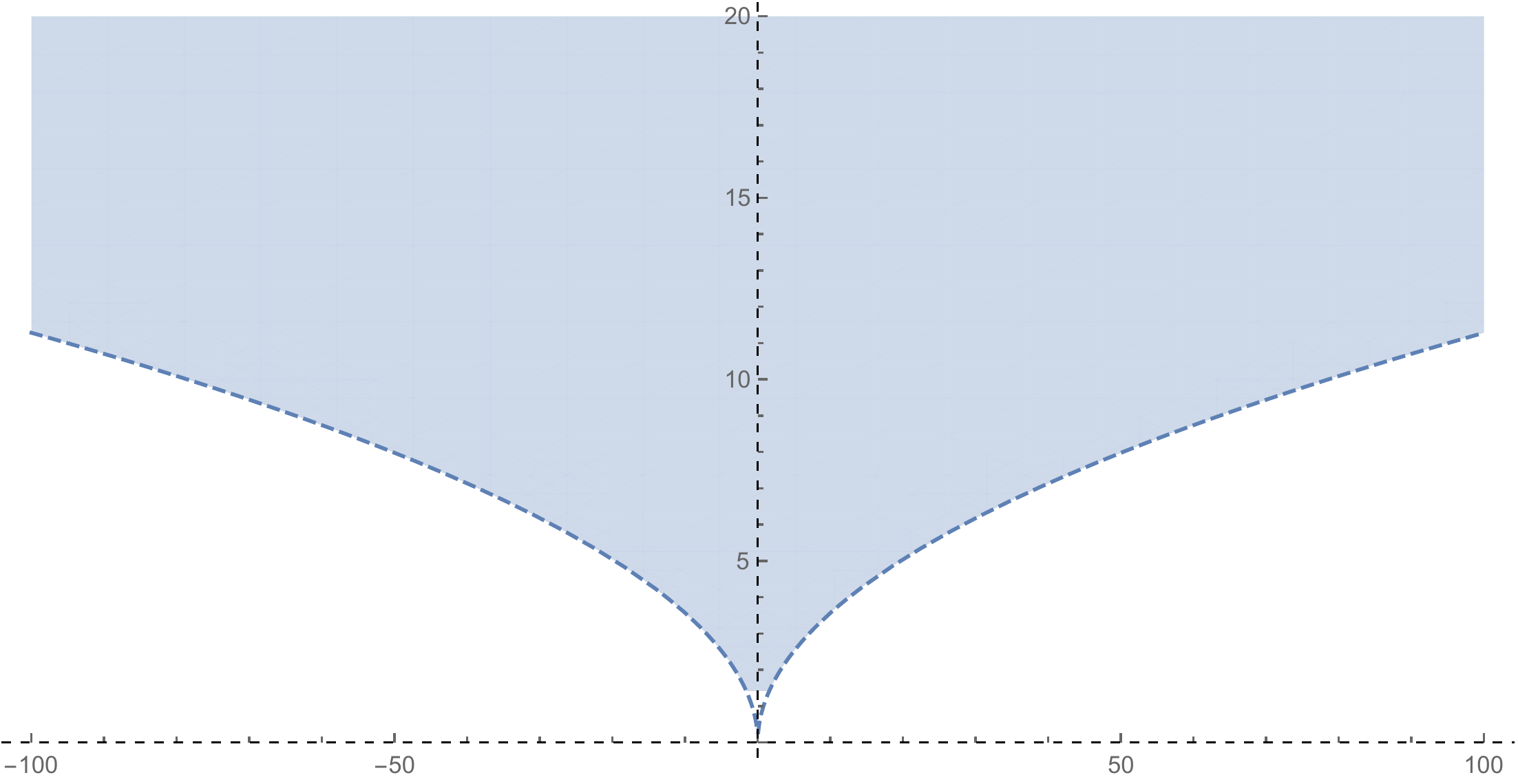}
\put(60,30){$\mathbb{R}_>^2$}
\put(101,2.5){$u_1$}
\put(49,52){$u_2$}
\put(15,12){$u_2 = \frac{2}{\sqrt{\pi}} \sqrt{|u_1|}$}
\put(28,16){$\nearrow$}
\end{overpic}
\caption[image]{Sketch map of $\mathbb{R}_>^2$  }\label{figure2}
\end{figure}

Consider now the smooth function $\MD$,
\begin{align} \label{32N96}
\MD(v_1, v_2) := v_2 \left[ \frac{\psi'(r)}{r} \, v_2 \, v + 2 \, \psi(r) \, e_2 \right], \qquad v = (v_1, v_2), \  r = |v|,
\end{align}
which is the variant of \eqref{32nm}. The following result is in fact a reformulation of Proposition \ref{Kp2}:

\begin{prop} \label{N32T2}
$\MD$ is a $C^{\infty}$-diffeomorphism from $\OB$ onto $\RN$.
\end{prop}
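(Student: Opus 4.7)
The plan is to reduce Proposition \ref{N32T2} to Proposition \ref{Kp2} by exhibiting $\MD$ as the conjugate of $\Upsilon$ by the coordinate-swap map, so that no new analysis of $\psi$ or Hadamard-type properness arguments are needed.

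First I would write $\MD$ component-wise from the definition \eqref{32N96}. With $v = (v_1, v_2)$ and $r = |v|$, a direct computation gives
\[
\MD(v_1, v_2) = \left( \psi'(r)\,\frac{v_1 v_2^2}{r},\ \ 2 v_2\,\psi(r) + \psi'(r)\,\frac{v_2^3}{r} \right).
\]
On the other hand, the formula \eqref{nSEI} for $\Upsilon$ reads
\[
\Upsilon(\tau_1, \tau_2) = \left( \tau_1 \Bigl(2\psi(|\tau|) + \psi'(|\tau|)\,\frac{\tau_1^2}{|\tau|} \Bigr),\ \ \psi'(|\tau|)\,\frac{\tau_1^2 \tau_2}{|\tau|} \right).
\]

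Let $S : \R^2 \to \R^2$ denote the coordinate-swap $S(a, b) = (b, a)$, which is an isometric $C^{\infty}$-diffeomorphism. Setting $(\tau_1, \tau_2) = S(v_1, v_2) = (v_2, v_1)$ (so $|\tau| = |v| = r$) and comparing the two formulas above, one reads off
\[
\MD(v_1, v_2) = S\bigl( \Upsilon( S(v_1, v_2)) \bigr), \qquad \text{i.e.}\quad \MD = S \circ \Upsilon \circ S.
\]
This is the only computational step and it is essentially an inspection.

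Next I would identify the domains. By definition $\OB = \{(v_1, v_2):\, v_2 > 0,\ v_1^2 + v_2^2 < \pi^2\}$, while $\Omega_{(+)} = \{(\tau_1, \tau_2) \in B_{\R^2}(0, \pi) :\, \tau_1 > 0\}$. Hence $S$ restricts to a $C^{\infty}$-diffeomorphism $\OB \to \Omega_{(+)}$. Similarly, the definition \eqref{N32O} of $\RN$ and the definition of $\R^2_{(>)}$ differ precisely by swapping the roles of $u_1$ and $u_2$, so $S$ restricts to a $C^{\infty}$-diffeomorphism $\R^2_{(>)} \to \RN$.

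Combining these observations with Proposition \ref{Kp2}, which states that $\Upsilon : \Omega_{(+)} \to \R^2_{(>)}$ is a $C^{\infty}$-diffeomorphism, the composition $\MD = S \circ \Upsilon \circ S$ is a $C^{\infty}$-diffeomorphism from $\OB$ onto $\RN$. Since the proof is a pure reduction, there is no real obstacle; the only point requiring a moment's care is checking the index conventions so that $S$ indeed intertwines the two pairs of open sets, which is why I would lay out both component formulas explicitly before invoking Proposition \ref{Kp2}.
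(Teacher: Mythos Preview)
Your proof is correct and is exactly the reformulation the paper has in mind: the paper simply asserts that Proposition~\ref{N32T2} ``is in fact a reformulation of Proposition~\ref{Kp2}'' without further detail, and your conjugation identity $\MD = S \circ \Upsilon \circ S$ together with $S(\OB)=\Omega_{(+)}$ and $S(\R^2_{(>)})=\RN$ makes that reformulation explicit.
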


Naturally we ask what the exact expression of $d(|x| \, e_1, \frac{|x|^2}{4} (u, 0))^2$ is when $u \in \partial \RN$. We have the following answer:

\begin{cor} \label{Cn32}
Let $\alpha \ge 0$, $x = |x| e_1$ and $t = \frac{|x|^2}{4} (\pm\frac{\alpha^2}{\pi}, \frac{2}{\pi} \alpha, 0)$. Then we have
\begin{align} \label{N32dE}
d(x, t)^2 = |x|^2 + \alpha^2 |x|^2 = \phi((x, t); \pi \, e_1) = \sup_{\tau \in \OA} \phi((x, t); \tau).
\end{align}
Moreover, we have $d(0, t)^2 = 4 \pi |t|$ for any $t \in \R^3$.
\end{cor}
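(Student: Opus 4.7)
The plan is to pass to the limit in the interior formula of Theorem~\ref{n32T} and Proposition~\ref{N32T2}, letting an interior point of $\RN$ approach the prescribed boundary point $(\pm \alpha^2/\pi,\, 2\alpha/\pi) \in \partial \RN$. By Lemma~\ref{Ln91}, one has $d(|x|e_1,\, t_1 e_1 + t_2 e_2) = d(|x|e_1,\, |t_1| e_1 + t_2 e_2)$, so the ``$-$'' case reduces to the ``$+$'' case and I may assume throughout that $t = \tfrac{|x|^2}{4}(\alpha^2/\pi,\, 2\alpha/\pi,\, 0)$ with $\alpha \ge 0$. The case $\alpha = 0$ collapses to $t = 0$ and gives $d(x, 0)^2 = |x|^2$ (cf.~Remark~\ref{nRKn25}(1)); assume henceforth $\alpha > 0$ and $|x| > 0$.

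I choose a sequence $u^{(j)} \in \RN$ with $u^{(j)} \to u := (\alpha^2/\pi,\, 2\alpha/\pi)$ and set $v^{(j)} := \MD^{-1}(u^{(j)}) \in \OB$ using Proposition~\ref{N32T2}. The key sub-step, which is the main technical obstacle, is to show that $v^{(j)} \to (\pi, 0)$ and $v_2^{(j)}/(\pi - |v^{(j)}|) \to \alpha$. This is the exact analog for $\MD$ of case~(4) in the proof of Proposition~\ref{Kp1}: taking any subsequential limit $v^* \in \overline{\OB}$ and using the asymptotics \eqref{nnp}, one rules out $|v^*| = \pi$ with $v_2^* > 0$ (where the second coordinate of $\MD$ would diverge) and $v_2^* = 0$ with $|v^*| < \pi$ (where $\MD(v^{(j)}) \to 0 \ne u$), leaving only $v^* \in \{(\pm\pi, 0)\}$; the sign $v_1^* = +\pi$ is then forced by $u_1 > 0$, and a direct calculation confirms the claimed rate.

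Setting $\theta^{(j)} := (v^{(j)}, 0) \in \OA$ and $t^{(j)} := \tfrac{|x|^2}{4}(u^{(j)}, 0)$, Theorem~\ref{n32T} yields
\[
d(x, t^{(j)})^2 \;=\; \phi((x, t^{(j)});\, \theta^{(j)}) \;=\; |x|^2 \left[\, \frac{(v_1^{(j)})^2}{|v^{(j)}|^2} + \left( \frac{v_2^{(j)}}{\sin|v^{(j)}|} \right)^{\!2} \,\right].
\]
Since $\sin|v^{(j)}| = \sin(\pi - |v^{(j)}|) \sim \pi - |v^{(j)}|$, the bracket tends to $1 + \alpha^2$, and continuity of $d$ (cf.~\cite{VSC92}) produces $d(x, t)^2 = (1 + \alpha^2) |x|^2$. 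Direct substitution of $\tau = \pi e_1$ in \eqref{n32c} (where $\langle \tau, x\rangle^2/|\tau|^2 = |x|^2$ makes the two formally divergent contributions collapse to $|x|^2$ under the convention of Remark~\ref{nRk21}, applicable because the orthogonal projection of $x$ on the $\pi^2$-eigenspace of $U^2(\pi e_1)$ vanishes) gives $\phi((x, t); \pi e_1) = |x|^2 + 4\pi t_1 = (1 + \alpha^2) |x|^2$. Theorem~\ref{MP1} then provides $\sup_{\tau \in \OA} \phi \le d^2 = (1 + \alpha^2)|x|^2$, while the interior sequence $\phi((x, t); \theta^{(j)}) = d(x, t^{(j)})^2 + 4(t - t^{(j)})\cdot \theta^{(j)} \to (1 + \alpha^2)|x|^2$ delivers the reverse inequality.

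For the final assertion $d(0, t)^2 = 4\pi|t|$, the lower bound is immediate: by Theorem~\ref{MP1}, $d(0, t)^2 \ge \sup_{\tau \in \OA} 4 t \cdot \tau = 4\pi|t|$. For the matching upper bound, after reducing by Lemma~\ref{Ln91} to $t = |t|\, e_3$, I display an explicit Heisenberg-type normal geodesic with initial covector $(\zeta(0), 2\theta_0) = (2\sqrt{\pi|t|}\, e_1,\, 2\pi e_3)$: a direct integration of \eqref{GEn1*}--\eqref{GEn3*} (noting that $\widetilde{U}((0,0,\pi))$ rotates in the $(e_1, e_2)$-plane, so $x(1) = 0$ after one full period) shows this curve has length $2\sqrt{\pi|t|}$ and terminates at $(0,\, (0,0,|t|))$, as required.
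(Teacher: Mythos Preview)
Your proof is correct. For the main identity \eqref{N32dE} you follow essentially the same limiting argument as the paper: approach the boundary point $(\alpha^2/\pi,\,2\alpha/\pi)\in\partial\RN$ from the interior via $u^{(j)}=\MD(v^{(j)})$ and pass to the limit in the formula of Theorem~\ref{n32T}, invoking the asymptotics \eqref{nnp} exactly as in case~(4) of the proof of Proposition~\ref{Kp1}. You add some details the paper leaves implicit (the exclusion of the other possible subsequential limits of $v^{(j)}$, and the explicit verification that $\phi((x,t);\pi e_1)=(1+\alpha^2)|x|^2$ together with the $\sup$ equality), but the core idea is identical.

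The genuine difference is in the final assertion $d(0,t)^2=4\pi|t|$. The paper reduces to $t=|t|\,e_2$, approximates $(0,|t|\,e_2)$ by $(\varepsilon\,e_1,|t|\,e_2)$, recognises the resulting expression $\varepsilon^2(|\theta_\varepsilon|/\sin|\theta_\varepsilon|)^2$ as the Heisenberg distance $d_{\mathbb H(2,1)}((\varepsilon,0),|t|)^2$, and then quotes the known limit $4\pi|t|$. Your route is more self-contained: the lower bound $d(0,t)^2\ge\sup_{\tau\in\OA}4\,t\cdot\tau=4\pi|t|$ comes straight from Theorem~\ref{MP1}, and the upper bound is produced by the explicit normal geodesic with initial covector $(2\sqrt{\pi|t|}\,e_1,\,2\pi e_3)$, which one checks directly closes up at $(0,|t|\,e_3)$ with length $2\sqrt{\pi|t|}$. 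This avoids any appeal to the Heisenberg formula; the paper's approach, on the other hand, makes transparent the connection between $N_{3,2}$ and $\mathbb H(2,1)$ along the axis $t\parallel e_2$.
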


\begin{proof}
If $|x| = 0$ or $\alpha = 0$, \eqref{N32dE} is trivial. Assume now that $|x| \cdot \alpha > 0$. Furthermore, it follows from Lemma \ref{Ln91} that we can take $t = \frac{|x|^2}{4} (\frac{\alpha^2}{\pi}, \frac{2}{\pi} \alpha, 0)$. Observe, from the case (4) in the proof of Proposition \ref{Kp1}, that there exists $\{ v^{(j)} = (v^{(j)}_1, v^{(j)}_2) \} \subset \OB$ such that
\begin{align*}
u^{(j)} = \MD(v^{(j)}) \longrightarrow u^{(0)} = (\frac{\alpha^2}{\pi}, \frac{2}{\pi} \alpha)
\end{align*}
with $|v^{(j)}| \longrightarrow \pi^-$ and
\begin{align*}
\lim_{j \longrightarrow +\infty} 2 \, \psi(|v^{(j)}|) \, v^{(j)}_2 = \frac{2}{\pi} \alpha, \mbox{ \ i.e. \ } \lim_{j \longrightarrow +\infty} \frac{v^{(j)}_2}{\pi - |v^{(j)}|} = \alpha.
\end{align*}
Combining this with the first equality in \eqref{dEn32}, we obtain the desired result.

We are in a position to show that $d(0, t)^2 = 4 \pi |t|$ for $|t| > 0$. It suffices to consider the special case where $t = |t| e_2$. By the continuity of $d$, we have
\begin{align*}
d(0, |t| \, e_2)^2 = \lim_{\varepsilon \longrightarrow 0^+} d(\varepsilon \, e_1, |t| \, e_2)^2.
\end{align*}
Let $g_{\varepsilon} = (\varepsilon \, e_1, |t| \, e_2)$. By \eqref{32nm}, there exists $\theta_{\varepsilon} = (0, \theta_{\varepsilon, 2}, 0) \in \OA^*$ such that
\begin{align*}
\frac{4 |t|}{\varepsilon^2} = \left( |\theta_{\varepsilon}|^2 \psi(|\theta_{\varepsilon}|) \right)' = \frac{2 |\theta_{\varepsilon}| - \sin{(2 |\theta_{\varepsilon}|)}}{2 \sin^2{|\theta_{\varepsilon}|}}.
\end{align*}
Then it follows from the first equality in \eqref{dEn32} that
$d(g_{\varepsilon})^2 = \varepsilon^2 \left( \frac{|\theta_{\varepsilon}|}{\sin{|\theta_{\varepsilon}|}} \right)^2$,
which is exactly the squared distance in the setting of Heisenberg group, say $d_{\mathbb{H}(2, 1)}((\varepsilon, 0), |t|)^2$, cf. for example \cite{G77} or \cite{BGG00}. And it is well-known that
\begin{align*}
\lim_{\varepsilon \longrightarrow 0^+} d_{\mathbb{H}(2, 1)}((\varepsilon, 0), |t|)^2 = 4 \pi |t|.
\end{align*}

This completes the proof.
\end{proof}

\begin{remark}
Notice that $(0, t) \in \partial{\M}$ and $(e_1, \frac{1}{4} (u, 0)) \in \partial{\M}$ for $u \in \partial \RN$. Using the scaling property, the result of Corollary \ref{Cn32} can be deduced directly from Theorem \ref{THN6}.
\end{remark}

Set in the sequel the simply connected domain
\begin{align}
\RS := \left\{ (u_1, u_2) \in \R^2; \ u_1 > 0, \,  0 < u_2 < \frac{2}{\sqrt{\pi}} \sqrt{u_1} \right\}.
\end{align}
See the sketch map in Figure \ref{figure3}.

\begin{figure}
 \centering
\begin{overpic}[width = 12cm, height=7.5cm]{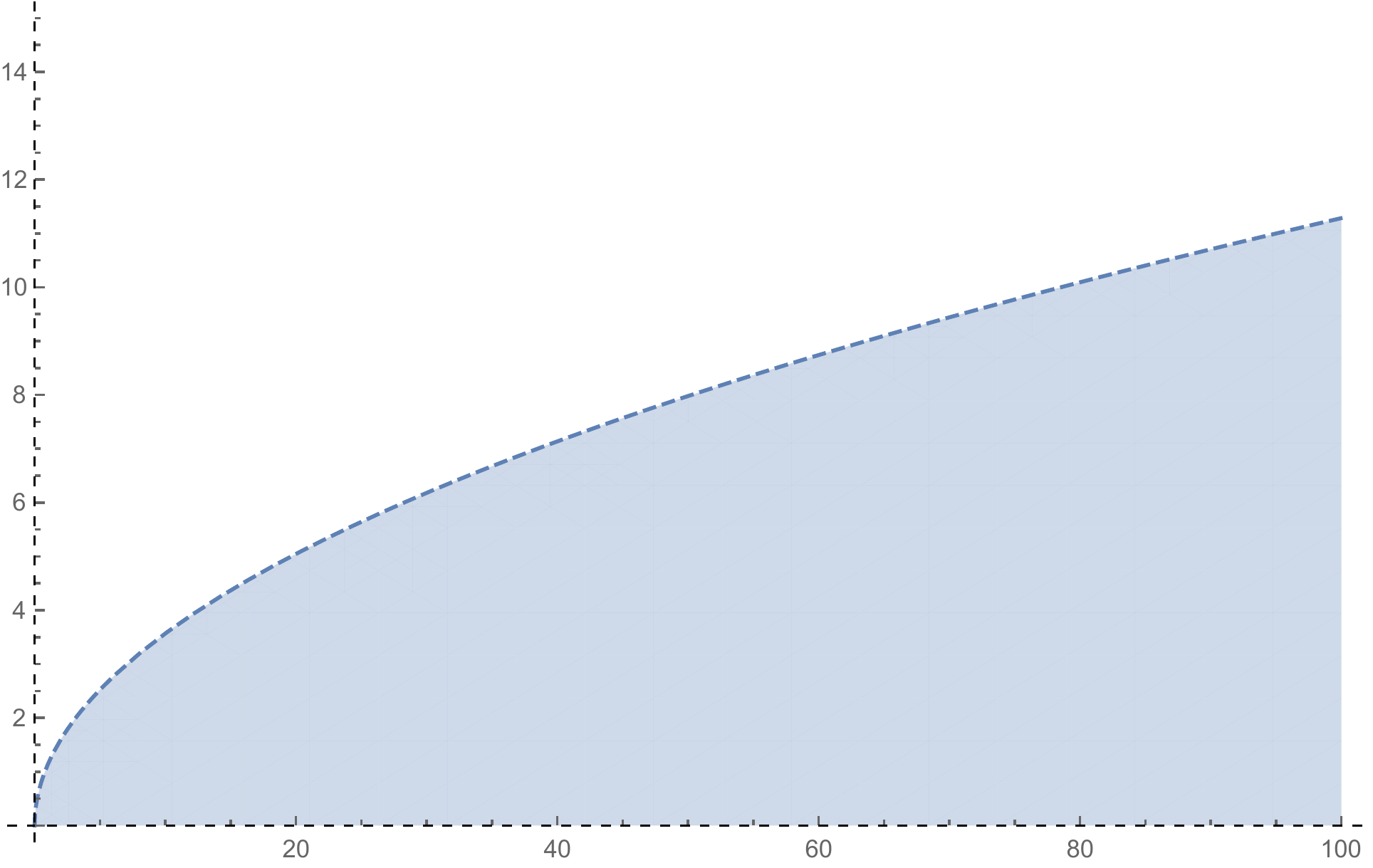}
\put(50,18){$\mathbb{R}_{<,+}^2$}
\put(1.5,64){$u_2$}
\put(101,2.5){$u_1$}
\put(15,33){$u_2 = \frac{2}{\sqrt{\pi}} \sqrt{u_1}$}
\put(29,29){$\searrow$}
\end{overpic}
\caption[image]{Sketch map of $\mathbb{R}_{<,+}^2$ }\label{figure3}
\end{figure}

By Lemma \ref{Ln91} and the scaling property (cf. \eqref{scaling}), it remains to determine
\begin{align}
d^2(|x| \, e_1, \frac{|x|^2}{4} (u, 0)) = |x|^2 \, d^2(e_1, \frac{1}{4} (u, 0)), \qquad |x| > 0, \ u \in \RS.
\end{align}

From now on, we will study the exact formulas of $d^2(e_1, \frac{1}{4} (u, 0))$ with $u \in \RS$. Inspired by the proof of Corollary \ref{Cn32}, we first seek suitable open sets $\mathcal{O}_1 \subset \RS$ and $\mathcal{O}_2 \subset \overline{B_{\R^2}(0, \pi)}^c$ which satisfy the following conditions:  \\
(1) $(\pi, 0) \in \partial \mathcal{O}_2$ and $\partial \mathcal{O}_1 \cap (\partial \RN \setminus \{ 0 \}) \neq \emptyset$;   \\
(2) $\Lambda$, defined by \eqref{32N96}, is a  $C^{\infty}$-diffeomorphism from $\mathcal{O}_2$ onto $\mathcal{O}_1$. Moreover, $\Lambda(v) \longrightarrow \partial \mathcal{O}_1 \cap (\partial \RN \setminus \{ 0 \})$ if $v \longrightarrow (\pi, 0)$ in $\mathcal{O}_2$ (this condition should be relaxed a little bit because of the next one); \\
(3) $\mathcal{O}_1$ is as much as possible in $\RS$.

(Obviously, we may have to repeat these processes in the general case of $\overline{\M} \subsetneqq \G$.)

To begin with, recall that $\V$ denotes the unique solution of $\tan{s} = s$ in $(\pi, \, \frac{3}{2} \pi)$, cf. \eqref{cN32}. Define
\begin{align} \label{nOnA}
\OC := \left\{ (v_1, v_2) \in \R^2; \, v_2 < 0, \, \pi < v_1 < r = \sqrt{v_1^2 + v_2^2} < \V, \  \K_3 < 0 \right\},
\end{align}
where
\begin{align} \label{32N98}
\K_3 := \K_3(v_1, v_2) := 2 \psi(r) + \frac{\psi'(r)}{r} v_2^2,
\end{align}

A sketch map of $\OC$ is exhibited in Figure \ref{figure4}.

\begin{figure}
 \centering
\begin{overpic}[width = 12cm, height=7.5cm]{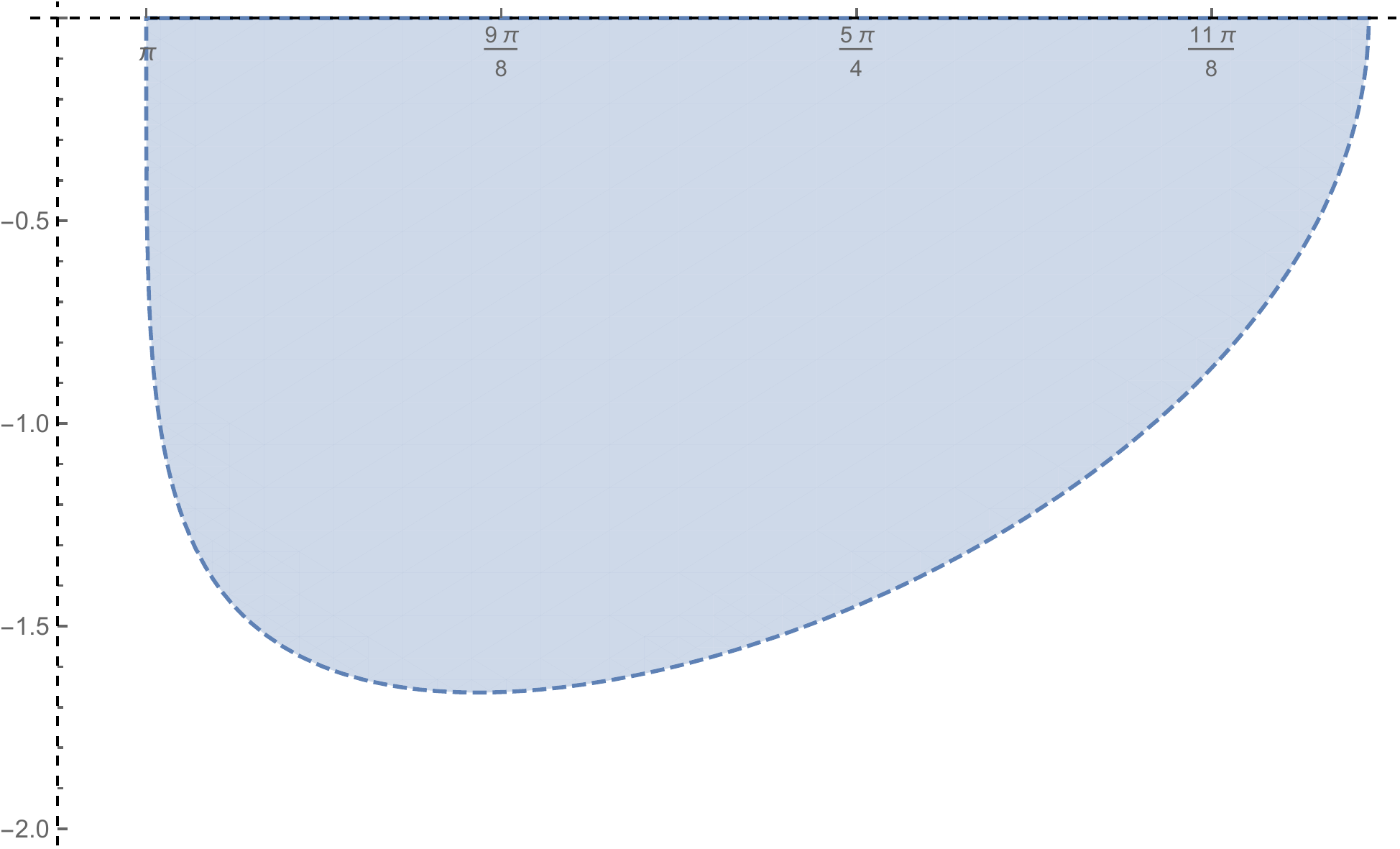}
\put(48,37){$\Omega_{-,4}$}
\put(2.5,-2.5){$v_2$}
\put(101,60.5){$v_1$}
\put(98.5,58){$\nwarrow$}
\put(100.5,55){$(\vartheta_1,0)$}
\put(68,18){$\nwarrow$}
\put(66,14){$\mathrm{K}_3 = 2\psi(r) + \frac{\psi^{\prime}(r)}{r} v_2^2 = 0$}
\end{overpic}
\caption[image]{Sketch map of $\Omega_{-,4}$ }\label{figure4}
\end{figure}

We have the following result.

\begin{theo} \label{N32T3}
$\MD$, defined by \eqref{32N96}, is a $C^{\infty}$-diffeomorphism from $\OC$ onto $\RS$. Furthermore, let $|x| > 0$ and $u = (u_1, u_2) \in \RS$. Set $x = |x| \, e_1$, $t = \frac{|x|^2}{4} (u, 0)$ and $\theta = (\MD^{-1}(u), 0)$. Then the Hessian matrix of $\phi((x, t); \cdot)$ at $\theta$ is nonsingular, and has exactly two positive eigenvalues.
\end{theo}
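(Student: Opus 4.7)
The plan is to prove both assertions together by leveraging the fact that $\Lambda$ is a gradient. From \eqref{32N96} we read off $\Lambda(v_1, v_2) = \nabla_v \widetilde{\psi}_*(v)$, where $\widetilde{\psi}_*(v) := v_2^2 \psi(|v|)$; hence the Jacobian matrix $J_{\Lambda}$ coincides with $\He \widetilde{\psi}_*$. Writing $\psi_*(\tau) = (\tau_2^2 + \tau_3^2) \psi(|\tau|)$ and using the formula $\He(f(|\tau|)) = \frac{f'(r)}{r} \mathbb{I}_3 + (\frac{f''(r)}{r^2} - \frac{f'(r)}{r^3}) \tau \tau^T$ at $\theta = (\theta_1, \theta_2, 0)$, a direct computation (with $P := \psi'(r)/r$ and $Q := \psi''(r)/r^2 - \psi'(r)/r^3$) shows that $\He \psi_*(\theta)$ is block-diagonal in the splitting $\R^3 = \R^2 \oplus \R$: the upper $2 \times 2$ block equals $J_\Lambda(\theta_1, \theta_2)$, while the $(3,3)$-entry is exactly $\K_3(\theta_1, \theta_2)$ as defined in \eqref{32N98}. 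Since $\phi((x, t); \tau) = |x|^2 + 4 t \cdot \tau - |x|^2 \psi_*(\tau)$, we get $\He_\tau \phi((x, t); \theta) = - |x|^2 \He \psi_*(\theta)$; its three eigenvalues are $-|x|^2 \K_3 > 0$ (on $\OC$) together with $-|x|^2$ times the two eigenvalues of $J_\Lambda$, so the signature claim reduces to $\det J_\Lambda < 0$. Once this is in hand, $\Lambda$ is a local diffeomorphism, and the global statement follows from Hadamard's theorem after checking image containment $\Lambda(\OC) \subseteq \RS$ and properness. The nontrivial half of image containment, namely $\Lambda_2 < \frac{2}{\sqrt{\pi}} \sqrt{\Lambda_1}$, rearranges into $\K_3^2 < 4 P v_1/\pi$; properness is verified by examining the images of the three boundary pieces $\{v_2 = 0\}$, $\{v_1 = \pi\}$, $\{\K_3 = 0\}$ of $\partial \OC$, exactly as in the proof of Proposition \ref{Kp1}, the escape to infinity along $\{v_1 = \pi\}$ as $r \to \pi^+$ being handled by the asymptotics \eqref{nnp}.

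The decisive step is thus $\det J_\Lambda < 0$ on $\OC$. Using the identity $P + r^2 Q = \psi''(r)$, a direct expansion yields
\[
\det J_\Lambda(v_1, v_2) = v_2^2 \Big[ 2 \psi(r) \psi''(r) - 4 P^2 r^2 + v_2^2 \big( P \psi''(r) + 8 P^2 - 2 \psi(r) Q \big) \Big],
\]
which at fixed $r \in (\pi, \V)$ is a linear function of $v_2^2$. By linearity, it suffices to verify strict negativity of the bracket at the two endpoints of $[0, 2|\psi(r)|/P]$, an interval which contains the admissible range $v_2^2 \in (0, \min\{2|\psi|/P, r^2 - \pi^2\})$ prescribed by $\OC$. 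At $v_2^2 = 0$ the bracket equals $2 \psi \psi'' - 4 P^2 r^2$; rewriting $\psi'' = P + r^2 Q$ gives $2 \psi \psi'' = 2 P \psi + 2 r^2 \psi Q$, and the inequality $2 \psi Q < 4 P^2$ from \eqref{32ei5} then yields $2 \psi \psi'' - 4 P^2 r^2 < 2 P \psi < 0$ (since $P > 0$ and $\psi < 0$). At $v_2^2 = 2|\psi(r)|/P$ (equivalently $\K_3 = 0$) the combination $2 \psi + 2|\psi| = 0$ cancels the $\psi''$-terms, and the bracket simplifies to $16 P |\psi| - 4 \psi^2 |Q|/P - 4 P^2 r^2$, which is strictly negative provided $r \psi'(r) > 4 |\psi(r)|$.

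The only ingredient not immediate from Lemma \ref{Ll34} is this one-variable inequality $r \psi'(r) > 4 |\psi(r)|$ for $r \in (\pi, \V)$. Since it is equivalent to $(r^4 \psi)' > 0$, and $r^4 \psi(r) = r^2 - r^3 \cot r$, a short computation reduces the task to positivity of $g(r) := 2 - 3 r \cot r + r^2/\sin^2 r$. Using the identity $1/\sin^2 r = 1 + \cot^2 r$ and the substitution $u := r \cot r$, one rewrites $g(r) = u^2 - 3 u + r^2 + 2$, a quadratic in $u$ whose minimum value $r^2 - 1/4$ is positive for $r > 1/2$, and hence for $r \in (\pi, \V)$. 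I expect this elementary calculus lemma, together with the two endpoint sign verifications, to be the main technical obstacle of the proof; the remaining pieces reduce to the block-diagonal structure and Hadamard's theorem as sketched above.
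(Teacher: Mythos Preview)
Your proposal is correct, and the overall architecture (block-diagonal Hessian, Hadamard's theorem, image containment, properness) matches the paper. The genuine difference lies in how you establish $\det J_\Lambda<0$ on $\OC$. The paper expands $\det J_\Lambda=v_2^2\,\K$ with
\[
\K = 2\psi\K_1 + v_1^2\bigl(2\psi\K_2-4\K_1^2\bigr)+v_2^2\,\K_1\bigl(5\K_1+\K_2 r^2\bigr),
\]
then splits into cases according to the sign of $5\K_1+\K_2 r^2$ and, in the harder case, pushes through the series representations \eqref{32ei2}--\eqref{32ei4} with term-by-term estimates (their Lemma~\ref{nL112}). You instead fix $r$ and view the bracket as affine in $v_2^2$, checking only the endpoints $v_2^2=0$ and $v_2^2=2|\psi|/P$; the first endpoint uses \eqref{32ei5} directly, and the second reduces to the single inequality $r\psi'(r)>4|\psi(r)|$, which you dispatch by the quadratic trick $g(r)=u^2-3u+r^2+2>0$ with $u=r\cot r$. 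This avoids the series manipulations entirely and replaces them by an elementary closed-form argument; the paper's route, on the other hand, stays within the analytic framework already set up in Lemma~\ref{Ll34} and reuses those estimates. Two minor remarks: your description of the boundary of $\OC$ as three pieces $\{v_2=0\}$, $\{v_1=\pi\}$, $\{\K_3=0\}$ is slightly off, since $\{v_1=\pi\}$ contributes only the corner $(\pi,0)$ where the other two pieces meet (the paper treats approach to that corner as its cases (II)--(III)); and the image-containment inequality $\K_3^2<4Pv_1/\pi$ still needs a line of justification (it follows from $|\K_3|<2|\psi|<2\sqrt{P}<\tfrac{2}{\sqrt\pi}\sqrt{Pv_1}$ via \eqref{32ei2}--\eqref{32ei3}, exactly as in the paper).
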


\begin{remark}
Combining Theorem \ref{N32T3} with Theorem \ref{nT122} below, at first glance, the last claim, namely $\He_{\theta} \phi((x, t); \theta)$ is indefinite, looks very strange in comparison with \eqref{csd}. Indeed, the highly non-trivial phenomenon can be explained by some ``inf-sup'' problem. Also, small-time asymptotics of the heat kernel on $N_{3, 2}$ can be completely solved. However, the asymptotic behavior at infinity of $p_1$ expressed in a simple form is currently unknown, we hope to solve it in a future work.
\end{remark}

Furthermore, we have the following:

\begin{theo} \label{nT122}
For any $u \in \RS$, we have
\begin{align} \label{FFF}
d^2(e_1, \frac{1}{4}(u, 0)) = \phi((e_1, \frac{1}{4}(u, 0)); (\Lambda^{-1}(u), 0)).
\end{align}
\end{theo}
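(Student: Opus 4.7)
\textbf{Proof plan for Theorem \ref{nT122}.}

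Set $g_0 := (e_1, \tfrac{1}{4}(u,0))$ and $\theta_0 := (\Lambda^{-1}(u),0) \in \R^3$. The plan is to sandwich $d^2(g_0)$ between matching upper and lower bounds, both equal to $\phi(g_0;\theta_0)$.

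For the upper bound, I first verify $\theta_0 \in \mathcal{V}$: by Theorem \ref{N32T3}, $\Lambda^{-1}(u) \in \Omega_{-,4}$, so $|\theta_0| \in (\pi,\V) \subset (\pi,\tfrac{3\pi}{2})$; the eigenvalues of $U(\theta_0)$ are $0, \pm|\theta_0|$ (cf.\ \eqref{n32na}), none of which lies in $\pi\N^*$. Next, the very definition of $\Lambda$ rewrites $u = \Lambda(\Lambda^{-1}(u))$ as the critical-point equation $\nabla_\theta \phi(g_0;\theta_0) = 0$. Theorem \ref{THN4} then produces a ``good'' normal geodesic from $o$ to $g_0$ with parameter $(\zeta(0), 2\theta_0)$ and length squared equal to $\phi(g_0;\theta_0)$, which gives $d^2(g_0) \leq \phi(g_0;\theta_0)$.

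For the lower bound, I plan a continuity/propagation argument anchored at the parabolic boundary $\partial \RS \cap \partial \RN$, parametrised by $u = (\alpha^2/\pi,\, 2\alpha/\sqrt\pi)$, $\alpha \ge 0$. Corollary \ref{Cn32} already yields $d^2(g_0) = 1 + \alpha^2$ there. On the other hand, case (4) in the proof of Proposition \ref{Kp1} combined with the asymptotics $\psi(r) \sim \frac{1}{\pi(\pi-r)}$ and $(\pi-r)^2\psi'(r) \to \frac{1}{\pi}$ as $r \to \pi^-$ (cf.\ \eqref{nnp}) gives, via \eqref{n32c}, that $\phi(g_0; (\Lambda^{-1}(u),0)) \to 1 + \alpha^2$ as $u$ approaches the boundary from inside $\RS$. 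Hence equality holds along the boundary. To propagate it into $\RS$, I would invoke Corollary \ref{nC121}: for each such $g_0$ there exists $\theta_g \in \FS$, a critical point of $\phi(g_0;\cdot)$, with $d^2(g_0) = \phi(g_0;\theta_g)$. Compactness of $\FS$ together with the boundary identification and Lemma \ref{AxL1} force $\theta_g$ to remain in the connected component of $\mathcal{V} \cap \FS$ containing $\theta_0$, and Theorem \ref{N32T3} identifies $\theta_g = \theta_0$ uniquely within that component. The identity $\phi(g_0;\theta_g) = \phi(g_0;\theta_0)$ then yields the matching lower bound.

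The main obstacle is establishing uniqueness of the ``good'' critical point in the relevant component of $\mathcal{V}$, because $\phi(g_0;\cdot)$ may possess additional saddle points outside $\OA$. A direct route is to classify all critical points: the third-coordinate equation factors as
\[
\tau_3\Bigl[2\psi(|\tau|) + \psi'(|\tau|)\,(\tau_2^2+\tau_3^2)/|\tau|\Bigr] = 0,
\]
separating a planar branch $\tau_3=0$ (which reduces to $u = \Lambda(\tau_1,\tau_2)$ and is completely governed by Theorem \ref{N32T3}) from a non-planar branch which, by Lemma \ref{n32l}, forces $|\tau| > \pi$; one must show this second branch either contains no critical points in $\mathcal{V} \cap \FS$ or only ones with $\phi$-value exceeding $\phi(g_0;\theta_0)$. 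An alternative, cleaner route would be to invoke Corollary \ref{nCx} on $\mathcal{W}^c$ (extending to all of $\RS$ by the continuity of $d^2$) and to prove directly the comparison $\phi(g_0;\theta_0) \leq \phi(g_0;\theta)$ at every critical point $\theta \in \mathcal{V}$, using the identity $\phi(g;\theta) = |\tfrac{U(\theta)}{\sin U(\theta)} x|^2$ of Proposition \ref{NC51} to compare the planar-branch values window by window over $|\theta| \in (k\pi,(k+1)\pi)$.
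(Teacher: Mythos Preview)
Your upper bound via Theorem~\ref{THN4} is exactly what the paper uses implicitly (the equality in \eqref{UED}). The lower bound, however, is where your plan diverges from the paper and where the real difficulty lies.

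The paper's argument does \emph{not} attempt to classify critical points or to track $\theta_g$ through connected components of $\mathcal{V}\cap\FS$. Instead it imports one external fact---the cut locus on $N_{3,2}$, taken from \cite{MM17} as \eqref{N32Cut}---and then runs a clean open--closed argument on $\RS$. Concretely: at a boundary point $g_\alpha$ (your parametrisation should read $(\alpha^2/\pi,\,2\alpha/\pi)$, not $2\alpha/\sqrt{\pi}$) the paper builds a smooth minorant $\phi_\alpha(g)=|x|^2+4\pi\,t\cdot x/|x|$ of $d^2$ touching at $g_\alpha$, reads off the proximal subdifferential, and via \cite[Proposition~2]{RT05} identifies the \emph{initial covector} $(\zeta_\alpha,2\pi e_1)$ of the unique shortest geodesic. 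Since $g_\alpha\notin\mathrm{Cut}_o$ by \eqref{N32Cut}, Lemma~\ref{AxL} gives a neighbourhood in covector space on which $d^2(\exp(\zeta,2\theta))=|\zeta|^2$; the paper then checks that $(\widetilde\zeta_u,2\theta_u)$ enters this neighbourhood as $u\to(\alpha^2/\pi,2\alpha/\pi)$ from inside $\RS$, establishing \eqref{FFF} near the boundary. Openness of the equality set $\mathbf{S}\subset\RS$ then follows from Corollary~\ref{nC124} (again using that all $\widetilde g_u\notin\mathrm{Cut}_o$), closedness from continuity, and connectedness of $\RS$ finishes.

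Your propagation step ``compactness of $\FS$ together with Lemma~\ref{AxL1} force $\theta_g$ to remain in the connected component of $\mathcal{V}\cap\FS$ containing $\theta_0$'' is the genuine gap. You do not know the shape of $\FS$---it is defined through $\mathrm{Cut}_o^c$, which is precisely what you are trying to determine---so you cannot assert that its intersection with $\mathcal{V}$ has the right component structure, nor that a limiting argument via Lemma~\ref{AxL1} traps $\theta_g$ there. Without the cut-locus input the argument is circular. Your Route~2 (direct comparison of $\phi$-values at all critical points in $\mathcal{V}$ via Proposition~\ref{NC51}) is a legitimate alternative that avoids \cite{MM17}; the paper itself remarks that such a self-contained proof through Corollary~\ref{nCx} exists in \cite{LZ20}. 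But as written you have only stated the programme, not carried out the window-by-window comparison, and the non-planar branch (where $2\psi(|\tau|)+\psi'(|\tau|)(\tau_2^2+\tau_3^2)/|\tau|=0$) still needs to be ruled out or bounded below.
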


Combining this with Lemma \ref{Ln91}, Theorem \ref{n32T}, Corollary \ref{Cn32}, the scaling property (see \eqref{scaling}) and \cite[Theorem~1.4]{MM17}, the Gaveau-Brockett optimal control problem on $N_{3, 2}$ is completely solved. It is worthwhile to point out that the main results obtained in \cite{MM17} can be deduced from Theorem \ref{nT122}, see \cite{LZ20} for more details.

\subsection{Proof of Theorem \ref{N32T3}} \label{nssn111}

First, the main difficulty for showing that $\MD$ is from $\OC$ onto $\RS$ is to check that
\begin{align*}
2 \psi(r) +  \frac{\psi'(r)}{r} v_2^2 > - \frac{2}{\sqrt{\pi}} \sqrt{\frac{\psi'(r)}{r} v_1}, \quad \forall (v_1, v_2) \in \OC.
\end{align*}
Using \eqref{32ei3}, it remains to prove that $\psi(r) > - \sqrt{\frac{\psi'(r)}{r}}$ which is obvious by \eqref{32ei2} and \eqref{32ei3}.

Moreover, using an argument similar to that in the proof of Proposition \ref{Kp1}, it is easy to show that the smooth function $\MD$ is proper. More precisely, for $j \in \N^*$, let $v^{(j)} = (v_1^{(j)}, v_2^{(j)}) \in \OC$ and $u^{(j)} = (u_1^{(j)}, u_2^{(j)}) := \Lambda(v^{(j)})$, we have that:
\begin{enumerate}[(I)]
  \item If $v^{(j)} \not\longrightarrow (\pi, \, 0)$ with $\K_3(v^{(j)}) \longrightarrow 0^-$ or $v_2^{(j)} \longrightarrow 0^-$, then $u^{(j)} \longrightarrow \partial \RS$ since $u_2^{(j)} \longrightarrow 0^+$;

  \item If $v^{(j)} \longrightarrow (\pi, \, 0)$ with $u_1^{(j)} \longrightarrow 0^+$ or $+\infty$, it is trivial;

  \item If $v^{(j)} \longrightarrow (\pi, \, 0)$ with $u_1^{(j)} \longrightarrow a > 0$, it follows from Lemma \ref{Ll34} that
\begin{align*}
\lim_{r \longrightarrow \pi^+} (\pi - r) \psi(r) = \frac{1}{\pi}, \qquad \lim_{r \longrightarrow \pi^+} (\pi - r)^2 \psi'(r) = \frac{1}{\pi}.
\end{align*}
A simple calculation shows that $u_2^{(j)} \longrightarrow \frac{2}{\sqrt{\pi}} \sqrt{a}$, so $u^{(j)} \longrightarrow \partial \RS$.
\end{enumerate}

By means of Hadamard's theorem, it suffices to prove that the Jacobian determinant of $\MD$ vanishes nowhere. Indeed, the Jacobian matrix of $\MD$ at $v = (v_1, v_2)$ equals
\begin{align} \label{nJAn}
\left(
\begin{array}{cc}
v_2^2 \left[ \frac{\psi'(r)}{r} + \left( \frac{\psi'(r)}{r} \right)' \frac{v_1^2}{r} \right] \qquad & \qquad v_1 v_2 \left[ 2 \frac{\psi'(r)}{r} + \left( \frac{\psi'(r)}{r} \right)' \frac{v_2^2}{r}  \right] \\
\mbox{} \qquad & \qquad \mbox{} \\
v_1 v_2 \left[ 2 \frac{\psi'(r)}{r} + \left( \frac{\psi'(r)}{r} \right)' \frac{v_2^2}{r}  \right] \qquad & \qquad  2 \psi(r) + v_2^2 \left[ 5 \frac{\psi'(r)}{r} + \left( \frac{\psi'(r)}{r} \right)' \frac{v_2^2}{r} \right]  \\
\end{array}
\right),
\end{align}
saying $\JA(v, r)$. Recall that (see \eqref{32ei3}, \eqref{32ei4} and \eqref{32N98})
\begin{align*}
\K_1 := \frac{\psi'(r)}{r}, \quad \K_2 := r^{-1} \left( \frac{\psi'(r)}{r} \right)', \quad \K_3 := 2 \psi(r) + \K_1 v_2^2,
\end{align*}
thus $\det \JA(v, r)$ equals
\[
v_2^2 \left\{ (\K_1 + \K_2 v_1^2) [\K_3 + v_2^2 (4 \K_1 + \K_2 v_2^2)] - v_1^2 (2 \K_1 + \K_2 v_2^2)^2 \right\} := v_2^2 \, \K.
\]

Observe that
\begin{align} \label{32nK}
\K &= (2 \psi(r) + \K_1 v_2^2) \cdot (\K_1 + v_1^2 \K_2) + 4 \K_1^2 v_2^2 + \K_1 \K_2 v_2^4 - 4 \K_1^2 v_1^2  \nonumber \\
&= 2  \psi(r) \K_1 + v_1^2 \left[ 2 \psi(r) \K_2 - 4 \K_1^2 \right] + v_2^2 \, \K_1 \left( 5 \K_1 + \K_2 \, r^2 \right).
\end{align}

To finish the proof of the first claim in  Theorem \ref{N32T3}, it only needs to prove the following

\begin{lem} \label{nL112}
We have $\K < 0$ for any $v = (v_1, v_2) \in \OC$.
\end{lem}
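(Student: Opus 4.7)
The plan is to combine the signs and inequalities of Lemma \ref{Ll34} with the defining constraint $\K_3 < 0$ of $\OC$, applied to the expanded expression \eqref{32nK}. Write $a := v_1^2$, $b := v_2^2$, so $a > \pi^2$, $b > 0$, $a + b = r^2 \in (\pi^2, \V^2)$. Lemma \ref{Ll34} gives $\psi(r) < 0$, $\K_1 > 0$, $\K_2 < 0$ and, crucially, $2\psi\K_2 - 4\K_1^2 < 0$ throughout $\OC$. Hence in
\[
\K = 2\psi\K_1 + a\bigl(2\psi\K_2 - 4\K_1^2\bigr) + b\,\K_1\bigl(5\K_1 + r^2\K_2\bigr)
\]
the first two terms are already strictly negative, so the whole task reduces to controlling the third.

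I would split on the sign of $5\K_1 + r^2\K_2$. If it is $\leq 0$, the third term is nonpositive (since $b\K_1 > 0$) and $\K < 0$ follows at once. If it is $> 0$, I would invoke $\K_3 < 0$, which (using $-\psi, \K_1 > 0$) gives the sharp upper bound $b < -2\psi/\K_1$; multiplying by the positive factor $\K_1(5\K_1 + r^2\K_2)$ yields
\[
b\,\K_1\bigl(5\K_1 + r^2\K_2\bigr) < -2\psi\bigl(5\K_1 + r^2\K_2\bigr).
\]
Substituting this into the expansion of $\K$ and using $r^2 = a + b$, the cross-terms telescope to
\[
\K < -8\psi\K_1 - 2b\,\psi\K_2 - 4a\K_1^2.
\]
Since $\psi\K_2 > 0$ the middle term is negative and can be dropped, and $a > \pi^2$ then leaves $\K < -4\K_1\bigl(2\psi + \pi^2\K_1\bigr)$.

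It remains to prove the one-variable inequality $2\psi(r) + \pi^2\K_1(r) > 0$ on $(\pi, \V)$. From the series in Lemma \ref{Ll34} this rearranges as
\[
2\psi + \pi^2\K_1 = 4\sum_{j = 1}^{+\infty}\frac{(j^2 + 1)\pi^2 - r^2}{[(j\pi)^2 - r^2]^2}.
\]
Every summand with $j \geq 2$ is strictly positive, since $(j^2 + 1)\pi^2 - r^2 \geq 5\pi^2 - \V^2 > 5\pi^2 - (3\pi/2)^2 = 11\pi^2/4$, and only the $j = 1$ summand can be negative (precisely when $r > \pi\sqrt{2}$). In that range I would compare the $j = 1$ and $j = 2$ summands directly: substituting $v = r^2/\pi^2$ reduces the inequality to $h(v) := (5 - v)(v - 1)^2 - (v - 2)(4 - v)^2 > 0$ on the short interval $v \in (2,\, \V^2/\pi^2) \subset (2,\, 2.05)$, which is immediate from $h(2) = 3$, $h'(2) = 1 > 0$, and the uniform boundedness of $h'$ on that interval.

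The main obstacle is this final one-variable positivity near $r = \V$: both the negative $j = 1$ contribution and the positive tail are of comparable (small) size, and the inequality just barely holds. What saves it is precisely the restriction $r < \V < 3\pi/2$ built into the definition of $\OC$ in \eqref{nOnA}, so that $\V^2$ barely exceeds $2\pi^2$; without this endpoint the residual polynomial inequality would fail and the whole chain of bounds would collapse.
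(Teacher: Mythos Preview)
Your proof is correct and follows the paper's strategy through the case split on the sign of $5\K_1 + r^2\K_2$ and the use of $\K_3 < 0$ to arrive at the same intermediate bound $\K < -8\psi\K_1 - 2v_2^2\,\psi\K_2 - 4v_1^2\K_1^2$. From that point on the two arguments diverge. The paper keeps both variables in play and shows $2\K_1^2 v_1^2 > -\psi(4\K_1 + \K_2 v_2^2)$ by a direct term-by-term series comparison, using $(a+b)^2 \ge a(a+2b)$ and pairing the $j=1$ pole against the $j\ge 2$ tail. You instead discard the negative term $-2v_2^2\psi\K_2$ and use only $v_1^2 > \pi^2$, reducing everything to the single-variable inequality $2\psi(r) + \pi^2\K_1(r) > 0$ on $(\pi,\V)$, which you then settle by comparing only the $j=1$ and $j=2$ summands via the cubic $h(v) = -2v^3 + 17v^2 - 43v + 37$ on the short interval $(2,\V^2/\pi^2)$. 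Your route is more elementary at the end (a polynomial check on an interval of length $<0.05$), at the cost of throwing away the helpful term $-2v_2^2\psi\K_2$; the paper's route is more uniform in spirit but requires the slightly more delicate series manipulation. One cosmetic point: ``uniform boundedness of $h'$'' is a bit loose as stated; it would be cleaner to note that $h''(v) = -12v + 34 > 0$ on $[2,2.05]$, so $h'(v) \ge h'(2) = 1$ and hence $h(v) \ge h(2) = 3$ there.
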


\begin{proof}
We start with the case $5 \K_1 + \K_2 \, r^2 \leq 0$. It follows from \eqref{32ei2}, \eqref{32ei3} and \eqref{32ei5} that $\K < 0$.

To treat the opposite case $5 \K_1 + \K_2 \, r^2 > 0$, the condition $\K_3 = 2 \psi(r) + v_2^2 \, \K_1 < 0$ implies that
\begin{align*}
\K  &<  2  \psi(r) \K_1 + v_1^2 \left[ 2 \psi(r) \K_2 - 4 \K_1^2 \right] - 2 \psi(r) \left( 5 \K_1 + \K_2 \, r^2 \right) \\
&= - 2 \left\{ 2 \K_1^2 v_1^2 +  \psi(r)  \left[ 4 \K_1 + \K_2 v_2^2 \right] \right\}.
\end{align*}
It remains to show that $2 \K_1^2 v_1^2 >  -  \psi(r)  \left( 4 \K_1 + \K_2 v_2^2 \right)$. By \eqref{32ei2} and \eqref{32ei3}, via the trivial inequality $(a + b)^2 \geq a (a + 2 b)$, it suffices to establish the estimate
\begin{align*}
&  \frac{v_1^2}{(r^2 - \pi^2)^2} \left\{ (r^2 - \pi^2)^{-2} + 2 \sum_{j = 2}^{+\infty}  \left[(j \, \pi)^2 - r^2 \right]^{-2} \right\} \\
&> \frac{1}{r^2 - \pi^2} \left\{ \frac{1}{(r^2 - \pi^2)^2} +  \sum_{j = 2}^{+\infty}  \frac{1}{\left[(j \, \pi)^2 - r^2 \right]^2} - \frac{v_2^2}{ (r^2 - \pi^2)^3} + \sum_{j = 2}^{+\infty}  \frac{v_2^2}{\left[(j \, \pi)^2 - r^2 \right]^3} \right\} \\
&= \frac{1}{r^2 - \pi^2} \left\{ \frac{v_1^2 - \pi^2}{(r^2 - \pi^2)^3} + \sum_{j = 2}^{+\infty}  \frac{(j \, \pi)^2 - v_1^2}{\left[(j \, \pi)^2 - r^2 \right]^3} \right\}.
\end{align*}

To obtain the last bound, observe first that for $j \ge 3$
\begin{align*}
\frac{2 v_1^2}{r^2 - \pi^2} > \frac{2 \pi^2}{(\frac{3}{2} \pi)^2 - \pi^2} > \frac{(3 \pi)^2 - \pi^2}{(3 \pi)^2 - (\frac{3}{2} \pi)^2} >  \frac{(j \pi)^2 - v_1^2}{(j \pi)^2 - r^2}.
\end{align*}
Moreover, by the fact that $r^2 - \pi^2 < (2 \pi)^2 - r^2$, we have
\begin{align*}
\frac{\pi^2}{(r^2 - \pi^2)^3} + \frac{2 v_1^2}{r^2 - \pi^2} \frac{1}{[(2 \pi)^2 - r^2]^2} > \frac{\pi^2 + 2 v_1^2}{\left[(2 \pi)^2 - r^2 \right]^3} > \frac{(2 \pi)^2 - v_1^2}{\left[(2 \pi)^2 - r^2 \right]^3},
\end{align*}
where we have used  $v_1 > \pi$ in the last inequality. Then
\begin{align*}
\frac{v_1^2}{(r^2 - \pi^2)^2} \left\{ \frac{1}{(r^2 - \pi^2)^2} + \frac{2}{[(2 \pi)^2 - r^2 ]^2} \right\} >  \frac{1}{r^2 - \pi^2} \left\{ \frac{v_1^2 - \pi^2}{(r^2 - \pi^2)^3} + \frac{(2 \pi)^2 - v_1^2}{\left[(2 \pi)^2 - r^2 \right]^3} \right\}.
\end{align*}

This completes the proof.
\end{proof}

The second claim in Theorem \ref{N32T3} is now clear. Indeed, setting $\Lambda^{-1}(u) = (\theta_1, \theta_2) := \widetilde{\theta}$, a direct calculation shows that $\He_{\tau = \theta} \phi((x, t); \tau)$ equals
\begin{align*}
- |x|^2 \, \left(
          \begin{array}{cc}
            \JA(\widetilde{\theta}, |\theta|)  & \mathbb{O}_{1 \times 2} \\
             \mathbb{O}_{2 \times 1} & 2 \psi(|\theta|) + \theta_2^2 \frac{\psi'(|\theta|)}{|\theta|} = \K_3(\widetilde{\theta}) \\
          \end{array}
        \right),  \quad \mbox{with $\JA$ defined by \eqref{nJAn}, }
\end{align*}
which is nonsingular, and has exactly two positive eigenvalues because of $\K_3(\widetilde{\theta}) < 0$ (cf. \eqref{nOnA}-\eqref{32N98}) and Lemma \ref{nL112}. This ends the proof of Theorem \ref{N32T3}.  \qed

\medskip

\subsection{Proof of Theorem \ref{nT122}} \label{ss123}

\medskip

\begin{proof}
There are three steps.

\textit{Step 1.} It follows from \eqref{n32c} that
\[
d(e_1, 0)^2 = 1 = \phi((e_1, 0); e_1) = \phi((e_1, 0); - e_1),
\]
which implies $(e_1, 0) \in \OM_2$. Up to an orthogonal transform and scaling, we have
\begin{align} \label{n32abn}
\left\{ (x, 0); \ x \in \R^3 \right\} \subseteq \OM_2 \subseteq \mathrm{Abn}_o^* \subseteq \mathrm{Abn}_o = \left\{ (x, 0); \ x \in \R^3 \right\},
\end{align}
where we have used Corollary \ref{nCCn} in the second inclusion, the definition of $\mathrm{Abn}_o^*$ in the third inclusion and \cite{LDMOPV16} in the equality. In other words, the abnormal shortest geodesics are straight segments in the first layer.

Next, by \eqref{n32abn} and \cite[Theorem 1]{LZ20}, it follows from \cite{MM17} that
\begin{align} \label{N32Cut}
\mathrm{Cut}_{o} = \{(x, t); \ \mbox{$x$ and $t$ are linearly dependent}\}.
\end{align}
Notice that our definition of $\mathrm{Cut}_o$ is clearly different from that in \cite{MM17}.

\textit{Step 2.} Let $\alpha > 0$ and $g_{\alpha} := (e_1, \frac{1}{4} (\frac{\alpha^2}{\pi}, \frac{2}{\pi} \alpha, 0))$. Let us consider a $C^{\infty}$ function on $N_{3, 2}$, $\phi_{\alpha}$, such that
\[
\phi_{\alpha}(x, t) = |x|^2 + 4 \pi t \cdot \frac{x}{|x|} \quad \mbox{for $(x, t)$ near $g_{\alpha}$.}
\]
By \eqref{n32c}, we have $\phi_{\alpha}(x, t) = \phi((x, t); \pi \frac{x}{|x|})$. Then for $g = (x, t)$ near $g_{\alpha}$, Theorem \ref{MP1} and Remark \ref{nRk21} imply that $\phi_{\alpha}(g) \le d(g)^2$. Moreover it follows from \eqref{N32dE} that $\phi_{\alpha}(g_{\alpha}) = d(g_{\alpha})^2$. Hence, we have (cf. \eqref{PSD}) $d\phi_{\alpha}(g_{\alpha}) = 2 \, z_{\alpha}  \in \partial_P d^2(g_{\alpha})$ where
\begin{align}
z_{\alpha} := (\zeta_{\alpha}, \ 2 \, \pi \, e_1) \in \R^3 \times \R^3, \qquad \zeta_{\alpha} := (1, \alpha, 0).
\end{align}

From \cite[Proposition~2]{RT05}, there exists a unique shortest geodesic from $o$ to $g_{\alpha}$, $\gamma_{g_{\alpha}}$, which admits a normal extremal lift $(\gamma_{g_{\alpha}}(s), (\xi(s), \tau(s)))$ such that $(\xi(1), \tau(1)) = z_{\alpha}$. By \eqref{HaEP}, we have $\zeta(1) = \xi(1) = \zeta_{\alpha}$. Moreover, a simple calculation shows that
\begin{align} \label{nMATRIXn}
e^{-\widetilde{U}(\pi \, e_1)} = \left(
  \begin{array}{ccc}
    1 & 0 & 0 \\
    0 & -1 & 0 \\
    0 & 0 & -1 \\
  \end{array}
\right), \quad \mbox{so } e^{-2 \widetilde{U}(\pi \, e_1)} = \mathbb{I}_3.
\end{align}
Then, by \eqref{GEn3*}, $\zeta(0) = \zeta(1) = \zeta_{\alpha}$. In conclusion, via
\eqref{nFG}, we have that
\[
g_{\alpha} = \exp\{(\zeta_{\alpha}, 2 \, \pi \, e_1) \}, \quad \gamma_{g_{\alpha}}(s) = \exp\{s \, (\zeta_{\alpha}, 2 \, \pi \, e_1) \}.
\]

It follows from \eqref{N32Cut} that $g_{\alpha} \not\in \mathrm{Cut}_o$. By Lemma \ref{AxL}, there exists a neighborhood $V_{(\zeta_{\alpha}, 2 \, \pi \, e_1)}$ of $(\zeta_{\alpha}, 2 \, \pi \, e_1)$ such that the sub-Riemannian exponential map is a diffeomorphism from $V_{(\zeta_{\alpha}, 2 \, \pi \, e_1)}$ onto its image $O_{g_{\alpha}} := \exp{(V_{(\zeta_{\alpha}, 2 \, \pi \, e_1)})} \subset (\mathrm{Cut}_o)^c$. In addition, we have
\begin{align} \label{n32dd}
d^2\left( \exp\{ (\zeta, 2 \, \theta) \} \right)  = |\zeta|^2, \qquad \forall (\zeta, 2 \, \theta) \in V_{(\zeta_{\alpha}, 2 \, \pi \, e_1)}.
\end{align}

Let $u \in \RS$. Set, via the first claim in Theorem \ref{N32T3},
\[
\theta_u := (\Lambda^{-1}(u), 0) := (\theta_1(u), \theta_2(u), 0).
\]
Recall that $\mathcal{V}$ is defined by \eqref{nD1V}. It is clear that $\theta_u \in \mathcal{V}$ since the eigenvalues of $U(\theta_u)$ are $\{0, \pm |\Lambda^{-1}(u)|\}$ with $\pi < |\Lambda^{-1}(u)| < \vartheta_1 < \frac{3}{2} \pi$. By Theorem \ref{THN4} and \eqref{iniC}, we take
\[
\widetilde{\zeta}_u := e^{- \widetilde{U}(\theta_u)} \frac{U(\theta_u)}{\sin{U(\theta_u)}} \, e_1 \quad \mbox{ such that } \quad \exp\{(\widetilde{\zeta}_u, 2 \, \theta_u)\} = (e_1, \frac{1}{4} (u, 0)) := \widetilde{g}_u.
\]

Assume for a moment that
\begin{align} \label{N32cl}
(\widetilde{\zeta}_u, 2 \, \theta_u) \in V_{(\zeta_{\alpha}, 2 \, \pi \, e_1)} \quad \mbox{for $u \in \RS$ near $(\frac{\alpha^2}{\pi}, \frac{2}{\pi} \alpha)$.}
\end{align}
Hence it follows from \eqref{n32dd} and the equality in \eqref{UED}
that
\begin{align*}
d^2(\widetilde{g}_u) = |\widetilde{\zeta}_u|^2 = \phi(\widetilde{g}_u; \theta_u), \quad \mbox{for some $u \in \RS$ near $(\frac{\alpha^2}{\pi}, \frac{2}{\pi} \alpha)$.}
\end{align*}

We now prove \eqref{N32cl}. Indeed, from the proof, in Subsection \ref{nssn111}, of the fact that $\Lambda$ is proper, we have that
\[
\lim_{u \in \RS, u \longrightarrow (\frac{\alpha^2}{\pi}, \frac{2}{\pi} \alpha)} \theta_u = \pi \, e_1 \quad \mbox{ and } \quad \lim_{u \in \RS, u \longrightarrow (\frac{\alpha^2}{\pi}, \frac{2}{\pi} \alpha)} \frac{\theta_2(u)}{\pi - |\theta_u|} = \alpha.
\]
Then, using \eqref{n32na}, a simple calculation yields that
\[
\lim_{u \in \RS, u \longrightarrow (\frac{\alpha^2}{\pi}, \frac{2}{\pi} \alpha)} \frac{U(\theta_u)}{\sin{U(\theta_u)}} \, e_1 = (1, - \alpha, 0)^{\mathrm{T}}.
\]
Combining this with \eqref{nMATRIXn}, we obtain
\[
\lim_{u \in \RS, u \longrightarrow (\frac{\alpha^2}{\pi}, \frac{2}{\pi} \alpha)} \widetilde{\zeta}_u = \zeta_{\alpha}.
\]

\textit{Step 3.} By the second claim in Theorem \ref{N32T3}, \eqref{N32Cut} and Corollary \ref{nC124}, the nonempty set $\mathbf{S} := \left\{u \in \RS; \ d^2(\widetilde{g}_u) = \phi(\widetilde{g}_u; \theta_u) \right\}$ is open. Moreover Theorem \ref{N32T3} shows that $\phi(\widetilde{g}_u; \theta_u)$ is smooth in $\RS$. Recall that the squared sub-Riemannian distance is continuous. Then $\mathbf{S}$ is also closed in $\RS$. By the connectness of $\RS$, we get $\mathbf{S} = \RS$, which finishes the proof of Theorem \ref{nT122}.
\end{proof}

\begin{remark}
(1) It is not hard to show directly $\OM_2 = \left\{ (x, 0); \ x \in \R^3 \right\}$. Then it follows from \cite{LZ20} that $\mathrm{Abn}_o^* = \left\{ (x, 0); \ x \in \R^3 \right\}$.

(2) By using the smooth function $\phi_{\alpha}$, we can show directly $g_{\alpha} \not\in \mathrm{Cut}_o$. See \cite[the beginning of Case (3) in the proof of Proposition 9]{LZ20} for more details.

(3) A completely different, more self-contained proof via Corollary \ref{nCx} can be found in \cite{LZ20}.
\end{remark}

\section*{Acknowledgement}
\setcounter{equation}{0}
This work is partially supported by NSF of China (Grants  No. 11625102 and No. 11571077).
I am indebted to Alano Ancona for the proof  of Proposition \ref{NPA}.

\bigskip

\mbox{}\\
Hong-Quan Li\\
School of Mathematical Sciences/Shanghai Center for Mathematical Sciences  \\
Fudan University \\
220 Handan Road  \\
Shanghai 200433  \\
People's Republic of China \\
E-Mail: hongquan\_li@fudan.edu.cn

\end{document}